\documentclass[tbtags,a4paper,11pt]{article}

\author{Cecilia Holmgren\footnote{Department of Mathematics, Stockholm University, 114 18 Stockholm, Sweden. Supported in part by the Swedish Research Council.}
	\and
	Svante Janson\footnote{Department of Mathematics, Uppsala University, SE-75310 Uppsala, Sweden. Supported in part by the Knut and Alice Wallenberg Foundation.}}
\title{Limit Laws for Functions of Fringe trees for Binary Search Trees and
  Recursive Trees} 
\date{26 June, 2014}

\usepackage{bbold}
\usepackage{mathrsfs}
\usepackage{mathrsfs}
\usepackage{amsmath}
\usepackage{amsfonts}
\usepackage{amsthm}
\usepackage{amssymb, graphicx, epsfig}
\usepackage{times}
\usepackage{hyperref}
\usepackage[numbers,sort&compress]{natbib}

\usepackage{latexsym}

\pdfminorversion=5

\linespread{1}

\numberwithin{equation}{section}

\hypersetup{
    bookmarks=true,         
    unicode=false,          
    pdftoolbar=true,        
    pdfmenubar=true,        
    pdffitwindow=true,      
    pdftitle={My title},    
    pdfauthor={Author},     
    pdfsubject={Subject},   
    pdfnewwindow=true,      
    pdfkeywords={keywords}, 
    colorlinks=true,       
    linkcolor=blue,          
    citecolor=blue,        
    filecolor=blue,      
    urlcolor=blue           
}

\newcommand\Bin{\operatorname{Bin}}

\pdfpagewidth 8.5in
\pdfpageheight 11in 
\topmargin 0in
\headheight 0in
\headsep 0in
\textheight 8.9in
\textwidth 5.5in
\oddsidemargin .5in
\evensidemargin 0in

\newtheorem{thm}{Theorem}[section] 
\newtheorem{Lemma}[thm]{Lemma}
\newtheorem{cor}[thm]{Corollary}
\theoremstyle{definition}
\newtheorem{rem}[thm]{Remark}
\newtheorem{defn}[thm]{Definition}
\newtheorem{example}[thm]{Example}
\newtheorem{problem}[thm]{Problem}

\newenvironment{romenumerate}[1][0pt]{
\addtolength{\leftmargini}{#1}\begin{enumerate}
 }{\end{enumerate}}

\newcounter{thmenumerate}
\newenvironment{thmenumerate}
{\setcounter{thmenumerate}{0}%
 \def\item{\par
 \refstepcounter{thmenumerate}\textup{(\roman{thmenumerate})\enspace}}
}
{}

\newcommand\REM[1]{{\raggedright\texttt{[#1]}\par\marginal{XXX}}}


\newcommand\ga{\alpha}
\newcommand\gb{\beta}
\newcommand\gd{\delta}

\newcommand\gam{\gamma}
\newcommand\gG{\Gamma}

\newcommand\gl{\lambda}
\newcommand\gL{\Lambda}

\newcommand\gs{\sigma}
\newcommand\gss{\sigma^2}

\newcommand{\refT}[1]{Theorem~\ref{#1}}
\newcommand{\refC}[1]{Corollary~\ref{#1}}
\newcommand{\refL}[1]{Lemma~\ref{#1}}
\newcommand{\refR}[1]{Remark~\ref{#1}}
\newcommand{\refS}[1]{Section~\ref{#1}}
\newcommand{\refE}[1]{Example~\ref{#1}}
\newcommand{\refPQ}[1]{Problem~\ref{#1}}

\newcommand\punkt{.\spacefactor=1000}  
\newcommand\iid{i.i.d\punkt}    
\newcommand\ie{i.e\punkt}
\newcommand\eg{e.g\punkt}

\newcommand\cf{cf\punkt}

\newcommand\qw{^{-1}}

\newcommand\qq{^{1/2}}
\newcommand\qqw{^{-1/2}}

\newcommand\etta{\boldsymbol1} 
\newcommand\ett[1]{\etta\set{#1}}
\newcommand\bigett[1]{\etta\bigset{#1}}

\newcommand\set[1]{\ensuremath{\{#1\}}}
\newcommand\bigset[1]{\ensuremath{\bigl\{#1\bigr\}}}

\newcommand\xpar[1]{(#1)}
\newcommand\bigpar[1]{\bigl(#1\bigr)}
\newcommand\Bigpar[1]{\Bigl(#1\Bigr)}
\newcommand\biggpar[1]{\biggl(#1\biggr)}
\newcommand\lrpar[1]{\left(#1\right)}

\newcommand\lrabs[1]{\left|#1\right|}

\newcommand\E{\operatorname{\mathbb E{}}}
\renewcommand\P{\operatorname{\mathbb P{}}}
\newcommand\Var{\operatorname{Var}}
\newcommand\Cov{\operatorname{Cov}}

\newcommand\Exp{\operatorname{Exp}}
\newcommand\Po{\operatorname{Po}}
\newcommand\Be{\operatorname{Be}}
\newcommand{\tend}{\longrightarrow}
\newcommand\dto{\overset{\mathrm{d}}{\tend}}
\newcommand\pto{\overset{\mathrm{p}}{\tend}}

\newcommand\eqd{\overset{\mathrm{d}}{=}}

\newcommand\xfrac[2]{#1/#2}
\newcommand\xpfrac[2]{(#1)/#2}
\newcommand\xqfrac[2]{#1/(#2)}

\newcommand\Bigparfrac[2]{\Bigpar{\frac{#1}{#2}}}

\newcommand\bbR{\mathbb R}

\newcommand\bbZ{\mathbb Z}

\newcommand\subtn{\text{ is a subtree in $\cT_{n}$}}

\newcommand\N{\mathcal N}
\newcommand\hX{\hat X}
\newcommand\hZ{\hat Z}
\newcommand\hmu{\hat\mu}
\newcommand\nk{_{n,k}}
\newcommand\cA{\mathcal A}
\newcommand\cL{\mathcal L}
\newcommand\CramerWold{Cram\'er--Wold}
\newcommand\yi{I}
\newcommand\tyi{\tilde I}
\newcommand\yl{\yi^{\mathrm{L}}}

\newcommand\xxn{\tilde X_n}
\newcommand\yyn{\tilde Y_n}
\newcommand\ff{\bar f}
\newcommand\fbar{\bar f}
\newcommand\gbar{\bar g}
\newcommand\dd{\,\mathrm{d}}
\newcommand\rrt{random recursive tree}
\newcommand\bst{binary search tree}
\newcommand\absx[2]{|#2|_{#1}}
\newcommand\absn[1]{\absx{n}{#1}}
\newcommand\absni[1]{\absx{n+1}{#1}}
\newcommand\xnkp{X_{n,k}^{P}}
\newcommand\pkp{p_{k,P}}
\newcommand\prkp{\hat p_{k,P}}
\newcommand\ipik{I^P_{i,k}}
\newcommand\sumin{\sum_{i=1}^{n}}
\newcommand\sumini{\sum_{i=1}^{n+1}}
\newcommand\sumkn{\sum_{k=1}^{n}}
\newcommand\sumk{\sum_{k=1}^{\infty}}
\newcommand\iik{\yi_{i,k}}
\newcommand\ixik{\yi_{i,k}^{\phantom P}}
\newcommand\iq{\yi^{\phantom P}}
\newcommand\mtoo{\ensuremath{{m\to\infty}}}
\newcommand\ntoo{\ensuremath{{n\to\infty}}}
\newcommand\Ntoo{\ensuremath{{N\to\infty}}}
\newcommand\cT{{\mathcal T}}
\newcommand\pkt{p_{k,T}}
\newcommand\pmtx{p_{m,T'}}
\newcommand\gstt{\gs_{T,T'}}
\newcommand\hgsll{\hat\gs_{\gL,\gL'}}
\newcommand\qtt{q^{T}_{T'}}
\newcommand\gsxx{\sigma} 
\newcommand\gskm{\gsxx_{k,m}}

\newcommand\upp[1]{^{(#1)}}
\newcounter{CC}
\newcommand{\CC}{\stepcounter{CC}\CCx} 
\newcommand{\CCx}{C_{\arabic{CC}}}     
\newcounter{cc}
\newcommand{\cc}{\stepcounter{cc}\ccx} 
\newcommand{\ccx}{c_{\arabic{cc}}}     
\newcommand\pfcase[2]{\smallskip\noindent\emph{Case #1: #2} \noindent}
\newcommand\tia{A}

\newcommand\nn{1,\dots,n}
\newcommand\nni{1,\dots,n+1}
\newcommand\bX{\mathbf{{X}}}
\newcommand\bmu{\boldsymbol{\mu}}
\newcommand\hbX{\mathbf{\hat{X}}}
\newcommand\hbmu{\boldsymbol{\hat\mu}}
\newcommand\hgs{\hat\sigma}
\newcommand\hgss{\hat\sigma^2}
\newcommand\ctn{\cT_n}
\newcommand\ctk{\cT_k}
\newcommand\glk{\gL_k}
\newcommand\gln{\gL_n}
\newcommand\gLL{\boldsymbol{\gL}}
\newcommand\cN{\N}
\newcommand\bgam{\beta}

\newcommand\bgamx{\bgam^*}
\newcommand\hbgamx{\hat{\bgam}^*}
\newcommand\muy[1]{\mu_{Y,{#1}}}
\newcommand\muyl{\muy{\ell}} 
\newcommand\muz[1]{\mu_{Z,{#1}}}
\newcommand\muzl{\muz{\ell}} 
\newcommand\sigmay[1]{\gs_{Y,{#1}}}
\newcommand\sigmayl{\sigmay{\ell}}
\newcommand\sigmaz[1]{\gs_{Z,{#1}}}
\newcommand\sigmazl{\sigmaz{\ell}}
\newcommand\gssy[1]{\gss_{Y,{#1}}}
\newcommand\gssz[1]{\gss_{Z,{#1}}}
\newcommand\yln{Y_{\ell,n}}
\newcommand\zln{Z_{\ell,n}}
\newcommand\yxn[1]{Y_{#1,n}}
\newcommand\zxn[1]{Z_{#1,n}}
\newcommand\hD{\hat D}
\newcommand\dn[1]{D_{n,#1}}
\newcommand\hdn[1]{\hD_{n,#1}}
\newcommand\mud[1]{\mu_{D,#1}}
\newcommand\hmud[1]{\mu_{\hD,#1}}
\newcommand\gsshd[1]{\gss_{\hD,#1}}
\newcommand\ttt[1]{T_{#1}}
\newcommand\hp{\hat p}
\newcommand\hP{\hat P}

\newcommand\hq{\hat q}


\begin{document}
\maketitle

\begin{abstract}
We prove limit theorems for
sums of functions of subtrees of  binary search trees
and random recursive trees.  
In particular, we give simple new proofs of the
fact that the number of fringe trees of size $ k=k_n $
in the binary search tree and the random recursive
 tree  (of total size $ n $) asymptotically has  a Poisson distribution
if $ k\rightarrow\infty $, 
and that the distribution is asymptotically normal for $ k=o(\sqrt{n}) $.
 
Furthermore, we prove similar results for the number of subtrees of
size $ k $ with some required property $ P $,  for example
the  number of copies of a certain fixed subtree $ T $. 
Using the \CramerWold{} device, we  show also
that these random numbers for different fixed subtrees converge jointly 
to a multivariate normal distribution. 

As an application of the general results, 
we obtain a normal limit law
for the number of $\ell$-protected nodes in a binary search tree or random
recursive tree.

The proofs use a new version of a representation by Devroye, and Stein's
method (for both normal and Poisson approximation) together with certain
couplings. 
\end{abstract}

\textbf{Keywords:} Fringe subtrees. Stein's method. Couplings. Limit laws. Binary search trees. Recursive trees.

\textbf{MSC 2010 subject classifications:}
Primary 60C05; secondary  05C05, 60F05.

\section{Introduction}\label{S:intro}
In this paper we consider fringe trees of the random binary search
tree as well as of the random recursive tree;
recall that a \emph{fringe tree} is a subtree consisting of some node and
all its 
descendants, see \citet{Aldous-fringe} for a general theory, 
and note that fringe trees typically are
"small" compared to the whole tree. 
(All subtrees considered in the present
paper are of this type, and we will use 'subtree' and 'fringe tree' as
synonyms.) 
We will use a representation of Devroye
\cite{Devroye1,Devroye2} for the binary search tree, and a well-known
bijection between binary trees and recursive trees, 
together with different
applications of Stein's method for both normal and Poisson approximation
to give both new general results on the
asymptotic distributions for random variables depending on fringe trees, 
and more direct proofs of several earlier results in the field. 
We give also examples of applications of these 
general results, for example
to the number of protected nodes in the binary search tree 
or \rrt{} studied by Mahmoud and Ward \cite{MahmoudWard2012,MahmoudWard2014}
and to the shape functionals for the \bst{} or \rrt, see \refS{Sapp}.

The \emph{binary search tree} is the tree representation of the 
sorting algorithm Quicksort, see e.g. \cite{KnuthIII} or \cite{Drmota}.  
Starting with $ n $ distinct numbers called
keys, we draw one of the keys at random and associate it to the root. Then
we draw one of the remaining keys. We compare it with the root, and
associate it to the left child if it is smaller than the key at the root, and to
the right child if it is larger. We continue recursively by drawing new
keys until the set is exhausted. The comparison for each new  key
starts at the root, and at each node the key visits, it proceeds to the
left/right child if it is smaller/larger than the key associated to that
node; eventually, the new key is associated to the first empty node it visits. 
In the final tree, all
the $ n $ ordered numbers are sorted by size, so that smaller numbers are in
left subtrees, and larger numbers are in right subtrees.
We let $ \cT_{n} $ denote a random binary search tree with $ n $ nodes.

We use the representation of the binary search tree by Devroye
\cite{Devroye1,Devroye2}.  
We may clearly assume that the keys are $1,\dots,n$.
We assign, independently, each key $ k$ a uniform random variable $ U_{k} $
in $(0,1)$ which we regard as a time stamp indicating the time when the key is
drawn. (We may and will assume that the $U_k$ are distinct.)
The random binary search tree constructed by drawing the keys in this order,
i.e., in order of increasing $U_k$, 
then is the unique binary tree with nodes labelled by $ (1,U_1),\dots,(n,U_n) $
with the property that it is a binary search tree with respect to the first
coordinates in the pairs, and along every path down from the root the values
$ U_i $ are increasing. 
We will also use a cyclic version of this representation described in
\refS{SScyclic}. 

Recall that the \emph{random recursive tree} is constructed recursively, by
starting with a root with label $ 1 $, and at stage $ i $ ($ i=2,\dots,n $)
a new node with label $ i $ is attached uniformly at random to one of the
previous nodes $ 1,\dots,i-1 $.
We let  $ \Lambda_{n} $ denote a random recursive tree with $ n $
nodes.
We may regard the \rrt{} as an ordered tree by ordering the children of each
node by their labels, from left to right.

There is a well-known bijection between ordered 
trees of size $n$ and
binary trees of size $n-1$,
see e.g.\ Knuth \cite[Section 2.3.2]{KnuthI} who calls this the 
\emph{natural correspondence} (the same bijection  is also called  
the \emph{rotation correspondence}):
Given an ordered  tree  with $ n $ nodes, eliminate first the root, and
arrange all its children in a path from left to right, as right children of
each other. Continue recursively,
with the children of each node arranged in a path from left to right, with
the first child attached to its parent as the left child.
This yields a binary tree
with $ n-1 $ nodes, and the transformation is invertible.  
As noted by \citet{Devroye1}, see also \citet{FuchsHwangNeininger},
the natural correspondence extends to a 
coupling between 
the random recursive tree $\gL_{n}$ and the binary search tree $\cT_{n-1}$;
the probability distributions are equal by induction because
the $n$ possible places to add a new node to $\gL_n$ correspond to the $n$
possible places (external leaves) to add a new node to $\cT_{n-1}$,
and these places have equal probabilities for both models.

Note that a left child in the binary search tree corresponds to an eldest
child in the random recursive tree, while a right child corresponds to a
sibling. 
We say that a proper subtree in a binary tree is left-rooted [right-rooted]
if its root is
a left [right] child. 
Thus, for  $1<k<n$, subtrees of size $k$ in the random recursive tree $\gL_n$, 
correspond to left-rooted subtrees of size $k-1$ in the binary search tree
$\cT_{n-1}$, while subtrees of size 1 (i.e., leaves) correspond to nodes
without left child. (Alternatively, we can say that subtrees of size 1 in the
recursive tree correspond to empty left subtrees in the binary tree.)

An example of a bijection obtained from the natural correspondence is
illustrated in Figures \ref{natural1}--\ref{natural2}. 
Note that the labels in the \rrt{} correspond to the time stamps in the
\bst{} (replaced by $1,2,\dots$ in increasing order), 
while the keys in the \bst{} are determined by the tree structure and
thus redundant.

\begin{rem}\label{Rinc}
  The \bst{} with its time stamps and the \rrt{} with its labels are both
  increasing trees, \ie, labelled trees where the label of a node is greater
  than the label of its parent.
We allow the labels in an increasing tree to be arbitrary real numbers, but
we are only interested in the order relations between them and consider two
increasing trees that are isomorphic as trees and with labels in the same
order to be the same; hence we may 
freely relabel  (preserving the order), for example by 1,2,\dots.

Note that the \bst{} yields a uniformly distributed
increasing binary tree, and the \rrt{} a uniformly distributed (unordered)
increasing tree,
see e.g.\ \cite[Sections 1.3--1.4]{Drmota}.
Note also that the natural correspondene extends to a bijection between
increasing binary trees and increasing ordered trees that have the children
of each node ordered according to their labels.
\end{rem}

\begin{figure}
\centering
\begin{minipage}{.45\textwidth}
  \centering
  \includegraphics[width=.75\linewidth]{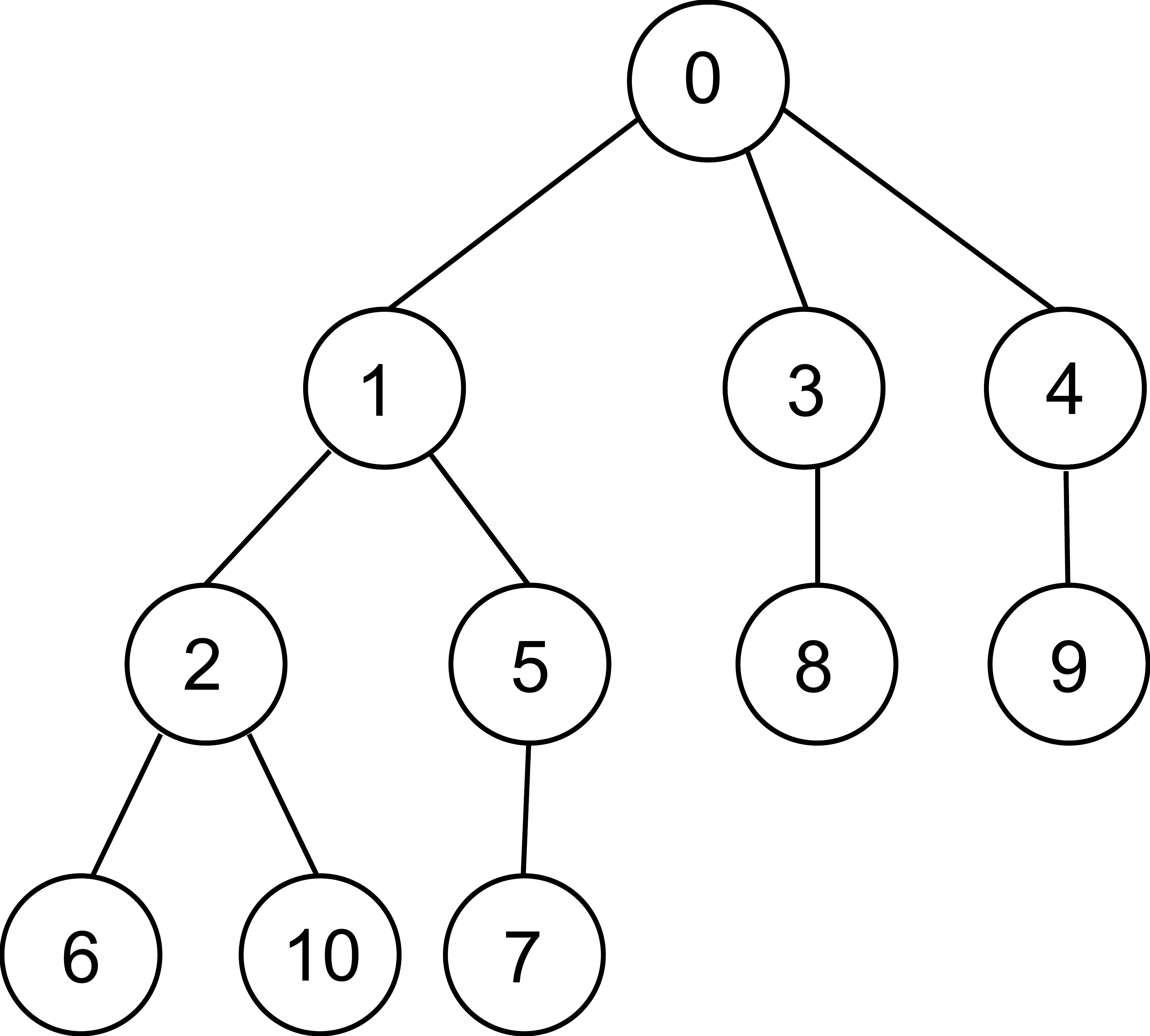}
  \caption{A recursive tree. The root has label $ 0 $ instead of 1, to
	better illustrate the bijection.} 
  \label{natural1}
\end{minipage}%
\hfill\hfill
\begin{minipage}{.45\textwidth}
  \centering
  \includegraphics[width=.7\linewidth]{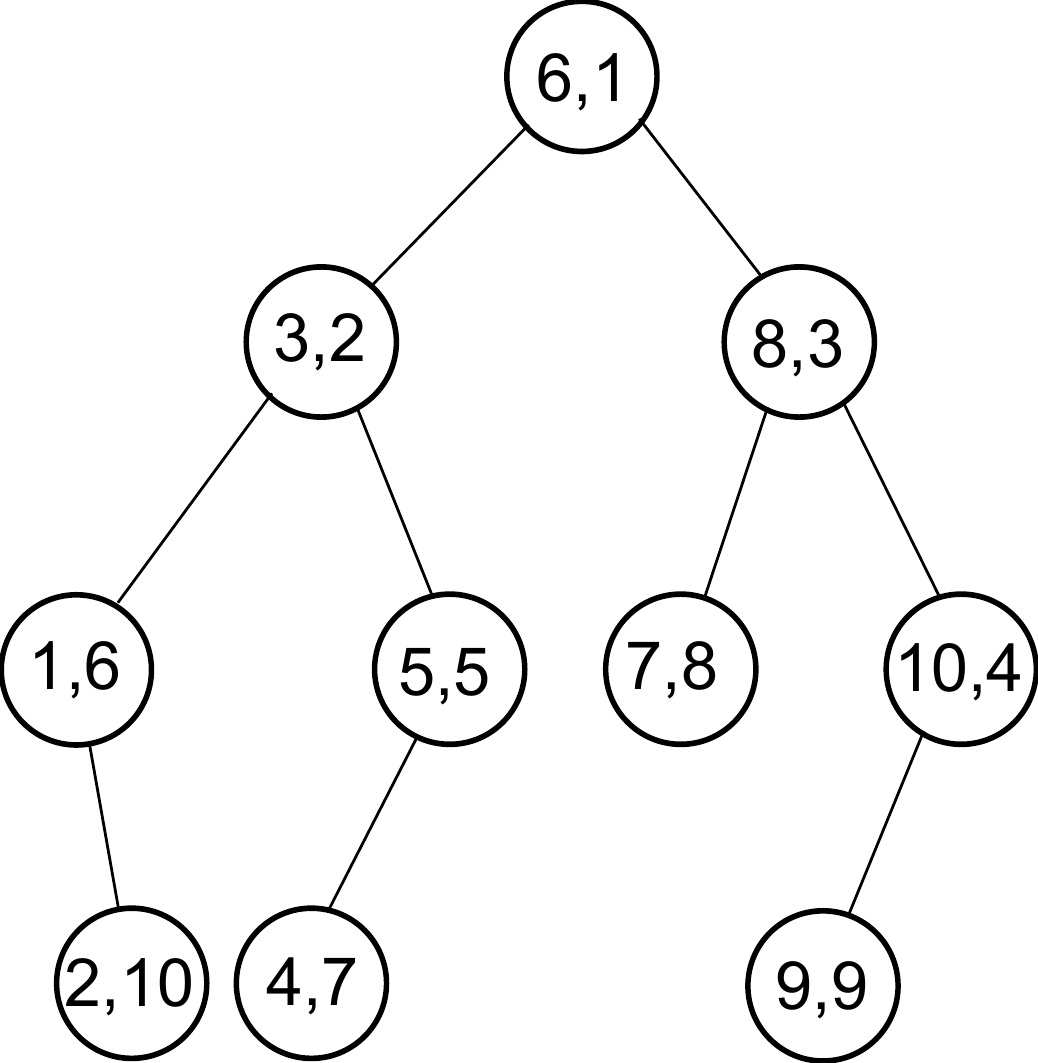}
  \caption{The corresponding binary search tree. 
The first and second labels are the keys and time stamps,
respectively, using the time stamps $1,\dots,10$ for convenience.} 
  \label{natural2}
\end{minipage}
\end{figure}

\begin{rem}\label{R0}
  We may consider a subtree of the \bst{} in two different ways; either we 
regard it as an (unlabelled) binary tree by ignoring the time stamps (and
keys), or we may regard it as an increasing binary tree by keeping the time
stamps (perhaps replacing them, in   order, by 1,2,\dots, see \refR{Rinc}).

Similarly, there are three different ways to look at a subtree of a \rrt:
as an increasing tree, as an
unlabelled ordered tree (ignoring the labels but keeping the order defined
by them), or as an unlabelled unordered rooted tree (by ignoring both labels
and ordering). 

The theorems and other results
below, and their proofs, apply (unless explicitly stated otherwise)
to all these interpretations.
(The different interpretations may be useful in different applications.)
For convenience, we state most results for the unlabelled versions (which
seem to be more common in applications), and leave the versions with
increasing trees to the reader.
Recall also that an ordered tree can be regarded as unordered by ignoring
the orderings.
\end{rem}

For simplicity, we consider 
first only the sizes of the fringe trees.
The results in the following two theorems, 
except the explicit rate in \eqref{Poissonbinary}--\eqref{Poissonrecursive},
were shown by \citet{FengMahmoudP08} and Fuchs \cite{Fuchs} by using variants of
the method of moments. 
\refT{main2} was earlier proved for fixed $k$ by 
\citet{Devroye1} (using the central limit theorem for $m$-dependent
variables), 
and the means \eqref{munk}--\eqref{murrt} are implicit in
\cite{Devroye1},
see also \cite{Devroye2} and \citet{FlajoletGM1997}.
(The corresponding, weaker, laws of large numbers were also given by
\citet{Aldous-fringe} by another method.)
The part \eqref{main2b} for binary search trees was extended to
$k=k_n$ (for a smaller range than here)
by Devroye
\cite{Devroye2} using Stein's method.
In the present paper we continue and extend
this approach, and use Stein's method for both 
Poisson and normal approximations  to provide simple proofs for the full range.

We state the main results in this section. Proofs are given in later
sections.
We let $ \mathcal{L} (X)$ denote the distribution of a random variable $ X $.
$\Po(\mu)$ denotes the Poisson distribution with mean $\mu$, and 
$\N(0,1)$ the standard normal distribution.
Convergence in distribution is denoted by $\dto$.
We recall also the definition of the total variation distance   between two
probability measures.  
\begin{defn}
Let $(\mathcal{X},\mathcal{A})$ be any measurable space. The total variation
distance $ d_{TV} $   between two probability measures $ \mu_{1} $ and $
\mu_{2} $ on $ \mathcal{X} $ is defined to be  
$$ 
d_{TV}(\mu_{1},\mu_{2}) :=\sup_{A\in \mathcal{A}}| \mu_{1}(A)-\mu_{2}(A)|.$$
\end{defn}

\begin{thm}\label{main1}
Let $ X_{n,k} $ be the number of subtrees of size $k$ in the random binary
search tree $ \cT_n $  and similarly let $ \hX_{n,k} $ be the number of
subtrees in the random recursive tree $ \Lambda_n $. Let  
$ k=k_n $ where $ k<n $.
Furthermore, let
$\mu_{n,k}:=\E(X_{n,k} )$ and $  \hmu_{n,k}:=\E(\hX_{n,k} )$. Then
\begin{align}
  \mu_{n,k}&:=\E(X_{n,k} ) =\frac{2(n+1)}{(k+1)(k+2)}, \label{munk}\\
  \hmu_{n,k}&:=\E(\hX_{n,k} ) =\frac{n}{k(k+1)}.  \label{murrt}
\end{align}
Then,  for the binary search tree,
\begin{align} \label{Poissonbinary} d_{TV} 
(\mathcal{L}(X_{n,k}),\Po(\mu_{n,k}))=\frac{1}{2}\sum_{l\geq 0}
\Bigl|\P(X_{n,k}=l)-e^{-\mu_{n,k}}\frac{(\mu_{n,k})^l}{l!}\Bigr|
=O\Bigpar{\frac1k},
\intertext{and for the random recursive tree,}  
\label{Poissonrecursive}
d_{TV}(\mathcal{L}(\hX_{n,k}),\Po(\hat{\mu}_{n,k}))
=\frac{1}{2}\sum_{l\geq 0}
\Bigl|\P(\hX_{n,k}=l)-e^{-\hat{\mu}_{n,k}}\frac{(\hat{\mu}_{n,k})^l}{l!}\Bigr|
=O\Bigpar{\frac1k}.
\end{align}

Consequently, if $ n\rightarrow\infty $ and
$ k\rightarrow\infty $, then 
$d_{TV}(\mathcal{L}({X}_{n,k}),\Po({\mu}_{n,k}))\to0$
and
$d_{TV}(\mathcal{L}(\hX_{n,k}),\Po(\hat{\mu}_{n,k}))\to0$
.
\end{thm}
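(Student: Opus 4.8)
The plan is to realize $X_{n,k}$ as a sum of indicators via Devroye's time-stamp representation and then apply the Chen--Stein method for Poisson approximation in its local-dependence form. Assume the keys are $1,\dots,n$ with i.i.d.\ uniform time stamps $U_1,\dots,U_n$, and set $U_0=U_{n+1}=+\infty$. For $0\le i\le n-k$ let $I_{i,k}$ be the indicator that the block of keys $\{i+1,\dots,i+k\}$ forms a fringe subtree of $\cT_n$, so that $X_{n,k}=\sum_{i=0}^{n-k}I_{i,k}$. In the treap interpretation of the representation (the key of smallest time stamp on any interval is the common ancestor of that interval), this block is a fringe subtree precisely when the two smallest of the $k+2$ values $U_i,\dots,U_{i+k+1}$ sit at the two endpoints $i$ and $i+k+1$. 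For an interior block this event has probability $\binom{k+2}{2}^{-1}=\frac{2}{(k+1)(k+2)}$, while each of the two boundary blocks (where only one endpoint is present) contributes $\frac{1}{k+1}$. Summing the $n-k-1$ interior terms and the two boundary terms gives \eqref{munk}; the analogous computation, now with the additional constraint that the root be a left child---equivalently that the larger of the two boundary time stamps sits at the right endpoint---yields \eqref{murrt} through the natural correspondence, under which subtrees of size $k$ in $\gln$ match left-rooted subtrees of size $k-1$ in $\cT_{n-1}$.

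For the Poisson bound I would take the dependency neighbourhood $N_i=\{j:\ |i-j|\le k+1\}$, which is a genuine independence neighbourhood since $I_{i,k}$ is a function of $U_i,\dots,U_{i+k+1}$ alone and these windows are disjoint once $|i-j|>k+1$. The local Chen--Stein inequality then gives
\begin{equation*}
d_{TV}\bigpar{\mathcal L(X_{n,k}),\Po(\mu_{n,k})}\le \min\bigpar{1,\mu_{n,k}^{-1}}\bigpar{b_1+b_2},
\end{equation*}
with $b_1=\sum_i p_i\sum_{j\in N_i}p_j$ and $b_2=\sum_i\sum_{j\in N_i\setminus\{i\}}\E[I_{i,k}I_{j,k}]$, where $p_i=\E I_{i,k}$; the third Stein term vanishes because $N_i$ is a true independence neighbourhood.

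The structural observation that makes $b_2$ harmless is that two fringe subtrees of the \emph{same} size cannot be nested, hence must be disjoint; consequently $I_{i,k}I_{j,k}=0$ whenever the blocks overlap, i.e.\ whenever $0<|i-j|\le k$. Thus the only surviving off-diagonal terms in $b_2$ are the adjacent pairs $|i-j|=k+1$, for which a short combinatorial estimate bounds $\E[I_{i,k}I_{j,k}]$ by a constant times $k^{-4}$. Since $p_i=\Theta(k^{-2})$, each neighbourhood sum $\sum_{j\in N_i}p_j=O(k^{-1})$ and $\mu_{n,k}=\Theta(nk^{-2})$, one finds $b_1=O(nk^{-3})$ and $b_2=O(nk^{-4})$, whence $\min(1,\mu_{n,k}^{-1})(b_1+b_2)=O(k^{-1})$, proving \eqref{Poissonbinary}. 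The recursive-tree estimate \eqref{Poissonrecursive} follows by the same argument applied to the left-rooted indicators (or via the cyclic representation of \refS{SScyclic}), and the final convergence statement is then immediate since both bounds tend to $0$ as $k\to\infty$.

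The main obstacle I anticipate is the careful bookkeeping of the boundary blocks and of the adjacent-pair correlations: verifying that the endpoint terms combine to the exact means in \eqref{munk}--\eqref{murrt}, and that the $|i-j|=k+1$ contributions to $b_2$ are genuinely of order $k^{-4}$ rather than $k^{-3}$, requires tracking the single shared time stamp $U_{i+k+1}$ in the two overlapping windows. Everything else is a routine application of the local Chen--Stein inequality once the disjointness of equal-size fringe subtrees has eliminated the bulk of the dependence.
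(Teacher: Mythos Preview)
Your approach is correct and yields the stated $O(1/k)$ bound, but it differs from the paper's in two respects worth noting. First, the paper works with the \emph{cyclic} representation of \refS{SScyclic}, which makes all indicators $I_{i,k}$ exchangeable and removes the boundary bookkeeping you flag as the main obstacle. Second, and more substantively, the paper does not use the Arratia--Goldstein--Gordon $b_1+b_2$ inequality but rather the monotone-coupling form of Stein's method from \cite{Janson} (\refT{Tstein} here): it constructs an explicit coupling (\refL{negativebinary}) showing that conditioning on $I_{i,k}=1$ can only \emph{decrease} $I_{j,k}$ except at the two adjacent positions $j=i\pm(k+1)$, and then applies the bound $d_{TV}\le(1\wedge\mu^{-1})\bigl(\mu-\Var(W)+2\sum_{\alpha}\sum_{\beta\in\cA_\alpha^0}\E(I_\alpha I_\beta)\bigr)$. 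Your route is arguably more elementary since it avoids building the coupling, while the paper's route exploits the near-negative-dependence structure more directly and extends cleanly to the pattern-counting generalization in \refT{poissontreepattern}.

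One correction: the adjacent-pair correlation $\E[I_{i,k}I_{i+k+1,k}]$ is $\Theta(k^{-3})$, not $O(k^{-4})$; the paper computes it exactly as $\frac{5k+3}{(k+1)^2(2k+1)(2k+3)}$ in \eqref{eikk}. This does not damage your argument, since $b_2=O(nk^{-3})$ still combines with $\min(1,\mu_{n,k}^{-1})=O(k^2/n)$ to give $O(k^{-1})$, but you should not expect the sharper order you claimed.
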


\begin{thm}\label{main2}Let $ X_{n,k} $ be the number of subtrees of size
  $k$ in the binary search tree $ \cT_n $ and similarly let $ \hX_{n,k} $ be
  the number of subtrees of size $k$ in the random recursive tree $
  \Lambda_n $. Let  
$ k=k_n=o(\sqrt{n})$. Then, as $n\to \infty$,
 for the binary search tree 
 \begin{align}
\frac{X_{n,k}-\E(X_{n,k})}{\sqrt{\Var(X_{n,k})}}
&\dto \mathcal{N}(0,1) \label{main2b}
\intertext{and, similarly, for the random recursive tree}    
\frac{\hX_{n,k}-\E(\hX_{n,k})}{\sqrt{\Var(\hX_{n,k})}}
&\dto\mathcal{N}(0,1).\label{main2r}
 \end{align}
 \end{thm}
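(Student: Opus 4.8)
The plan is to reduce both statements to a central limit theorem for a sum of locally dependent indicator variables and then to apply Stein's method for normal approximation. I would work with Devroye's time-stamp representation, and in fact with its cyclic form (\refS{SScyclic}) so as to obtain an exactly stationary structure. Labelling the keys $1,\dots,n$ in increasing order with i.i.d.\ uniform time stamps $U_1,\dots,U_n$ (and the boundary convention $U_0=U_{n+1}=-\infty$), the set of keys $\{a,a+1,\dots,a+k-1\}$ is the key-set of a fringe tree of size $k$ exactly when both neighbouring time stamps are smaller than every time stamp inside the block, i.e.\ when $\max(U_{a-1},U_{a+k})<\min_{a\le i\le a+k-1}U_i$. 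Writing $I_a$ for the indicator of this event, we have $X_{n,k}=\sum_a I_a$; each $I_a$ is a $\mathrm{Be}(p)$ variable with $p=\frac{2}{(k+1)(k+2)}$ depending only on the window $U_{a-1},\dots,U_{a+k}$ of $k+2$ time stamps, and $I_a,I_{a'}$ are independent as soon as the windows are disjoint, that is once $|a-a'|\ge k+2$. Thus $(I_a)$ is an $m$-dependent family with $m=O(k)$, and a short computation recovers the mean \eqref{munk}.

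The decisive structural observation is that two distinct fringe trees of the same size $k$ are disjoint as key-intervals; hence in any block of $O(k)$ consecutive positions at most a bounded number (in fact at most three) of the indicators $I_a$ can equal $1$ simultaneously. Consequently, if $A_a$ denotes the dependency neighbourhood $\{b:|a-b|\le k+1\}$, of size $O(k)$, and $\eta_a=\sum_{b\in A_a}I_b$, then $\eta_a\le 3$ deterministically. This is exactly what upgrades the accessible range from the crude $k=o(n^{1/6})$ (which is all one gets by inserting the neighbourhood size $D=O(k)$ into a generic dependency-neighbourhood bound, whose leading term is of order $D^2\sigma^{-3}\sum_a\E|\bar I_a|^3=O(k^3/\sqrt n)$) to the sharp $k=o(\sqrt n)$.

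Next I would record a variance estimate. The diagonal contributes $\sum_a\Var(I_a)\sim 2n/k^2$; the covariances $\Cov(I_a,I_{a'})$ vanish for $|a-a'|\ge k+2$, are negative and equal to $-p_ap_{a'}=O(k^{-4})$ on the $O(k)$ overlapping positions $|a-a'|<k$ (where $I_aI_{a'}=0$ by disjointness), and are $O(k^{-4})$ on the $O(1)$ boundary positions $|a-a'|\in\{k,k+1\}$. Summing shows the total covariance is of order $O(n/k^3)=o(n/k^2)$, so $\sigma^2:=\Var(X_{n,k})\asymp n/k^2$; in particular $\sigma^2\gtrsim n/k^2$, and since $k=o(\sqrt n)$ we get $\sigma^2\to\infty$. (For bounded $k$ one argues instead that the $m$-dependent long-run variance is strictly positive, giving $\sigma^2\asymp n$, which is all that is needed below.)

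Finally I would invoke a standard normal-approximation theorem for sums with local dependence (Stein's method, as in Chen--Goldstein--Shao). Applied to $W=(X_{n,k}-\E X_{n,k})/\sigma$ with the neighbourhoods above, its bound is a sum of terms of the shape $\sigma^{-3}\sum_a\E\bigl|\bar I_a\,\eta_a\,\tau_a\bigr|$ and $\sigma^{-3}\sum_a\E|\bar I_a\eta_a|\,\E|\tau_a|$, where $\bar I_a=I_a-p$ and $\tau_a$ is the corresponding sum over a slightly larger neighbourhood. All the local sums that occur (centred or not) are bounded by an absolute constant by the disjointness of paragraph two, while $\sum_a\E|\bar I_a|=2p(1-p)\,n=O(n/k^2)$; hence every such term is $O\Bigpar{\frac{n/k^2}{(n/k^2)^{3/2}}}=O\Bigpar{\frac{k}{\sqrt n}}\to 0$, which yields \eqref{main2b}. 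The recursive-tree statement \eqref{main2r} follows by the same argument through the natural correspondence: for $1<k<n$, $\hX_{n,k}$ counts the left-rooted fringe trees of size $k-1$ in $\cT_{n-1}$, and being left-rooted is the additional local condition $U_{a+k-1}<U_{a-1}$ on the same window, while size-$1$ subtrees correspond to nodes without left child; the indicators are again $O(k)$-locally dependent with pairwise disjoint supports, so the identical computation gives $O(k/\sqrt n)\to 0$. The one genuinely delicate point throughout is the interplay between the variance lower bound $\sigma^2\gtrsim n/k^2$ and the constant bound on the local sums $\eta_a,\tau_a$: it is precisely their combination that produces the sharp exponent, whereas treating the problem as generic $O(k)$-dependence would be far too lossy.
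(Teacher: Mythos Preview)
Your proposal is correct and takes a genuinely different route from the paper's proof. The paper does not attempt a unified Stein argument across the whole range $k=o(\sqrt n)$; instead it splits by subsequences into the cases $k\to\infty$ and $k=O(1)$. For $k\to\infty$ it simply quotes \refT{main1}: since $d_{TV}(\cL(X_{n,k}),\Po(\mu_{n,k}))\to 0$ and $\mu_{n,k}\to\infty$, the normal limit follows from the classical CLT for Poisson laws. For $k=O(1)$ it applies the cruder dependency-graph bound of \refL{jansonnormal} with $M_n=\E X_{n,k}$ and $Q_n=O(k)$, which gives $M_nQ_n^2/\sigma_n^3=O(n/\sigma_n^3)$ and hence directly only the range $k=o(n^{1/6})$---enough for bounded $k$, but no more. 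Your route instead exploits the geometric fact that at most $O(1)$ of the indicators $I_b$ in any $O(k)$-window can equal $1$ simultaneously (and that the centring contribution $|A_a|\,p=O(1/k)$ is negligible), so the local sums $\eta_a,\tau_a$ in a Chen--Goldstein--Shao local-dependence bound are $O(1)$ rather than $O(k)$; this collapses the error to $O(k/\sqrt n)$ and covers the full range in one stroke, incidentally recovering the Kolmogorov rate of Fuchs that the paper mentions in a remark as not obtained by its own argument. The trade-off is that the paper's approach reuses the Poisson approximation it has already established and needs only the very elementary \refL{jansonnormal}, whereas yours requires the somewhat more refined local-dependence version of Stein's method.
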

 
 \begin{rem}
   If $k/\sqrt n\to\infty$, then $\mu\nk,\hmu\nk\to0$, and the convergence
   result in \refT{main1} reduces to the trivial 
$X_{n,k}\pto0$ and 
$\hX_{n,k}\pto0$; the rate of convergence in
\eqref{Poissonbinary}--\eqref{Poissonrecursive} is still of interest.
\citet{DennertGrubel} considered instead the sum $\sum_{k\ge (1-t)n}
X_{n,k}$ and obtained a functional central limit theorem.

If $k/\sqrt n\to c\in(0,\infty)$, then $\mu\nk\to 2c^{-2}$ and $\hmu\nk\to
c^{-2}$;   and we obtain the Poisson distribution limits 
$X_{n,k}\dto\Po(2c^{-2})$ and 
$\hX_{n,k}\dto\Po(c^{-2})$
\cite{FengMahmoudP08,Fuchs}.
 \end{rem}

 \begin{rem}
The proofs yield immediately, using the classical Berry--Esseen estimate for
Poisson  distributions,  also the estimate 
$O(k/\sqrt n +1/k)$
of the convergence rates in \eqref{main2b}
   and \eqref{main2r}
for the Kolmogorov distance; 
however, for slowly growing $k$ this is inferior to
   the bound $O(k/\sqrt n)$  given by Fuchs
   \cite{Fuchs}. (We have not investigated whether  
this bound can be shown by a more careful application of Stein's method.)
Other distances might also be studied,
   but we have not done so.
 \end{rem}

It is also relevant to study  trees of a fixed size with certain
properties. For example in evolutionary biology, it is important to study
such \emph{tree patterns} in phylogenetic trees. 
A \emph{phylogenetic tree}, more precisely a \emph{cladogram}, is a
(non-ordered) tree where every node has outdegree 2 (internal nodes) or 0
(external nodes). A binary tree of size $n$ 
yields a phylogenetic tree with $n$ internal nodes by adding $n+1$ external
nodes. 
An important model for a random phylogenetic tree is the \emph{Yule model}, 
which gives the same distribution as the correspondence just described 
applied to a random binary search tree,
see \eg{} \cite{Aldous-cladograms} and \cite{Blum}.  
Hence, fringe trees in  random phylogenetic trees under the Yule model
correspond to fringe trees in random binary search trees, and our results
can be translated.
(Note that the size of a phylogenetic tree usually is defined as the number
of external nodes; a phylogenetic tree of size $k$ thus corresponds to a
binary search tree with $k-1$ nodes.)
Some important examples of tree patterns that have been studied are
\emph{k-pronged nodes} (trees of size $ k $),
\emph{k-caterpillars} (trees of size $ k $ such that the internal nodes form
a path) and  \emph{minimal clade size k} (trees of size $ k $ with either
left or right subtree of the root empty); see e.g., \cite{Chang} and the
references there.

\citet{Chang} studied fringe trees in random phylogenetic trees.
The following theorem (the binary search tree case)
improves the convergence rate in their Theorem 9
and yields in a simple way the rate stated in their Remark 1.
By a property $P$, 
in the binary tree case
we formally mean any set of binary trees; 
we let $P_k$ be the set of binary trees of size $k$ in $P$ and 
we let $\pkp:=\P(\cT_k\in P)$.
Similarly, in the random recursive tree case,
a property $P$ is any set of ordered trees,
and $\prkp:=\P(\gL_k\in P)$.
(As said in \refR{R0}, we may also, more generally, let $P$ be a set of
increasing binary or unordered trees.) 

\begin{rem}\label{Rpkp}
If $ A_k ^{P}$ is the set of permutations of length $ k $ that give rise
to binary search trees of size $ k $ with the property $ P $, then 
$p_{k,P}=\xfrac{| A_{k} ^{P}| }{k!}$. In particular, if $P_k$ is nonempty,
then $1\ge p_{k,P} \ge 1/k!$.
Similarly, 
$1\ge \hat p_{k,P} \ge 1/(k-1)!$ unless 
$\hat p_{k,P}=0$.
\end{rem}

\begin{thm}\label{poissontreepattern}
Let $ X_{n,k}^{P} $ be the number of subtrees of size $k$ with some given
property $ P $ in the binary search tree $ \cT_n $, and similarly let $
\hX_{n,k}^{P}  $ be the number of subtrees of size $k$ with some
given property $ P $ in the random recursive tree $ \Lambda_n $.  Let $
p_{k,P} $ be the probability that a binary search tree of size $ k $ has
property $ P $, and similarly let $ \hat{p}_{k,P} $ be the probability that
a random recursive tree of size $ k $ has property $ P $.
Let $ k=k_n $ where $ k<n $.
Furthermore, let 
$\mu_{n,k}^P:=\E(X_{n,k}^P)$ and $  \hmu_{n,k}^P:=\E(\hX_{n,k}^P)$. Then
\begin{align}
  \mu_{n,k}^{P}&:=\E(X_{n,k}^{P} ) =\frac{2(n+1)p_{k,P}}{(k+1)(k+2)}, 
\label{ptp}\\
  \hmu_{n,k}^{P}&:=\E(\hX_{n,k}^{P} ) = \frac{n\hat{p}_{k,P}}{k(k+1)}. 
\label{ptpr}
\end{align}
Then,  for the binary search tree, if $ k\neq (n-1)/2 $,
\begin{equation} 
  \begin{split}
d_{TV}(\mathcal{L}(X_{n,k}^{P}),\Po(\mu_{n,k}^{P}))
&=\frac{1}{2}\sum_{l\geq 0}
\Bigl|\P(X_{n,k}^{P}=l)-e^{-\mu_{n,k}^{P}}\frac{(\mu_{n,k}^{P})^l}{l!}\Bigr| 
\\&=
\begin{cases}
O\Bigpar{\frac{p_{k,P}}{k}} & \text{if}\quad  \mu_{n,k}^{P}\geq 1 
\\
O\Bigpar{\frac{p_{k,P}}{k}\cdot \mu_{n,k}^{P}} & \text{if}\quad
		\mu_{n,k}^{P} < 1,
\end{cases}
  \end{split}
\end{equation}
and if $ k=(n-1)/2 $,
\begin{equation}\label{specialpoissonpatternbinary}
d_{TV}(\mathcal{L}(X_{n,k}^{P}),\Po(\mu_{n,k}^{P}))=O\Bigpar{\frac{p_{k,P}^{2}}{k}}.
\end{equation}
For the random recursive tree,
  \begin{equation}
	\begin{split}
d_{TV}(\mathcal{L}(\hX_{n,k}^{P}),\Po(\hat{\mu}_{n,k}^{P}))
&=\frac{1}{2}\sum_{l\geq 0}
\Bigl|\P(\hX_{n,k}^{P}=l)-e^{-\hat{\mu}_{n,k}^{P}}\frac{(\hat{\mu}_{n,k}^{P})^l}{l!}\Bigr|
\\&=
\begin{cases}
 O\Bigpar{\frac{\hat{p}_{k,P}}{k}} & \text{if}\quad  \hat{\mu}_{n,k}^{P}\geq 1 
\\
O\Bigpar{\frac{\hat{p}_{k,P}}{k}\cdot \hat{\mu}_{n,k}^{P}} &
		\text{if}\quad \hat{\mu}_{n,k}^{P} < 1.  
\end{cases}	  
	\end{split}
  \end{equation}

Consequently, if $ n\rightarrow\infty $ and
$ k\rightarrow\infty $ then
$d_{TV}(\mathcal{L}({X}_{n,k}^{P}),\Po({\mu}_{n,k}^{P}))\to0$ and similarly
$d_{TV}(\mathcal{L}(\hX_{n,k}^{P}),\Po(\hat{\mu}_{n,k}^{P}))\to0$
.
\end{thm}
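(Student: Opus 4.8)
The plan is to prove \refT{poissontreepattern} by the Poisson version of Stein's method applied to a sum of dependent indicators, just as for \refT{main1}, but now carrying the extra factor $\pkp$ (resp.\ $\prkp$). I would treat the \bst{} first. Using Devroye's time-stamp representation in the cyclic form of \refS{SScyclic}, write $X_{n,k}^{P}=\sum_{l}I_{l}$, where $I_{l}$ indicates that the block of keys $\{l,\dots,l+k-1\}$ is a fringe subtree of size $k$ whose shape has property $P$. The combinatorial heart is that a contiguous block of $k$ keys forms a fringe subtree precisely when the two framing keys bordering it (those that exist) both carry smaller time stamps than every key inside the block; equivalently, among the at most $k+2$ relevant time stamps the two smallest occupy the two frame positions. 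Conditioned on this event the internal time stamps remain in uniform random order, so the subtree is a uniform \bst{} of size $k$ and hence has property $P$ with probability exactly $\pkp$, independently of the size event. Thus $\mu_{n,k}^{P}=\pkp\,\mu_{n,k}$, and together with the elementary count $\mu_{n,k}=2(n+1)/((k+1)(k+2))$ this gives \eqref{ptp}; \eqref{ptpr} is identical.

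For the approximation itself I would record three structural facts: (i) two distinct blocks that overlap cannot both be fringe subtrees, since fringe subtrees of equal size are either disjoint or equal; (ii) two adjacent blocks cannot both be fringe subtrees, because the two keys straddling their common border would each be forced to carry the smaller time stamp, which is impossible; and (iii) $I_{l}$ depends only on the $k+2$ time stamps in the window around its block, so, writing $B_{l}$ for the set of blocks whose window meets that of $l$, the indicator $I_{l}$ is independent of $\{I_{j}:j\notin B_{l}\}$. Fact (iii) kills the long-range Stein term, while (i)--(ii) leave as genuinely correlated only pairs of blocks separated by a single key, of which there are $O(1)$ per block. I would realise the coupling by conditioning one block to be a fringe subtree with property $P$ (equivalently, planting a uniform size-$k$ tree with property $P$ there) and tracking how the status of the $O(k)$ blocks in $B_{l}$ changes; the resulting error is captured by the interaction terms $b_{1}=\sum_{l}\sum_{j\in B_{l}}p_{l}p_{j}$ and $b_{2}=\sum_{l}\sum_{j\in B_{l},\,j\ne l}\E[I_{l}I_{j}]$. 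Since $p_{l}=O(\pkp/k^{2})$, $|B_{l}|=O(k)$, and each correlated pair contributes $O(\pkp^{2}/k^{3})$ (a factor $\Theta(k)$ above the product $p_{l}p_{j}$, because its shared frame key must beat both blocks), both $b_{1}$ and $b_{2}$ are $O(n\pkp^{2}/k^{3})$. Multiplying by the Stein factor $\min(1,1/\mu_{n,k}^{P})$ and using $\mu_{n,k}^{P}=\Theta(n\pkp/k^{2})$ then gives $O(\pkp/k)$ when $\mu_{n,k}^{P}\ge1$ and $O(\pkp\,\mu_{n,k}^{P}/k)$ when $\mu_{n,k}^{P}<1$.

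The delicate point, and the source of the special case, is the boundary: the two extreme blocks $\{1,\dots,k\}$ and $\{n-k+1,\dots,n\}$ have only one frame key, so fact (ii) does not constrain them. When $k=(n-1)/2$ these two blocks share the single middle key $k+1$ as their only frame, and they are simultaneously fringe subtrees exactly when $k+1$ carries the globally smallest time stamp, an event of probability $1/n=\Theta(1/k)$; with property $P$ this one pair contributes $\Theta(\pkp^{2}/k)$ to $b_{2}$. Here $\mu_{n,k}^{P}=\Theta(\pkp/k)<1$, so the Stein factor is $1$ and this single term dominates $b_{1}=O(\pkp^{2}/k^{2})$, producing the weaker rate \eqref{specialpoissonpatternbinary}. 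I would then verify that no comparably cheap pair occurs for other $k$: if $n>2k+1$ the two extreme blocks have disjoint windows, and any interior pair separated by one key carries two further frame constraints that bring its contribution back to $O(\pkp^{2}/k^{3})$.

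Finally, for the \rrt{} I would pass through the natural correspondence of \refR{Rinc}, under which a size-$k$ subtree of $\gL_{n}$ corresponds to a left-rooted size-$(k-1)$ subtree of $\cT_{n-1}$ (and a size-$1$ subtree to a node with empty left subtree), the property $P$ on ordered trees translating into the matching property on binary trees; equivalently, one may run the indicator/coupling argument directly on the cyclic representation of \refS{SScyclic}. This reproduces $\hmu_{n,k}^{P}=\prkp\,n/(k(k+1))$ and the same two-case bound for $\hX_{n,k}^{P}$. No exceptional case arises here because the root of $\gL_{n}$ is an ancestor of every node, so there is no pair of one-sided boundary blocks that can become complementary fringe subtrees at cost of order $1/k$; the degeneracy responsible for \eqref{specialpoissonpatternbinary} has no recursive-tree analogue.
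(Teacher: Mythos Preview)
Your proposal is correct and follows the same overall strategy as the paper --- Stein's method applied to the sum of indicators $\tilde I_{i,k}^{P}:=I_{i,k}I_{i,k}^{P}$ in Devroye's representation --- but the specific Stein bound you invoke is different. The paper does \emph{not} use the local $b_1,b_2$ dependency-neighbourhood bound; instead it constructs, for each $i$, an explicit monotone coupling $(W_{ji}^{k})_{j}$ with $\cL(W_{ji}^{k})=\cL(\tilde I_{j,k}^{P}\mid \tilde I_{i,k}^{P}=1)$ (\refL{negativebinaryP}), verifies $W_{ji}^{k}\le \tilde I_{j,k}^{P}$ except at the two positions $j=i\pm(k+1)$, and then applies the coupling bound of \refT{Tstein}, which reads $(1\wedge\lambda^{-1})\bigl(\lambda-\Var(W)+2\sum_{\alpha}\sum_{\beta\in\cA_{\alpha}^{0}}\E[I_\alpha I_\beta]\bigr)$. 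The quantity $\lambda-\Var(W)$ is taken directly from the exact variance formula of \refL{lemma1P}, and the remaining cross term uses the closed forms \eqref{eikk}--\eqref{eikk=}. Your route avoids constructing the coupling and is therefore a little more elementary; the paper's route explains \emph{why} the bound should be sharp (the coupling is monotone off two sites) and recycles the variance computation already done.

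One expository wrinkle: you announce the cyclic representation but then argue the special case $k=(n-1)/2$ in linear terms (``two extreme blocks with one frame key''). In the cyclic picture there are no boundary blocks; the mechanism is rather that the two neighbours at distance $k+1$ coincide modulo $n+1$, and the shared frame keys make $\E(I_{0,k}I_{k+1,k})=2/(n(n+1))=\Theta(k^{-2})$ instead of $\Theta(k^{-3})$ (this is exactly \eqref{eikk=}). Either picture gives $b_{2}=\Theta(\pkp^{2}/k)$ and hence \eqref{specialpoissonpatternbinary}, so the mathematics survives, but you should pick one representation and stay in it. Your explanation for the absence of an exceptional case in the \rrt{} is essentially right: when $k=n/2$ two disjoint fringe subtrees of size $k$ would have to cover all $n$ nodes yet both exclude the root, which is impossible; in the paper's cyclic language this is the observation (in the proof of \refL{lemma1rrt}) that $\yl_{0,k-1}\yl_{k,k-1}=0$.
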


Note that this theorem extends \refT{main1}, which is the case when $P$ is
the set of all trees.

\begin{rem}  \label{RPnorm}
\refT{poissontreepattern} 
implies asymptotic normality in all cases when $k\to\infty$ and
$\mu^P_{n,k}\to\infty$ or $\hat\mu^P_{n,k}\to\infty$. 
Asymptotic normality holds for $k=O(1)$ too, see 
Examples \ref{Epattern} and \ref{EPk} 
below.   
For the binary tree, the asymptotic normality in these cases 
was proved by 
\citet[Theorem 1]{Devroye1} ($k$ fixed) and 
\cite[Theorem 5]{Devroye2}
(at least for $k=o(\log n/\log\log n)$ which implies that
$\mu^P_{n,k}\to\infty$ for every $P$ unless $P_k$ is empty).
\end{rem}

So far we have considered subtrees of one size $k=k_n$ only, but we have
allowed the size to depend on $n$. In the remainder of this section we
consider subtrees of different sizes together, 
giving result on joint asymptotic normality for several sizes and
properties; however, we do not allow the sizes to depend on $n$.

An important example of $ X_{n,k}^{P} $ is the number of subtrees 
of the binary search tree $ \cT_n $
that are copies of a fixed binary tree $ T $,
which we
denote by $X^{T}_{n}$; similarly we denote by $\hX^{\Lambda}_{n}$ the
number of copies of an ordered (or unordered)
tree $ \Lambda $ in the random recursive tree 
$\Lambda_n $.  
\refT{multivariate} below shows that these numbers are asymptotically
normal, and moreover, jointly so for different trees $T$ or $\gL$.
(For a single binary tree ${T}$ this was shown by
\citet{Devroye1,Devroye2}, 
and by another method by \citet{FlajoletGM1997}; 
for a single unordered tree ${\Lambda}$ this was shown by \citet{FengMahmoud}.)
Before stating the theorem, we give (exact) expressions for the covariances
between these numbers. 
(The variances, \ie{} the special case $T=T'$, in the binary case were
found by \cite{FlajoletGM1997}.)
As said in \refR{R0}, these results hold also if $T$ or $\gL$ is a given
increasing tree, and for the \rrt{} we may let $\gL$ be either an ordered or
unordered tree; the results are valid for all cases, but note that \eg{}
$\hp_{k,\gL}$ and $\hq_{\gL'}^{\gL}$ depend on the version. 
(In particular, for increasing trees $T$ and $\gL$, $p_{k,T}=1/k!$ and
$\hp_{k,\gL}=1/(k-1)!$.)
\begin{thm}\label{Tcovbin} 
Let $T$ be a binary tree of size $ k $ and let $ T' $ be a binary tree of
size $m$ where $ m\leq k $.  
Let 
$ \pkt:=\P(\cT_k=T) $ and $\pmtx:=\P(\cT_m=T')$,
and let
$\qtt$
be the number of  subtrees of\/ $ T $ that are
copies of\/ $ T' $; further, let
\begin{multline}\label{gamma}
  \bgam(k,m):=
\frac{4(k+m+3)}{(k+1)(k+2)(m+1)(m+2)}
\\{}- \frac {4({k}^{2}+3 km+{m}^{2}+4 k+4 m+3)}
{ \left( k+1 \right) \left( m+1 \right)  
\left( k+m+1 \right) \left( k+m+2 \right) \left( k+m+3 \right) }
.
\end{multline}
If\/ $n>k+m+1$, then
the covariance between $X^{T}_{n}$ and $ X^{T'}_{n} $ is equal to
\begin{align}\label{covbin}
\Cov(X^{T}_{n},X^{T'}_{n})&=
(n+1)\gstt,
\end{align} 
where
\begin{equation}\label{covbin2}
  \gstt:=\frac{2}{(k+1)(k+2)}\qtt\pkt
-\bgam(k,m)\pkt\pmtx.
\end{equation}
\end{thm}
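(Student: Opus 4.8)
The plan is to write each count as a sum of indicators over intervals of consecutive keys and then expand the covariance as a double sum. Using the Devroye representation, label the keys $1,\dots,n$ with \iid{} uniform time stamps, and recall that a set of keys is the descendant set of some node exactly when it is an interval $I=\{a,a+1,\dots,b\}$ of consecutive keys; moreover such an $I$ is a fringe subtree precisely when each existing neighbour $a-1,b+1$ carries a smaller time stamp than every key of $I$. Writing $E_I^T$ for the event ``$I$ is a fringe subtree that is a copy of $T$'', we then have $X^T_n=\sum_{|I|=k}\mathbf 1[E_I^T]$ and likewise $X^{T'}_n=\sum_{|J|=m}\mathbf 1[E_J^{T'}]$. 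Since the relative arrival order inside $I$ is uniform and independent of the boundary condition, $\P(E_I^T)=\tfrac{2}{(k+1)(k+2)}\pkt$ for an interior interval (and $\tfrac1{k+1}\pkt$ at the two ends), and summing these re-derives \eqref{ptp}. I would phrase all of this through the cyclic representation of \refS{SScyclic}, which removes the two exceptional boundary interval types and makes the configuration count exactly translation invariant over $n+1$ positions; this is the source of the prefactor $n+1$.

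Next I expand $\Cov(X^T_n,X^{T'}_n)=\sum_{I,J}\bigl[\P(E_I^T\cap E_J^{T'})-\P(E_I^T)\P(E_J^{T'})\bigr]$. Because $E_I^T$ depends only on the time stamps of $I$ together with its two neighbours, the events $E_I^T$ and $E_J^{T'}$ are independent, and the summand vanishes, whenever the closures $I^+=I\cup\{a-1,b+1\}$ and $J^+$ are disjoint. Only pairs with $I^+\cap J^+\neq\emptyset$ contribute, and these split into three families: (i) nested, $J\subseteq I$; (ii) crossing, where $I$ and $J$ overlap with neither containing the other; (iii) near-disjoint, where $I$ and $J$ are disjoint but adjacent or separated by a single key so that their closures still meet. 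For a crossing pair the node sets of two fringe subtrees would be neither nested nor disjoint, which is impossible, so $\P(E_I^T\cap E_J^{T'})=0$ there. In each family the number of contributing pairs is $O(n)$ with all size-dependence bounded (using $m\le k$), so the hypothesis $n>k+m+1$ guarantees that every local configuration is realised with its full, position-independent multiplicity and the total is exactly linear in $n$.

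For the nested family, conditioning on $E_I^T$ fixes the shape on $I$, which then contains exactly $\qtt$ copies of $T'$; hence $\sum_{J\subseteq I}\P(E_I^T\cap E_J^{T'})=\qtt\,\P(E_I^T)$, and summing over $I$ gives $\qtt\,\mu_{n,k}^T=(n+1)\tfrac{2}{(k+1)(k+2)}\qtt\pkt$, which is precisely the first term of $(n+1)\gstt$. Everything else — the product subtractions over nested and crossing pairs and the joint-minus-product terms over near-disjoint pairs — is, by independence of the in-interval shapes, of the form (combinatorial arrival-order factor)$\,\times\pkt\pmtx$, which is why the remainder is proportional to $\pkt\pmtx$. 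I would compute these factors by counting linear orders of the at most $k+m+3$ relevant time stamps: the product subtractions over the overlapping positions supply the first term of $\bgam(k,m)$, whose numerator $4(k+m+3)$ reflects the two fringe-probabilities $\tfrac{2}{(k+1)(k+2)},\tfrac{2}{(m+1)(m+2)}$ and the number of overlapping positions, while the adjacency joint probabilities — orderings of the $k+m$ interior keys together with the two or three shared boundary keys — produce the second term, with its denominator $(k+1)(m+1)(k+m+1)(k+m+2)(k+m+3)$.

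The main obstacle is the exact bookkeeping in the last step: one must enumerate all near-disjoint and crossing configurations (left versus right adjacency, gap zero versus gap one, and which boundary keys are shared between $I^+$ and $J^+$), evaluate the corresponding arrival-order probabilities as ratios of factorials, and then verify that the signed sum of all these combinatorial factors collapses to exactly $-\bgam(k,m)$ as written in \eqref{gamma}. The cyclic representation is what keeps this tractable, since it replaces the asymmetric boundary intervals by one translation-invariant family and pins the overall multiplicity at $n+1$; the remaining work is the lengthy but routine factorial algebra needed to match the two explicit rational functions of $k$ and $m$.
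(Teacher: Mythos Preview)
Your proposal is correct and follows essentially the same route as the paper. The paper organizes the computation through a general lemma (\refL{Lkm}\ref{lkmii}) for arbitrary functionals $f,g$ supported on sizes $k,m$ and then specializes to indicators, but the underlying argument is exactly your cyclic-representation expansion with the nested/crossing/adjacent case split; the adjacency joint probability is computed there via an integral in $u=U_{i+k}$ (your combinatorial ordering count is noted as an equivalent alternative), and the nested sum is identified with $\E\bigl(f(\cT_k)G(\cT_k)\bigr)$, which for indicators is $\qtt\pkt$.

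One small bookkeeping correction to your sketch: in your ``near-disjoint'' family, the truly adjacent case (gap zero, closures sharing two points) still forces $\P(E_I^T\cap E_J^{T'})=0$ by the same order contradiction as in crossing; only the gap-one configuration (closures sharing exactly the single separating key) gives a nonzero joint probability. So the positive contribution from family~(iii) comes from exactly two translates (left and right), matching the two terms $\E(I_{0,k}I_{k+1,m})$ and $\E(I_{0,k}I_{-m-1,m})$ in the paper; the product subtractions then run over the remaining $k+m+3$ overlapping positions, yielding the first summand of $\bgam(k,m)$.
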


We note also the corresponding result for $X_{n,k}$, combining all subtrees
of the same size. 
(The variances $\gsxx_{k,k}$ are given by \citet{FengMahmoudP08}, as well as
higher moments, and the
covariances are given by \citet{DennertGrubel}.)

\begin{thm}[\citet{DennertGrubel}]\label{Tcovbin2} 
Let $k,m\ge1$ and suppose $n>k+m+1$.
The covariance between $X_{n,k }$ and $ X_{n,m} $ 
is equal to
\begin{align}\label{covksubtrees}&
\Cov(X_{n,k},X_{n,m})=
(n+1)\gskm
\end{align}
where
$\gskm=\gsxx_{m,k}$ and
\begin{align}\label{covb2a}
\gskm &=
-{\frac {4m \left( 2k+m+3 \right) }
{(k+1) \left( k+2 \right)  \left( k+m+1 \right)  \left( k+m+2 \right)
  \left( k+m+3 \right) }},
\qquad m<k,
\\
\gsxx_{k,k} &=
{\frac {2k \left( 4{k}^{2}+5k-3 \right)}
{ \left( k+1 \right) \left( k+2 \right) ^{2} \left( 2k+1 \right)
 \left( 2k+3 \right) }}.
\label{covb2b}
\end{align}
\end{thm}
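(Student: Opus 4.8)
The plan is to derive \refT{Tcovbin2} from \refT{Tcovbin} by summing over tree shapes. Since every fringe subtree of size $k$ is a copy of exactly one binary tree $T$ of size $k$, we have $X_{n,k}=\sum_{|T|=k}X^{T}_{n}$ and likewise $X_{n,m}=\sum_{|T'|=m}X^{T'}_{n}$, where the sums range over all binary trees of the indicated size. By symmetry of the covariance (and of the claimed answer, $\gskm=\gsxx_{m,k}$) I may assume $m\le k$, so that \refT{Tcovbin} applies to every pair $(T,T')$ occurring below. Bilinearity of covariance then gives, for $n>k+m+1$,
\begin{equation*}
\Cov(X_{n,k},X_{n,m})
=\sum_{|T|=k}\sum_{|T'|=m}\Cov(X^{T}_{n},X^{T'}_{n})
=(n+1)\sum_{|T|=k}\sum_{|T'|=m}\gstt ,
\end{equation*}
with $\gstt$ as in \eqref{covbin2}; in particular the factor $(n+1)$ comes out automatically, matching \eqref{covksubtrees}.

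Next I would evaluate the two resulting double sums separately using the expression \eqref{covbin2} for $\gstt$. The term $-\bgam(k,m)\pkt\pmtx$ factorizes, and since $\sum_{|T|=k}\pkt=\sum_{|T'|=m}\pmtx=1$ (each being a total probability), it contributes exactly $-\bgam(k,m)$. For the term $\tfrac{2}{(k+1)(k+2)}\qtt\pkt$, I first sum over $T'$: for fixed $T$, $\sum_{|T'|=m}\qtt$ is the total number of fringe subtrees of $T$ of size $m$, so summing this against $\pkt$ over $T$ yields the expected number of size-$m$ subtrees in a random binary search tree of size $k$, namely $\E(X_{k,m})$. Hence
\begin{equation*}
\gskm=\frac{2}{(k+1)(k+2)}\,\E(X_{k,m})-\bgam(k,m).
\end{equation*}

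It remains to insert $\E(X_{k,m})$ and simplify. For $m<k$ I would use \eqref{munk} with $(n,k)$ replaced by $(k,m)$, giving $\E(X_{k,m})=\tfrac{2(k+1)}{(m+1)(m+2)}$; for the diagonal case $m=k$ the only size-$k$ subtree of a tree of size $k$ is the whole tree, so $\E(X_{k,k})=1$, which is why the two formulas \eqref{covb2a} and \eqref{covb2b} differ. Substituting the explicit $\bgam(k,m)$ from \eqref{gamma} reduces everything to elementary rational-function algebra, and this final simplification is the only real work and the step most prone to error. For $m<k$, combining the three rational terms over the common denominator $(k+1)(k+2)(m+1)(k+m+1)(k+m+2)(k+m+3)$, the numerator collapses (the cubic and several lower-order terms in $k,m$ cancel) to $-4m(m+1)(2k+m+3)$; cancelling the factor $(m+1)$ then yields \eqref{covb2a}. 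The analogous computation with $\E(X_{k,k})=1$ and $\bgam(k,k)$ gives \eqref{covb2b}. I expect no conceptual obstacle beyond carrying this polynomial arithmetic carefully.
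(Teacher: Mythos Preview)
Your proof is correct and, amusingly, is precisely the route the paper mentions in its first sentence (``In principle, this follows from \refT{Tcovbin} by summing over all trees of sizes $k$ and $m$'') before opting instead to apply \refL{Lkm}\ref{lkmii} directly with $f(T)=\ett{|T|=k}$ and $g(T)=\ett{|T|=m}$. Both routes land on the identical intermediate identity
\[
\gskm=\frac{2}{(k+1)(k+2)}\,\E(X_{k,m})-\bgam(k,m),
\]
after which the paper's proof and yours coincide verbatim; the paper's path is marginally shorter only because \refL{Lkm}\ref{lkmii} already packages the sum $\sum_{|T'|=m}\qtt$ as $G(\cT_k)=X_{k,m}$, whereas you recover that identification by hand.
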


For the random recursive tree we have similar results.
 
\begin{thm}\label{Tcovrec} 
Let $\Lambda$ be an ordered [or unordered] tree of size $ k$, and let $
\Lambda' $ be an ordered  [or unordered] tree of size $m$ where $ m\leq k $.  

Let 
$ \hat{p}_{k,\Lambda}:=\P(\Lambda_k= \Lambda) $ and $\hat{p}_{m,\Lambda'}:=\P(\Lambda_m= \Lambda')$,
and let
$\hq_{\Lambda'}^{\Lambda}$
be the number of  subtrees of\/ $ \Lambda' $ that are
copies of\/ $ \Lambda$; further, let
\begin{align}\label{gamma4}
\hat{\bgam}(k,m):=\frac { k^2 + k m + m^2+k+m}
{  k (k+1) m ( m+1) (k + m+1)  }
.\end{align}

If\/ $n>k+m$, then
the covariance between $\hX^{\Lambda }_{n}$ and $ \hX^{\Lambda'}_{n} $ is equal to
\begin{align}\label{covariancerecursive2}
\Cov(\hX^{\Lambda}_{n},\hX^{\Lambda'}_{n})&=
n\hat{\gs}_{\Lambda,\Lambda'},
\end{align} 
where
\begin{equation}\label{covbin3}
\hat{\gs}_{\Lambda,\Lambda'}:=\frac{1}{k(k+1)}\hq_{\Lambda'}^{\Lambda}
 \hat{p}_{k,\Lambda}
-\hat{\bgam}(k,m)\hat{p}_{k,\Lambda}\hat{p}_{m,\Lambda'}.
\end{equation}
\end{thm}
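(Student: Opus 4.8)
The plan is to compute $\Cov(\hX^{\gL}_n,\hX^{\gL'}_n)$ directly from the indicator representation $\hX^{\gL}_n=\sum_v\I{F_v\cong\gL}$, where $v$ ranges over the nodes of $\gL_n$, $F_v$ is the fringe tree rooted at $v$ (read as an ordered or unordered tree according to the chosen version), and $\cong$ means ``is a copy of''; likewise $\hX^{\gL'}_n=\sum_w\I{F_w\cong\gL'}$. Expanding the product gives $\E[\hX^{\gL}_n\hX^{\gL'}_n]=\sum_{v,w}\P(F_v\cong\gL,\,F_w\cong\gL')$, and I use the basic structural fact that two fringe trees are either disjoint or nested (they never partially overlap) to split the pairs $(v,w)$ into comparable pairs (one of $v,w$ an ancestor of the other, including $v=w$) and incomparable pairs (disjoint fringe trees). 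Since $m\le k$, any comparable pair has $w$ weakly below $v$, so $F_w\subseteq F_v$.

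For the comparable pairs the contribution is deterministic given the shape of $F_v$: whenever $F_v\cong\gL$, the number of nodes $w\in F_v$ with $F_w\cong\gL'$ is exactly $\hq_{\gL'}^{\gL}$, the number of fringe subtrees of $\gL$ isomorphic to $\gL'$ (counting $\gL$ itself when $\gL\cong\gL'$, which absorbs the diagonal $v=w$). Hence $\sum_{(v,w)\text{ comparable}}\P(F_v\cong\gL,F_w\cong\gL')=\hq_{\gL'}^{\gL}\,\E\hX^{\gL}_n$, and inserting the mean \eqref{ptpr} yields the first term $\tfrac{n}{k(k+1)}\hq_{\gL'}^{\gL}\hat p_{k,\gL}$ of $n\hat\gs_{\gL,\gL'}$ in \eqref{covbin3}.

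For the incomparable pairs I would first prove a conditional-independence lemma: given that $v,w$ are incomparable with $|F_v|=k$ and $|F_w|=m$, the two fringe trees, relabelled in increasing order, are independent copies of $\gL_k$ and $\gL_m$. This follows from \refR{Rinc}: $\gL_n$ is a uniformly random increasing tree, so conditionally on the label sets of $F_v,F_w$ and on the structure outside them, the internal structures of $F_v$ and $F_w$ are independent uniform increasing trees on disjoint label sets. Consequently the incomparable contribution factorizes as $\hat p_{k,\gL}\hat p_{m,\gL'}\,N_{n,k,m}$, where $N_{n,k,m}$ is the expected number of ordered incomparable pairs of fringe trees of sizes $(k,m)$. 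Subtracting $\E\hX^{\gL}_n\,\E\hX^{\gL'}_n=n^2\hat p_{k,\gL}\hat p_{m,\gL'}/\bigl(k(k+1)m(m+1)\bigr)$ and comparing with \eqref{covbin3}, the whole theorem reduces to the single identity $N_{n,k,m}=n^2/\bigl(k(k+1)m(m+1)\bigr)-n\,\hat\bgam(k,m)$.

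Evaluating $N_{n,k,m}$ is the main obstacle and the source of the rational function \eqref{gamma4}. I would compute it by summing, over ordered pairs of distinct nodes that may serve as the two roots, the probability that they root disjoint fringe trees of sizes $k$ and $m$; in the increasing-tree picture this amounts to counting the increasing trees of size $n$ in which a block of $k$ labels and a disjoint block of $m$ labels each hang as a complete fringe subtree below the remaining $n-k-m$ ``top'' nodes, a count carried out by a direct calculation from the recursive-tree subtree-size probabilities (equivalently a beta-integral or P\'olya-urn evaluation). The hypothesis $n>k+m$ guarantees that two such disjoint fringe trees can coexist. Once $N_{n,k,m}$ is shown to equal the stated expression, the comparable and incomparable contributions combine into $n\hat\gs_{\gL,\gL'}$, giving \eqref{covariancerecursive2}; as consistency checks, $\gL'=\gL$ returns the variance and summing over all trees of sizes $k$ and $m$ should reproduce the total-size covariance. (One could instead try to transport \refT{Tcovbin} through the natural correspondence, but that is less direct here, since subtrees of $\gL_n$ correspond only to \emph{left-rooted} subtrees of $\cT_{n-1}$, which \refT{Tcovbin} does not isolate.)
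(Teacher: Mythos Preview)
Your approach is sound and genuinely different from the paper's. The paper does not argue directly on the tree; instead it deduces \refT{Tcovrec} as an immediate specialisation of a general covariance lemma (\refL{Lkm2}\ref{lkmii2}), proved via the cyclic representation (\refL{Lcyclicrrt}) that encodes $\gL_n$ through a periodic sequence of i.i.d.\ uniform random variables. In that representation the indicator $\yl_{i,k-1}$ of a fringe tree at position $i$ depends only on $U_{i-1},\dots,U_{i+k-1}$, so covariance contributions vanish unless the corresponding index windows overlap, and the few surviving terms are evaluated by short integrals such as $\E(\yl_{0,k-1}\yl_{k,m-1})=\int_0^1 x(1-x)^{k-1}\tfrac{1}{m}(1-x)^m\dd x=\tfrac{1}{m(k+m)(k+m+1)}$; combining these produces $\hat\bgam(k,m)$ directly.

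Your decomposition into comparable and incomparable pairs, together with the conditional independence of disjoint fringe-tree shapes given their sizes, is a valid alternative and arguably more transparent probabilistically: the appearance of $\hq_{\gL'}^{\gL}$ and the factorisation $\hat p_{k,\gL}\hat p_{m,\gL'}$ become immediate without any auxiliary representation. The trade-off is that all the analytic work is pushed into the quantity $N_{n,k,m}$, which you sketch but do not actually evaluate. That computation is certainly doable by the methods you name (conditioning on the label sets of the two fringe trees and using the P\'olya-urn description of subtree sizes, or a beta integral), but it is not shorter than the paper's route; indeed the cyclic representation is precisely a device that makes this joint probability transparent. If you carry out the identity $N_{n,k,m}=\tfrac{n^2}{k(k+1)m(m+1)}-n\hat\bgam(k,m)$ explicitly you will have a complete alternative proof; as written, that identity is asserted rather than established.
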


\begin{rem}
Note that using the natural correspondence  between ordered trees 
and binary trees, if $\Lambda$ corresponds to the binary tree $T$ of size
$k-1$, then  $ \hat{p}_{k,\Lambda}=p_{k-1,T} $. 
\end{rem}

\begin{thm}\label{Tcovrec2}
Let $k,m\ge1$ and suppose $n>k+m$.
The covariance between $\hX_{n,k }$ and $ \hX_{n,m} $
is equal to
\begin{align}\label{covksubtreesrecursive}&
\Cov(\hX_{n,k},\hX_{n,m})=
n\hat{\gs}_{k,m}
\end{align}
where
$\hat{\gs}_{k,m}=\hat{\gs}_{m,k}$ and
\begin{align}\label{covb2c}
\hat{\gs}_{k,m} &=
{-\frac {1}
{k\left( k+1 \right) 
  \left( k+m+1 \right) }},
\qquad m<k,
\\
\hat{\gs}_{k,k} &=
{\frac {2 k^2-1}
{ k\left( k+1 \right)^2 \left( 2k+1 \right) }}.
\label{covb2d}
\end{align}
\end{thm}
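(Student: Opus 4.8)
The plan is to obtain the aggregate covariance from the shape-resolved covariance of \refT{Tcovrec} by summing over all tree shapes. Since $\hX_{n,k}=\sum_{\Lambda}\hX^{\Lambda}_n$, where the sum ranges over all (ordered or unordered) trees $\Lambda$ of size $k$, bilinearity of the covariance together with \eqref{covariancerecursive2} gives
\[
\Cov(\hX_{n,k},\hX_{n,m})=\sum_{|\Lambda|=k}\ \sum_{|\Lambda'|=m}\Cov(\hX^{\Lambda}_n,\hX^{\Lambda'}_n)=n\sum_{\Lambda,\Lambda'}\hat{\gs}_{\Lambda,\Lambda'},
\]
valid for $n>k+m$. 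By symmetry of the covariance we may assume $m\le k$, so that \refT{Tcovrec} applies to every pair; this also yields $\hat{\gs}_{k,m}=\hat{\gs}_{m,k}$ immediately. It thus remains to evaluate $\hat{\gs}_{k,m}:=\sum_{\Lambda,\Lambda'}\hat{\gs}_{\Lambda,\Lambda'}$ from the explicit form \eqref{covbin3}.

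I would handle the two double sums in \eqref{covbin3} separately. The second term factorizes: since $\hp_{k,\Lambda}=\P(\gL_k=\Lambda)$ and $\hp_{m,\Lambda'}=\P(\gL_m=\Lambda')$ are probability distributions on trees of sizes $k$ and $m$, we have $\sum_{\Lambda}\hp_{k,\Lambda}=\sum_{\Lambda'}\hp_{m,\Lambda'}=1$, whence $\sum_{\Lambda,\Lambda'}\hp_{k,\Lambda}\hp_{m,\Lambda'}=1$. For the first term, summing $\hq^{\Lambda}_{\Lambda'}$ over all shapes $\Lambda'$ of size $m$ counts every size-$m$ fringe subtree of $\Lambda$, regardless of shape; hence $\sum_{\Lambda'}\hq^{\Lambda}_{\Lambda'}$ is the total number of size-$m$ fringe subtrees of $\Lambda$, and therefore
\[
\sum_{\Lambda}\hp_{k,\Lambda}\sum_{\Lambda'}\hq^{\Lambda}_{\Lambda'}=\E\bigl(\hX_{k,m}\bigr).
\]

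The key point, and the main obstacle, is that this expectation must be computed differently in the two cases. When $m<k$, formula \eqref{murrt} applies with total size $k$, giving $\E(\hX_{k,m})=k/\bigl(m(m+1)\bigr)$; substituting this and \eqref{gamma4} into \eqref{covbin3} and clearing the common denominator $k(k+1)m(m+1)(k+m+1)$ collapses the numerator to $-m(m+1)$, which yields \eqref{covb2c}. When $m=k$, however, \eqref{murrt} is unavailable, since it presupposes the fringe size to be strictly smaller than the total size; applying it naively at $m=k$ would produce the wrong value $1/(k+1)$ and corrupt the computation. Instead one uses the combinatorial fact that the only fringe subtree of $\Lambda$ of the full size $k$ is $\Lambda$ itself, so $\E(\hX_{k,k})=1$. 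Combining this with $\hat{\bgam}(k,k)=(3k+2)/\bigl(k(k+1)^2(2k+1)\bigr)$ in \eqref{covbin3} and simplifying over the denominator $k(k+1)^2(2k+1)$ reduces the numerator to $2k^2-1$, giving \eqref{covb2d}.
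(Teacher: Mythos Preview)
Your proof is correct. In fact the paper explicitly mentions your route --- ``In principle, this follows from \refT{Tcovrec} by summing over all trees of sizes $k$ and $m$'' --- but then opts for what it calls a more direct argument: it applies \refL{Lkm2}\ref{lkmii2} with the indicator functionals $f(\Lambda)=\ett{|\Lambda|=k}$ and $g(\Lambda)=\ett{|\Lambda|=m}$, so that $F(\gL_n)=\hX_{n,k}$, $G(\gL_n)=\hX_{n,m}$, $\gl_f=\gl_g=1$, and $\E\bigl(f(\gL_k)G(\gL_k)\bigr)=\E\hX_{k,m}$. Both routes land on exactly the same intermediate expression $\hat\gs_{k,m}=\frac{1}{k(k+1)}\E\hX_{k,m}-\hat\bgam(k,m)$, and both then split into the cases $m<k$ (where \eqref{murrt} gives $\E\hX_{k,m}=k/(m(m+1))$) and $m=k$ (where $\E\hX_{k,k}=1$). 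The only difference is packaging: you pass through the shape-resolved covariance \eqref{covbin3} and re-aggregate, whereas the paper bypasses the shape decomposition by invoking \refL{Lkm2} directly; the latter is marginally cleaner since it avoids introducing and then summing out the individual trees $\Lambda,\Lambda'$.
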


\begin{thm}\label{multivariate}%
  \begin{thmenumerate}
  \item
Let 
$T^1,\dots,T^d$ be a fixed sequence of distinct binary trees and let
$\bX_{n}=(X^{T^{1}}_{n},X^{T^{2}}_{n},\dots,X^{T^{d}}_{n})$.
Let $$ 
\bmu_{n}:=\E\bX_n= \lrpar{\E(X^{T^{1}}_{n}),\E(X^{T^{2}}_{n}),\dots,
\E(X^{T^{d}}_{n})} 
$$ 
and let\/ $\Gamma=(\gamma_{ij})_{i,j=1}^d$ denote the matrix with elements 
\begin{equation}\label{mv0}
\gamma_{ij}
=\lim_{n\rightarrow \infty}\frac{1}{n}\Cov(X^{T^{i}}_{n},X^{T^{j}}_{n})
=\gs_{T^{i},T^{j}},
\end{equation}
with notation as in 
\eqref{covbin}--\eqref{covbin2}. 
Then $\Gamma$ is non-singular and
\begin{equation}\label{mv1}
n\qqw (\bX_{n}-\bmu_{n}) \dto \N(0, \Gamma ).
\end{equation}

\item
Similarly,
let $\gL^1,\dots,\gL^d$ be a fixed sequence of distinct ordered 
(or unordered) trees and let
$\hbX_{n}
=(\hX^{\Lambda^{1}}_{n},\hX^{\Lambda^{2}}_{n},\dots,\hX^{\Lambda^{d}}_{n})$. 
Let 
\begin{equation*}
\hbmu_{n}:=
\E\hbX
=\lrpar{\E(\hX^{\Lambda^{1}}_{n}),\E(\hX^{\Lambda^{2}}_{n}),\dots,
\E(\hX^{\Lambda^{d}}_{n})} 
\end{equation*} 
and let\/ 
$ \hat{\Gamma}=(\hat\gamma_{ij})_{i,j=1}^d$ denote the matrix with elements
\begin{equation}
\hat{\gamma}_{ij}
=\lim_{n\rightarrow  \infty}
\frac{1}{n}\Cov\bigpar{\hX^{\Lambda^{i}}_{n},\hX^{\Lambda^{j}}_{n}}
=\hgs_{\gL^i,\gL^j}
\end{equation}
with notation as in \eqref{covariancerecursive2}--\eqref{covbin3}.
Then $\hat\Gamma$ is non-singular and 
\begin{equation}\label{rrtmv1}
n\qqw \bigpar{\hbX_{n}-\hbmu_{n}} \dto \N(0, \hat\Gamma ).
\end{equation}
  \end{thmenumerate}
 \end{thm}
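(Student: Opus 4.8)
The plan is to prove both parts via the \CramerWold{} device, reducing each multivariate statement to a one-dimensional central limit theorem for an arbitrary linear combination, and then to treat that combination as a centred additive functional of fringe trees with a bounded toll, to which the Stein's-method argument already used for \refT{main2} applies. Fix $\mathbf a=(a_1,\dots,a_d)\in\bbR^d$ and set $M:=\max_i|T^i|$. For a node $v$ of $\cT_n$ write $\tau_v$ for the fringe tree rooted at $v$, and put
\begin{equation*}
g_{\mathbf a}(\tau):=\sum_{i=1}^d a_i\I{\tau=T^i},
\end{equation*}
a function bounded by $\sum_i|a_i|$ that vanishes on all trees of size $>M$. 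Since $X^{T^i}_n=\sum_v\I{\tau_v=T^i}$, we have $\mathbf a\cdot\bX_n=\sum_v g_{\mathbf a}(\tau_v)$, so $\mathbf a\cdot(\bX_n-\bmu_n)$ is a centred sum of a bounded, bounded-support function of fringe trees.

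First I would identify the limiting variance. By bilinearity of the covariance and \refT{Tcovbin},
\begin{equation*}
\frac1n\Var\bigpar{\mathbf a\cdot\bX_n}
=\frac1n\sum_{i,j=1}^d a_ia_j\Cov(X^{T^i}_n,X^{T^j}_n)
=\frac{n+1}{n}\sum_{i,j=1}^d a_ia_j\gs_{T^i,T^j}
\longrightarrow \mathbf a^\top\Gamma\mathbf a ,
\end{equation*}
with $\Gamma$ as in \eqref{mv0}. Next, the one-dimensional convergence $n\qqw\,\mathbf a\cdot(\bX_n-\bmu_n)\dto\N(0,\mathbf a^\top\Gamma\mathbf a)$ follows from the Stein's-method normal approximation developed for \refT{main2}: that argument only uses that each summand is a bounded function of a fringe tree of bounded size, together with the coupling controlling the local dependence between fringe trees in Devroye's representation. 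Replacing the single-size indicator by $g_{\mathbf a}$ keeps the support bounded by $M$ and the summand bounded (the sign of the coefficients is immaterial for the local-dependence estimate), so the same bound gives asymptotic normality with the variance just computed. By the \CramerWold{} device this yields \eqref{mv1} (when $\mathbf a^\top\Gamma\mathbf a=0$ the limit is the degenerate $\N(0,0)=\delta_0$, which is consistent).

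The remaining, and main, point is the non-singularity of $\Gamma$, equivalently $\mathbf a^\top\Gamma\mathbf a>0$ for every $\mathbf a\neq0$. As a limit of covariance matrices $\Gamma$ is positive semidefinite, so it suffices to exclude degenerate directions, i.e.\ to show no nonzero toll $g_{\mathbf a}$ has $\lim_n n\qw\Var(\sum_v g_{\mathbf a}(\tau_v))=0$. Since the $T^i$ are distinct, the indicators $\I{\cdot=T^i}$ are linearly independent, so $g_{\mathbf a}\not\equiv0$ whenever $\mathbf a\neq0$. The hard part will be to upgrade ``$g_{\mathbf a}\not\equiv0$'' to ``$\Var=\Theta(n)$'', since a priori cancellations between copies of trees of different sizes could reduce the order of the variance; I expect this to be the principal obstacle. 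I would resolve it by verifying positive definiteness of the explicit matrix $\Gamma$ directly from \eqref{covbin2}: the diagonal entries \eqref{covb2b} are manifestly positive, while the containment counts $q^{T^i}_{T^j}$ give the main term a triangular structure when the trees are ordered by size. Conceptually, a lower bound also follows because the $\Theta(n)$ fringe trees of the largest relevant size occur in asymptotically independent local regions and realise every shape with positive probability, so a nonzero $g_{\mathbf a}$ forces irreducible $\Theta(n)$ fluctuations, which an Efron--Stein / martingale-difference estimate makes rigorous.

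Part (ii) is handled identically, using \refT{Tcovrec} in place of \refT{Tcovbin} for the limiting covariances in \eqref{covariancerecursive2}--\eqref{covbin3} and the recursive-tree version of the Stein argument; alternatively one may transfer (ii) to (i) through the natural correspondence, which couples $\gL_n$ with $\cT_{n-1}$. Non-singularity of $\hat\Gamma$ follows by the same positivity argument, giving \eqref{rrtmv1}.
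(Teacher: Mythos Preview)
Your overall architecture---\CramerWold{} reduction, identification of the limiting variance via \refT{Tcovbin}, and a Stein's-method CLT for the resulting bounded-support additive functional using the local dependency structure of Devroye's representation---is exactly the paper's approach (the paper applies \refL{jansonnormal} directly to the sum $\sum_{i,j} t_j I_i^j$ with the natural dependency graph on $\bbZ_{n+1}\times\set{1,\dots,d}$, which is what your sketch amounts to).

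The genuine gap is the non-singularity of $\Gamma$. You correctly flag this as the principal obstacle, but neither of your two suggestions is carried through. The ``triangular structure'' idea does not work as stated: positivity of the diagonal of $\Gamma$ (and note that \eqref{covb2b} concerns $\gskm$, not $\gs_{T,T}$) together with an upper-triangular structure in the $q^{T^i}_{T^j}$ does not by itself force positive definiteness of the \emph{symmetric} matrix $\Gamma$, since the $\bgam(k,m)$ term in \eqref{covbin2} contributes off-diagonal entries of both signs regardless of containment. The Efron--Stein\,/\,martingale idea is only asserted, not argued.

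The paper's route (\refL{LnonsingularT}) is much cleaner and avoids analysing $\Gamma$ quantitatively. Because \refT{Tcovbin} gives the \emph{exact} identity $\Var\bigpar{\sum_i a_i X_n^{T^i}}=(n+1)\,\mathbf a^\top\Gamma\,\mathbf a$ for all $n>2\max_i|T^i|+1$, singularity of $\Gamma$ would force $Z_n:=\sum_i a_i X_n^{T^i}$ to be \emph{deterministic} for every such $n$. One then exhibits, for each large $n$, two explicit binary trees on $n$ nodes (a long right path, and the same path with the smallest $T^i$ grafted as a left subtree at the root) whose subtree counts $X_n^{T^i}$ agree except that the count of the smallest tree differs by exactly one; this forces $a_i=0$ for that index, and by iteration $\mathbf a=0$. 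The same construction works for the recursive-tree case (\refL{LnonsingularT2}). This is the missing ingredient in your proposal.
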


For \bst{s}, \eqref{mv1} 
can be proved as the univariate case in \citet{Devroye1}, but the formula
\eqref{covbin2} for the covariances seems to be new.
For random recursive trees, 
as said above,
\citet{FengMahmoud} showed 
the univariate case $d=1$ of \eqref{rrtmv1} (for unordered $\gL$), 
together with formulas for the
mean and variance. 

\begin{rem}
Since the covariance matrices $\gG$ and $\hat\gG$ in Theorem \ref{multivariate} 
are non-singular, 
the limiting multivariate normal distributions $\N(0, \Gamma )$ and 
$\N(0,\hat{\Gamma} )$ are non-degenerate. 
Furthermore, 
let $ \Cov (\bX_n)$ denote the covariance matrix of 
$\bX_n$.
Since $n\qw \Cov (\bX_n)\to \gG$ as $n\to\infty$,
$\Cov (\bX_n)$ is non-singular
for large enough $n$ and thus
$\Cov(\bX_n)^{-1/2}$ exists and the conclusion
\eqref{mv1} is equivalent to
$\Cov(\bX_n)^{-1/2}
(\bX_n-\bmu_n) \dto \N(0, I_d)$,
where $I_d$ is the $d\times d$ identity matrix and $\N(0,I_d)$ is the $d$-dimensional
standard normal distribution with $d$ \iid{} $\N(0,1)$ components.
Similarly, 
\eqref{rrtmv1} is equivalent to 
$\Cov(\hbX_n)^{-1/2}
(\hbX_n-\bmu_n) \dto \N(0, I_d)$. 
\end{rem}

\begin{example}\label{Epattern}
For any property $P$ of binary trees and any fixed $k$, $
X_{n,k}^{P}=\sum_{T\in P_k} X^{T}_n $, 
summing over all trees $T\in P_k$; hence the joint asymptotic normality in
\refT{multivariate} implies asymptotic normality of $ X_{n,k}^{P}$, as
asserted in \refR{RPnorm}. 
Moreover, 
this also yields joint asymptotic normality for several properties
$P$; in particular, we obtain joint asymptotic normality of $X_{n,k}$ for
any finite set of $k$, as earlier shown by \citet{DennertGrubel}.
The random recursive tree case  is similar. 
\end{example}

\refE{Epattern} generalizes immediately to any finite linear combination of
subtree counts $X^T_n$ or $\hX^T_n$; in fact, this is equivalent to the
joint asymptotic normality.
Using a truncation argument, this can be further extended as follows.

Let $f$ be a functional,
\ie,  a real-valued  function,
 of (binary, ordered or unordered) rooted trees, 
(Again, we may also, more generally, consider functionals of increasing trees.)
For a  tree $T$, 
let $T(v)$ be the fringe tree rooted at the node $v\in T$, and
define the sum over all fringe trees
\begin{equation}\label{F}
  F(T)=F(T;f):=\sum_{v\in T} f(T(v)).
\end{equation}
\begin{rem}\label{RF}
Functionals $F$ that can be written as \eqref{F} for some $f$ are called
\emph{additive functionals}. They can also be defined recursively by
\begin{equation}\label{rf}
  F(T)=f(T)+F(\ttt1)+\dots+F(\ttt d),
\end{equation}
where $\ttt1,\dots,\ttt d$ are the subtrees rooted at the children of the root
of $T$.
In this context, 
$f(T)$ is often called a \emph{toll function}.  
\end{rem}

We consider the random variables
$F(\ctn)$ and $F(\gln)$, where as above $\ctn$ and $\gln$
are the binary search tree and random recursive tree, respectively.
For example, if $f(T')=\ett{T'=T}$, the indicator
function that $T'$ equals some given binary tree $T$,
then $F(\ctn)=X^{T}_n$.
Conversely, for any $f$,
\begin{equation}\label{Fnt}
  F(\ctn)=\sum_{T}f(T) X^{T}_n,
\end{equation}
summing over all binary trees $T$.
As another example, $X_{n,k}^P=F(\ctn)$ with $f(T)=\ett{T\in P_k}$;
in particular, $X_{n,k}=F(\ctn)$ with $f(T)=\ett{|T|=k}$.
The recursive tree case is similar.
We refer to Devroye \cite{Devroye2} for several other examples showing
the generality of this representation, 
and for some special cases of the following result.

\begin{thm}\label{TF}
  Let $F$ be given by \eqref{F} for some functional $f$.
\begin{romenumerate}[-10pt] 
  \item 
For the binary search tree, assume that
\begin{align}
\sumk\frac{(\Var f(\cT_k))\qq}{k^{3/2}}&<\infty, \label{tfa1}
\\
\lim_{k\to\infty}\frac{\Var f(\cT_k)}k &=0,\label{tfa2}
\\
\sumk\frac{(\E f(\cT_k))^2}{k^2}& <\infty. \label{tfa3}
\end{align}
Then, as \ntoo, 
\begin{align}
 \E (F(\ctn))/n &\to \mu_F :=  \sumk \frac{2}{(k+1)(k+2)} \E f(\ctk), \label{tfe}
\\
 \Var (F(\ctn))/n &\to  \gss_F
:=\lim_{N\to \infty} \sum_{|T|,|T'|\le N} f(T)f(T')\gstt <\infty \label{tfv}
\end{align}
and
\begin{equation}\label{tfd}
\frac{ F(\ctn)-\E F(\ctn)}{\sqrt{n}}\dto \cN(0,\gss_F).
\end{equation}

\item 
For the random recursive tree, assume that
\begin{align}
\sumk\frac{(\Var f(\glk))\qq}{k^{3/2}}&<\infty, \label{tfa1r}
\\
\lim_{k\to\infty}\frac{\Var f(\glk)}k &=0,
\\
\sumk\frac{(\E f(\glk))^2}{k^2}& <\infty.\label{tfa3r}
\end{align}
Then, as \ntoo, 
\begin{align}
\E (F(\gln))/n & \to \hmu_F := \sumk \frac{1}{k(k+1)} \E f(\glk),  \label{tfer}
\\
\Var (F(\ctn))/n &\to  
\hgss_F:=
\lim_{N\to \infty} \sum_{|\gL|,|\gL'|\le N} f(\gL)f(\gL')\hgsll <\infty
\label{tfvr}
\end{align}
and
\begin{equation}\label{tfdr}
\frac{ F(\gln)-\E F(\gln)}{\sqrt{n}}\dto \cN(0,\hgss_F).
\end{equation}
  \end{romenumerate}
\end{thm}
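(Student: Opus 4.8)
The plan is to prove \refT{TF} by reducing the general additive functional to the multivariate Gaussian limit of \refT{multivariate} via truncation, and then controlling the truncation tail using the covariance formulas of \refT{Tcovbin} and \refT{Tcovrec}. I will treat only the binary search tree case (i) in detail; part (ii) is entirely analogous, with $\cT_k$, $\pkt$, $\gstt$ replaced by $\glk$, $\hp_{k,\gL}$, $\hgsll$ throughout.

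First I would decompose $f$ by size. Writing $f = \sum_{k\ge1} f_k$ where $f_k(T) := f(T)\ett{|T|=k}$, the representation \eqref{Fnt} gives $F(\ctn) = \sum_k F_k(\ctn)$ where $F_k(\ctn) = \sum_{|T|=k} f(T) X^T_n$ depends only on the size-$k$ subtree counts. For a fixed truncation level $N$ set $F^{\le N} := \sum_{k\le N} F_k$. This is a finite linear combination of the counts $X^T_n$ with $|T|\le N$, so \refT{multivariate} (together with the \CramerWold{} device, or simply the fact that a fixed linear functional of an asymptotically multivariate normal vector is asymptotically univariate normal) yields
\begin{equation*}
\frac{F^{\le N}(\ctn)-\E F^{\le N}(\ctn)}{\sqrt n}\dto\cN\bigpar{0,\gss_{F,N}},
\qquad
\gss_{F,N}:=\sum_{|T|,|T'|\le N}f(T)f(T')\gstt.
\end{equation*}
The mean statement \eqref{tfe} and the existence/formula for $\gss_F$ in \eqref{tfv} as the limit of $\gss_{F,N}$ require first checking that $\mu_F$ and $\gss_F$ are well-defined finite limits, which is where hypotheses \eqref{tfa1}--\eqref{tfa3} enter. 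For the mean, \eqref{munk} gives $\E F_k(\ctn)=\mu_{n,k}\E f(\ctk)=\tfrac{2(n+1)}{(k+1)(k+2)}\E f(\ctk)$, so $\E F(\ctn)/n\to\mu_F$ by dominated convergence using \eqref{tfa3} to bound the tail (via Cauchy--Schwarz, $|\E f(\ctk)|\le(\E f(\ctk)^2)\qq$, and $\E f(\ctk)^2=\Var f(\ctk)+(\E f(\ctk))^2$, both of whose size-weighted tails are controlled by \eqref{tfa2}--\eqref{tfa3}).

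The crux is the truncation estimate: I must show that the discarded tail $F - F^{\le N}=\sum_{k>N}F_k$ is negligible uniformly, \ie,
\begin{equation*}
\limsup_{\ntoo}\frac1n\Var\Bigpar{F(\ctn)-F^{\le N}(\ctn)}\to0
\quad\text{as }\Ntoo .
\end{equation*}
Expanding the variance and using \refT{Tcovbin}, $\tfrac1n\Cov(X^T_n,X^{T'}_n)\to\gstt$ with an error that \eqref{covbin}--\eqref{covbin2} makes explicit, the tail variance is bounded by $\sum_{k,m>N}\bigabs{\sum_{|T|=k,|T'|=m}f(T)f(T')\gstt}$. Here I split $\gstt$ into its two terms from \eqref{covbin2}. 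The ``diagonal'' term $\tfrac{2}{(k+1)(k+2)}\qtt\pkt$, after summing over $T,T'$ of fixed sizes, is governed by $\tfrac{1}{k^2}\E\bigpar{f(\ctk)^2}$-type quantities (since $\sum_{T'}f(T')\qtt$ counts weighted copies inside $T$), whose tail is summable by \eqref{tfa2} and \eqref{tfa3}; the ``product'' term factors as $\bgam(k,m)\bigpar{\sum_{|T|=k}f(T)\pkt}\bigpar{\sum_{|T'|=m}f(T')\pmtx}=\bgam(k,m)\E f(\ctk)\E f(\ctm)$, and since $\bgam(k,m)=O\bigpar{\xfrac1{km}}$ one checks its tail is summable using \eqref{tfa3}. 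The hypothesis \eqref{tfa1}, which weights $(\Var f(\ctk))\qq$ by $k^{-3/2}$, is exactly what is needed to control the cross terms arising from $\Cov$ between sizes $k$ and $m$ via Cauchy--Schwarz, summing a bound of the form $\sum_{k,m}\tfrac{1}{(km)^{3/4}}(\Var f(\ctk))^{1/4}(\cdots)$ against the two single-size sums; this is the delicate bookkeeping step.

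Granting the uniform tail bound, the proof concludes by a standard approximation lemma: if $W_n^{N}:=\bigpar{F^{\le N}(\ctn)-\E F^{\le N}(\ctn)}/\sqrt n\dto\cN(0,\gss_{F,N})$ for each fixed $N$, with $\gss_{F,N}\to\gss_F$, and if $\limsup_n\Var\bigpar{(F-F^{\le N})(\ctn)}/n\to0$ as $\Ntoo$, then $W_n:=\bigpar{F(\ctn)-\E F(\ctn)}/\sqrt n\dto\cN(0,\gss_F)$. The argument is the usual one (\eg{} Billingsley's theorem on convergence of approximations): the difference $W_n-W_n^N$ has variance tending to $0$ as $\Ntoo$ uniformly in $n$ by the tail estimate, so it converges to $0$ in probability in the double limit, while $\cN(0,\gss_{F,N})\to\cN(0,\gss_F)$ weakly. \textbf{The main obstacle} I anticipate is the explicit tail summability for the truncation: one must verify that each of the three hypotheses \eqref{tfa1}--\eqref{tfa3} does real work in bounding the corresponding piece of the covariance sum, and in particular that the precise $O\bigpar{\xfrac1{km}}$ decay of $\bgam(k,m)$ and of the error in $\tfrac1n\Cov\to\gstt$ is strong enough to absorb the growth permitted in $f$. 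Everything else is an application of \refT{multivariate} plus routine weak-convergence approximation.
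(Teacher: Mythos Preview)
Your overall architecture is exactly the paper's: truncate $f$ at size $N$, apply \refT{multivariate} to the finite linear combination $F^{\le N}$, show a uniform-in-$n$ tail variance bound
\begin{equation*}
\sup_n \frac{1}{n}\Var\bigpar{F(\ctn)-F^{\le N}(\ctn)}\to 0
\quad\text{as }N\to\infty,
\end{equation*}
and conclude via the Billingsley approximation theorem. The mean argument is also fine.

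The gap is in the tail bound itself, which you correctly flag as the crux but do not actually carry out. Two of your specific claims fail:
\begin{itemize}
\item For the ``product'' term you write $\bgam(k,m)=O(1/(km))$ and assert that \eqref{tfa3} makes the tail summable. But $\sum_{k,m>N}\frac{|\mu_k||\mu_m|}{km}=\bigpar{\sum_{k>N}|\mu_k|/k}^2$, and \eqref{tfa3} (i.e.\ $\sum\mu_k^2/k^2<\infty$) does \emph{not} imply $\sum|\mu_k|/k<\infty$; take $\mu_k=\sqrt k/\log k$. You need the sharper decay $\bgam(k,m)=O\bigpar{1/(km\min(k,m))}$ together with Hilbert's inequality, or (as the paper does) the sign information that $\Cov(X_{n,k},X_{n,m})<0$ for $m<k$ in the main range.
\item For the ``diagonal'' term, summing $\frac{2}{(k+1)(k+2)}\qtt\pkt f(T)f(T')$ over $T,T'$ and over $m$ does \emph{not} give a quantity controlled by $\frac{1}{k^2}\E f(\ctk)^2$; it gives $\frac{2}{(k+1)(k+2)}\E\bigpar{f(\ctk)F(\ctk)}$, which involves $F$ itself. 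Bounding this is circular unless you set up an induction on $n$, which is precisely what the paper does.
\end{itemize}
There is also a more structural issue: for the tail $k,m>N$ with $n$ fixed, many pairs have $k+m+1\ge n$, where $\Cov(X_n^T,X_n^{T'})$ is \emph{not} $(n+1)\gstt$ but given by the different formulas of \refL{Lkm}(iii)--(v). So the finite-$n$ tail variance is not bounded by the limiting $\gstt$-sum you wrote down.

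The paper handles all of this by isolating the tail bound as a separate result, \refT{Tny}, which gives the uniform estimate
\begin{equation*}
\Var(F(\cT_n))\le Cn\Bigpar{\Bigpar{\sum_k(\Var f(\ctk))\qq k^{-3/2}}^2+\sup_k\Var f(\ctk)/k+\sum_k\mu_k^2/k^2}
\end{equation*}
applied to $f-f^N$. Its proof splits $f=f\upp1+f\upp2$ with $f\upp1(T)=\mu_{|T|}$ and $\E f\upp2(\ctk)=0$; for $f\upp1$ it uses the explicit covariances of $X_{n,k}$ (including all boundary cases) and the sign of the off-diagonal ones, and for $f\upp2$ it runs a short induction on $b_n^2:=\Var(F(\ctn))/(n+1)$ via \eqref{vF}. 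Once you have \refT{Tny}, your outline goes through verbatim.
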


\begin{cor}\label{CTF}
  Let $F$ be given by \eqref{F} for some functional $f$ 
such  that $f(T)=O(|T|^\ga)$ for some $\ga<1/2$.
Then the conclusions \eqref{tfe}--\eqref{tfd} and \eqref{tfer}--\eqref{tfdr}
hold. Furthermore,
the asymptotic normality \eqref{tfd}  can be written as
\begin{equation}\label{ctfd}
\frac{ F(\ctn)-n\mu_F}{\sqrt{n}}\dto \cN(0,\gss_F)
\end{equation}
and similarly,  \eqref{tfdr} can be written
\begin{equation}\label{ctfdr}
\frac{ F(\gln)-n\hmu_F}{\sqrt{n}}\dto \cN(0,\hgss_F).
\end{equation}
\end{cor}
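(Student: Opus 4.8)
The plan is to obtain \refC{CTF} directly from \refT{TF}: first I would verify that the pointwise growth bound $f(T)=O(|T|^\ga)$ with $\ga<1/2$ forces the analytic hypotheses of that theorem, and then upgrade the centering in \eqref{tfd} and \eqref{tfdr} from the exact mean to $n\mu_F$ and $n\hmu_F$. The key observation for the first part is that $f(T)=O(|T|^\ga)$ gives a \emph{deterministic} bound $|f(\ctk)|=O(k^\ga)$, since $\ctk$ has exactly $k$ nodes, and likewise $|f(\glk)|=O(k^\ga)$. Hence $\E f(\ctk)=O(k^\ga)$, and since a bounded random variable satisfies $\Var Y\le\E Y^2$, also $\Var f(\ctk)=O(k^{2\ga})$. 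Substituting into \eqref{tfa1}--\eqref{tfa3}, the summand of \eqref{tfa1} is $O(k^{\ga-3/2})$, the ratio in \eqref{tfa2} is $O(k^{2\ga-1})$, and the summand of \eqref{tfa3} is $O(k^{2\ga-2})$. All three are governed by a single threshold: the two series converge because $\ga-3/2<-1$ and $2\ga-2<-1$, and the ratio tends to $0$ because $2\ga-1<0$, each inequality being equivalent to $\ga<1/2$. The recursive-tree hypotheses \eqref{tfa1r}--\eqref{tfa3r} follow identically, so \refT{TF} applies and delivers \eqref{tfe}--\eqref{tfd} and \eqref{tfer}--\eqref{tfdr}.

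It then remains only to show that the centering may be replaced, \ie that
\[
\frac{\E F(\ctn)-n\mu_F}{\sqrt n}\to0\qquad(\ntoo),
\]
after which \eqref{ctfd} follows from \eqref{tfd} by Slutsky's theorem. The subtlety is that the mean asymptotics \eqref{tfe} only give $\E F(\ctn)-n\mu_F=o(n)$, whereas the sharper $o(\sqrt n)$ is needed; to get this I would return to the \emph{exact} mean. Writing $a_k:=\frac{2}{(k+1)(k+2)}\E f(\ctk)$, so that $\mu_F=\sumk a_k$ by \eqref{tfe}, the representation \eqref{Fnt} together with the exact expectation \eqref{ptp} (applied to each singleton property $P=\{T\}$, valid for $|T|<n$) and the fact that the unique fringe tree of size $n$ is $\ctn$ itself gives
\[
\E F(\ctn)=(n+1)\sum_{k=1}^{n-1}a_k+\E f(\ctn).
\]
Subtracting $n\mu_F=n\sumk a_k$ and rearranging yields
\[
\E F(\ctn)-n\mu_F=\sum_{k=1}^{n-1}a_k-n\sum_{k=n}^{\infty}a_k+\E f(\ctn).
\]
Since $|a_k|=O(k^{\ga-2})$, the first sum is $O(1)$, the tail estimate $\sum_{k\ge n}|a_k|=O(n^{\ga-1})$ gives $n\sum_{k\ge n}a_k=O(n^\ga)$, and $\E f(\ctn)=O(n^\ga)$; dividing by $\sqrt n$, the three contributions are $O(n^{-1/2})$, $O(n^{\ga-1/2})$ and $O(n^{\ga-1/2})$, and all vanish because $\ga<1/2$. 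This proves the displayed limit, hence \eqref{ctfd}. The recursive case \eqref{ctfdr} is entirely analogous, using \eqref{ptpr} and \eqref{tfer} in place of \eqref{ptp} and \eqref{tfe} (here the prefactor is $n$ rather than $n+1$, so the leading $\sum_{k<n}$ term drops out and the estimate is even cleaner).

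The genuinely delicate point is this last centering step: the crude convergence $o(n)$ supplied by \refT{TF} is not enough, and one must invoke the explicit formula \eqref{ptp} to extract the rate $O(n^\ga)$. Everything then hinges on the single inequality $\ga<1/2$, which simultaneously makes $n^\ga=o(\sqrt n)$ and guarantees the three hypotheses of \refT{TF}; no condition on $\ga$ beyond $\ga<1/2$ is required.
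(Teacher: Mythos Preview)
Your proof is correct and follows essentially the same approach as the paper: verify that $f(T)=O(|T|^\ga)$ with $\ga<1/2$ implies the hypotheses \eqref{tfa1}--\eqref{tfa3} (and \eqref{tfa1r}--\eqref{tfa3r}) of \refT{TF}, and then show $\E F(\ctn)-n\mu_F=O(n^\ga)=o(\sqrt n)$ to upgrade the centering. The only difference is that the paper obtains this last estimate by a direct appeal to \eqref{tlimeO} (together with \eqref{mub}), whereas you have in effect reproved the relevant case of \refT{Tlime} from the exact mean formula; your computation is correct and amounts to the same thing.
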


\begin{rem}
For the binary search tree and $f(T)$ depending on the size $|T|$ only,
\refC{CTF}
was shown by \citet{HwangN} using the contraction method
(somewhat more generally, and with a somewhat different expression for
$\gss_F$),
they also show that, for example, $f(T)=|T|^\ga$
with $\ga>1/2$ yields different limit behaviour.
This  case of \refC{CTF} was also proved (by methods similar to the ones
used here) by 
\citet[Theorem 6]{Devroye2} under somewhat stronger hypotheses.
See further
\citet{FillKapur-mary} for similar results 
(extended to general $m$-ary search trees).
Cf.\ also \citet[Theorem 13]{FillFK} for related results for the mean.

A well-known case when $f$ grows too rapidly for the results above to hold 
is $f(T)=|T|$, when $F(T)$ is the total path length in the tree.
In this case, for the binary search tree,
the expectation grows like $2n\log n$
and the limit is non-normal, see \citet{Regnier}, \citet{Roesler},
\citet{FillJanson131}.
\end{rem}

\begin{rem}  
Of course, \eqref{tfv} means that (summing over all binary trees)
  \begin{equation}
\gss_F= \sum_{T,T'} f(T)f(T')\gstt, \label{xtfv}	
  \end{equation}
provided this sum is absolutely convergent. However,
this fails in general, even if $f$ is bounded,
since, as is shown in the appendix, 
  \begin{equation}
\sum_{T,T'}|\gstt|=\infty. \label{xb}	
  \end{equation}
Similarly, for the \rrt{} in \eqref{tfvr},
\begin{equation}
  \label{xr}
\sum_{\gL,\gL'}|\hgsll| =\infty.
\end{equation}
Hence, in general, we need the less elegant expression in \eqref{tfv}.
and \eqref{tfvr}.
The same applies to the special cases in
\eqref{cpv} and \eqref{cpvr} below.

Note that if $f(T)$ depends on the size $|T|$ only (a case considered in
\cite{HwangN} and \cite{FillKapur-mary}), so $f(T)=\mu_{|T|}$ for some
sequence $\mu_k$, $k\ge1$, then \eqref{tfv} implies
\begin{equation}
\gss_F= \sum_{k,m\ge1} \mu_k\mu_m\gskm, 
\end{equation}
where it is easily shown that the sum is absolutely convergent
as a consequence of \eqref{covb2a}--\eqref{covb2b} and the
assumption \eqref{tfa3}, \ie{} $\sum_k
\mu_k^2/k^2<\infty$. The analogous result for the \rrt{} holds too for such
$f$, now using \eqref{covb2c}--\eqref{covb2d}.
\end{rem}

The asymptotic means $\mu_F$ and $\hmu_F$ 
in \eqref{tfe} and \eqref{tfer} can also be written  as follows.
Let $\cT$ be the random \bst{}  $\cT_N$ with random size $N$ such that
$\P(|\cT|=k)=\P(N=k)=\frac{2}{(k+1)(k+2)}$, $k\ge1$.
Similarly, 
let $\gLL$ be the random recursive tree  $\gL_N$ with random size $N$ such that
$\P(|\gLL|=k)=\P(N=k)=\frac{1}{k(k+1)}$, $k\ge1$. Then, by definition,
\begin{align}
  \mu_F &= \E f(\cT), \label{mub}
\\
\hmu_F&=\E f(\gLL) \label{mur}.
\end{align}
Moreover, as shown by \citet{Aldous-fringe},
$\cT$ is the limit 
in distribution of a uniformly random fringe tree of $\cT_n$ as \ntoo, 
and similarly  $\gLL$ is the limit 
in distribution of a uniformly random fringe tree of $\gL_n$ as \ntoo, 
see also
\cite{Devroye1} and
\cite{DevroyeJanson}.
(In fact, this is an immediate consequence of \eqref{ptp} and \eqref{ptpr}.)

\citet{Aldous-fringe} gave also direct constructions of $\cT$ and $\gLL$
using branching processes. For $\gLL$ we consider a tree $\gLL_t$ growing
randomly in 
continuous time, starting with an isolated root at time $t=0$ and 
such that each existing node gets children according to a Poisson process
with rate 1. 
For $\cT$ we similarly grow a random binary tree $\cT_t$
by letting each node get a
left and a right child after waiting times that are independent and $\Exp(1)$.
In both cases, we stop the process at a random time $\tau\sim\Exp(1)$,
independent of everything else; this gives $\gLL$ and $\cT$, see 
\cite{Aldous-fringe}. This construction often simplifies the calculation of
$\mu_F$ and $\hmu_F$, see 
\cite{DevroyeJanson} and examples in \refS{Sapp}.
($\gLL$ and $\cT$ can be regarded as increasing trees, using the  birth
times of the nodes as labels.)

\refC{CTF} shows, in particular, that $F(\ctn)$ or $F(\gL_n)$ 
is asymptotically normal for any bounded $f$, unless $\gss_F=0$ or $\hgss_F=0$.
Letting $f$ be the indicator function of a set of trees, we obtain the
following general result.
(In the binary case, \citet[Theorem~2]{Devroye2} showed \eqref{cpe} and 
the corresponding weak law of large numbers, which is a consequence of
\eqref{cp}.
See also \citet[Lemma~4]{Devroye2} for a result similar to \eqref{cp}.)

\begin{cor}\label{CP}
Let $P$ be any property of binary  trees 
and let $X_n^P$  be the number of subtrees of $T_n$ with this property.
Then, as \ntoo,  
\begin{align}
\E X_n^P/n&\to \mu_P:=\P(\cT\in P), \label{cpe}\\
\Var X_n^P/n &\to  
\gss_P:=\lim_{N\to \infty} \sum_{T,T'\in P:\;|T|,|T'|\le N} \gstt <\infty,
\label{cpv}
\end{align}
and
\begin{equation}\label{cp}
\frac{X_n^P-\E X_n^P}{\sqrt{n}}\dto \cN(0,\gss_P).
\end{equation}

Similarly,
if $P$ is any property of ordered (or unordered) trees
and $\hX_n^P$ is the number of subtrees of $\gL_n$
with this property,
then, as \ntoo,  
\begin{align}
\E \hX_n^P/n&\to \hmu_P:=\P(\gLL\in P), \label{cper}\\
\Var \hX_n^P/n &\to  
\hgss_P:=\lim_{N\to \infty} \sum_{\gL,\gL'\in P:\;|\gL|,|\gL'|\le N} \hgsll
<\infty, \label{cpvr}
\end{align}
and
\begin{equation}\label{cpr}
\frac{\hX_n^P-\E \hX_n^P}{\sqrt{n}}\dto \cN(0,\hgss_P).
\end{equation}
Furthermore, we can replace
$\E X_n^P$ in \eqref{cp}  
and $\E \hX_n^P$ in \eqref{cpr} by 
$n\mu_P$ and $n\hmu_P$, respectively.
\qed
\end{cor}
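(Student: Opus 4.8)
The plan is to read off this corollary directly from \refC{CTF} by taking the toll function $f$ to be the indicator of the property. For the binary search tree, set $f(T):=\ett{T\in P}$; this is bounded by $1$, hence satisfies $f(T)=O(|T|^\ga)$ with $\ga=0<1/2$, so the hypotheses of \refC{CTF} hold. By the definition \eqref{F}, $F(\ctn)=\sum_{v\in\ctn}\ett{\ctn(v)\in P}$ is precisely the number of fringe trees of $\ctn$ lying in $P$, that is, $F(\ctn)=X_n^P$.

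It then remains to specialize the constants. By \eqref{mub}, $\mu_F=\E f(\cT)=\P(\cT\in P)$, which is the asserted $\mu_P$ and yields \eqref{cpe}. Substituting $f(T)f(T')=\ett{T\in P}\ett{T'\in P}$ into \eqref{tfv} restricts the double sum to pairs in $P$, so that $\gss_F=\lim_{N\to\infty}\sum_{T,T'\in P:\,|T|,|T'|\le N}\gstt=\gss_P$, establishing \eqref{cpv} together with its finiteness. The central limit theorem \eqref{tfd} now reads exactly \eqref{cp}. For the final sentence I would invoke the stronger form \eqref{ctfd} of \refC{CTF}, which already permits replacing $\E F(\ctn)$ by $n\mu_F$; specializing gives the permitted replacement of $\E X_n^P$ by $n\mu_P$. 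The random recursive tree statements \eqref{cper}--\eqref{cpr} are obtained identically, now with $f(\gL):=\ett{\gL\in P}$ for a property $P$ of ordered (or unordered) trees, and using \eqref{mur}, \eqref{tfvr}, \eqref{tfdr} and \eqref{ctfdr} in place of their binary analogues.

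I do not expect any genuine obstacle here; the analytic content is entirely contained in \refT{TF} and \refC{CTF}, and what remains is the bookkeeping of checking that the indicator toll function meets the growth hypothesis (trivially, since it is bounded) and that the limit formula \eqref{tfv} specializes to the restricted sum over $T,T'\in P$. This is exactly why the statement is marked \qed.
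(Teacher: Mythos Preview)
Your proposal is correct and matches the paper's approach exactly: the paper itself introduces the corollary with the sentence ``Letting $f$ be the indicator function of a set of trees, we obtain the following general result'' and marks it \qed, so the intended proof is precisely to apply \refC{CTF} with $f(T)=\ett{T\in P}$ and specialize the constants as you describe.
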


\begin{problem}\label{PQ0}
Is the asymptotic variance $\gss_F$ or $\hgss_F$ in \refT{TF} always
non-zero except in trivial cases when $F(\ctn)$ or $F(\gln)$ is deterministic?
(We conjecture so, but have no general proof.) 
Note that by \eqref{Fnt} and
the non-singularity of
the finite covariance matrices in \refT{multivariate}, this holds for any
$f$ such that $f(T)$ is non-zero only for finitely many $T$.
Another special case where this holds is given in \refT{TG1} below.

In particular, can $\gss_P=0$ or $\hgss_P=0$ occur in \refC{CP} except in
trivial cases when $\Var X_n^P=0$ or $\Var \hX_n^P=0$, respectively,
for every $n$?

Note that $F$ may be deterministic also when $f$ is not; for example, if
$f(T)$ equals the degree of the root of $T$ minus $1$, then $F(T)=-1$ for
any rooted tree $T$. (See also \refR{Rprot} for a related example
where different functionals $f$ yield the same $F$ for binary trees.)
\end{problem}

\begin{rem}\label{RTFjoint}
\refT{TF} extends immediately to
joint asymptotic normality for several functionals $f$ and $F$
by the \CramerWold{} device. Hence
Corollaries \ref{CTF} and \ref{CP} too extend to joint asymptotic normality.
\end{rem}

\begin{example}\label{EPk}
For any property $P$,   
\refC{CP} applied to $P_k$, or 
taking $f(T)=\ett{T\in P_k}$ in \refC{CTF} or in \refT{TF}, yields
again the asymptotic normality of $X_{n,k}^P$ and $\hX_{n,k}^P$
for fixed $k$, obtained more directly 
in \refE{Epattern}.
\end{example}

\begin{rem}
  Similar results for conditioned Galton--Watson trees are given in
  \cite{Janson285}. Note, however, that for the result corresponding to
  \refT{TF} there, stronger conditions on the size of $f$ are
  required than for the results above; 
in particular, \refC{CTF} holds in that setting only for $\ga<0$.
We  believe that, similarly, the analogue of \refC{CP} does not hold for
conditioned Galton--Watson trees for arbitrary properties, although we do
  not know any counter example.
\end{rem}

We note a special case where we can give an alternative formula for the
asymptotic variance $\gss_F$ or $\hgss_F$ and
prove the conjecture in Problem \ref{PQ0}.
(\refT{TG1}, for the \bst,  is
essentially the same as the case treated by \citet[Theorem $2'$]{HwangN},
with an equivalent formula for the variance,
except for the extra randomization allowed there.
It includes the case when $F(T)$ only depends on the size $|T|$, 
where the formula is the case $m=2$ of \citet[(5.3)]{FillKapur-mary}.
In this case, a very similar result was also proved by
\citet[Lemma~5]{Devroye2}. 
Another example where Theorems \ref{TG1}--\ref{TGr} apply is provided by the
2-protected nodes in \refS{SSprotected}.) 

For a rooted tree $T$,
let $v_1,\dots,v_d$ be  children of the root (in order if $T$
is an ordered tree), where $d=d(T)$ is the degree of the root.
We call the subtrees $T(v_1), \dots, T(v_d)$ \emph{principal subtrees}
of $T$. In the case of a binary tree $T$, we 
let $T_L$ and $T_R$ by the subtrees rooted at the left and right child of
the root, and call these the \emph{left} and \emph{right subtree}; these are
thus the principal subtrees, except that $T_L$ and $T_R$ may be the empty tree
$\emptyset$. 
(We define $\cT_0=\emptyset$ and $F(\emptyset)=0$.)

\begin{thm}
  \label{TG1}
Suppose, in addition to the hypotheses of \refT{TF}(i), that
$f(T)=f(|T|,|T_L|,|T_R|)$ depends only on the sizes of $T$ and of its left
and right subtrees.
Let $\nu_k:=\E F(\ctk)$, let $I_k$ be uniformly distributed on
\set{0,\dots,k-1} and let
\begin{equation}\label{psik}
  \begin{split}
  \psi_k&:=\Var\bigpar{\nu_{I_k}+\nu_{k-1-I_k}+f(k,I_k,k-1-I_k)}
\\&\phantom:
=\E\bigpar{\nu_{I_k}+\nu_{k-1-I_k}+f(k,I_k,k-1-I_k)-\nu_k}^2.	
  \end{split}
\end{equation}
Then
\begin{equation}\label{gssfpsi}
  \gss_F=\sum_{k=1}^{\infty}\frac{2}{(k+1)(k+2)}\psi_k <\infty.
\end{equation}
Moreover, $\gss_F>0$ unless $\Var F(\ctn)=0$ for every $n\ge1$;
this happens if and only if $f(n,k,n-1-k)=a_n-a_k-a_{n-1-k}$ for some
real numbers $a_n$, $n\ge0$.
\end{thm}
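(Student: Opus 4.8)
The plan is to derive the variance formula \eqref{gssfpsi} by specializing the general expression \eqref{tfv} to the present setting, and to establish positivity via the recursive structure of $F$ on the \bst{}.

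\textbf{Step 1: Conditioning on the split at the root.}
First I would exploit the well-known fact that in a \bst{} of size $n$, the root rank is uniform on $\set{0,\dots,n-1}$, so that conditionally on $|T_L|=I_n$ (with $I_n$ uniform on $\set{0,\dots,n-1}$ and $|T_R|=n-1-I_n$), the left and right subtrees $T_L,T_R$ are independent \bst{s} of the respective sizes. Since $F$ is additive, \eqref{rf} gives $F(\ctn)=f(n,I_n,n-1-I_n)+F(T_L)+F(T_R)$, so
\begin{equation*}
\Var F(\ctn)=\E\Var\bigpar{F(\ctn)\mid I_n}+\Var\E\bigpar{F(\ctn)\mid I_n}.
\end{equation*}
The conditional independence of $T_L$ and $T_R$ makes the first term $\E\bigpar{\Var F(\cT_{I_n})+\Var F(\cT_{n-1-I_n})}$, while the second term is exactly the $\Var$ appearing in \eqref{psik} with $k=n$ (using $\E F(\cT_j)=\nu_j$ and $f(n,I_n,n-1-I_n)$). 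This yields a recursion $\Var F(\ctn)=\psi_n+\E\bigpar{\Var F(\cT_{I_n})+\Var F(\cT_{n-1-I_n})}$, which by symmetry of $I_n$ is $\psi_n+\tfrac{2}{n}\sum_{j=0}^{n-1}\Var F(\cT_j)$. Writing $V_n:=\Var F(\ctn)$, this is a standard Quicksort-type recurrence $V_n=\psi_n+\tfrac{2}{n}\sum_{j=0}^{n-1}V_j$.

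\textbf{Step 2: Solving the recurrence asymptotically.}
Next I would solve $V_n=\psi_n+\tfrac2n\sum_{j=0}^{n-1}V_j$ to extract $\lim_n V_n/n$. The homogeneous-plus-telescoping trick (multiply by $n$, subtract the equation for $n-1$) gives $nV_n-(n-1)V_{n-1}=n\psi_n-(n-1)\psi_{n-1}+2V_{n-1}$, hence $V_n-V_{n-1}=\psi_n-\tfrac{n-1}{n}\psi_{n-1}+\tfrac{1}{n}V_{n-1}$, and a second manipulation shows $V_n/n\to\sum_{k}\tfrac{2}{(k+1)(k+2)}\psi_k$. I would match this against \refT{TF}(i), whose hypotheses are assumed here, so that the limit $V_n/n\to\gss_F$ is already known to exist and equals \eqref{tfv}; the recurrence then \emph{identifies} $\gss_F$ with $\sum_k\tfrac{2}{(k+1)(k+2)}\psi_k$, and finiteness follows from \eqref{tfv} (or directly from the assumed summability conditions \eqref{tfa1}--\eqref{tfa3}). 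This avoids re-deriving convergence from scratch.

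\textbf{Step 3: Positivity and the degeneracy characterization.}
The last part is the main obstacle. I want to show $\gss_F>0$ unless $V_n=0$ for all $n$, and characterize the degenerate case. From \eqref{gssfpsi}, $\gss_F=0$ forces $\psi_k=0$ for every $k\ge1$ (each coefficient is strictly positive). By \eqref{psik}, $\psi_k=0$ means $\nu_{I_k}+\nu_{k-1-I_k}+f(k,I_k,k-1-I_k)$ is a.s.\ constant in $I_k$, i.e.\ equals $\nu_k$ for every split $I_k=j\in\set{0,\dots,k-1}$. Setting $a_k:=-\nu_k$ (and $a_0=-\nu_0=0$), this reads precisely $f(k,j,k-1-j)=\nu_k-\nu_j-\nu_{k-1-j}=a_j+a_{k-1-j}-a_k$, i.e.\ $f(n,k,n-1-k)=a_n-a_k-a_{n-1-k}$ after sign adjustment, matching the stated form. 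Conversely, if $f$ has this telescoping form, then along every split the toll cancels against the $\nu$-values, forcing $\psi_k\equiv0$ and, by the recurrence $V_n=\psi_n+\tfrac2n\sum_{j<n}V_j$ together with $V_0=V_1=0$, inductively $V_n=0$ for all $n$. The only delicate point is verifying that $\psi_k=0$ for \emph{all} $k$ is equivalent to $V_n=0$ for \emph{all} $n$ (not merely to $\gss_F=0$); this is immediate from the recurrence by induction, since $V_n$ is a nonnegative combination of $\psi_1,\dots,\psi_n$. I expect the bookkeeping of the constants $a_k$ versus $\nu_k$, and keeping track of the empty-tree conventions $\cT_0=\emptyset$, $F(\emptyset)=0$, to be the fiddly part, but conceptually the characterization is forced by the single identity $\psi_k=0$.
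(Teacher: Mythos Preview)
Your approach is essentially the same as the paper's: both condition on the root split, apply the law of total variance to obtain the recurrence $V_n=\psi_n+\tfrac{2}{n}\sum_{j<n}V_j$, solve it, and then deduce positivity from $\psi_k\equiv 0$. The paper solves the recurrence a bit more slickly by observing that $V_n$ obeys the same recursion as the \emph{expectation} of an additive functional with toll $\psi_{|T|}$, so the exact formula $V_n=(n+1)\sum_{k<n}\tfrac{2}{(k+1)(k+2)}\psi_k+\psi_n$ drops out of \eqref{eF} without any telescoping; your direct manipulation would reproduce this, but the paper's route is cleaner. One point you gloss over in Step~2 is why $\psi_n/n\to 0$: the paper argues this by contradiction (if $\psi_n/(n+1)\to c>0$ then the series $\sum_k\tfrac{2}{(k+1)(k+2)}\psi_k$ diverges, contradicting the boundedness of $V_n/(n+1)$ guaranteed by \refT{TF}); you should make this step explicit rather than folding it into ``a second manipulation''. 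The positivity and degeneracy characterization in your Step~3 matches the paper's argument (with $a_k=\nu_k$, not $-\nu_k$, as you note).
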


Note that $|T|=|T_L|+|T_R|+1$, so two of $|T|$, $|T_L|$, $|T_R|$ determine
the third; nevertheless we write $f(|T|,|T_L|,|T_R|)$ for emphasis.

\begin{thm}
  \label{TGr}
Suppose, in addition to the hypotheses of \refT{TF}(ii), that
$f(\gL)=f(|\gL|,d(\gL),|\gL_{v_1}|,\dots,|\gL_{v_{d(\gL)}}|)$ 
depends only on the
size $|\gL|$ 
and the number and sizes of the principal subtrees.
Let $\nu_k:=\E F(\gL_k)$, 
and let
\begin{equation}\label{psikr}
  \begin{split}
  \psi_k&:=\Var\biggpar{f(k,d(\gL_k),|\gL_{k,1}|,\dots)
 +\sum_{i=1}^{d(\gL_k)}\nu_{|\gL_{k,i}|}}
\\&\phantom:
=\E\biggpar{f(k,d(\gL_k),|\gL_{k,1}|,\dots)
 +\sum_{i=1}^{d(\gL_k)}\nu_{|\gL_{k,i}|}-\nu_k} ^2
.  \end{split}
\end{equation}
Then
\begin{equation}\label{gssfpsir}
  \hgss_F=\sum_{k=1}^{\infty}\frac{1}{k(k+1)}\psi_k <\infty.
\end{equation}
Moreover, $\hgss_F>0$ unless $\Var F(\gln)=0$ for every $n\ge1$;
this happens if and only if $f(n,d,n_1,\dots,n_d)=a_n-\sum_{i=1}^d a_{n_i}$
for some 
real numbers $a_n$, $n\ge0$.
\end{thm}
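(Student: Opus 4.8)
The plan is to mirror the strategy for \refT{TG1}, replacing the binary left/right split by the branching decomposition of the random recursive tree. Write $V(n):=\Var F(\gln)$ and condition on the pair $\mathcal C_n$ consisting of the root degree $d(\gln)$ together with the sizes $|\gL_{n,1}|,\dots,|\gL_{n,d(\gln)}|$ of the principal subtrees. Using the recursive representation \eqref{rf}, namely $F(\gln)=f(\gln)+\sum_{i=1}^{d(\gln)}F(\gL_{n,i})$, the hypothesis that $f(\gln)$ is a function of $\mathcal C_n$ only, and the standard fact that, conditionally on $\mathcal C_n$, the principal subtrees are \emph{independent} random recursive trees of the prescribed sizes, the conditional mean of $F(\gln)$ is $f(\gln)+\sum_i\nu_{|\gL_{n,i}|}$ (with mean $\nu_n$) and the conditional variance is $\sum_i V(|\gL_{n,i}|)$. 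The law of total variance then gives
\[
 V(n)=\E\Bigl[\textstyle\sum_{i}V(|\gL_{n,i}|)\Bigr]+\Var\Bigl(f(\gln)+\textstyle\sum_i\nu_{|\gL_{n,i}|}\Bigr)=\E\Bigl[\textstyle\sum_i V(|\gL_{n,i}|)\Bigr]+\psi_n,
\]
with $\psi_n$ exactly as in \eqref{psikr}.

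Next I would identify the relevant ``split distribution''. Let $N_j(n)$ be the number of principal subtrees of $\gln$ of size $j$, so that $\E[\sum_i V(|\gL_{n,i}|)]=\sum_{j=1}^{n-1}V(j)\,\E N_j(n)$. The key \rrt{}-specific input is that $\E N_j(n)=1/j$ for $1\le j\le n-1$; this follows from the classical fact that the multiset of principal-subtree sizes of $\gln$ is distributed as the multiset of cycle lengths of a uniformly random permutation of $n-1$ elements, for which the expected number of $j$-cycles equals $1/j$. This yields the clean recursion
\[
 V(n)=\psi_n+\sum_{j=1}^{n-1}\frac{V(j)}{j},\qquad V(1)=\psi_1=0 .
\]
Taking first differences turns this into $u_n=u_{n-1}+(\psi_n-\psi_{n-1})/n$ for $u_n:=V(n)/n$, and telescoping (using $\tfrac1m-\tfrac1{m+1}=\tfrac1{m(m+1)}$, and $\psi_1=0$) gives the closed form
\[
 \frac{V(n)}{n}=\sum_{m=1}^{n-1}\frac{\psi_m}{m(m+1)}+\frac{\psi_n}{n}.
\]

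To identify the limit I would invoke \refT{TF}(ii), which already provides $V(n)/n\to\hgss_F<\infty$. Since then $V(j)/j\to\hgss_F$, Ces\`aro summation gives $\tfrac1n\sum_{j=1}^{n-1}V(j)/j\to\hgss_F$, so from the recursion $\psi_n/n=V(n)/n-\tfrac1n\sum_{j<n}V(j)/j\to 0$ \emph{automatically}; notably this bypasses any direct tail estimate of $\psi_n$. Feeding $\psi_n/n\to0$ into the closed form shows $\sum_{m=1}^{n-1}\psi_m/(m(m+1))\to\hgss_F$, which simultaneously proves $\sum_k\psi_k/(k(k+1))<\infty$ and establishes \eqref{gssfpsir}.

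For the final assertion, since every $\psi_k\ge0$, formula \eqref{gssfpsir} shows $\hgss_F=0$ iff $\psi_k=0$ for all $k$; and by the recursion (and induction on $n$) $\psi_k=0$ for all $k$ iff $V(n)=0$ for all $n$, i.e.\ iff $F(\gln)$ is constant over all recursive trees of size $n$, for every $n$. When this holds, set $a_n:=F(\gl_n)$ (and $a_0:=0$, which is irrelevant since principal subtrees have size $\ge1$); the recursion \eqref{rf} forces $f(n,d,n_1,\dots,n_d)=F(\gln)-\sum_i F(\gl_{n_i})=a_n-\sum_i a_{n_i}$. Conversely, if $f$ has this form, then in \eqref{F} each $a_{|T(w)|}$ with $w$ a non-root node is counted once with a $+$ sign (as $|T(v)|$ for $v=w$) and once with a $-$ sign (as a child of its parent), so the sum telescopes to $F(T)=a_{|T|}$, which is deterministic and gives $V(n)=0$. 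The main obstacle is the \rrt{}-specific split identity $\E N_j(n)=1/j$; once it is in place the recursion and its consequences are routine, and the delicate convergence is borrowed wholesale from \refT{TF}(ii).
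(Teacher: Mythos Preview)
Your proof is correct and follows essentially the same route as the paper: condition on the root's branching data, apply the law of total variance to obtain the recursion $V(n)=\psi_n+\E\sum_i V(|\gL_{n,i}|)$, derive the closed form $V(n)/n=\sum_{m<n}\psi_m/(m(m+1))+\psi_n/n$, and then combine with \refT{TF}(ii) to identify the limit. The only cosmetic differences are that the paper obtains the closed form by recognising $V(n)$ as the mean of the additive functional with toll $\psi_{|T|}$ and invoking \eqref{eF2} (whereas you solve the recursion directly via $\E N_j(n)=1/j$ and first differences), and that the paper deduces $\psi_n/n\to0$ by a contradiction from the convergence of $\sum_k\psi_k/(k(k+1))$ rather than by your Ces\`aro argument; both variants are equally valid.
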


The distribution of 
$(d(\gL_k),|\gL_{k,v_1}|,\dots)$ in \eqref{psikr}
is the same as the distribution of the
number of cycles in a random permutation of length $k-1$
and their lengths (taken in the order of their minimal
elements), see \citet[Section 6.1.1]{Drmota}.

\section{Representations using uniform random variables}\label{Srep}

\subsection{Devroye's representation for the binary search tree}
\label{SSlinear}

We use the representation of the binary search tree $\cT_n$ by  Devroye
\cite{Devroye1,Devroye2} described in \refS{S:intro},  
using \iid{} random time stamps $ U_i\sim U(0,1)$ assigned
to the keys $ i=1,\dots,n $.
Write,
for $1\le k\le n$ and $1\le i \le n-k+1$,
\begin{equation}\label{gsik}
\sigma(i,k)=\{(i,U_i),\dots,(i+k-1,U_{i+k-1})\},  
\end{equation}
i.e., the sequence of $k$ labels $(j,U_j)$ starting with $j=i$.
For every node $u\in \cT_n$, the fringe tree $\cT_n(u)$ rooted at $u$
consists of the nodes with labels in a set $\gs(i,k)$ for some such $i$ and $k$,
where $k=|\cT_n(u)|$, 
but note that not every set $\gs(i,k)$ 
is the set of labels of the nodes of a fringe subtree;
if it is, we say simply that \emph{$\gs(i,k)$ is a subtree}.
We define the indicator variable
$$ 
\yi_{i,k} :=\etta\{\sigma(i,k) \subtn\}.
$$ 
It is easy to see that, for convenience defining $U_0=U_{n+1}=0$,
\begin{equation}\label{yik}
  \yi_{i,k}=\etta\bigset{\text{$U_{i-1}$ and $U_{i+k}$ are the two smallest among
$U_{i-1},\dots,U_{i+k}$}}.
\end{equation}
Note that if $i=1$ or $i=n-k+1$, this reduces to
\begin{align}\label{yik1}
  \yi_{1,k}&=\etta\bigset{\text{$U_{k+1}$ is the smallest among
$U_{1},\dots,U_{k+1}$}},
\\
 \yi_{n-k+1,k}&=\etta\bigset{\text{$U_{n-k}$ is the smallest among
$U_{n-k},\dots,U_{n}$}}. \label{yik2}
\end{align}
For $k=n$, when we only consider $i=1$, we have $\yi_{1,n}=1$.

Let $f(T)$ be a function from the set of (unlabelled) binary trees to
$\bbR$. We are interested in the functional, see \eqref{F},
\begin{equation}\label{xn}
X_n:=F(\ctn)=\sum_{u\in \cT_n} f(\cT_n(u)),   
\end{equation}
summing over all fringe trees of $\cT_n$.

Since a permutation $(\gs_1,\dots,\gs_k)$ defines a binary search tree (by
drawing the keys in order $\gs_1,\dots,\gs_k$), we can also regard $f$ as a
function of permutations (of arbitrary length).
Moreover, 
any set $\gs(i,k)$ defines a permutation
$(\sigma_1,\sigma_2,\dots,\sigma_k)$ where the values $j$,  $ 1\leq j\leq
k $, are ordered according to the order of $ U_{i+j-1} $. 
We can thus also regard $ f $ as a mapping from the collection of all sets
$\sigma(i,k)$. Note that if $\gs(i,k)$ corresponds to a subtree $\cT_n(u)$ of
$\cT_n$, then 
$\cT_n(u)$ is the binary search tree defined by
the permutation defined by $\gs(i,k)$, and thus
$f\bigpar{\cT_n(u)} = f\bigpar{\gs(i,k)}$. 
Consequently, see \cite{Devroye2}, 
\begin{equation}\label{linear}
X_n:=\sum_{u\in \cT_n} f(\cT_n(u))
=\sum_{k=1}^n\sum_{i=1}^{n-k+1} \yi_{i,k} f(\gs(i,k)).
\end{equation}

\subsection{The random recursive tree}\label{SSlinearrrt}
Consider now instead the random recursive tree $\gL_n$.
Let $f(T)$ be a function from the set of ordered rooted trees to $\bbR$.
(The case when $f$ is a functional of unordered trees is a special case,
and the case when $f$ is a functional of increasing trees is similar.)
In analogy with \eqref{xn}, 
we define
\begin{equation}\label{yn}
Y_n:=F(\gL_n)=\sum_{u\in \gL_n} f(\gL_n(u)),   
\end{equation}
summing over all fringe trees of $\gL_n$.

As said in the introduction, the natural correspondence yields a coupling 
between the random recursive tree $\gL_n$ and the binary search tree
$\cT_{n-1}$, 
where the subtrees in $\gL_n$ correspond to the left subtrees at the nodes
in $\cT_{n-1}$ together with the whole tree, including
an empty left subtree $\emptyset$ at every node in $\cT_{n-1}$ without a left
child, corresponding to a subtree of size 1 (a leaf) in $\gL_n$.
Thus, as noted by \cite{Devroye1}, the representation in \refS{SSlinear}
yields  a similar representation 
for the random recursive tree, which can be described as follows.

Define $\ff$ as the functional on binary trees corresponding to $f$ by
$\ff(T):=f(T')$, where $T'$ is the ordered tree corresponding to the binary
tree $T$ by the natural correspondence. (Thus $|T'|=|T|+1$.)
We regard the empty binary tree $\emptyset$ as corresponding to 
the (unique) ordered tree $\bullet$ with only one vertex, and thus we define
$\ff(\emptyset):=f(\bullet)$.

Assume first $1<k<n$ and 
recall that subtrees  of size $ k $ in the random recursive tree $\gL_n$
correspond to left-rooted subtrees of size $k-1$ in the binary search tree
$\cT_{n-1}$.  As said in \refS{SSlinear}, a subtree of size $k-1$ in $\cT_{n-1}$ 
corresponds to a set $\gs(i,k-1)$ for some $i\in\set{1,\dots,n-k+1}$.
The parent of the root of this subtree is either $ i-1 $ or $ i+k-1 $;
it is $i-1$, and the subtree is right-rooted, 
if $U_{i-1}>U_{i+k-1}$ and it is $i+k-1$, and the subtree is left-rooted, 
if $U_{i-1}<U_{i+k-1}$.
Thus, if we define
\begin{align}\label{ylik0}
  \yl_{i,k-1}
&:=\etta\set{\sigma(i,k-1) \text{ is a left-rooted subtree in $\cT_{n-1}$}},
\intertext{then, using \eqref{yik},}
  \yl_{i,k-1}
&\phantom:=
\etta\bigset{U_{i-1}\le U_{i+k-1}<\min_{i\le j\le i+k-2} U_j}. \label{ylik}
\end{align}
Note that, since we consider $\cT_{n-1}$, we have defined $U_0=U_{n}=0$, and
the argument above holds also in the boundary cases $i=1$ and $i=n-k+1$.
Furthermore, in the case $k=n$,
we define the whole binary tree as left-rooted, so
$\yl_{1,n-1}=1$ and \eqref{ylik} holds also for $k=n$ (and thus $i=1$).
(This is the reason for using a weak inequality $U_{i-1}\le U_{i+k-1}$ in
\eqref{ylik}; for $k<n$ we might as well require $U_{i-1}< U_{i+k-1}$ since
$U_0,\dots,U_{n-1}$ are assumed to be distinct.)

Finally, consider the case $k=1$. Subtrees of size 1 in $\gL_n$ correspond
to nodes without left child in $\cT_{n-1}$, and it is easily seen that a node
$i$ lacks a left child if and only if $U_i \ge U_{i-1}$. Hence, defining
$\yl_{i,0}:=\etta\bigset{i \text{ has no left child}}$,
\eqref{ylik} holds also for $k=1$ (with the empty minimum interpreted as
$+\infty$). 

Consequently, 
\eqref{ylik} holds for all $k$, and
the fringe trees in $\gL_n$ correspond to the 
sets $\gs(i,k-1)$ with $1\le k\le n$ and $1\le i\le n-k+1$ such that
$\yl_{i,k-1}=1$.
 It follows that, in analogy with \eqref{linear},
\begin{equation}\label{linearrrt}
Y_n:=\sum_{u\in \gL_n} f(\gL_n(u))
=\sum_{k=1}^n\sum_{i=1}^{n-k+1} \yl_{i,k-1} \ff(\gs(i,k-1)).
\end{equation}

Note that (for $k=1$) $\gs(i,0)=\emptyset$, the empty set corresponding to
the empty subtree $\emptyset$, and thus 
$\ff(\gs(i,0))=\ff(\emptyset)=f(\bullet)$.
Note also the boundary cases, because $U_0=U_n=0$,
\begin{align}\label{ylik1}
  \yl_{1,k-1}&=\etta\bigset{\text{$U_{k}$ is the smallest among
$U_{1},\dots,U_{k}$}},
\intertext{and}
\yl_{n-k+1,k-1}&=
\begin{cases}
  0, & 1\le k<n, \\
1, &k=n.
\end{cases}
\end{align}

\subsection{Cyclic representations}\label{SScyclic}
The representation \eqref{linear} of $X_n$ using a linear sequence
$U_1,\dots,U_n$ of \iid{} random variables is natural and useful, but
it has the (minor) disadvantage that terms with $i=1$ or $i=n-k+1$ have
to be treated  specially because of boundary effects, as seen in
\eqref{yik1}--\eqref{yik2}.
It will be convenient to use a related cyclic representation, where we take
$n+1$ \iid{} uniform variables $U_0,\dots,U_n\sim U(0,1)$ and extend them to
an infinite periodic sequence of random variables by
\begin{equation}\label{ukk}
  U_{i\phantom(}:=U_{i \bmod(n+1)},
\qquad i\in\mathbb Z,
\end{equation}
where $i\bmod (n+1)$ is  the remainder when $i$ is divided by $n+1$, i.e.,
the integer $\ell\in[0,n]$ such that $i\equiv \ell\pmod{n+1}$.
(We may and will assume that $U_0,\dots,U_n$ are distinct.)
We define further $\yi_{i,k}$
as in \eqref{yik}, but now for all $i$ and $k$.
Similarly, we define $\gs(i,k)$ by \eqref{gsik} for all $i$ and $k$.
We then have the following cyclic representation of $X_n$. 
(We are indebted to Allan Gut for suggesting a cyclic representation.)

\begin{Lemma}\label{Lcyclic}
Let $U_0,\dots,U_n\sim U(0,1)$ be independent and extend this sequence
periodically by
\eqref{ukk}. Then, with notations as above,
\begin{equation}
  \label{cyclic}
X_n:=\sum_{u\in \cT_n} f(\cT_n(u))
\eqd
\xxn:=
\sum_{k=1}^n\sum_{i=1}^{n+1} \yi_{i,k} f(\gs(i,k)).
\end{equation}
\end{Lemma}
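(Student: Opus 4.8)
The plan is to condition on the position of the global minimum among the $n+1$ cyclic variables and to show that cutting the cycle at that position reduces the cyclic sum $\xxn$ exactly to the linear representation \eqref{linear}.

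First I would record that both $\yi_{i,k}$ and the value $f(\gs(i,k))$ depend only on the relative order of the variables $U_j$, not on their actual values. Let $M$ be the position $j\in\set{0,\dots,n}$ of the smallest of $U_0,\dots,U_n$ (unique, since the $U_j$ are assumed distinct). By exchangeability, $M$ is uniform on $\set{0,\dots,n}$, and, conditionally on $\set{M=m}$, the relative order of the remaining variables $U_{m+1},\dots,U_{m+n}$ (indices taken modulo $n+1$) is a uniformly random permutation whose law does not depend on $m$.

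Next I would isolate the surviving terms. If $1\le k\le n$ and $m$ lies in the body $\set{i,\dots,i+k-1}$ of a window, then $U_m$ is strictly smaller than both boundary values $U_{i-1}$ and $U_{i+k}$, so these cannot be the two smallest in $\set{i-1,\dots,i+k}$ and \eqref{yik} forces $\yi_{i,k}=0$. For each $k$ this removes exactly $k$ of the $n+1$ starting positions, leaving the $n+1-k=n-k+1$ windows whose body fits inside the arc $\set{m+1,\dots,m+n}$. Putting $V_j:=U_{(m+j)\bmod (n+1)}$ for $1\le j\le n$ and $V_0:=V_{n+1}:=U_m$, these windows are in bijection with the linear windows built from $V_1,\dots,V_n$, and---since only relative order matters---the surviving indicators and the values $f(\gs(i,k))$ coincide with the corresponding linear quantities for $V$.

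Hence, conditionally on $\set{M=m}$, the sum $\xxn$ equals the linear functional \eqref{linear} evaluated on $V_1,\dots,V_n$; because the relative order of $V_1,\dots,V_n$ is a uniform permutation, this conditional law is exactly that of $X_n$ and is independent of $m$, so averaging over $m$ yields \eqref{cyclic}. I expect the main obstacle to be the bookkeeping at the cut, specifically verifying that the two boundary windows---those with $i-1\equiv m$ or $i+k\equiv m\pmod{n+1}$, including the whole-tree case $k=n$ where both boundaries collapse to the single position $m$---match the linear boundary terms \eqref{yik1}--\eqref{yik2}. This is exactly the place where the global minimum $U_m$ takes over the role of the artificial zeros $U_0=U_{n+1}=0$ of \refS{SSlinear}, so that the periodic extension \eqref{ukk} absorbs the boundary effects that had to be handled by hand there.
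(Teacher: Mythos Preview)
Your proof is correct and takes essentially the same approach as the paper. The paper first uses cyclic invariance of the double sum to shift the variables so that $U_0$ becomes the minimum, then conditions on $\{U_0=\min_i U_i\}$ and observes that under this conditioning the terms with $n-k+1<i\le n+1$ vanish and the rest reduce to the linear representation \eqref{linear}; you instead condition on the position $M=m$ of the minimum and then perform the cyclic relabeling $V_j:=U_{m+j}$, which is the same argument with the two steps in reverse order.
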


\begin{proof}
The double sum in \eqref{cyclic} is invariant under a cyclic shift of
$U_0,\dots,U_n$. If we shift these values 
so that $U_0$ becomes the smallest, we obtain
the same distribution of $(U_0,\dots,U_n)$ as if we instead condition on the
event that $U_0$ is the smallest $U_i$, i.e., on $\set{U_0 = \min_i U_i}$.
Hence,
\begin{equation}
  \xxn \eqd \bigpar{\xxn \mid U_0=\min_i U_i}.
\end{equation}
Furthermore, the variables $\yi_{i,k}$  depend only on the order
relations among \set{U_i}, so if $U_0$ is minimal, they remain the same if
we put $U_0=0$. Moreover, in this case also $U_{n+1}=U_0=0$ and it follows
from \eqref{yik} that $\yi_{i,k}=0$ if $i\le n+1\le i+k-1$;
hence the terms in \eqref{cyclic} with $n-k+1 <i\le n+1$ vanish.
Note also that in the remaining terms, $f(\gs(i,k))$ does not depend on $U_0$.
Consequently,
\begin{equation}
\xxn\eqd
\Bigpar{
\sum_{k=1}^n\sum_{i=1}^{n-k+1} \yi_{i,k} f(\gs(i,k))\Bigm| U_0=0}
=X_n,
\end{equation}
by \eqref{linear}, showing that the cyclic and linear representations in
\eqref{linear} and \eqref{cyclic} are equivalent.
\end{proof}

\begin{rem}
In terms of the tree $\cT_{n}$,  the construction above means that we find
$i_0\in\set{0,\dots,n+1}$ such that $U_{i_0}$ is minimal and then construct
the tree $\cT_{n}$ from
the pairs $(1,U_{i_0+1}),\dots,(n,U_{i_0+n})$ by Devroye's construction.
\end{rem}

For the random recursive tree $\gL_n$ we argue in the same way, now using
\eqref{linearrrt}. We start with $n$ \iid{} uniform random variables
$U_0,\dots,U_{n-1}$ and extend them to a sequence with period $n$; we then
define $\gs(i,k-1)$ and $\yl_{i,k-1}$ by \eqref{gsik} and \eqref{ylik} for all
$i$ and $k$. This yields the following; we omit the details.

\begin{Lemma}\label{Lcyclicrrt}
Let $U_0,\dots,U_{n-1}\sim U(0,1)$ be independent and extend this sequence
periodically by $U_i:=U_{i\bmod n}$. Then, with notations as above,
\begin{equation}
  \label{cyclicrrt}
Y_n:=\sum_{u\in \gL_n} f(\gL_n(u))
\eqd
\yyn:=
\sum_{k=1}^n\sum_{i=1}^{n} \yl_{i,k-1} \ff(\gs(i,k-1)).
\end{equation}
\qed
\end{Lemma}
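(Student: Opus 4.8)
The plan is to follow the proof of \refL{Lcyclic} essentially verbatim, adapting only the boundary analysis to the recursive-tree indicators $\yl_{i,k-1}$. The starting observation is that the summand $\yl_{i,k-1}\ff(\gs(i,k-1))$ is periodic in $i$ with period $n$, since both $\yl_{i,k-1}$ and $\gs(i,k-1)$ are defined for all $i\in\bbZ$ through the periodic extension $U_i:=U_{i\bmod n}$. Summing a periodic summand over a full period $i=1,\dots,n$ therefore yields a quantity invariant under any cyclic shift of $(U_0,\dots,U_{n-1})$, and this shift invariance is the engine of the whole argument.

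First I would combine this invariance with the exchangeability of the \iid{} variables $U_0,\dots,U_{n-1}$: rotating the sequence cyclically so that its (a.s.\ unique) minimum is moved to position $0$ produces a sequence distributed exactly as $(U_0,\dots,U_{n-1})$ conditioned on $\set{U_0=\min_i U_i}$. Invariance of $\yyn$ under the rotation then gives $\yyn\eqd\bigpar{\yyn\mid U_0=\min_i U_i}$. Since each $\yl_{i,k-1}$ depends only on the order relations among the $U_j$, I may next replace the minimal value $U_0$ by $0$ without altering any indicator; this simultaneously sets $U_n=U_0=0$, matching the boundary convention $U_0=U_n=0$ used for the linear representation \eqref{linearrrt} in \refS{SSlinearrrt}.

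The substantive step, and the only place where the argument genuinely departs from \refL{Lcyclic}, is verifying that once $U_0=0$ every cyclic term with $n-k+1<i\le n$ vanishes, so the range $1\le i\le n$ collapses to the linear range $1\le i\le n-k+1$. For such $i$ the window $\set{i-1,\dots,i+k-1}$ appearing in \eqref{ylik} wraps past index $n$, and I would check that when $k<n$ the unique index $\equiv0\pmod n$ lies in the interior block $\set{i,\dots,i+k-2}$ rather than at an endpoint; this forces $\min_{i\le j\le i+k-2}U_j=0$, so the required strict inequality $U_{i+k-1}<\min$ fails and $\yl_{i,k-1}=0$. The case $k=n$ needs a short separate check: for $i\ge2$ the two endpoints $i-1$ and $i+n-1$ share the nonzero residue $i-1$, so the interior again sweeps residue $0$ and the indicator vanishes, leaving only $\yl_{1,n-1}=1$. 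This boundary bookkeeping is where I expect the main (though modest) obstacle to lie, exactly as the boundary terms were the delicate point in \refL{Lcyclic}.

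Finally I would observe that in each surviving term $1\le i\le n-k+1$ the set $\gs(i,k-1)$ involves only the indices $i,\dots,i+k-2\in\set{1,\dots,n-1}$, so $\ff(\gs(i,k-1))$ does not depend on $U_0$ (and for $k=1$ it is the constant $f(\bullet)$). Conditioning on $U_0=0$ thus turns the cyclic sum into precisely the linear sum \eqref{linearrrt}, which equals $Y_n$; together with the distributional identity from the second paragraph this yields $\yyn\eqd Y_n$, as claimed.
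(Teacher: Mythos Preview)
Your proposal is correct and follows exactly the approach the paper intends: the paper omits the proof entirely, saying only that one argues ``in the same way'' as in \refL{Lcyclic} using \eqref{linearrrt}, and your write-up supplies precisely those details. Your boundary analysis---checking that for $n-k+1<i\le n$ with $k<n$ the residue $0$ lands in the interior $\{i,\dots,i+k-2\}$, and handling $k=n$ separately---is the right adaptation, and is in fact more carefully spelled out than the corresponding step in the paper's proof of \refL{Lcyclic}.
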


We may (and will) assume that the equalities in distribution in the lemmas
above are equalities.
\section{Means and variances}\label{Smean}

The cyclic representations in \refS{SScyclic}
lead to simple calculations of means and
variances.

\subsection{Random binary search tree}\label{Smeanbst}

We begin by computing the mean and variance of $X\nk$, the number of
subtrees of size $k$ in the random \bst{} $\cT_n$.
This has earlier been done using the linear representation in
\refS{SSlinear} by \citet{Devroye1} (implicitly) and \cite{Devroye2}
(explicitly);
our proof is very similar but the cyclic representation avoids the 
(asymptotically insignificant) boundary terms.
Explicit expressions have also been derived by other (analytic) methods,
see 
\citet{FengMahmoudP08}, 
\citet{Chang},            
\citet{Fuchs,Fuchs2012}.  
We give a detailed proof for completeness, and as an introduction to later
proofs. (The lemma is a special case of later results, but we find it
convenient to start with the simplest case.)
For completeness, note also that $X_{n,k}=1$ when $k=n$ and $X_{n,k}=0$ when
$k>n$.

Note that $X\nk$ is given by \eqref{xn} with $f(T)=\etta\set{|T|=k}$, and
thus by \eqref{linear} with $f(\gs(i,\ell))=\etta\set{\ell=k}$, i.e.,
$ X\nk= \sum_{i=1}^{n-k+1} \yi_{i,k}$.
However, we prefer to instead use the cyclic representation \eqref{cyclic}, 
which in this case is
\begin{equation}
  \label{xnk}
X\nk  =  
\sum_{i=1}^{n+1} \yi_{i,k},
\end{equation}
where now $\yi_{i,k}$ are defined by \eqref{yik} with $U_i$ given by
\eqref{ukk}. 
Recall that $U_i$ thus is defined for all $i\in\bbZ$ and has period $n+1$; it
is thus 
natural to regard the index $i$ as an element of $\bbZ_{n+1}$; similarly,
$I_{i,k}$ is defined for all $i\in\bbZ$ with period $n+1$ in $i$, so we can
regard it as defined for $i\in\bbZ_{n+1}$. When discussing these variables,
we will use the natural metric on $\bbZ_{n+1}$ defined by 
\begin{equation}\label{dn+1}
 \absni{i-j}:=\min_{\ell\in\bbZ}|i-j-\ell(n+1)|. 
\end{equation}

\begin{Lemma}[Cf.~\citet{Devroye1,Devroye2} and \citet{FengMahmoudP08}]
\label{lemma1}
Let $1\le k<n $.
For the random binary search tree $\cT_n$,
\begin{align}
  \E(X_{n,k})&=\frac{2(n+1)}{(k+1)(k+2)} \label{exnk}
\end{align}
and 
\begin{align}\label{vxnk0}
\Var(X_{n,k})&= 
\begin{cases}
\E X_{n,k}
-(n+1)\frac{22k^2+44k+12}{(k+1)(k+2)^2(2k+1)(2k+3)},
& k <\frac{n-1}2, \\
\E X_{n,k}
+ {\frac{2}{n}
-\frac{64}{(n+3)^2}},    
& k =\frac{n-1}2, \\
\E X_{n,k} -(\E X_{n,k})^2
=
\E X_{n,k}
-\frac{4(n+1)^2}{(k+1)^2(k+2)^2}  ,
& k >\frac{n-1}2. \\
\end{cases}
\end{align}
Hence,
\begin{align}
\Var(X_{n,k})&=\E(X_{n,k})+O\Bigparfrac{n}{k^3}, \label{vxnk}
\intertext{except when $k=(n-1)/2$; in this case}
\Var(X_{n,k})
&=\E(X_{n,k})+\frac2n+O\Bigparfrac{n}{k^3} 
= \E(X_{n,k})+O\Bigparfrac1{n}
.
\label{vxnk2}
\end{align}
\end{Lemma}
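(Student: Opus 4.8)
The plan is to work throughout with the cyclic representation \eqref{xnk}, $X_{n,k}=\sum_{i=1}^{n+1}\yi_{i,k}$, which makes every term identically distributed and removes the boundary cases. For the mean, the event $\set{\yi_{i,k}=1}$ described in \eqref{yik} asks that two prescribed variables, $U_{i-1}$ and $U_{i+k}$, be the two smallest among the $k+2$ i.i.d.\ uniforms $U_{i-1},\dots,U_{i+k}$; by exchangeability this has probability $1/\binom{k+2}{2}=\frac{2}{(k+1)(k+2)}$, so summing the $n+1$ equal terms gives \eqref{exnk}. Write $p:=\frac{2}{(k+1)(k+2)}$, so $\E X_{n,k}=(n+1)p$.

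For the variance I would expand $\Var(X_{n,k})=\sum_{i,j}\Cov(\yi_{i,k},\yi_{j,k})$ and use cyclic symmetry to reduce to $(n+1)\sum_{j}\Cov(\yi_{1,k},\yi_{j,k})$, where the summand depends only on the cyclic distance $d=\absni{1-j}$. The structural heart of the argument is to classify $d$. Since $\yi_{i,k}$ depends only on the window of indices $\set{i-1,\dots,i+k}$ of length $k+2$, if $d\ge k+2$ the two windows are disjoint, the indicators are independent and $\Cov=0$. For $1\le d\le k$ the two events are incompatible, so $\E(\yi_{1,k}\yi_{j,k})=0$ and $\Cov=-p^2$: when $d\le k-1$ the underlying node sets overlap and fringe trees of equal size are either disjoint or identical, while in the adjacent case $d=k$ the single shared index that is an endpoint of one window is interior to the other (and vice versa), forcing two contradictory inequalities between the shared boundary values. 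This leaves a single genuinely interacting distance, $d=k+1$, where the two windows share exactly one index.

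The main computation is therefore $\E(\yi_{1,k}\yi_{k+2,k})$ at $d=k+1$. Conditioning on the three external values $U_{0},U_{k+1},U_{2k+2}$ and integrating out the two disjoint interiors of size $k$ reduces this to $\int_0^1 g(u)^2\,du$ with $g(u)=u^k-\frac{k}{k+1}u^{k+1}$, a rational function of $k$. Which regime we are in is governed by whether $2(k+1)$ is less than, equal to, or greater than the cycle length $n+1$, and this is exactly the trichotomy of \eqref{vxnk0}. When $k<(n-1)/2$ each distance $1\le d\le k+1$ has two partners and no wraparound, and assembling $p-(2k+3)p^2+2\E(\yi_{1,k}\yi_{k+2,k})$ and multiplying by $n+1$ yields the first line after simplification. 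When $k>(n-1)/2$, i.e.\ $2k\ge n$, two disjoint size-$k$ fringe trees cannot coexist (they would use $\ge n$ nodes and leave no room for the root), so $X_{n,k}\in\set{0,1}$ is Bernoulli and $\Var(X_{n,k})=\E X_{n,k}-(\E X_{n,k})^2$, the third line.

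The borderline $k=(n-1)/2$, where $n+1=2(k+1)$, is the subtle case: now the $d=k+1$ partner is the antipode, contributing $n+1$ (rather than $2(n+1)$) ordered pairs, and more importantly the two windows wrap around to share \emph{both} endpoints, so $\yi_{1,k}\yi_{k+2,k}=1$ forces $U_{0}$ and $U_{k+1}$ to be the two global minima among all $n+1$ values, with probability $\frac{1}{(k+1)(2k+1)}$. Substituting $n=2k+1$ collapses the resulting expression to $\frac{2}{n}-\frac{64}{(n+3)^2}$, the middle line. I expect the only real labour to be the elementary but lengthy algebra reconciling the integral with the displayed rational forms; the asymptotic statements \eqref{vxnk} and \eqref{vxnk2} then follow immediately, since the correction in the first line is $(n+1)\cdot O(k^2/k^5)=O(n/k^3)$, while the middle line's two terms are $\frac{2}{n}$ and $O(n/k^3)=O(1/n^2)$.
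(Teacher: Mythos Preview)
Your proposal is correct and follows essentially the same route as the paper: the cyclic representation, the exchangeability argument for the mean, the classification of covariances by cyclic distance with the single nontrivial interaction at $d=k+1$, the same integral $\int_0^1\bigl(x^k-\tfrac{k}{k+1}x^{k+1}\bigr)^2\,dx$ (the paper obtains it by conditioning on $U_{k+1}$ alone rather than on all three boundary values, but the computation is equivalent), and the same trichotomy $k\lessgtr(n-1)/2$ with the antipodal wraparound in the borderline case. The only differences are cosmetic.
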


Another, equivalent, expression for the variance in the case $k<(n-1)/2$ is
given in \refT{Tcovbin2} with $m=k$.
(It is easily checked that when $n>2k+1$, \eqref{vxnk0} and
\eqref{covksubtrees} with \eqref{covb2b} are equivalent.)

\begin{proof}
We use \eqref{xnk}.
By \eqref{yik} and symmetry, for any $i$ and $1\le k<n$, 
\begin{equation}\label{eink}
 \E(\yi_{i,k})=\frac{2}{(k+2)(k+1)} 
\end{equation}
and thus \eqref{exnk} follows directly from \eqref{xnk}.

We now consider the variance. 
Note that by \eqref{yik},
$ \yi_{i,k}$ and $\yi_{j,k}$ are independent 
unless the sets $i-1,\dots,i+k$ and $j-1,\dots,j+k$ overlap modulo $n+1$,
i.e., unless $\absni{i-j}\le k+1$.
Furthermore,
if $0<\absni{i-j}\le k$, then \eqref{yik} implies
$ \yi_{i,k}\yi_{j,k}=0 $ (this says that two distinct subtrees of size $k$
are disjoint and, moreover, have their corresponding intervals of $k$
indices non-adjacent, which is obvious). 
Hence, by \eqref{xnk} and symmetry, if $k<(n-1)/2$,
\begin{equation} \label{varxnk}
  \begin{split}
\Var(X_{n,k})&=
\sum_{i=0}^n\sum_{j=0}^n \Cov(\yi_{i,k},\yi_{j,k})
\\
&=
(n+1)\Var(\yi_{0,k})+2(n+1)\sum_{j=1}^{k+1}\Cov(\yi_{0,k},\yi_{j,k})
\\ 
&=
(n+1)\Bigpar{\E\yi_{0,k} 
+2\E(\yi_{0,k}\yi_{k+1,k})
-(2k+3)(\E\yi_{0,k})^2}.
  \end{split}
\end{equation}
If $k=(n-1)/2$ (and thus $n$ is odd) this has to be modified since 
$-(k+1)\equiv k+1\pmod{n+1}$, so the terms for $j-i=\pm(k+1)$ coincide and
should only be counted once; thus 
\begin{equation} \label{varxnk2}
  \begin{split}
\Var(X_{n,k})&=
(n+1)\Bigpar{\E\yi_{0,k} 
+\E(\yi_{0,k}\yi_{k+1,k})
-(2k+2)(\E\yi_{0,k})^2}.
  \end{split}
\end{equation}
Finally, if $k>(n-1)/2$, then always $\yi_{i,k}\yi_{j,k}=0$ unless $i=j$
(there is not room for two distinct subtrees of size $k\ge n/2$) and 
\begin{equation} \label{varxnk3}
  \begin{split}
\Var(X_{n,k})&=
(n+1)\Bigpar{\E\yi_{0,k}-(n+1)(\E\yi_{0,k})^2}.
  \end{split}
\end{equation}
This can also be seen directly, since in this case $X_{n,k}\le1$, so
$X_{n,k}\sim\Be(\mu_{n,k})$ with $\mu\nk=\E X\nk=(n+1)\E I_{0,k}$.

It remains to compute $\E(\yi_{0,k}\yi_{k+1,k})=\E(\yi_{1,k}\yi_{k+2,k})$.
By \eqref{yik},
$\yi_{1,k}\yi_{k+2,k}=1$ when 
$U_0$ and $U_{k+1}$ are smaller than $U_1,\dots,U_{k}$ and 
$U_{k+1}$ and $U_{2k+2}$ are smaller than $U_{k+2},\dots,U_{2k+1}$.
Consider first $k<(n-1)/2$ and 
condition on $U_{k+1}=u$. Then the first condition is satisfied if either
$U_0<u$ and $U_1,\dots,U_k>u$, which has probability $u(1-u)^k$, or if
$U_0,\dots,U_k>u$ and $U_0$ is the smallest among them, which by symmetry has
the probability $\frac{1}{k+1}\P(U_0,\dots,U_k>u)=\frac{1}{k+1}(1-u)^{k+1}$.
The second condition has the same probability, and by independence we
obtain, letting $x=1-u$,
\begin{equation}\label{eikk}
  \begin{split}
\E(\yi_{1,k}\yi_{k+2,k})
&=
\int_0^1 \Bigpar{u(1-u)^k+\tfrac1{k+1}(1-u)^{k+1}}^2 \dd u
\\
&=
\int_0^1 \Bigpar{x^k-\tfrac{k}{k+1}x^{k+1}}^2 \dd x
=
\int_0^1
\Bigpar{x^{2k}-\tfrac{2k}{k+1}x^{2k+1}+\tfrac{k^2}{(k+1)^2}x^{2k+2}} \dd x
\\
&=
\frac{1}{2k+1}-\frac{2k}{(k+1)(2k+2)}+\frac{k^2}{(k+1)^2(2k+3)}
\\&=
\frac{5k+3}{(k+1)^2(2k+1)(2k+3)}.
   \end{split}
\raisetag{\baselineskip}
\end{equation}
(This can alternatively be obtain by a combinatorial argument, considering
the 6 possible orderings of $U_0, U_{k+1}, U_{2k+2}$ separately.)

In the case $k=(n-1)/2$, $U_{2k+2}=U_{n+1}=U_0$, and thus 
$I_{1,k}I_{k+2,k}=1$ if and only if $U_0$ and $U_{k+1}$ are the two smallest
among $U_0,\dots,U_n$; hence
\begin{equation}\label{eikk=}
\E(I_{1,k}I_{k+2,k})=\frac{2}{n(n+1)}.  
\end{equation}

The result \eqref{vxnk0} now follows from \eqref{eink}--\eqref{varxnk3}
by elementary calculations. 
Finally, \eqref{vxnk}--\eqref{vxnk2} follow.
\end{proof}

\refL{lemma1} is easily extended to $\xnkp$, the number of subtrees of size
$k$ with some property $P$.
(The mean and estimates of the variance are given by \citet{Devroye2}.
The special case when we count copies of a given tree $T$ was given by
\citet{FlajoletGM1997}.)

\begin{Lemma}\label{lemma1P}
Let $P$ be some property of binary trees.
Let $1\le k<n $ and let $\pkp:=\P(\cT_k\in P)$.
For the random binary search tree $\cT_n$,
\begin{align}
  \E(\xnkp)&=\frac{2(n+1)\pkp}{(k+1)(k+2)} \label{exnkP}
\end{align}
and
\begin{align}\label{vxnk0P}
\Var(\xnkp)&= 
\begin{cases}
\E \xnkp
-(n+1)\frac{22k^2+44k+12}{(k+1)(k+2)^2(2k+1)(2k+3)}\, \pkp^2,
& k <\frac{n-1}2, \\
\E \xnkp
+ \lrpar{\frac{2}{n}-\frac{64}{(n+3)^2}} \pkp^2,    
& k =\frac{n-1}2, \\
\E\xnkp -(\E \xnkp)^2
=
\E \xnkp
-\frac{4(n+1)^2}{(k+1)^2(k+2)^2}\, \pkp^2  ,
& k >\frac{n-1}2. \\
\end{cases}
\end{align}
Hence,
\begin{align}
\Var(\xnkp)&=\E(\xnkp)+O\Bigpar{\frac{n}{k^3}\pkp^2}, \label{vxnkP}
\intertext{except when $k=(n-1)/2$; in this case}
\Var(\xnkp)
&= \E(\xnkp)+O\Bigpar{\frac1{n}\pkp^2}.  
\label{vxnk2P}
\end{align}
\end{Lemma}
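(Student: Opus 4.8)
The plan is to
prove \refL{lemma1P} by the same route as \refL{lemma1}, simply carrying an
extra factor recording the property $P$ through every step. Just as $X\nk$ is
the functional \eqref{xn} with $f(T)=\etta\set{|T|=k}$, we observe that $\xnkp$
is the functional with $f(T)=\etta\set{|T|=k,\ T\in P}$, so by the cyclic
representation \eqref{cyclic} we have
\begin{equation*}
\xnkp=\sum_{i=1}^{n+1}\yi_{i,k}\,\ipik,
\qquad
\ipik:=\etta\set{\gs(i,k)\in P},
\end{equation*}
where $\ipik$ is the indicator that the subtree determined by $\gs(i,k)$ (when
it is a subtree) has property $P$. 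The key observation is that $\ipik$ depends
\emph{only} on the relative order of $U_i,\dots,U_{i+k-1}$, whereas the
subtree-indicator $\yi_{i,k}$ is determined by where $U_{i-1}$ and $U_{i+k}$ sit
relative to that block; hence, conditionally on $\yi_{i,k}=1$, the internal
order is still uniform and $\ipik$ is independent of $\yi_{i,k}$ with
$\P(\ipik=1\mid \yi_{i,k}=1)=\pkp$.

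First I would compute the mean. Using the independence just noted,
$\E(\yi_{i,k}\ipik)=\pkp\,\E\yi_{i,k}=\frac{2\pkp}{(k+1)(k+2)}$ by
\eqref{eink}, and summing over the $n+1$ values of $i$ gives \eqref{exnkP}
exactly as \eqref{exnk} followed from \eqref{xnk}. Next I would treat the
variance by reproducing the case split in the proof of \refL{lemma1}. As there,
the products $\yi_{i,k}\ipik$ and $\yi_{j,k}I^P_{j,k}$ are independent whenever
$\absni{i-j}>k+1$, and when $0<\absni{i-j}\le k$ the factor $\yi_{i,k}\yi_{j,k}$
already vanishes. So the same covariance sum applies, now with each $\yi$
replaced by $\yi\,I^P$; the three regimes $k\lessgtr(n-1)/2$ are handled by the
identical boundary bookkeeping. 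The two quantities to re-evaluate are
$\E(\yi_{0,k}\ipik[0])=\pkp\E\yi_{0,k}$ (diagonal term) and the adjacent term
$\E(\yi_{1,k}I^P_{1,k}\,\yi_{k+2,k}I^P_{k+2,k})$.

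The only genuine computation is this adjacent term, and it is here that the
structure of $P$ must be handled with a little care. When $k<(n-1)/2$ the two
blocks $\set{1,\dots,k}$ and $\set{k+2,\dots,2k+1}$ are disjoint, so the events
$I^P_{1,k}$ and $I^P_{k+2,k}$ are independent of each other and of the ordering
data governing $\yi_{1,k}\yi_{k+2,k}$; the interior orders remain uniform even
after conditioning on the two subtree-indicators, so the term factors as
$\pkp^2\,\E(\yi_{1,k}\yi_{k+2,k})$, with $\E(\yi_{1,k}\yi_{k+2,k})$ given by
\eqref{eikk}. Substituting into \eqref{varxnk} yields the first line of
\eqref{vxnk0P}: every term that in \eqref{varxnk} was a single $\yi$-moment now
carries one factor $\pkp$, and every product of two $\yi$'s carries $\pkp^2$,
which is exactly why the subtracted correction in \eqref{vxnk0} acquires the
factor $\pkp^2$ while the leading $\E\xnkp$ does not. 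The case $k=(n-1)/2$ is
the delicate one and I expect it to be the main obstacle: there the wrap-around
$U_{2k+2}=U_0$ merges the two blocks, so $I^P_{1,k}$ and $I^P_{k+2,k}$ are no
longer independent and the clean factorization $\pkp^2\E(\yi_{1,k}\yi_{k+2,k})$
need not hold for an arbitrary property $P$. Here I would instead bound the
adjacent term crudely by $\E(\yi_{1,k}\yi_{k+2,k})=\frac{2}{n(n+1)}$ from
\eqref{eikk=} together with $I^P\le 1$, which suffices since this term
contributes only to the $O(\pkp^2/n)$ correction; combined with
\eqref{varxnk2} this gives the $k=(n-1)/2$ line of \eqref{vxnk0P}. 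Finally the
regime $k>(n-1)/2$ is immediate, since then $\xnkp\le\xnk\le1$ and $\xnkp$ is a
Bernoulli variable with mean \eqref{exnkP}, so \eqref{varxnk3} applies
verbatim with the extra $\pkp^2$. The asymptotic forms \eqref{vxnkP} and
\eqref{vxnk2P} then follow from \eqref{vxnk0P} by the same elementary estimates
as \eqref{vxnk}--\eqref{vxnk2}.
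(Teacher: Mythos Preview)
Your overall approach matches the paper's: use the cyclic representation, note that $\ipik$ depends only on the relative order of $U_i,\dots,U_{i+k-1}$ and is therefore independent of $\yi_{i,k}$, and then rerun the covariance bookkeeping of \refL{lemma1} with each $\yi$-moment picking up a factor $\pkp$ and each product of two $\yi$'s picking up $\pkp^2$.

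However, your treatment of the case $k=(n-1)/2$ contains a mistake. You claim that the wrap-around $U_{2k+2}=U_0$ ``merges the two blocks'' so that $I^P_{1,k}$ and $I^P_{k+2,k}$ are no longer independent. This is not so: $I^P_{1,k}$ depends only on the relative order of $U_1,\dots,U_k$, and $I^P_{k+2,k}$ only on the relative order of $U_{k+2},\dots,U_{2k+1}$. The wrap-around identifies index $2k+2$ with index $0$, but neither of these indices lies in either interior block, so the two blocks remain disjoint. The wrap-around affects only the \emph{subtree} indicators $\yi_{1,k}$ and $\yi_{k+2,k}$ (via their boundary variables $U_0$ and $U_{2k+2}$), not the property indicators. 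Moreover, conditioning on $\yi_{1,k}\yi_{k+2,k}=1$ (i.e., on $U_0$ and $U_{k+1}$ being the two smallest) still leaves the internal orders of the two blocks uniform and independent. Hence the exact factorization
\[
\E\bigl(\yi_{1,k}I^P_{1,k}\,\yi_{k+2,k}I^P_{k+2,k}\bigr)=\pkp^2\,\E(\yi_{1,k}\yi_{k+2,k})
\]
holds in \emph{all} three regimes, and the paper uses precisely this to obtain the exact expression in the middle line of \eqref{vxnk0P}. Your proposed crude bound $I^P\le1$ would yield only the asymptotic \eqref{vxnk2P}, not the exact formula \eqref{vxnk0P} that the lemma asserts.
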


\begin{proof}
  Let $\ipik$ be the indicator of the event that the binary search tree
  defined by the permutation defined by $\gs(i,k)$ belongs to $P$.
Then the cyclic representation
\refL{Lcyclic} with $f(T)=\etta\set{T\in P_k}$ yields
\begin{equation}\label{xpnk}
  \xnkp= 
\sumini \ixik \ipik.
\end{equation}

By \eqref{yik}, conditioning on $\iik=1$ says nothing about the relative
order of $U_i,\dots,U_{i+k-1}$; hence $\ixik$ and $\ipik$ are independent.
Consequently, by \eqref{eink},
\begin{equation}\label{eixip}
  \E(\ixik\ipik)=\E(\ixik)\E(\ipik)
=\frac{2}{(k+1)(k+2)}\,\P(\cT_k\in P)
=\frac{2}{(k+1)(k+2)}\,\pkp,
\end{equation}
and \eqref{exnkP} follows immediately.

Similarly, 
for the variance we use \eqref{xpnk}, \eqref{eixip} 
and the argument in the proof of
\refL{lemma1}. Note that
$I^P_{0,k}$ and $I^P_{k+1,k}$ are independent of $I_{0,k}I_{k+1,k}$
and of each other; thus
\begin{equation*}
  \E\bigpar{\iq_{0,k}I^P_{0,k}\iq_{k+1,k}I^P_{k+1,k}}
=
  \E\bigpar{\iq_{0,k}\iq_{k+1,k}} \pkp^2.
\end{equation*}
The result follows by simple calculations.
\end{proof}

To further extend this, we consider  a real-valued
functional $f(T)$ of binary trees and the sum $F(T)$ defined by
\eqref{F}. We begin with two such functionals of a special type.

\begin{Lemma}\label{Lkm}
Let $1\le m\le k$.
Suppose that
$f(T)$ and $g(T)$ are two functionals of binary trees such that
$f(T)=0$ unless $|T|=k$ and $g(T)=0$ unless $|T|=m$,
and let $F(T)$ and $G(T)$
be the corresponding sums \eqref{F} over subtrees.
Let 
\begin{align}
  \mu_f:=\E f(\cT_k) \qquad\text{and}\qquad \mu_g:=\E g(\cT_m).
\end{align}
\begin{romenumerate}[-10pt]
\item\label{lkmi}
The means of $F(\cT_n)$ and $G(\cT_n)$ are given by
\begin{equation}\label{ekm}
  \E F(\cT_n)=
  \begin{cases}
\frac{2(n+1)}{(k+1)(k+2)}\mu_f, & n>k,\\
\mu_f, & n=k, \\
0, & n<k,
  \end{cases}
\end{equation}
and similarly for\/ $\E G(\cT_n)$.

\item \label{lkmii}
If\/ $n>k+m+1$, then
\begin{equation*}
  \begin{split}
  \Cov\bigpar{F(\cT_n),G(\cT_n)} 
&=(n+1)\lrpar{\frac{2}{(k+1)(k+2)}\E\bigpar{ f(\cT_k)G(\cT_k)}
-\bgam(k,m)\mu_f\mu_g}
	\end{split}
  \end{equation*}
where $\bgam(k,m)$ is given by \eqref{gamma}. 
\item \label{lkmiii}
If\/ $n=k+m+1$, then
\begin{equation*}
  \begin{split}
  \Cov\bigpar{F(\cT_n),G(\cT_n)} 
&=(n+1)\lrpar{\frac{2}{(k+1)(k+2)}\E\bigpar{ f(\cT_k)G(\cT_k)}
-\bgam_1(k,m)\mu_f\mu_g}
	\end{split}
  \end{equation*}
where
\begin{equation}\label{gamma1}
  \bgam_1(k,m):=
\frac{4(k+m+2)}{(k+1)(k+2)(m+1)(m+2)}
-
\frac{2}{n(n+1)}.
\end{equation}

\item \label{lkmiv}
If\/ $k<n<k+m+1$, then
\begin{equation*}
  \begin{split}
  \Cov\bigpar{F(\cT_n),G(\cT_n)} 
&=(n+1)\lrpar{\frac{2}{(k+1)(k+2)}\E\bigpar{ f(\cT_k)G(\cT_k)}
-\bgam_2(k,m)\mu_f\mu_g}
	\end{split}
  \end{equation*}
where
\begin{equation}\label{gamma2}
  \bgam_2(k,m):=
\frac{4(n+1)}{(k+1)(k+2)(m+1)(m+2)}.
\end{equation}

\item \label{lkmv}
If\/ $n=k$, then
\begin{equation*}
  \begin{split}
  \Cov\bigpar{F(\cT_n),G(\cT_n)} 
&=\E\bigpar{ f(\cT_k)G(\cT_k)}
-(n+1)\bgam_3(k,m)\mu_f\mu_g
	\end{split}
  \end{equation*}
where
\begin{equation}\label{gamma3}
  \bgam_3(k,m):=
  \begin{cases}
\frac{2}{(m+1)(m+2)}, & m<k,\\
\frac{1}{k+1}, & m=k.
  \end{cases}
\end{equation}

\item \label{lkmvi}
If\/ $n<k$, then $F(\cT_n)=0$ and thus $\Cov\bigpar{F(\cT_n),G(\cT_n)} =0$.

\end{romenumerate}
\end{Lemma}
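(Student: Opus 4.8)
The plan is to push everything through the cyclic representation of \refL{Lcyclic}. Since $f$ vanishes off size $k$ and $g$ off size $m$, \eqref{cyclic} collapses to $F(\cT_n)\eqd\sum_{i=1}^{n+1}a_i$ and $G(\cT_n)\eqd\sum_{i=1}^{n+1}b_i$, where $a_i:=\yi_{i,k}f(\gs(i,k))$ and $b_i:=\yi_{i,m}g(\gs(i,m))$. The structural fact I would reuse from the proof of \refL{lemma1P} is that conditioning on $\yi_{i,k}=1$ says nothing about the internal order of $U_i,\dots,U_{i+k-1}$; hence $\yi_{i,k}$ is independent of $f(\gs(i,k))$, and given $\yi_{i,k}=1$ the permutation $\gs(i,k)$ is distributed as $\cT_k$. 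Thus $\E a_i=\E(\yi_{i,k})\mu_f=\frac{2}{(k+1)(k+2)}\mu_f$ by \eqref{eink} whenever $k<n$, and summing $n+1$ terms gives the first line of \eqref{ekm}; the cases $n=k$ (where $\yi_{1,n}=1$ and $F(\cT_n)=f(\cT_k)$) and $n<k$ (where $F(\cT_n)=0$) are immediate, proving \ref{lkmi}.

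For the covariance I would use that the periodic sequence $(U_i)_{i\in\bbZ}$ is stationary under the shift $i\mapsto i+1$, so $\Cov(a_i,b_j)$ depends only on $d:=j-i\in\bbZ_{n+1}$; grouping the $(n+1)^2$ pairs by $d$ gives
$$\Cov\bigpar{F(\cT_n),G(\cT_n)}=(n+1)\sum_{d\in\bbZ_{n+1}}\Cov(a_0,b_d).$$
Now $\Cov(a_0,b_d)=0$ unless the index sets $\{-1,\dots,k\}$ and $\{d-1,\dots,d+m\}$ on which $a_0$ and $b_d$ depend (via \eqref{yik}) overlap modulo $n+1$; since these are cyclic intervals of lengths $k+2$ and $m+2$, the number of overlapping offsets is $\min(k+m+3,n+1)$, and each such offset contributes $-\E(a_0)\E(b_d)=-\frac{4}{(k+1)(k+2)(m+1)(m+2)}\mu_f\mu_g$ to the covariance. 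I then classify the overlapping offsets into three types: the \emph{nested} ones $d\in\{0,\dots,k-m\}$, where $\gs(d,m)$ can sit inside $\gs(0,k)$; the \emph{adjacent} ones $d=k+1$ and $d\equiv-(m+1)$, where the two trees are disjoint but share one boundary variable; and all others, which are touching or partially overlapping so that the two intervals cannot both be fringe trees and $\E(a_0b_d)=0$. For the nested offsets, since $g$ is supported on size $m$, when $\yi_{0,k}=1$ one has $\sum_{d=0}^{k-m}\yi_{d,m}g(\gs(d,m))=G(\gs(0,k))$, so the same conditioning-and-independence argument gives $\sum_{d\text{ nested}}\E(a_0b_d)=\frac{2}{(k+1)(k+2)}\E\bigpar{f(\cT_k)G(\cT_k)}$, which is exactly the first term of \ref{lkmii}. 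For an adjacent offset, $\E(a_0b_d)=\E(\yi_{0,k}\yi_{d,m})\mu_f\mu_g$.

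Assembling the pieces and computing the adjacent term by conditioning on the shared boundary value $U_k=u$ exactly as in \eqref{eikk}, namely $\E(\yi_{0,k}\yi_{k+1,m})=\int_0^1\bigpar{x^{k}-\tfrac{k}{k+1}x^{k+1}}\bigpar{x^{m}-\tfrac{m}{m+1}x^{m+1}}\dd x$, yields \ref{lkmii}, with $\bgam(k,m)$ of \eqref{gamma} equal to $(k+m+3)\frac{4}{(k+1)(k+2)(m+1)(m+2)}-2\E(\yi_{0,k}\yi_{k+1,m})$; the specialization $k=m$ recovers \eqref{eikk} as a check. The remaining cases are then only a matter of tracking how the geometry degrades as $n$ shrinks. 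When $n=k+m+1$ the two adjacent offsets coincide (one offset instead of two), the shared-boundary event becomes ``$U_{-1}$ and $U_k$ are the two globally smallest'' with probability $\frac{2}{n(n+1)}$, and now all $n+1$ offsets overlap, producing $\bgam_1$ of \eqref{gamma1}. When $k<n\le k+m$ no two disjoint fringe trees of sizes $k,m$ fit (disjointness forces $k+m\le n-1$), so the adjacent contribution disappears while all $n+1$ offsets still overlap, giving $\bgam_2$ of \eqref{gamma2}. Finally $n=k$ reduces to $\Cov(f(\cT_k),G(\cT_k))$, evaluated directly from \ref{lkmi} (splitting according to $m<k$ or $m=k$), and $n<k$ is trivial.

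The hard part will be the wrap-around bookkeeping on $\bbZ_{n+1}$: one must verify precisely which offsets are nested, adjacent, touching, or independent, and in particular confirm that the adjacent offsets $k+1$ and $-(m+1)$ are distinct exactly when $n\neq k+m+1$, and that no disjoint pair survives once $n\le k+m$. The second delicate point is the explicit integral evaluation together with the algebra needed to match it to the closed form in \eqref{gamma} (and to $\tfrac{2}{n(n+1)}$ in the collapsed case); this is routine but error-prone, and getting the stated rational function out is where the computation must be done with care.
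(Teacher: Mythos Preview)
Your proposal is correct and follows essentially the same route as the paper: cyclic representation, shift-invariance to factor out $n+1$, the independence of $\yi_{i,k}$ from the internal permutation, the classification of offsets into nested/adjacent/overlapping, the identification of the nested sum with $\E\bigpar{f(\cT_k)G(\cT_k)}$ via the linear representation, and the same integral \eqref{eikk} for the adjacent term. Your bookkeeping on $\bbZ_{n+1}$ (overlap count $\min(k+m+3,n+1)$, coincidence of the two adjacent offsets exactly when $n=k+m+1$, and the collapse of the adjacent probability to $2/\bigl(n(n+1)\bigr)$ in that case) matches the paper's case analysis exactly.
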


\begin{proof}
(i):
The result is trivial for $k\ge n$ since $F(\cT_n)=F(\cT_k)=f(\cT_k)$ if $k=n$ and 
$F(\cT_n)=0$ if $k>n$.

Hence, assume $k<n$.
Using the cyclic representation \eqref{cyclic}, we find
\begin{equation}\label{eFG}
\E{F(\cT_n)} 
=\sum_{i=0}^{n} \E\bigpar{\iik f(\gs(i,k))}
=(n+1) \E\bigpar{\iik f(\gs(i,k))}
.
\end{equation}
Recalling \eqref{yik} and noting that $f(\gs(i,k))$ depends only on the
relative order of $U_i,\dots,\allowbreak U_{i+k-1}$, we see that 
$\iik$ and $f(\gs(i,k))$  are independent.
Thus, using \eqref{eink},
\begin{equation}\label{emm}
  \E\bigpar{\iik f(\gs(i,k))}
=\E(\iik)  \E\bigpar{ f(\gs(i,k))}
=\E(\iik) \E\bigpar{ f(\cT_k)}
=\frac{2}{(k+1)(k+2)}\mu_f
\end{equation}
and thus
\begin{equation}
\E{F(\cT_n)} 
=(n+1) \E(\iik)\E\bigpar{f(\cT_k)}
=(n+1)\frac{2}{(k+1)(k+2)}\mu_f,
\end{equation}
showing \eqref{ekm} in the case $k<n$.

(ii)--(iv):
The cyclic representation \eqref{cyclic} similarly yields
\begin{equation}\label{covFG}
 \Cov\bigpar{F(\cT_n),G(\cT_n)} 
=\sum_{i=0}^{n}\sum_{j=0}^{n} \Cov\bigpar{\iik f(\gs(i,k)),I_{j,m} g(\gs(j,m))},
\end{equation}
where
$\iik f(\gs(i,k))$ and $I_{j,m} g(\gs(j,m))$ are independent unless the
sets \set{i-1,\dots,i+k} and \set{j-1,\dots,j+m} overlap (as subsets of
$\bbZ_{n+1}$). 
Furthermore, as a consequence of \eqref{yik}, 
if these sets overlap by more than one element
but none of the sets is a subset of the other, then $I_{i,k}I_{j,m}=0$,
except in the case $k+m=n-1$ and $j-1\equiv i+k$, $i-1\equiv j+m \pmod{n+1}$
(again, this says that two subtrees cannot
overlap or be adjacent unless one is contained in the other). 

(ii):  We now assume $k+m< n-1$ and $k\ge m$.
Then \eqref{covFG}, symmetry and the observations just made yield
\begin{equation*}
  \begin{split}
  \Cov\bigpar{F(\cT_n),G(\cT_n)} 
&=(n+1)\Bigl(
\E\bigpar{I_{0,k} f(\gs(0,k))I_{-m-1,m} g(\gs(-m-1,m))}
\\&
\hskip5em+
\sum_{j=0}^{k-m}\E\bigpar{I_{0,k} f(\gs(0,k))I_{j,m} g(\gs(j,m))}
\\&
\hskip5em+
\E\bigpar{I_{0,k} f(\gs(0,k))I_{k+1,m} g(\gs(k+1,m))}
\\&
\hskip5em
-(k+m+3)\E\bigpar{I_{0,k} f(\gs(0,k))}\E\bigpar{I_{0,m} g(\gs(0,m))}
\Bigr).
  \end{split}
\end{equation*}
As seen in the proof of (i),
$\iik$ is independent of
$f(\gs(i,k))$, and thus \eqref{emm} holds.
Similarly,
\begin{equation}\label{emmg}
  \E\bigpar{I_{j,m} g(\gs(j,m))}
=\frac{2}{(m+1)(m+2)}\mu_g,
\end{equation}
and 
\begin{equation}
\E\bigpar{I_{0,k} f(\gs(0,k))I_{k+1,m} g(\gs(k+1,m))}  
=
\E\bigpar{I_{0,k} I_{k+1,m}}
\mu_f 
\mu_g. 
\end{equation}
Furthermore, the argument for \eqref{eikk} generalizes to
  \begin{align}
\E(\yi_{0,k}\yi_{k+1,m})     
&=
\int_0^1 \Bigpar{u(1-u)^k+\tfrac1{k+1}(1-u)^{k+1}}
\Bigpar{u(1-u)^m+\tfrac1{m+1}(1-u)^{m+1}}
\dd u
\notag\\
&=
\int_0^1 \Bigpar{x^k-\tfrac{k}{k+1}x^{k+1}}\Bigpar{x^m-\tfrac{m}{m+1}x^{m+1}}
\dd x
\notag\\
&=
\frac{1}{k+m+1}-\frac{k}{(k+1)(k+m+2)}-\frac{m}{(m+1)(k+m+2)}
\notag\\
&
\hskip16em
+\frac{km}{(k+1)(m+1)(k+m+3)}
\notag\\&=
\frac {2({k}^{2}+3 km+{m}^{2}+4 k+4 m+3)}
{ \left( k+1 \right) \left( m+1 \right)  
\left( k+m+1 \right) \left( k+m+2 \right) \left( k+m+3 \right) }.
\label{eqw}
   \end{align}
(Again, this can also be obtain by a combinatorial argument.)

The term
$\E\bigpar{I_{0,k} f(\gs(0,k))I_{-m-1,m} g(\gs(-m-1,m))}$ is calculated in
the same way, and yields the same result.

Finally, for convenience shifting the indices,
\begin{equation}\label{sumfG}
  \begin{split}
&\sum_{j=0}^{k-m}\E\bigpar{I_{0,k} f(\gs(0,k))I_{j,m} g(\gs(j,m))}
\\
&\hskip4em
=
\E(I_{1,k})
\E\Bigpar{ f(\gs(1,k)) \sum_{j=1}^{k-m+1}I_{j,m} g(\gs(j,m))\Bigm| I_{1,k}=1}
\\
&\hskip4em
=
\frac{2}{(k+1)(k+2)}
\E\bigpar{ f(\cT_k)G(\cT_k)},
  \end{split}
\end{equation}
where the last equality follows because the conditioning on $I_{1,k}=1$
yields the same result as conditioning 
on $U_0=U_{k+1}=0$, and the linear representation \eqref{linear} shows that 
then the sum is $G(\cT_k)$. 
The result follows by collecting the terms above.

(iii):
In the case $k+m= n-1$, we argue in the same way, but as in the case
$k=(n-1)/2$ of \refL{lemma1} (a special case of the present lemma), 
there are only
$k+m+2=n+1$ terms to subtract and \eqref{eqw} is replaced by the simple
\begin{equation}
\E\bigpar{I_{0,k} I_{k+1,m}}=\frac{2}{n(n+1)},
\end{equation}
\cf{} \eqref{varxnk2} and \eqref{eikk=}.

(iv):
In the case $k+m>n-1$, 
there cannot be two disjoint subtrees of sizes $k$ and $m$.
Hence the arguments above yield
\begin{multline*}
  \Cov\bigpar{F(\cT_n),G(\cT_n)} 
=(n+1)\biggl(
\sum_{j=0}^{k-m}\E\bigpar{I_{0,k} f(\gs(0,k))I_{j,m} g(\gs(j,m))}
\\
{}-(n+1)\E\bigpar{I_{0,k} f(\gs(0,k))}\E\bigpar{I_{0,m} g(\gs(0,m))}
\biggr)
 \end{multline*}
and the result follows from \eqref{sumfG} and \eqref{emm}, \eqref{emmg}.

(v):
In the case $k=n$ we have $F(\cT_n)=F(\cT_k)=f(\cT_k)$, and the result follows
from \eqref{ekm}.

(vi): Trivial.
\end{proof}

This leads to the following formulas for a general functional $f$.
(Note that Lemmas \ref{lemma1}--\ref{Lkm} treat special cases.
The mean \eqref{eF} is computed by \citet{Devroye2}.)

\begin{thm}\label{TFvar}
  Let $f(T)$ be a functional of binary trees, and let $F(T)$ be the sum
  \eqref{F}. Further, let
  \begin{equation}
	\mu_k:=\E f(\cT_k)
  \end{equation}
and
\begin{equation}\label{pikn}
  \pi_{k,n}:=
  \begin{cases}
\frac{2}{(k+1)(k+2)}, & k<n,\\
\frac1{n+1}, & k=n, \\
0, & k>n.
  \end{cases}
\end{equation}
Then, for the random \bst,
\begin{equation}\label{eF}
\E F(\cT_n) = (n+1)\sumkn \pi_{k,n}\mu_k
\end{equation}
and
\begin{equation}\label{vF}
  \begin{split}
\Var\bigpar{F(\cT_n)}
&=(n+1)
\lrpar{
\sum_{k=1}^n
\pi_{k,n}\E\Bigpar{f(\cT_k)\bigpar{2F(\cT_k)-f(\cT_k)}}
-\sumkn\sum_{m=1}^n
\bgamx(k,m)\mu_k\mu_m}
  \end{split}
\end{equation}
where, using \eqref{gamma} and \eqref{gamma1}--\eqref{gamma3},
\begin{equation}\label{gamx}
  \bgamx(k,m):=
  \begin{cases}
\bgam(k,m),& k+m+1<n, \\
\bgam_1(k,m),& k+m+1=n, \\
\bgam_2(k,m),& \max\set{k,m}<n<k+m+1, \\
\bgam_3(k,m),& k=n\ge m, \\
\bgam_3(m,k),& m=n\ge k.
  \end{cases}
\end{equation}
\end{thm}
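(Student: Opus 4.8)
The plan is to reduce everything to \refL{Lkm} by splitting $f$ according to tree size. For each $k\ge1$ set $f_k(T):=f(T)\etta\set{|T|=k}$ and let $F_k:=F(\,\cdot\,;f_k)$ be the corresponding sum \eqref{F}, so that $f=\sum_k f_k$ and hence $F=\sum_k F_k$ pointwise on every finite tree. Each $f_k$ is supported on trees of a single size, so for $m\le k$ the pair $(f_k,f_m)$ is exactly of the special type treated in \refL{Lkm}, and I may apply that lemma termwise. Since $F_k(\cT_n)=0$ whenever $k>n$, all sums over $k$ (and $m$) below are effectively finite, running over $1\le k,m\le n$, so there are no convergence issues and no need to handle case (vi).

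For the mean \eqref{eF} I would write $\E F(\cT_n)=\sum_{k=1}^n \E F_k(\cT_n)$ and insert \eqref{ekm}. Its three cases $k<n$, $k=n$, $k>n$ combine into the single expression $(n+1)\pi_{k,n}\mu_k$ with $\pi_{k,n}$ as in \eqref{pikn}, which is \eqref{eF}. For the variance, bilinearity of covariance gives $\Var F(\cT_n)=\sum_{k=1}^n\sum_{m=1}^n\Cov\bigpar{F_k(\cT_n),F_m(\cT_n)}$. The key observation is that cases (ii)--(v) of \refL{Lkm} collapse into the single unified formula
\[
\Cov\bigpar{F_k(\cT_n),F_m(\cT_n)}=(n+1)\pi_{k,n}\E\bigpar{f(\cT_k)F_m(\cT_k)}-(n+1)\bgamx(k,m)\mu_k\mu_m\qquad(m\le k),
\]
where $(n+1)\pi_{k,n}$ absorbs both the factor $\tfrac{2}{(k+1)(k+2)}$ of cases (ii)--(iv) and the factor $1$ of case (v) ($k=n$), and $\bgamx$ is precisely the case selector \eqref{gamx}; for $m>k$ I use $\Cov(F_k,F_m)=\Cov(F_m,F_k)$.

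Summing the $\bgamx$-terms is immediate once one checks that $\bgamx(k,m)=\bgamx(m,k)$: the functions $\bgam,\bgam_1,\bgam_2$ of \eqref{gamma} and \eqref{gamma1}--\eqref{gamma2} are visibly symmetric, and the two $\bgam_3$-branches of \eqref{gamx} are set up symmetrically (both giving $\bgam_3(n,m)$ on $\{k,m\}=\{n,m\}$). Hence these terms contribute exactly $-(n+1)\sum_{k,m}\bgamx(k,m)\mu_k\mu_m$, matching the second term of \eqref{vF}. For the first terms, writing $S$ for their sum divided by $n+1$, symmetrizing the off-diagonal part and relabeling gives $S=\sum_k\pi_{k,n}\E\bigpar{f(\cT_k)F_k(\cT_k)}+2\sum_{k>m}\pi_{k,n}\E\bigpar{f(\cT_k)F_m(\cT_k)}$.

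Here I would use two elementary facts about $\cT_k$: the only size-$k$ fringe subtree of $\cT_k$ is $\cT_k$ itself, so $F_k(\cT_k)=f(\cT_k)$; and $\sum_m F_m(\cT_k)=F(\cT_k)$. Together these give $\sum_{m<k}\E\bigpar{f(\cT_k)F_m(\cT_k)}=\E\bigpar{f(\cT_k)F(\cT_k)}-\E\bigpar{f(\cT_k)^2}$, whence $S=\sum_k\pi_{k,n}\E\bigpar{f(\cT_k)\bigpar{2F(\cT_k)-f(\cT_k)}}$, which is the first term of \eqref{vF}. The only real work is thus this reorganization of the first terms: the main obstacle is purely the bookkeeping of keeping the diagonal and off-diagonal contributions and the asymmetric appearance of $\E(f(\cT_k)F_m(\cT_k))$ straight while assembling the symmetric $2F-f$ form. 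There are no analytic difficulties, as every sum involved is finite.
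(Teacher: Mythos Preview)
Your proposal is correct and follows essentially the same approach as the paper: decompose $f=\sum_k f_k$ by size, apply \refL{Lkm} termwise, and reassemble using $F_k(\cT_k)=f(\cT_k)$ and $\sum_m F_m=F$ to obtain the $2F-f$ form, together with the symmetry of $\bgamx$. The only cosmetic difference is that the paper writes the variance as $\sum_{k}\sum_{m\le k}(2-\gd_{km})\Cov(F_k,F_m)$ from the outset rather than symmetrizing afterwards as you do.
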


\begin{proof}
  Let $f_k(T):=f(T)\etta\set{|T|=k}$, and let $F_k$ be the corresponding sum
  \eqref{F}. 
Then $f(T)=\sum_k f_k(T)$  and $F(T)=\sum_k F_k(T)$.
Hence, using \refL{Lkm}\ref{lkmi},
\begin{equation}
  \E F(\cT_n) = \sumkn \E F_k(\cT_n)
=\sumkn (n+1)\pi_{k,n}\mu_k,
\end{equation}
which shows \eqref{eF}.

Similarly, using symmetry and \refL{Lkm}\ref{lkmii}--\ref{lkmv}, 
noting $\E f_k(\cT_k)=\E
f(\cT_k)=\mu_k$, 
\begin{equation*}
  \begin{split}
\Var\bigpar{F(\cT_n)}
&=\sum_{k=1}^n\sum_{m=1}^k(2-\gd_{km}) \Cov\bigpar{F_k(\cT_n),F_m(\cT_n)}
\\
&=\sum_{k=1}^n\sum_{m=1}^k(2-\gd_{km}) (n+1)
\Bigpar{\pi_{k,n}\E\bigpar{f_k(\cT_k)F_m(\cT_k)}-\bgamx(k,m)\mu_k\mu_m}
  \end{split}
\end{equation*}
(where $ \delta_{km} $ denotes the Kronecker delta).
Furthermore,  $F_m(\cT_k)=0$ for $m>k$,
and $F_k(\cT_k)=f_k(\cT_k)=f(\cT_k)$,
and thus
\begin{equation*}
  \begin{split}
\sum_{m=1}^k(2-\gd_{km})F_m(\cT_k)
&= 2\sum_{m=1}^\infty F_m(\cT_k) -F_k(\cT_k)
=2F(\cT_k)-f(\cT_k)
  \end{split}
\end{equation*}
and \eqref{vF} follows, noting that $\bgamx(k,m)$ by definition is symmetric
in $k$ and $m$.
\end{proof}

The formula \eqref{eF} for the expectation is also easily obtained by
induction, using a simple recurrence,
see \citet[Lemma 1]{HwangN}.

The notation above is a little cheating, since not only $\pi_{k,n}$  
but also $\bgamx(k,m)$
depends on $n$;
however, if $n>k+m+1$, neither depends on $n$, and we obtain
the following.
Define  
\begin{equation}
  \label{pik}
\pi_k:=\frac{2}{(k+1)(k+2)}
\end{equation}
and recall that $\cT$ is the random \bst{}  $\cT_N$ with random size $N$
such that $\P(|\cT|=k)=\P(N=k)=\pi_k$.

\begin{cor}\label{CFc}
  In the notation above, assume further that $f(T)=0$ when $|T|>K$,
  for some $K<\infty$. If $n>2K+1$, then
\begin{equation}\label{eFc}
\E F(\cT_n) 
=(n+1)\E f(\cT)
\end{equation}
and
\begin{equation}\label{vFc}
  \begin{split}
\Var\bigpar{F(\cT_n)}
&=(n+1)
\lrpar{
\E\Bigpar{f(\cT)\bigpar{2F(\cT)-f(\cT)}}
-\sum_{k=1}^K\sum_{m=1}^K
\bgam(k,m)\mu_k\mu_m}.
  \end{split}
\end{equation}
\qed
\end{cor}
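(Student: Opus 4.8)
The plan is to derive \refC{CFc} directly from \refT{TFvar} by exploiting the support hypothesis on $f$. First I would observe that if $f(T)=0$ whenever $|T|>K$, then $\mu_k=\E f(\cT_k)=0$ for every $k>K$, since for such $k$ we have $|\cT_k|=k>K$ and hence $f(\cT_k)=0$ identically. Consequently, in both formulas \eqref{eF} and \eqref{vF}, every sum over tree sizes is effectively restricted to indices $k,m\le K$.

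For the mean, I would note that since $n>2K+1>K$, each surviving index satisfies $k\le K<n$, so $\pi_{k,n}=\pi_k=\frac{2}{(k+1)(k+2)}$ by \eqref{pikn}. Thus \eqref{eF} reduces to $\E F(\cT_n)=(n+1)\sum_{k=1}^K \pi_k\mu_k$. Recalling that $\cT=\cT_N$ with $\P(|\cT|=k)=\pi_k$, and that $f(\cT_k)=0$ for $k>K$, I would identify $\sum_{k=1}^K\pi_k\mu_k=\sum_{k=1}^\infty \pi_k\E f(\cT_k)=\E f(\cT)$, which gives \eqref{eFc}.

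For the variance I would treat the two terms in \eqref{vF} separately. In the first term the factor $f(\cT_k)$ vanishes for $k>K$, so only $k\le K$ contribute, and for these $\pi_{k,n}=\pi_k$ as above; identifying the resulting sum with an expectation over $\cT$ exactly as for the mean (the terms with $N>K$ again vanish through the leading factor $f(\cT)$) yields $\sum_{k=1}^n\pi_{k,n}\E\bigpar{f(\cT_k)(2F(\cT_k)-f(\cT_k))}=\E\bigpar{f(\cT)(2F(\cT)-f(\cT))}$. In the double sum, only pairs with $k,m\le K$ survive since $\mu_k\mu_m=0$ otherwise. The crucial point is that for any such pair we have $k+m+1\le 2K+1<n$, so by the case distinction \eqref{gamx} we get $\bgamx(k,m)=\bgam(k,m)$. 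Hence the double sum equals $\sum_{k=1}^K\sum_{m=1}^K\bgam(k,m)\mu_k\mu_m$, and \eqref{vFc} follows.

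The only real obstacle is bookkeeping: verifying that the strict inequality $n>2K+1$ is precisely what places every contributing pair $(k,m)$ in the regime $k+m+1<n$, where the otherwise $n$-dependent weight $\bgamx$ collapses to the single size-dependent expression $\bgam$. Once this is checked, the remaining steps are routine substitutions into \refT{TFvar} together with the defining law of $\cT$.
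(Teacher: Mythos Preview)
Your proof is correct and is exactly the argument the paper has in mind: the corollary is stated with a bare \qed{} and the sentence preceding it notes that when $n>k+m+1$ neither $\pi_{k,n}$ nor $\bgamx(k,m)$ depends on $n$, which together with the support restriction $k,m\le K$ is precisely what you spell out. There is nothing to add.
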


We can now prove Theorems \ref{Tcovbin} and \ref{Tcovbin2} as two 
special cases of the results above.  

\begin{proof}[Proof of \refT{Tcovbin}]
Apply \refL{Lkm}\ref{lkmii} with $f(T_1):=\ett{T_1=T}$ and
  $g(T_1):=\ett{T_1=T'}$. 
Then $X_n^T=F(\cT_n)$ and $X_n^{T'}=G(\cT_n)$.
We have $\mu_f=\pkt$ and $\mu_g=\pmtx$.
Furthermore, if $f(\cT_k)\neq0$, then $\cT_k=T$ and $G(\cT_k)=G(T)=\qtt$.
Hence,
\begin{equation*}
  \E\bigpar{f(\cT_k)G(\cT_k)} =\qtt\E f(\cT_k) =\qtt\pkt.
\qedhere
\end{equation*}
\end{proof}

\begin{proof}[Proof of \refT{Tcovbin2}]
In principle, this follows from \refT{Tcovbin} by summing over all trees of
sizes $k$ and $m$, and evaluating the resulting sum;
however, it is easier to give a direct proof.
By symmetry we may assume $k\ge m$.
We  apply \refL{Lkm}\ref{lkmii} with $f(T):=\ett{|T|=k}$ and
$g(T):=\ett{|T|=m}$. 
Then $X_{n,k}=F(\cT_n)$ and $X_{n,m}=G(\cT_n)$. 
Furthermore, $f(\cT_k)=1$, $g(\cT_m)=1$  and $G(\cT_k)=X_{k,m}$.
Hence  $\mu_f=\mu_g=1$, and, using \eqref{exnk},  
\begin{equation}\label{aw}
\E\bigpar{f(\cT_k) G(\cT_k)}=\E X_{k,m}=
\begin{cases}
\frac{2(k+1)}{(m+1)(m+2)} , & m<k,\\
1 , & m=k.  
\end{cases}
\end{equation}
Hence, \refL{Lkm}\ref{lkmii} yields \eqref{covksubtrees} with
\begin{align}
\gskm &=
\begin{cases}
{\frac{4}{(k+2)(m+1)(m+2)} -\bgam(k,m)}, & m<k, \\  
\frac{2}{(k+1)(k+2)} -\bgam(k,k), & m=k, \\  
\end{cases}
\end{align}
which yields \eqref{covb2a}--\eqref{covb2b} by elementary calculations.
\end{proof}

\begin{Lemma}\label{LnonsingularT}
  Let $T_1,\dots,T_N$ be a finite sequence of distinct binary trees.
Then the matrix $(\gsxx_{T_i,T_j})_{i,j=1}^N$ 
in \refT{Tcovbin}
is non-singular and thus positive definite.
\end{Lemma}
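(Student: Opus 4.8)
The plan is to view $(\gsxx_{T_i,T_j})$ as a rescaled covariance matrix, which is automatically positive semidefinite, and then to prove nonsingularity by showing that no nontrivial linear combination of the counts $X^{T_i}_n$ is deterministic. Put $K:=\max_i|T_i|$ and fix any $n>2K+1$. By \refT{Tcovbin}, $\Cov\bigpar{X^{T_i}_n,X^{T_j}_n}=(n+1)\gsxx_{T_i,T_j}$ for all $i,j$, so $(\gsxx_{T_i,T_j})_{i,j=1}^N=(n+1)\qw\Cov\bigpar{X^{T_1}_n,\dots,X^{T_N}_n}$ is $(n+1)\qw$ times a genuine covariance matrix, hence positive semidefinite. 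It therefore suffices to show that for every real $(a_1,\dots,a_N)\neq0$ the quadratic form $\sum_{i,j}a_ia_j\gsxx_{T_i,T_j}$ is strictly positive.

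Setting $f:=\sum_i a_i\ett{\,\cdot\,=T_i}$ and letting $F$ be the additive functional \eqref{F}, equation \eqref{Fnt} gives $F(\ctn)=\sum_i a_iX^{T_i}_n$, so $\Var F(\ctn)=(n+1)\sum_{i,j}a_ia_j\gsxx_{T_i,T_j}$; thus a vanishing quadratic form would make $F(\ctn)$ almost surely constant for every $n>2K+1$. Since every binary tree of size $n$ arises as $\ctn$ with positive probability, this would mean that $T\mapsto F(T)$ takes a single value $\Phi(n)$ on all binary trees of size $n$, for each $n>2K+1$. The whole problem is then reduced to the deterministic claim: if a finitely supported toll $f$ satisfies $F(T)=\Phi(|T|)$ for all $|T|>2K+1$, then $f\equiv0$.

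To prove this I would exploit the recursion $F(T)=f(T)+F(T_L)+F(T_R)$ together with the fact that $f(T)=0$ as soon as $|T|>K$. First, evaluating this on trees $T$ whose two principal subtrees are both of size $>2K+1$ yields $\Phi(n)=\Phi(\ell)+\Phi(n-1-\ell)$ for all such splittings; comparing consecutive values of $\ell$ shows that $\Phi$ has constant increments on large arguments, so $\Phi(m)=s(m+1)$ for $m$ large and some constant $s$. Next, for an arbitrary binary tree $S$ I would embed it as the left subtree of the tree whose left and right subtrees are $S$ and $T_R$, with $|T_R|>2K+1$ large; since this tree has size $>K$ its toll vanishes, giving $0=\Phi(|S|+|T_R|+1)-F(S)-\Phi(|T_R|)=s(|S|+1)-F(S)$, that is $F(S)=s(|S|+1)$ for every non-empty $S$. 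Finally the telescoping $f(S)=F(S)-F(S_L)-F(S_R)$ gives $f(S)=0$ when both subtrees are non-empty and $f(S)=s$ when exactly one of them is empty (using $F(\emptyset)=0$); as the latter holds for infinitely many $S$, finite support forces $s=0$, whence $F\equiv0$ and $f\equiv0$, \ie{} $a\equiv0$ — the desired contradiction.

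The routine parts are the semidefiniteness and the passage to a deterministic statement; the main obstacle is the combinatorial rigidity in the previous paragraph. Its delicate point is that constancy of $F$ is only available for large trees, so one cannot simply read $f$ off the recursion at small trees; the device of placing an arbitrary test subtree $S$ beneath an arbitrarily large context (whose contribution $F(T_R)=\Phi(|T_R|)$ depends only on its size) is exactly what lets me isolate $F(S)$ and then telescope down to $f$. In effect this establishes that the fringe-count functions $X^{T}_n$ of distinct trees are linearly independent modulo constants, which is the combinatorial heart of the nonsingularity, and positive definiteness follows at once.
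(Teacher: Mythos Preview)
Your proof is correct. The reduction to positive semidefiniteness and the passage to a deterministic statement match the paper, but the endgame is genuinely different.

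The paper, after arriving at $Z_n:=\sum_i a_i X_n^{T_i}$ constant for $n>2K+1$, takes the shortest possible route: it assumes (w.l.o.g.) all $a_i\neq0$, lets $T_1$ have minimal size $k$, and for $n>K+k+1$ exhibits two concrete trees --- the pure right path $T_{0,n}$ and the tree $T_{1,n}$ obtained by attaching $T_1$ as the left subtree of the top of a right path --- whose subtree counts in the relevant size range differ by exactly one copy of $T_1$, giving $Z_n(T_{1,n})=Z_n(T_{0,n})+a_1$ and an immediate contradiction.

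Your argument is more structural: you use the recursion with vanishing toll on large trees to pin down $\Phi(m)=s(m+1)$, then propagate this downward to \emph{all} $S$ via the embedding trick, and finally telescope to show $f\equiv0$. This is longer but yields more --- in effect you classify all finitely supported tolls whose additive functional depends only on size (they are the tolls with $F(T)=s(|T|+1)$, forced to $s=0$ by finite support). The paper's two-tree construction is quicker and entirely elementary; your approach is closer in spirit to \refT{TG1} and makes the ``linear independence modulo constants'' phenomenon explicit. Either buys the nonsingularity.
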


\begin{proof}
  Let $K:=\max_i|T_i|$.
For any real numbers $a_1,\dots,a_N$ and any $n>2K+1$, \refT{Tcovbin} yields
\begin{equation}
  \label{vtt}
\Var\biggpar{\sum_{i=1}^N a_iX_n^{T_i}}
=\sum_{i,j=1}^Na_ia_j\Cov\bigpar{X_n^{T_i},X_n^{T_j}}
=(n+1)\sum_{i,j=1}^Na_ia_j\gsxx_{{T_i},{T_j}}.
\end{equation}
Since a variance always is nonnegative, it follows that 
the matrix $(\gsxx_{T_i,T_j})_{i,j=1}^N$ is positive semi-definite.

Suppose that the matrix is singular. Then, using \eqref{vtt}, there exist
$a_1,\dots,a_N$, not all $0$, such that
if $Z_n:=\sum_{i=1}^N a_iX_n^{T_i}$, then $\Var(Z_n)=0$ for every $n>2K+1$.
Hence $Z_n$ is a constant, \ie, it takes the same value (possibly depending
on $n$) for every realization of $\cT_n$. We shall see that this leads to a
contradiction.

We may assume that $a_i\neq0$ for every $i$ (otherwise we just ignore the
remaining trees $T_i$). We may further assume that $T_1,\dots,T_N$ are
ordered with $k:=|T_1|=\min_i|T_i|$.
For $n>K+ k+1$, let $T_{0,n}$ be the tree 
consisting of a path to the right 
from the root with $n$ nodes, and
let $T_{1,n}$ consist of a path to the right 
from the root with $n-k$ nodes together with a left subtree $T_1$ at the root.
The subtrees of $T_{1,n}$ with size in $[k,K]$ are paths to the right, one
each of each length $l\in[k,K]$, and in addition one copy of $T_1$;
$T_{0,n}$ have the same paths as subtrees but no other subtrees of these
sizes.
Thus, denoting the values of $Z_n$ for a realization $T$ of $\ctn$ by
$Z_n(T)$, and similarly for $X_n^{T_i}$,
we have $X_n^{T_1}(T_{1,n})=X_n^{T_1}(T_{0,n})+1$ and
$X_n^{T_i}(T_{1,n})=X_n^{T_i}(T_{0,n})$ for $i>1$, and hence
$Z_n(T_{1,n})=Z_n(T_{0,n})+a_1$.
This exhibits two possible realizations of $\cT_n$ with different values of
$Z_n$. Hence $\Var(Z_n)>0$, a contradiction which completes the proof.
\end{proof}

\begin{Lemma}\label{Lnonsingulark}
For every $N\ge1$, the matrix $(\gskm)_{k,m=1}^N$ 
of the values defined in \refT{Tcovbin2}
is non-singular and thus positive definite.
\end{Lemma}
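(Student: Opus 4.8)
The plan is to follow closely the strategy of \refL{LnonsingularT}: establish positive semi-definiteness from the nonnegativity of variances, and then rule out singularity by exhibiting explicit realizations of $\cT_n$ that separate any candidate null vector. The matrix $M:=(\gskm)_{k,m=1}^N$ is symmetric by \refT{Tcovbin2}, so it will suffice to prove that it is positive definite.

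First I would show $M$ is positive semi-definite. Fix real numbers $a_1,\dots,a_N$ and take any $n>2N+1$, so that $n>k+m+1$ for all $k,m\le N$ and \refT{Tcovbin2} applies to every pair. Then
\begin{equation*}
0\le \Var\Bigpar{\sum_{k=1}^N a_k X_{n,k}}
=\sum_{k,m=1}^N a_k a_m \Cov(X_{n,k},X_{n,m})
=(n+1)\sum_{k,m=1}^N a_k a_m\gskm .
\end{equation*}
Since $n+1>0$, this gives $\sum_{k,m} a_k a_m\gskm\ge0$, i.e.\ $M$ is positive semi-definite.

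Next, suppose $M$ is singular. Being positive semi-definite and singular, it has a nonzero null vector $a=(a_1,\dots,a_N)$, and then $Z_n:=\sum_{k=1}^N a_k X_{n,k}$ has $\Var(Z_n)=0$ for every $n>2N+1$; hence $Z_n$ takes the same value over all realizations of $\cT_n$. To contradict this, I would, for each $j\in\set{1,\dots,N}$, compare two binary trees of size $n$ (both legitimate shapes of $\cT_n$): the tree $T^{(0)}_n$ consisting of a single path to the right from the root with $n$ nodes, and the tree $T^{(j)}_n$ consisting of a path to the right with $n-j$ nodes together with a left subtree at the root that is itself a right path of size $j$. A direct count of fringe subtrees shows that, for sizes $\ell\in\set{1,\dots,N}$ (using $n>2N+1$, so that $N<n-j-1$), the profile of $T^{(0)}_n$ has $X_{n,\ell}=1$ for every $\ell$, while $T^{(j)}_n$ has $X_{n,\ell}=2$ for $\ell\le j$ and $X_{n,\ell}=1$ for $j<\ell\le N$, because the right-path left subtree of size $j$ contributes exactly one extra subtree of each size $1,\dots,j$ and nothing else in this range. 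Thus $X_{n,\ell}(T^{(j)}_n)-X_{n,\ell}(T^{(0)}_n)=\etta\set{\ell\le j}$.

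Since $Z_n$ is constant, evaluating it on the two realizations gives $0=Z_n(T^{(j)}_n)-Z_n(T^{(0)}_n)=\sum_{\ell=1}^j a_\ell$ for every $j=1,\dots,N$; subtracting consecutive equations yields $a_j=0$ for all $j$, contradicting $a\neq0$. Hence $M$ is non-singular, and together with positive semi-definiteness this gives positive definiteness. The main obstacle is the fringe-subtree bookkeeping in the third step, i.e.\ verifying that appending a right-path left subtree of size $j$ changes the size-$\ell$ count by exactly $\etta\set{\ell\le j}$ for $\ell\le N$; everything else is a routine adaptation of the argument in \refL{LnonsingularT}.
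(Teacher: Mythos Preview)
Your proof is correct and follows essentially the paper's first suggested route, namely adapting the argument of \refL{LnonsingularT} directly; your explicit family $T^{(j)}_n$ cleanly produces the system $\sum_{\ell\le j}a_\ell=0$ and the bookkeeping you flag as the main obstacle checks out (since $n>2N+1$ and $j\le N$ force $n-j-1>N$, so every $\ell\le N$ is hit exactly once on the right branch). The paper also notes a shorter alternative: deduce the lemma as a corollary of \refL{LnonsingularT} via $X_{n,k}=\sum_{|T|=k}X_n^T$, which immediately transfers positive definiteness from the tree-indexed matrix to the size-indexed one.
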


\begin{proof}
  This can be proved in exactly the same way as \refL{LnonsingularT}.
Alternatively, it is an easy corollary of \refL{LnonsingularT}, since 
$X_{n,k}=\sum_{|T|=k}X_n^T$ for every $k$.
\end{proof}

In the finitely supported case in \refC{CFc},
both $\E F(\cT_n)$ and $\Var F(\cT_n)$ grow linearly in $n+1$. 
Asymptotically, this is true under much weaker assumptions.
We begin with the mean.
(The binary tree case \eqref{tlime} was shown by \citet[Lemma 1]{Devroye2}.)

\begin{thm}\label{Tlime}
Under the assumptions in \refT{TFvar},
assume further that $\E |f(\cT)|<\infty$ and 
$\mu_n=o(n)$ as $n\to\infty$.
Then
\begin{equation}\label{tlime}
  \E F(\cT_n) = n \E f(\cT) + o(n).
\end{equation}

More generally, if\/ $\E |f(\cT)|<\infty$ and 
$\mu_n=o(n^\ga)$ for some $\ga\in (0,1]$, 
then
\begin{align}
  \E F(\cT_n) = n \E f(\cT) + o(n^\ga), \label{tlimeo}
\intertext{and if\/ $\E |f(\cT)|<\infty$ and $\mu_n=O(n^\ga)$ for some 
$\ga\in [0,1)$, then
}
\E F(\cT_n) = n \E f(\cT) + O(n^\ga). \label{tlimeO}
\end{align}
\end{thm}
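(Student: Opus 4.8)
The plan is to start from the exact formula \eqref{eF} of \refT{TFvar}, namely $\E F(\cT_n) = (n+1)\sum_{k=1}^n \pi_{k,n}\mu_k$, and compare it term-by-term with the series representation of $\E f(\cT)$. Since $\cT=\cT_N$ with $\P(N=k)=\pi_k=\frac{2}{(k+1)(k+2)}$, conditioning on $N$ gives $\E f(\cT)=\sum_{k=1}^\infty \pi_k\mu_k$. The hypothesis $\E|f(\cT)|<\infty$ says $\sum_k\pi_k\E|f(\cT_k)|<\infty$, and because $|\mu_k|\le\E|f(\cT_k)|$ this series is absolutely convergent; thus $\E f(\cT)$ is well defined and every tail $\sum_{k\ge n}\pi_k\mu_k$ converges.

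The key step is an exact identity. Since $\pi_{k,n}=\pi_k$ for $k<n$ while $\pi_{n,n}=\frac1{n+1}$ differs from $\pi_n$, I would isolate the $k=n$ term to get $\E F(\cT_n)=(n+1)\sum_{k=1}^{n-1}\pi_k\mu_k+\mu_n$. Subtracting $n\E f(\cT)=n\sum_{k=1}^\infty\pi_k\mu_k$ and rewriting $\sum_{k=1}^{n-1}=\sum_{k=1}^\infty-\sum_{k=n}^\infty$ yields
$$
\E F(\cT_n)-n\E f(\cT)=\E f(\cT)+\mu_n-(n+1)\sum_{k=n}^\infty\pi_k\mu_k .
$$
It then remains to bound the three terms on the right by the claimed error order.

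The first term $\E f(\cT)$ is constant, hence $O(1)$, which is $o(n)$, is $o(n^\ga)$ for $\ga>0$, and is $O(n^\ga)$ for $\ga\ge0$. The second term $\mu_n$ is controlled directly by the hypothesis in each case. For the tail I would use $\pi_k\le 2/k^2$ together with the growth bound on $\mu_k$ and the elementary comparison $\sum_{k\ge n}k^{\ga-2}=O(n^{\ga-1})$, valid for $\ga<1$ (compare with $\int_{n-1}^\infty x^{\ga-2}\dd x$). Concretely: for the basic claim, $\sum_{k\ge n}\pi_k|\mu_k|$ is a tail of a convergent series, hence $o(1)$, so $(n+1)\sum_{k\ge n}\pi_k\mu_k=o(n)$. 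For $\mu_n=o(n^\ga)$ with $\ga<1$, writing $|\mu_k|\le\eps_k k^\ga$ with $\eps_k\to0$ gives $\sum_{k\ge n}\pi_k|\mu_k|\le 2(\sup_{k\ge n}\eps_k)\sum_{k\ge n}k^{\ga-2}=o(n^{\ga-1})$, whence the tail is $o(n^\ga)$; the case $\ga=1$ reduces to the basic claim. For $\mu_n=O(n^\ga)$ with $\ga<1$ the same estimate with a fixed constant gives $\sum_{k\ge n}\pi_k|\mu_k|=O(n^{\ga-1})$, so the tail is $O(n^\ga)$.

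I expect the only genuine work to lie in the tail estimate, and more pedantically in keeping the $o$ and $O$ statements straight: the assumption $\sum_k\pi_k|\mu_k|<\infty$ alone forces $\sum_{k\ge n}\pi_k|\mu_k|=o(1)$ but gives no rate, so the sharper $o(n^\ga)$ and $O(n^\ga)$ conclusions for $\ga<1$ must extract the rate from the pointwise growth bound on $\mu_k$ through the series comparison above. The algebraic identity itself is routine bookkeeping once the boundary discrepancy $\pi_{n,n}\neq\pi_n$ is accounted for.
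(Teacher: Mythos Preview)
Your proof is correct and follows essentially the same approach as the paper: both start from \eqref{eF}, use absolute convergence of $\sum_k\pi_k\mu_k$ from $\E|f(\cT)|<\infty$, and bound the discrepancy by the $k=n$ boundary term plus the tail $\sum_{k\ge n}\pi_k|\mu_k|$. The paper compresses this into the single inequality $\bigl|\frac{1}{n+1}\E F(\cT_n)-\E f(\cT)\bigr|\le \frac{|\mu_n|}{n}+\sum_{k>n}\pi_k|\mu_k|$ and then remarks that the refined cases $\ga<1$ follow ``similarly''; your exact identity and explicit tail estimate $\sum_{k\ge n}k^{\ga-2}=O(n^{\ga-1})$ supply precisely those details.
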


\begin{proof}
We have
\begin{equation}\label{tlime|}
\sumk\pi_k|\mu_k| \le
\sumk \pi_k\E |f(\cT_k)| =
  \E |f(\cT)|<\infty
\end{equation}
and similarly
\begin{equation}\label{tlime=}
  \E f(\cT)=\sumk \pi_k\E f(\cT_k)=\sumk\pi_k\mu_k,
\end{equation}
where the sum converges absolutely by \eqref{tlime|}.
Thus \eqref{eF} implies
\begin{equation}\label{tli}
  \lrabs{\frac{1}{n+1}\E F(\cT_n)-\E f(\cT)}
\le \sumk |\pi_{k,n}-\pi_k|\,|\mu_k|
\le \frac{|\mu_n|}{n}+\sum_{k=n+1}^\infty \pi_k|\mu_k|,
\end{equation}
which tends to 0 by the assumption $\mu_n=o(n)$ and \eqref{tlime|}.
This implies \eqref{tlime}. 

This is the case $\ga=1$ of \eqref{tlimeo}. 
For $\ga<1$,  \eqref{tli} similarly implies \eqref{tlimeo} and \eqref{tlimeO}
under the stated assumptions.
\end{proof}

For the variance we begin with
an upper bound that is uniform in $n$ and $f$.

\begin{thm}\label{Tny}
There exists a universal constant $C$ such that,
under the assumptions and notations of \refT{TFvar}, 
for all $n\ge1$,
\begin{equation}\label{vftny}
\Var(F(\cT_n))
\le C n
\lrpar{
 \biggpar{\sumk\frac{(\Var f(\ctk))\qq}{k^{3/2}}}^2
 + \sup_k \frac{\Var f(\cT_k)}k
+\sumk\frac{\mu_k^2}{k^2} 
}.
\end{equation}
\end{thm}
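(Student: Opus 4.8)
The plan is to split $f$ into a part that depends only on the sizes of the fringe trees and a fluctuation part that is centred at every level, and to estimate the two resulting variances separately. Set $\mu_k:=\E f(\ctk)$ and $\sigma_k^2:=\Var f(\ctk)$, and write $f(T)=\mu_{|T|}+\tilde f(T)$ with $\tilde f(T):=f(T)-\mu_{|T|}$, so that $\E\tilde f(\ctk)=0$ and $\Var\tilde f(\ctk)=\sigma_k^2$. Then $F(\ctn)=F^\mu(\ctn)+\tilde F(\ctn)$, where $F^\mu(\ctn)=\sumkn\mu_k X_{n,k}$ depends only on the subtree sizes and $\tilde F$ is the additive functional \eqref{F} built from $\tilde f$, and $\Var(F(\ctn))\le 2\Var(F^\mu(\ctn))+2\Var(\tilde F(\ctn))$. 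Writing $S:=\sumk(\Var f(\ctk))\qq/k^{3/2}$, $T:=\sup_k \Var f(\ctk)/k$ and $U:=\sumk\mu_k^2/k^2$, I aim to prove $\Var(\tilde F(\ctn))\le Cn(S^2+T)$ and $\Var(F^\mu(\ctn))\le CnU$, which together give \eqref{vftny}.

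The fluctuation part is the heart of the argument, and here I would exploit that centring annihilates all but the ``nested'' contributions. Since every level mean of $\tilde f$ vanishes, the double sum in \eqref{vF} disappears and
\[
\Var(\tilde F(\ctn))=(n+1)\sumkn\pi_{k,n}\bigpar{2C_k-\sigma_k^2},\qquad C_k:=\E\bigpar{\tilde f(\ctk)\tilde F(\ctk)}.
\]
Because $\E\tilde f(\ctk)=0$ we have $C_k=\Cov(\tilde f(\ctk),\tilde F(\ctk))$, so Cauchy--Schwarz gives $|C_k|\le\sigma_k\Var(\tilde F(\ctk))\qq$. Setting $W(n):=\Var(\tilde F(\ctn))/(n+1)$ and inserting $|C_k|\le\sigma_k\bigpar{(k+1)W(k)}\qq$, together with $\pi_{k,n}\le\pi_k=O(1/k^2)$ for $k<n$ (the term $k=n$, where $\pi_{n,n}=1/(n+1)$, is kept separate and contributes at most $2\sqrt T\,W(n)\qq$), produces the self-bounding inequality
\[
W(n)\le \sum_{k<n}\Bigpar{\tfrac{4\sigma_k}{k^{3/2}}\,W(k)\qq+\tfrac{2\sigma_k^2}{k^2}}+2\sqrt T\,W(n)\qq.
\]
Bounding $W(k)\qq\le M_N\qq$ for all $k\le n\le N$, where $M_N:=\max_{n\le N}W(n)$, and using the elementary bound $\sumk\sigma_k^2/k^2\le S\sqrt T$ (see below), this collapses to $M_N\le (4S+2\sqrt T)M_N\qq+2S\sqrt T$. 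Solving this quadratic inequality in $M_N\qq$ yields $M_N\le C(S^2+T)$ uniformly in $N$; letting $N\to\infty$ gives $\sup_n W(n)\le C(S^2+T)$, i.e.\ $\Var(\tilde F(\ctn))\le Cn(S^2+T)$.

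For the mean part I would use the exact covariances of \refT{Tcovbin2}: for $n>k+m+1$ one has $\Cov(X_{n,k},X_{n,m})=(n+1)\gskm$ with $\gsxx_{k,k}=O(1/k^2)$ and, by \eqref{covb2a}, $|\gskm|\le Cm/\bigpar{k(k+m)^3}\le Cm/k^4$ for $m<k$ (the boundary pairs with $k+m+1\ge n$ are covered by the corresponding cases of \refL{Lkm} and have comparable or smaller magnitude). Then $\Var(F^\mu(\ctn))=(n+1)\sum_{k,m}\gskm\mu_k\mu_m$, and splitting off the diagonal and using $2|\mu_k\mu_m|\le\mu_k^2+\mu_m^2$ together with $\sum_{m<k}m=O(k^2)$ and $\sum_{k>m}k^{-4}=O(m^{-3})$ gives $\bigabs{\sum_{k,m}\gskm\mu_k\mu_m}\le C\sumk\mu_k^2/k^2=CU$, hence $\Var(F^\mu(\ctn))\le CnU$.

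The main obstacle is making the self-bounding step of the second paragraph rigorous rather than circular: one must first know $M_N<\infty$ (true because each $\tilde F(\ctn)$ is a finite sum of $L^2$ terms once all $\sigma_k<\infty$, which may be assumed since otherwise the right-hand side of \eqref{vftny} is infinite) and only then solve the quadratic inequality and pass to the limit. The two analytic facts I would isolate as lemmas are the bound $\sumk\sigma_k^2/k^2=\sumk\tfrac{\sigma_k}{k^{3/2}}\tfrac{\sigma_k}{k^{1/2}}\le S\sqrt T$, which is precisely what converts the diagonal contribution into the first two terms of \eqref{vftny}, and the bookkeeping of the boundary weights $\pi_{k,n}$ for $k$ near $n$ (and the exceptional pair $k=m=(n-1)/2$ in \refL{Lkm}): none of these changes the order of the estimate, but all must be checked to secure a genuinely universal constant $C$.
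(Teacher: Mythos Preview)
Your proposal is correct and follows essentially the same route as the paper: the same decomposition $f=\mu_{|T|}+\tilde f$, the same use of the simplified variance formula \eqref{vF} with Cauchy--Schwarz to obtain a self-bounding inequality for the centred part, and the same reliance on the explicit size--size covariances $\gskm$ for the mean part.

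The only differences are cosmetic. For the centred part, the paper drops the $-\sigma_k^2$ term immediately (since it is nonpositive) and closes the recursion by a direct induction $b_n\le 6A$ rather than by taking a supremum $M_N$ and solving a quadratic; the inductive version sidesteps your ``main obstacle'' of first verifying $M_N<\infty$. For the mean part, the paper reduces to $\mu_k\ge0$ by splitting into positive and negative parts (so that the negative off-diagonal $\gskm$ can simply be discarded), whereas you keep the signs and use $2|\mu_k\mu_m|\le\mu_k^2+\mu_m^2$; the paper also records, in a remark, exactly your alternative of bounding $|\gskm|$ directly (there via Hilbert's inequality). Your handling of the boundary cases $k+m+1\ge n$ is a little breezy but matches the paper's treatment: the only genuinely positive extra piece is the single pair $k+m+1=n$, contributing $O(n^{-2})\sum_{k<n}|\mu_k\mu_{n-1-k}|$, which is indeed bounded by $\sumk\mu_k^2/k^2$ after Cauchy--Schwarz.
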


\begin{proof}
We split $f(T)=f\upp1(T)+f\upp2(T)$, where for a tree $T$ with $|T|=k$ we
define
$f\upp1(T):=\E f(\cT_k)=\mu_k$ and $f\upp2(T):=f(T)-\mu_k$; thus
$\E f\upp2(\cT_k)=0$. 
This yields a corresponding decomposition
$F(\cT_n)=F\upp1(\cT_n)+F\upp2(\cT_n)$,  and it suffices to estimate the
variance of each term separately.
For convenience, we drop the superscripts, and note that the two terms
correspond to the two special cases $f(T)=\mu_k$ when $|T|=k$
(i.e., $f(T)$ depends on $|T|$ only), and $\mu_k=\E
f(\cT_k)=0$, respectively.

\pfcase{1}{$f(T)=\mu_{|T|}$.}
In this case, $f(T)=\sumk \mu_k\ett{|T|=k}$ and 
$F(\cT_n)=\sum_{k=1}^n \mu_k X\nk$; furthermore, $X_{n,n}=1$ is deterministic.
Hence,
\begin{equation}\label{jk}
  \Var\bigpar{F(\cT_n)}
=\sum_{k=1}^{n-1} \sum_{m=1}^{n-1} \Cov(X\nk,X_{n,m})\mu_k\mu_m.
\end{equation}
These covariances are evaluated by \refL{Lkm}\ref{lkmii}--\ref{lkmiv}, as in
the special case $n>k+m+1$ treated in \refT{Tcovbin2}; this yields,
assuming  $m\le k<n$ and recalling \eqref{aw},
\begin{equation}
  \frac1{n+1}\Cov\bigpar{X\nk,X_{n,m}}
=
\frac{2}{(k+1)(k+2)} \E X_{k,m} -\bgamx(k,m).
\end{equation}

Suppose first that $m<k<n$. If $n>k+m+1$, then
$\Cov\bigpar{X\nk,X_{n,m}}<0$ by \eqref{covb2a}. 
If $n=k+m+1$, then, 
similar calculations 
as in the proof of \refT{Tcovbin2}, 
now using \eqref{aw} and \eqref{gamma1}, yield
\begin{align} \label{jk2}
  \frac1{n+1} \Cov\bigpar{X\nk,X_{n,m}}
&
=-\frac{4}{(k+1)(k+2)(m+2)} +\frac{2}{n(n+1)}
\le \frac{2}{n(n+1)},
\intertext{and when $k<n<k+m+1$,  \eqref{gamma2} similarly implies,} 
  \frac1{n+1} \Cov\bigpar{X\nk,X_{n,m}}
&
=-\frac{4(n-k)}{(k+1)(k+2)(m+1)(m+2)} 
<0.  \label{jk3}
\end{align}

In the case $m=k<n$ we obtain similarly, or simpler from 
\eqref{vxnk}--\eqref{vxnk2},
\begin{equation}\label{jk4}
  \frac1{n+1}\Var\bigpar{X\nk}
= O\Bigparfrac{1}{k^2}.
\end{equation}

Suppose now that all $\mu_k\ge0$. The \eqref{jk},  \eqref{covb2a}
and \eqref{jk2}--\eqref{jk4} yield, for some $\CC$,
using the Cauchy--Schwarz inequality,
\begin{equation}
  \begin{split}
\frac1{n+1}
  \Var\bigpar{F(\cT_n)}
&\le 
\CCx \sum_{k=1}^{n-1} \frac{\mu_k^2}{k^2}
+ 2\sum_{k=1}^{n-2}\frac{\mu_k\mu_{n-1-k}}{n^2}
\le 
\CCx \sum_{k=1}^{n-1} \frac{\mu_k^2}{k^2}
+ \frac{2}{n^2}\sum_{k=1}^{n-2}\mu_k^2
\\&
\le(\CCx+2) \sum_{k=1}^{\infty} \frac{\mu_k^2}{k^2}	.
  \end{split}
\end{equation}
This proves \eqref{vftny} in the case $f(T)=\mu_{|T|}$, if we further assume
$\mu_k\ge0$, \ie, $f(T)\ge0$.
For a general sequence $\mu_k$, we split $f$ (and thus $\mu_k$) into its
positive and negative parts, and apply the estimate just obtained to each
part.
This yields \eqref{vftny} in general for Case 1.

\pfcase{2}{$\mu_{k}=0$, $k\ge1$.}
Let 
\begin{align}
a_k^2:=\Var (f(\cT_k))&&&  \text{and}&&
b_n^2:=\frac{1}{n+1}\Var (F(\cT_n)).  
\end{align}
Then \eqref{vF} implies, since we assume $\mu_k=0$,
using the Cauchy--Schwarz inequality and recalling \eqref{pikn},
\begin{equation}\label{kull}
  \begin{split}
b_n^2
&=
\sum_{k=1}^n
\pi_{k,n}\E\lrpar{f(\cT_k)\bigpar{2F(\cT_k)-f(\cT_k)}}
\le
2\sum_{k=1}^{n}
\pi_{k,n}\E\lrpar{f(\cT_k)F(\cT_k)}
\\&
\le
2\sum_{k=1}^{n}
\pi_{k,n} a_k (k+1)\qq b_k
\le
4\sum_{k=1}^{n-1} k^{-3/2} a_k b_k + 2 n^{-1/2} a_n b_n.
  \end{split}
\end{equation}
Now let $A:=\max\bigset{\sumk a_k k^{-3/2}, \sup_k k\qqw a_k}$.
We find from \eqref{kull}
\begin{equation}
  b_n^2 \le 4 A \max_{k<n} b_k + 2 A b_n
\end{equation}
and thus $(b_n-A)^2 \le 4 A\max_{k<n} b_k +A^2$, which by induction
implies $b_n\le 6A$.

In other words, 
\begin{equation}
\Var\bigpar{F(\cT_n)} \le 36 A^2(n+1)
\le 36  (n+1) 
\biggpar{
\biggpar{\sumk a_k k^{-3/2}}^2
+ \sup_k \frac{a_k^2}k },
\end{equation}
which proves
\eqref{vftny} in Case 2.
\end{proof}

\begin{rem}
In the proof of Case 1, it was convenient to reduce to the case $\mu_k\ge0$ in
order to require only upper bounds for 
$\Cov\bigpar{X\nk,X_{n,m}}$. This is not necessary, however. An alternative
  is to note that by \eqref{covb2a} and \eqref{jk2}--\eqref{jk3},
whenever $m<k<n$,
  \begin{equation}
\frac{1}{n+1}
\Cov\bigpar{X\nk,X_{n,m}} > - \frac{8}{k^2m}.
  \end{equation}
Hence, the proof can be concluded (for general $\mu_k$) 
by the additional estimate
\begin{equation}\label{jp}
  \sum_{k=1}^\infty \sum_{m=1}^\infty \frac{|\mu_k\mu_m|}{km\max\set{k,m}} 
\le 2  \sum_{k=1}^\infty \sum_{m=1}^\infty \frac{|\mu_k\mu_m|}{(k+m)km} 
\le \CC\sumk \frac{\mu_k^2}{k^2},
\end{equation}
which (by the substitution $x_k=|\mu_k|/k$) is an application of Hilbert's
inequality saying that the infinite matrix 
$(1/\xpar{k+m})_{k,m=1}^\infty$ defines a bounded operator on $\ell^2$,
see
\cite[Chapter IX]{HLP}.
\end{rem}

\begin{rem}
  In order for the estimate in \refT{Tny} to be useful, the three terms in
  the right-hand side of \eqref{vftny} have to be finite.
These conditions are the best possible that imply $\Var(F(\cT_n))=O(n)$ in
general, as is seen in the following examples. 
(We do not claim that these terms have to be finite in all cases
for $\Var(F(\cT_n))=O(n)$ to hold, but at least in some examples they have to.)

Consider first the case $f(T)=\mu_{|T|}$.
By \eqref{jk} and the estimates above,
\begin{equation}\label{jeppe}
  \begin{split}
\frac{\Var F(\cT_n)}{n+1}
&=
\sum_{k=1}^{n-1} \Bigpar{\frac{2}{k^2}+O\bigpar{k^{-3}}}\mu_k^2
+ \sum_{k=1}^{n-2}\frac{2}{n^2}\mu_k\mu_{n-1-k}
+\sum_{m<k<n}O\Bigparfrac{1}{k^2m}\mu_k\mu_m. 
  \end{split}
\end{equation}
Note that the factor $\xfrac{2}{k^2}+O\bigpar{k^{-3}}\ge 1/k^{2}$ unless
$k\le k_0$, for some $k_0$, and suppose for simplicity that $\mu_k=0$ for
$k\le k_0$. Then the first sum is at least 
$\sum_{k=1}^{n-1}\xfrac{\mu_k^2}{k^2}$.

Now consider instead of the sequence $(\mu_k)$ a random thinning $(\mu_k')$
obtained by letting $\mu_k'=\mu_k$ with some small fixed probability $p>0$,
and $\mu_k=0$ otherwise, independently for all $k$.
Replacing $\mu_k$ by $\mu_k'$ in \eqref{jeppe} and taking the expectation
over the thinnings yields, using the Cauchy--Schwarz inequality and \eqref{jp},
\begin{equation}
  \begin{split}
\E\frac{\Var F(\cT_n)}{n+1}
&\ge
\sum_{k=1}^{n-1} \frac{p\mu_k^2 }{k^2} 
+  \sum_{k=1}^{n-2}\frac{2}{n^2}p^2\mu_k\mu_{n-1-k}
+\sum_{m<k<n}O\Bigparfrac{1}{k^2m}p^2\mu_k\mu_m
\\ & \ge
p\sum_{k=1}^{n-1} \frac{\mu_k^2}{k^2} 
-p^2 \CC \sum_{k=1}^{n-1} \frac{\mu_k^2}{k^2}.
  \end{split}
\raisetag{1.2\baselineskip}
\end{equation}
Choose $p \le 1/2\CCx$; then the right-hand side is at least 
$\frac p2\sum_{k=1}^{n-1} \xfrac{\mu_k^2}{k^2}$. 
Suppose now that $\sumk \mu_k^2/k^2=\infty$. By choosing $n$ large we then can
make $\E\Var F(\cT_n)/n$ arbitrarily large, so there exists an $n$ and a
thinning with $\Var F(\cT_n)/n$ arbitrarily large. This holds also if we fix
a finite number of the elements $\mu_k'$ of the thinning, and it follows by
using this argument recursively that there exists a (deterministic)
thinning $(\mu_k')$ and
a sequence $n_\nu\to\infty$ such that
$\Var F(\cT_n)/n\to\infty$ as \ntoo{} along this sequence.

For the case $\mu_k=0$, suppose that $(a_k)_3^\infty$ is a given sequence of
positive numbers. Define $f_1=f_2:=0$ and let
$g_3(T):=\#\set{\text{leaves in  $T$}}-4/3$ when $|T|=3$, 
where the constant $4/3$ is chosen such that
$\E g_3(\cT_3)=0$, cf.\ \eqref{munk}. 
Let $f_3(T)=c_3 g_3(T)$ for a constant $c_3>0$ such that
$\Var f_3(\cT_3)=a_3^2$. 
Continue recursively as follows: If we have chosen
$f_1,\dots,f_{k-1}$, let for a tree $T$ with $|T|=k$, 
$g_k(T):=\sum'_{v\in  T} f_{|T(v)|}(T(v))$, where $\sum'$ denotes summation
over all nodes except the root.
Define $f_k(T)=c_k g_k(T)$ for a constant $c_k>0$ such that
$\Var f_k(\cT_k)=a_k^2$. 
Note that, by induction using \eqref{eF}, 
$\E f_k(\cT_k)=\E g_k(\cT_k)=0$ for every $k$.

Consider $f:=\sum_k f_k$ and the corresponding $F$. By construction,
for $k>3$,
$F(T)=f_k(T)+g_k(T)=(1+c_k)g_k(T)$ for every tree $T$ with $|T|=k$.
If we let $d_k^2 := \Var g_k(\cT_k)$, we have $a_k^2=c_k^2d_k^2$ and
\begin{equation}\label{bl}
  \E\bigpar{f(\cT_k)\bigpar{2F(\cT_k)-f(\cT_k)}}
=
  \E\bigpar{c_k g_k(\cT_k)(c_k+2)g_k(\cT_k)}
=c_k(c_k+2) d_k^2
=a_k(2d_k+a_k).
\end{equation}
Similarly,
\begin{equation}
  \label{bl2}
\Var F(\cT_k) =(1+c_k)^2\Var g_k(\cT_k) 
=(1+c_k)^2b_k^2
= (a_k+d_k)^2.
\end{equation}
(For $k=3$, $F(\cT_3)=f_3(\cT_3)$, and \eqref{bl}--\eqref{bl2} hold if
we redefine $d_3:=0$.)

Note first that we have 
$\Var F(\cT_k) = (a_k+d_k)^2 \ge a_k^2=\Var f(\cT_k)$; hence, if
$\Var F(\cT_n)=O(n)$,  then $a_n^2=\Var f(\cT_n)=O(n)$, i.e., 
$\sup_k \Var f(\ctk)/k<\infty$.

Next,  \eqref{vF} and \eqref{bl}--\eqref{bl2} yield
\begin{equation}
  \begin{split}
  (d_n+a_n)^2 &
=\Var F(\cT_n) 
=(n+1)\sum_{k=3}^n \pi_{k,n} a_k(2d_k+a_k)
\\&
=
2a_nd_n+a_n^2+\sum_{k=3}^{n-1}\frac{2(n+1)}{(k+1)(k+2)} a_k(d_k+2a_k)
  \end{split}
\end{equation}
and thus
\begin{equation}\label{ws}
  d_n^2
=
\sum_{k=3}^{n-1}\frac{2(n+1)}{(k+1)(k+2)} a_k(2d_k+a_k).
\end{equation}
It follows that, for $n\ge4$,
 $ d_n^2 > n a_3^2/10$, and thus
$d_n\ge\cc n\qq$ for  some $\ccx>0$.
Using this in \eqref{ws} we obtain
\begin{equation}
d_n^2
>
\cc n\sum_{k=4}^{n-1} k^{-3/2} a_k.
\end{equation}
Hence, if $\sumk k^{-3/2} a_k =\infty$, then $\Var F(\cT_n)/n\to\infty$ as
\ntoo. 
\end{rem}

\subsection{Random recursive tree}\label{SSmeanrrt}

For the random recursive tree we similarly  
compute mean and variance
using the cyclic representation
\eqref{cyclicrrt}.
Again, these have earlier been computed using the linear representation
(see \refS{SSlinearrrt}) 
by \citet{Devroye1}, and also by 
other (analytic) methods,
see 
\citet{FengMahmoudP08},
\citet{Fuchs}.

The representation \eqref{cyclicrrt} gives, recalling that subtrees of size $k$
correspond to subtrees of size $k-1$ in the corresponding binary tree, 
\begin{equation}\label{xnkrrt}
  \hX\nk= 
\sum_{i=1}^n\yl_{i,k-1},
\qquad 1\le k\le n.
\end{equation}

\begin{Lemma}\label{lemma1rrt}
Let $1\le k<n $.
For the \rrt,
\begin{align}
  \E(\hX_{n,k})&=\frac{n}{k(k+1)} \label{exnkrrt}
\end{align}
and 
\begin{align}\label{vxnk0rrt}
\Var(\hX_{n,k})&= 
\begin{cases}
\E \hX_{n,k}
- n\frac{3k+2}{k(k+1)^2(2k+1)},
& k <\frac{n}2, \\
\E \hX_{n,k} -(\E \hX_{n,k})^2
=
\E \hX_{n,k}
-\frac{n^2}{k^2(k+1)^2}  ,
& k \ge\frac{n}2. \\
\end{cases}
\end{align}
Hence, for $1\le k<n$,
\begin{align}
\Var(\hX_{n,k})&=\E(\hX_{n,k})+O\Bigparfrac{n}{k^3}. \label{vxnkrrt}
\end{align}
\end{Lemma}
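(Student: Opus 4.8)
The plan is to start from the cyclic representation \eqref{xnkrrt}, $\hX_{n,k}=\sum_{i=1}^{n}\yl_{i,k-1}$, and to follow the pattern of the proof of \refL{lemma1}. The first step is to rewrite the event in \eqref{ylik} in a symmetric form: since $U_{i-1}\le U_{i+k-1}$ while $U_{i+k-1}<U_j$ for every interior index $j\in\set{i,\dots,i+k-2}$, the event $\set{\yl_{i,k-1}=1}$ says precisely that, among the $k+1$ consecutive variables $U_{i-1},\dots,U_{i+k-1}$, the left endpoint $U_{i-1}$ is the smallest and the right endpoint $U_{i+k-1}$ is the second smallest. As the $U$'s are independent and a.s.\ distinct, by symmetry this has probability $\frac{1}{(k+1)k}$ for every $i$, and summing the $n$ terms gives \eqref{exnkrrt}.

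For the variance I would expand $\Var(\hX_{n,k})=\sum_{i,j}\Cov(\yl_{i,k-1},\yl_{j,k-1})$ and group pairs by their cyclic distance $d$ in $\bbZ_n$. Each $\yl_{i,k-1}$ depends only on the window $W_i:=\set{i-1,\dots,i+k-1}$ of $k+1$ indices, so the covariance vanishes as soon as $W_i\cap W_j=\emptyset$, i.e.\ for $d>k$. The combinatorial heart of the argument, the analogue of the vanishing used in \refL{lemma1}, is that for $0<d\le k-1$ the product $\yl_{i,k-1}\yl_{i+d,k-1}$ is identically $0$: the minimum $U_{i+d-1}$ of the second window is an interior point of the first window and so must exceed its second smallest value $U_{i+k-1}$, whereas $i+k-1$ lies in the second window and so $U_{i+d-1}<U_{i+k-1}$ --- a contradiction. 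Hence the only non-diagonal pairs with a non-trivial joint expectation sit at distance exactly $d=k$, and when $k<n/2$ (so that $2k\not\equiv0\pmod n$ and the windows do not wrap) each index has exactly two such neighbours.

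It then remains to evaluate the single cross term $\E(\yl_{1,k-1}\yl_{k+1,k-1})$, in the spirit of \eqref{eikk}; the two windows $\set{0,\dots,k}$ and $\set{k,\dots,2k}$ meet only at the index $k$, so conditioning on $U_k=u$ makes the two window events independent. The first contributes $u(1-u)^{k-1}$ and the second $\int_u^1(1-t)^{k-1}\,\dd t=(1-u)^{k}/k$, giving conditional probability $u(1-u)^{2k-1}/k$; integrating yields $\E(\yl_{1,k-1}\yl_{k+1,k-1})=\frac{1}{2k^{2}(2k+1)}$. Writing $\mu:=\frac{1}{k(k+1)}$ and collecting the diagonal term $\mu-\mu^2$, the $2(k-1)$ neighbours at distances $1,\dots,k-1$ each contributing $-\mu^2$, and the two distance-$k$ terms, the inner sum over $j$ for fixed $i$ equals $\mu-(2k+1)\mu^2+2\E(\yl_{1,k-1}\yl_{k+1,k-1})$, so $\Var(\hX_{n,k})=n\bigpar{\mu-(2k+1)\mu^2+2\E(\yl_{1,k-1}\yl_{k+1,k-1})}$, which reduces to the first line of \eqref{vxnk0rrt} by elementary algebra.

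For $k\ge n/2$ I would argue instead that $\hX_{n,k}\le1$: a fringe subtree of size $k<n$ never contains the root of $\gL_n$, so two disjoint such subtrees would require $2k\le n-1$ nodes; hence all products $\yl_{i,k-1}\yl_{j,k-1}$ with $i\neq j$ vanish and $\hX_{n,k}\sim\Be(\E\hX_{n,k})$, giving the second line of \eqref{vxnk0rrt}. This also explains why, in contrast with \refL{lemma1}, no separate middle case is needed: the only wrap-around coincidence $2k\equiv0\pmod n$ occurs exactly at $k=n/2$, which already lies in the Bernoulli regime. Finally \eqref{vxnkrrt} follows in both regimes from the explicit correction terms, since $\frac{n(3k+2)}{k(k+1)^2(2k+1)}=O(n/k^3)$ and, as $n=O(k)$ when $k\ge n/2$, also $(\E\hX_{n,k})^2=O(n^2/k^4)=O(n/k^3)$. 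The main obstacle is the careful bookkeeping of the dependence structure on the cycle $\bbZ_n$ --- confirming that $d=k$ is the unique non-trivial distance and that the clean two-case split at $n/2$ is correct.
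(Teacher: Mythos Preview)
Your proposal is correct and follows essentially the same route as the paper's proof: the cyclic representation \eqref{xnkrrt}, the computation $\E\yl_{i,k-1}=\frac{1}{k(k+1)}$ by symmetry, the vanishing of cross products for cyclic distances $1,\dots,k-1$, the evaluation of $\E(\yl_{1,k-1}\yl_{k+1,k-1})$ by conditioning on the shared endpoint $U_k=u$ to obtain $\frac{1}{2k^2(2k+1)}$, and the Bernoulli argument $\hX_{n,k}\le1$ for $k\ge n/2$. Your explicit combinatorial justification for the vanishing at distances $1\le d\le k-1$ and your observation that the would-be exceptional case $2k\equiv0\pmod n$ is absorbed into the Bernoulli regime are exactly the points the paper makes (the latter in its parenthetical remark after \eqref{varxnk3rrt}).
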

\begin{proof}
We use \eqref{xnkrrt} and argue as in the proof of \refL{lemma1}, replacing
$k$ and $n$ by $k-1$ and $n-1$.
By \eqref{ylik} and symmetry, for any $i$ and $1\le k<n$, 
\begin{equation}\label{elnk}
 \E(\yl_{i,k-1})=\frac{1}{k(k+1)} 
\end{equation}
and thus \eqref{exnkrrt} follows from \eqref{xnkrrt}.

For the variance, we obtain as in the proof of \refL{lemma1},
if $k<n/2$,
\begin{equation} \label{varxnkrrt}
  \begin{split}
\Var(\hX_{n,k})
&=
n\Bigpar{\E\yl_{0,k-1}
+2\E(\yl_{0,k-1}\yl_{k,k-1})
-(2k+1)(\E\yl_{0,k-1})^2}.
  \end{split}
\end{equation}
If $k\ge n/2$, 
then $\yl_{i,k}\yl_{j,k}=0$ unless $i=j$
and thus, or because $\hX\nk\le1$,
\begin{equation} \label{varxnk3rrt}
  \begin{split}
\Var(\hX_{n,k})&=
n\Bigpar{\E\yl_{0,k-1}-n(\E\yl_{0,k-1})^2}
=\E \hX\nk-\bigpar{\E\hX\nk}^2.
  \end{split}
\end{equation}
(There is no exceptional case when $k=n/2$, since $\yl_{0,k-1}\yl_{k,k-1}=0$
in this case.)

It remains to compute $\E(\yl_{0,k-1}\yl_{k,k-1})=\E(\yl_{1,k-1}\yl_{k+1,k-1})$.
We can argue as in the proof of \refL{lemma1},
recalling also the condition $U_{i-1}\le U_{i+k-1}$ in \eqref{ylik}, which
yields 
\begin{equation}\label{eikkrrt}
  \begin{split}
\E(\yl_{1,k-1}\yl_{k+1,k-1})
&=
\int_0^1 u(1-u)^{k-1}\cdot\frac1{k}(1-u)^{k} \dd u
\\
&=
\int_0^1 \frac{1}{k}(1-x)x^{2k-1} \dd x
=\frac{1}{2k^2(2k+1)}.
   \end{split}
\end{equation}
Alternatively, it is this time easy to use a combinatorial argument;
$\yl_{1,k-1}\yl_{k+1,k-1}=1$ if $U_0$ is the smallest of $U_0,\dots,U_{2k}$,
$U_k$ is the smallest of the rest, and $U_{2k}$ is the smallest of
$U_{k+1},\dots,U_{2k}$; these events are independent and have probabilities
$1/(2k+1)$, $1/(2k)$ and $1/k$. 

Finally, \eqref{vxnk0rrt}--\eqref{vxnkrrt}
follow by simple calculations from \eqref{elnk}--\eqref{eikkrrt}.
 \end{proof}

\begin{Lemma}\label{lemma1P2}
Let $P$ be some property of ordered rooted trees.
Let $1\le k<n $ and let $\hat{p}_{k,P}:=\P(\gL_k\in P)$.
For the random recursive tree $\gL_n$, 
\begin{align}
  \E(\hat{X}_{n,k}^P)&=\frac{n\hat{p}_{k,P}}{k(k+1)} \label{exnkP2}
.
\end{align}
Furthermore,
\begin{align}\label{vxnk0P2}
\Var(\hat{X}_{n,k}^P)&= 
\begin{cases}
\E \hat{X}_{n,k}^P
- n\frac{3k+2}{k(k+1)^2(2k+1)}\, \hat{p}_{k,P}^2,
& k <\frac{n}2, \\
\E \hat{X}_{n,k}^P -(\E \hat{X}_{n,k}^P)^2
=
\E \hat{X}_{n,k}^P
-\frac{n^2}{k^2(k+1)^2} \, \hat{p}_{k,P}^2 ,
& k \ge\frac{n}2,
\end{cases}
\end{align}
and hence
\begin{align}
\Var(\hat{X}_{n,k}^P)&=\E(\hat{X}_{n,k}^P)+O\Bigparfrac{n \hat{p}_{k,P}^2}{k^3}. \label{vxnkP2}
\end{align}
\end{Lemma}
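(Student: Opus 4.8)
The plan is to mirror the proofs of \refL{lemma1rrt} and \refL{lemma1P}, inserting factors of $\hat p_{k,P}$ at the appropriate places. First I would apply the cyclic representation \refL{Lcyclicrrt} with $f$ the indicator of the set of ordered trees of size $k$ lying in $P$. Since $\ff(\gs(i,j-1))$ then vanishes unless the corresponding ordered tree has size $k$ (\ie\ the binary tree $\gs(i,j-1)$ has size $k-1$), only the term with $j=k$ survives, and I obtain, in analogy with \eqref{xpnk} and \eqref{xnkrrt},
\begin{equation*}
  \hX_{n,k}^P=\sum_{i=1}^n \yl_{i,k-1}\,\hat I^{P}_{i,k-1},
\end{equation*}
where $\hat I^{P}_{i,k-1}$ is the indicator that the ordered tree of size $k$ corresponding (by the natural correspondence) to the binary tree defined by $\gs(i,k-1)$ has property $P$. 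Since the relative order of $U_i,\dots,U_{i+k-2}$ is a uniform permutation of length $k-1$, which defines $\cT_{k-1}$ and hence, via the natural correspondence, a copy of $\gL_k$, we have $\E(\hat I^{P}_{i,k-1})=\P(\gL_k\in P)=\hat p_{k,P}$.

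The key step, exactly as in \refL{lemma1P}, is the independence of $\hat I^{P}_{i,k-1}$ from the occurrence indicator $\yl_{i,k-1}$. By \eqref{ylik}, conditioning on $\yl_{i,k-1}=1$ only forces $U_{i-1}$ and $U_{i+k-1}$ to be smaller than every $U_j$ with $i\le j\le i+k-2$ (with $U_{i-1}\le U_{i+k-1}$); it imposes no constraint on the relative order within the block $U_i,\dots,U_{i+k-2}$, which is all that $\hat I^{P}_{i,k-1}$ depends on. Hence $\yl_{i,k-1}$ and $\hat I^{P}_{i,k-1}$ are independent, so $\E(\yl_{i,k-1}\hat I^{P}_{i,k-1})=\E(\yl_{i,k-1})\hat p_{k,P}=\hat p_{k,P}/(k(k+1))$ by \eqref{elnk}, and summing the $n$ cyclic terms yields \eqref{exnkP2}.

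For the variance I would follow the covariance bookkeeping of \refL{lemma1rrt}. By translation invariance $\Var(\hX_{n,k}^P)=n\sum_{d}\Cov\bigpar{\yl_{0,k-1}\hat I^{P}_{0,k-1},\,\yl_{d,k-1}\hat I^{P}_{d,k-1}}$ over $d\in\bbZ_n$. The summand vanishes when the index blocks are disjoint, \ie\ for $\absn{d}>k$; for $0<\absn{d}\le k-1$ the product of occurrence indicators is $0$ (two size-$k$ subtrees cannot be disjoint and adjacent), leaving the contribution $-(\E\yl_{0,k-1})^2\hat p_{k,P}^2$. For $d=0$ the term is the variance of the Bernoulli variable $\yl_{0,k-1}\hat I^{P}_{0,k-1}$, and for $d=\pm k$ I would use the additional observation (as in \refL{lemma1P}) that $\hat I^{P}_{0,k-1}$ and $\hat I^{P}_{k,k-1}$ are independent of each other and of $\yl_{0,k-1}\yl_{k,k-1}$, since they depend on the relative orders within the two disjoint blocks $U_0,\dots,U_{k-2}$ and $U_k,\dots,U_{2k-2}$; this gives $\E(\yl_{0,k-1}\hat I^{P}_{0,k-1}\yl_{k,k-1}\hat I^{P}_{k,k-1})=\E(\yl_{0,k-1}\yl_{k,k-1})\hat p_{k,P}^2$, with the last expectation evaluated in \eqref{eikkrrt}. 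Collecting these contributions (there being $2(k-1)$ terms of the second type and two of the third for $k<n/2$) reproduces exactly the structure of \eqref{varxnkrrt}: the linear term $\E\yl_{0,k-1}$ is replaced by $\E(\yl_{0,k-1})\hat p_{k,P}=\frac1n\E\hX_{n,k}^P$, while the quadratic part $2\E(\yl_{0,k-1}\yl_{k,k-1})-(2k+1)(\E\yl_{0,k-1})^2$ acquires a factor $\hat p_{k,P}^2$; together with \eqref{elnk} and \eqref{eikkrrt} this is precisely the first case of \eqref{vxnk0P2}. When $k\ge n/2$ there is room for at most one subtree of size $k$, so $\hX_{n,k}^P\le1$ is Bernoulli and $\Var(\hX_{n,k}^P)=\E\hX_{n,k}^P-(\E\hX_{n,k}^P)^2$, the second case of \eqref{vxnk0P2}; as noted in \refL{lemma1rrt} there is no exceptional case at $k=n/2$ because $\yl_{0,k-1}\yl_{k,k-1}=0$ there. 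Finally \eqref{vxnkP2} follows from \eqref{vxnk0P2} by the same size estimates as in \refL{lemma1rrt}.

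The only substantive point — and the one I expect to need the most care — is the pair of independence claims, namely that $\hat I^{P}_{i,k-1}$ is independent of $\yl_{i,k-1}$, and that $\hat I^{P}_{0,k-1},\hat I^{P}_{k,k-1}$ are jointly independent of $\yl_{0,k-1}\yl_{k,k-1}$. Both reduce to the observation that the events defining the $\yl$'s constrain only the boundary minima and leave the internal relative orders uniform and untouched; once these are in hand, every remaining manipulation is identical to the unrestricted computation in \refL{lemma1rrt}.
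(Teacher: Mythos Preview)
Your proposal is correct and follows essentially the same approach as the paper: set up the cyclic representation \eqref{xpnk2} via \refL{Lcyclicrrt} with the indicator $f(\gL)=\ett{\gL\in P_k}$, observe the independence of the pattern indicator from $\yl_{i,k-1}$ (and the analogous joint independence at distance $k$), and then rerun the covariance bookkeeping of \refL{lemma1rrt} with the extra factor $\hat p_{k,P}$ or $\hat p_{k,P}^2$ attached. The paper's proof simply writes down the representation and declares the rest ``analogous to the proof of \refL{lemma1P}''; you have spelled out precisely those analogous details.
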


\begin{proof}
Let $I_{i,k-1}^{\bar{P}}$ be the indicator of the event that the binary
search tree 
defined by the permutation defined by $\gs(i,k-1)$ belongs to $\bar{P}$,
where $\bar{P}$ is the property of binary trees corresponding to
(by the natural correspondence) the property $ P $ of ordered rooted trees.
Then the cyclic representation
\refL{Lcyclicrrt} with   
$f(\Lambda)=\etta\set{\Lambda\in P_k}$
and thus $\bar{f}(T)=\etta\set{T\in \bar{P}_{k-1}}$ yields
\begin{equation}\label{xpnk2}
\hat{X}_{n,k}^P= 
\sum_{i=1}^{n}\yl_{i,k-1}I_{i,k-1}^{\bar{P}}.
\end{equation}

The rest of the proof is analogous to the proof of Lemma \ref{lemma1P}.
\end{proof}

\begin{Lemma}\label{Lkm2}
Let $1\le m\le k$.
Suppose that
$f(\Lambda)$ and $g(\Lambda)$ are two functionals of ordered rooted trees such that
$f(\Lambda)=0$ unless $|\Lambda|=k$ and $g(\Lambda)=0$ unless $|\Lambda|=m$,
and let $F(\Lambda)$ and $G(\Lambda)$
be the corresponding sums \eqref{F} over subtrees.
Let 
\begin{align}
  \lambda_f:=\E f(\gL_k) \qquad\text{and}\qquad \lambda_g:=\E g(\gL_m).
\end{align}
\begin{romenumerate}[-10pt]
\item\label{lkmi2}
The means of $F(\gL_n)$ and $G(\gL_n)$ are given by
\begin{equation}\label{ekm2}
  \E F(\gL_n)=
  \begin{cases}
\frac{n}{k(k+1)}\lambda_f, & n>k,\\
\lambda_f, & n=k, \\
0, & n<k,
  \end{cases}
\end{equation}
and similarly for\/ $\E G(\gL_n)$.

\item \label{lkmii2}
If\/ $n>k+m$, then
\begin{equation*}
  \begin{split}
  \Cov\bigpar{F(\gL_n),G(\gL_n)} 
&=n\lrpar{\frac{1}{k(k+1)}\E\bigpar{ f(\gL_k)G(\gL_k)}
-\hat{\bgam}(k,m)\lambda_f\lambda_g}
	\end{split}
  \end{equation*}
where $\hat{\bgam}(k,m)$ is given by \eqref{gamma4}. 

\item \label{lkmiv2}
If\/ $k<n\leq k+m$, then
\begin{equation*}
  \begin{split}
  \Cov\bigpar{F(\gL_n),G(\gL_n)} 
&=n\lrpar{\frac{1}{k(k+1)}\E\bigpar{ f(\gL_k)G(\gL_k)}
-\hat{\bgam}_2(k,m)\lambda_f\lambda_g}
	\end{split}
  \end{equation*}
where
\begin{equation}\label{gamma2rec}
  \bgam_2(k,m):=
\frac{n}{k(k+1)m(m+1)}.
\end{equation}

\item \label{lkmv2}
If\/ $n=k$, then
\begin{equation*}
  \begin{split}
  \Cov\bigpar{F(\gL_n),G(\gL_n)} 
&=\E\bigpar{ f(\gL_k)G(\gL_k)}
-n\hat{\bgam}_3(k,m)\gl_f\gl_g
	\end{split}
  \end{equation*}
where
\begin{equation}\label{gamma3rec}
  \hat{\bgam}_3(k,m):=
  \begin{cases}
\frac{1}{m(m+1)}, & m<k,\\
\frac{1}{k}, & m=k.
  \end{cases}
\end{equation}

\item \label{lkmvi2}
If\/ $n<k$, then $F(\gL_n)=0$ and thus $\Cov\bigpar{F(\gL_n),G(\gL_n)} =0$.

\end{romenumerate}
\end{Lemma}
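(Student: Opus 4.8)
The plan is to mirror the proof of \refL{Lkm}, replacing the binary search tree representation by the random recursive tree representation \eqref{cyclicrrt} and the indicators $\yi_{i,k}$ by the left-rooted indicators $\yl_{i,k-1}$ of \eqref{ylik}. Since $f(\gL)=0$ unless $|\gL|=k$ and $g(\gL)=0$ unless $|\gL|=m$, only the terms of the corresponding size survive in \eqref{cyclicrrt}, so that $F(\gL_n)=\sum_{i=1}^{n}\yl_{i,k-1}\ff(\gs(i,k-1))$ and $G(\gL_n)=\sum_{j=1}^{n}\yl_{j,m-1}\gbar(\gs(j,m-1))$, with indices read cyclically in $\bbZ_n$. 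Put $A_i:=\yl_{i,k-1}\ff(\gs(i,k-1))$ and $B_j:=\yl_{j,m-1}\gbar(\gs(j,m-1))$. As in \refL{Lkm}, the structural fact that drives everything is that $\yl_{i,k-1}$, which by \eqref{ylik} constrains only the two smallest values in the block of indices $\{i-1,\dots,i+k-1\}$, is independent of $\ff(\gs(i,k-1))$, which depends only on the relative order of $U_i,\dots,U_{i+k-2}$; similarly for $B_j$.

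For the mean (i), cyclic symmetry gives $\E F(\gL_n)=n\,\E A_0=n\,\E(\yl_{0,k-1})\,\E\ff(\gs(0,k-1))$, and by \eqref{elnk} and the definition of $\ff$ this equals $n\cdot\frac{1}{k(k+1)}\cdot\lambda_f$ when $n>k$, which is \eqref{ekm2}; the cases $n=k$ (where the whole tree is the only subtree of size $k$, so $F(\gL_k)=f(\gL_k)$) and $n<k$ are immediate, and this also settles (v).

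For the covariances I would expand $\Cov(F(\gL_n),G(\gL_n))=\sum_{i,j}\Cov(A_i,B_j)$ and reduce by cyclic symmetry to $n\sum_{j}\Cov(A_0,B_j)$. Now $A_0$ and $B_j$ are independent unless the index blocks $\{-1,\dots,k-1\}$ and $\{j-1,\dots,j+m-1\}$ overlap in $\bbZ_n$; when $n>k+m$ this happens for exactly $k+m+1$ values of $j$. Among these dependent $j$, the product expectation $\E(A_0B_j)$ vanishes unless the two subtrees are compatible, which leaves the \emph{nested} positions $0\le j\le k-m$ (the size-$m$ subtree lies inside the size-$k$ one) and the two \emph{parent--child adjacencies} $j=k$ (the $m$-subtree hangs below the root of the $k$-subtree) and $j=-m$ (the $k$-subtree hangs below the root of the $m$-subtree); every other overlap forces a shared boundary index to be simultaneously a global and a local minimum and hence $\yl_{0,k-1}\yl_{j,m-1}=0$. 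Conditioning on $\yl_{0,k-1}=1$ reproduces the law of $\gL_k$ on the block $\{0,\dots,k-1\}$, so that $\ff(\gs(0,k-1))=f(\gL_k)$ and, by \eqref{linearrrt}, $\sum_{j=0}^{k-m}\yl_{j,m-1}\gbar(\gs(j,m-1))=G(\gL_k)$; the nested positions therefore contribute $\E(\yl_{0,k-1})\,\E(f(\gL_k)G(\gL_k))=\frac{1}{k(k+1)}\E(f(\gL_k)G(\gL_k))$. The two adjacency expectations are evaluated by the one-variable integral used for \eqref{eikkrrt}, giving $\E(\yl_{0,k-1}\yl_{k,m-1})=\frac{1}{m(k+m)(k+m+1)}$ and $\E(\yl_{0,k-1}\yl_{-m,m-1})=\frac{1}{k(k+m)(k+m+1)}$, so that the two adjacencies together contribute $\frac{1}{km(k+m+1)}\lambda_f\lambda_g$. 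Finally each of the $k+m+1$ dependent $j$ contributes the subtraction $-\E(A_0)\E(B_j)=-\frac{1}{k(k+1)m(m+1)}\lambda_f\lambda_g$. Collecting the nested, adjacency and subtraction contributions and simplifying reproduces \eqref{gamma4}, which is (ii). For (iii), $k<n\le k+m$, the two blocks overlap for \emph{every} $j\in\bbZ_n$ and the adjacency configurations now share a boundary index that forces $\yl_{0,k-1}\yl_{j,m-1}=0$; thus only the nested contribution survives while the subtraction runs over all $n$ values of $j$, giving \eqref{gamma2rec}. Part (iv), $n=k$, follows at once from (i): then $F(\gL_k)=f(\gL_k)$ and $\Cov(f(\gL_k),G(\gL_k))=\E(f(\gL_k)G(\gL_k))-\lambda_f\,\E G(\gL_k)$, with $\E G(\gL_k)$ supplied by \eqref{ekm2}, which yields \eqref{gamma3rec}.

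The main obstacle is the combinatorial bookkeeping of the dependent pairs: identifying exactly which overlapping configurations leave $\E(A_0B_j)\neq0$---in particular the two asymmetric parent--child adjacencies forced by the left-rooted nature of $\yl$, and the verification that all remaining partial overlaps kill the indicator product---together with counting the dependent $j$ correctly on the cycle $\bbZ_n$ in the two regimes $n>k+m$ and $k<n\le k+m$. Once these facts are settled, the surviving integrals are routine variants of \eqref{eikkrrt}.
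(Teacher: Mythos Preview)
Your proposal is correct and follows essentially the same approach as the paper: both use the cyclic representation \eqref{cyclicrrt}, exploit the independence of $\yl_{i,k-1}$ from $\ff(\gs(i,k-1))$, classify the dependent pairs into the nested block $0\le j\le k-m$ and the two (asymmetric) adjacencies $j=k$ and $j=-m$, compute the two adjacency integrals just as you do, and handle the regimes $k<n\le k+m$ and $n=k$ exactly as you outline. Your bookkeeping of the $k+m+1$ dependent positions and the verification that all other overlaps kill the indicator product matches the paper's argument (which simply observes that overlaps by more than one element, with neither block contained in the other, force $\yl_{i,k-1}\yl_{j,m-1}=0$, and that there is no exceptional configuration at $k+m=n$).
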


\begin{proof} The proof is similar to the proof of Lemma \ref{Lkm}.

\ref{lkmi2}:
The result is trivial for $k\ge n$ since $F(\Lambda_n)=F(\Lambda_k)=f(\Lambda_k)$ if $k=n$ and 
$F(\Lambda_n)=0$ if $k>n$.
Hence, assume $k<n$.
Using the cyclic representation \eqref{cyclicrrt}, we find
by similar calculations as in \eqref{emm},  using \eqref{elnk},
\begin{equation}\label{emm2}
  \begin{split}
\E{F(\Lambda_n)} 
&=\sum_{i=1}^{n} \E\bigpar{\yl_{i,k-1} \ff(\gs(i,k-1))}
=n\E\bigpar{\yl_{i,k-1} \ff(\gs(i,k-1)}
\\&
=n \E(\yl_{i,k-1})\E\bigpar{f(\Lambda_k)}
=\frac{n}{k(k+1)}\lambda_f,	
  \end{split}
\end{equation}
showing \eqref{ekm2} in the case $k<n$.

\ref{lkmii2}--\ref{lkmiv2}:
The cyclic representation \eqref{cyclicrrt} similarly yields
\begin{equation}\label{covFG2}
 \Cov\bigpar{F(\Lambda_n),G(\Lambda_n)} 
=\sumin\sumin \Cov\bigpar{\yl_{i,k-1} \ff(\gs(i,k-1)),
\yl_{j,m-1} \gbar(\gs(j,m-1))},
\end{equation}
where
$\yl_{i,k-1} \ff(\gs(i,k-1))$ and $\yl_{j,m-1} \gbar(\gs(j,m-1))$ are independent unless the
sets \set{i-1,\dots,i+k-1} and \set{j-1,\dots,j+m-1} overlap (as subsets of
$\bbZ_{n}$). 
Furthermore, as a consequence of \eqref{yik}, 
if these sets overlap by more than one element
but none of the sets is a subset of the other, then $\yl_{i,k-1}\yl_{j,m-1}=0$.
(Note that there is no exception with  $k+m=n$;
there is not room for two disjoint subtrees of sizes $k$ and $m$.)   

\ref{lkmii2}:  
We now assume $k+m< n$ and $k\ge m$.
Then \eqref{covFG2}, symmetry and the observations just made yield
\begin{equation*}
  \begin{split}
  \Cov\bigpar{F(\Lambda_n),G(\Lambda_n)} 
&=n\Bigl(
\E\bigpar{\yl_{0,k-1}  \ff(\gs(0,k-1))\yl_{-m,m-1} \gbar(\gs(-m,m-1))}
\\&
\hskip3em+
\sum_{j=0}^{k-m}\E\bigpar{\yl_{0,k-1} \ff(\gs(0,k-1))\yl_{j,m-1} \gbar(\gs(j,m-1))}
\\&
\hskip3em+
\E\bigpar{\yl_{0,k-1}  \ff(\gs(0,k-1))\yl_{k,m-1} \gbar(\gs(k,m-1))}
\\&
\hskip-1em
-(k+m+1)\E\bigpar{\yl_{0,k-1} \ff(\gs(0,k-1))}\E\bigpar{\yl_{0,m-1} \gbar(\gs(0,m-1))}
\Bigr).
  \end{split}
\end{equation*}
As seen in the proof of \ref{lkmi2},
$\yl_{0,k-1}$ is independent of
$\ff(\gs(i,k-1))$, and thus, \cf{} \eqref{emm2},
\begin{align}\label{emmf2}
&\E\bigpar{\yl_{i,k-1} \fbar(\gs(i,k-1))}=\frac{1}{k(k+1)}\lambda_f;
\end{align}
similarly,
\begin{gather}
\E\bigpar{\yl_{j,m-1} \gbar(\gs(j,m-1))}=\frac{1}{m(m+1)}\lambda_g,\label{emmg2}
\\
\E\bigpar{\yl_{0,k-1} \ff(\gs(0,k-1))\yl_{k,m-1} \gbar(\gs(k,m-1))}  
=
\E\bigpar{\yl_{0,k-1} \yl_{k,m-1}}
\gl_f 
\gl_g. \label{emmg3}
\end{gather}
Furthermore, the argument for \eqref{eikkrrt} generalizes to
  \begin{align}
\E(\yl_{0,k-1}\yl_{k,m-1})     
&=
\int_0^1 x (1 - x)^{k - 1}\frac{1}{m} (1 - x)^m \dd x
=
\frac {1}
{ m \left(k + m\right)\left (1 + k + m\right)}.
\label{eqw2}
   \end{align}
(Again, this can also be obtain by a combinatorial argument.)
By  analogous calculations we obtain
$$
\E\bigpar{\yl_{0,k-1} \ff(\gs(0,k-1))\yl_{-m,m-1} \gbar(\gs(-m,m-1))}  
=\frac {1}{ k \left(k + m\right)\left (1 + k + m\right)}\gl_f\gl_g.
$$
(Note that this differs from \eqref{emmg3}--\eqref{eqw2}, unlike
the corresponding terms for
the binary search tree case where \eqref{eqw} is symmetric in $k$ and $m$.)
Finally, for convenience shifting the indices,
\begin{equation}\label{sumfG2}
  \begin{split}
&\sum_{j=0}^{k-m}\E\bigpar{\yl_{0,k-1} \ff(\gs(0,k-1))\yl_{j,m-1} \gbar(\gs(j,m-1))}
\\
&\hskip4em
=
\E(\yl_{1,k-1})
\E\Bigpar{ \ff(\gs(1,k-1)) \sum_{j=1}^{k-m+1}\yl_{j,m-1} \gbar(\gs(j,m-1))\Bigm| \yl_{1,k-1}=1}
\\
&\hskip4em
=
\frac{1}{k(k+1)}
\E\bigpar{ f(\Lambda_k)G(\Lambda_k)},
  \end{split}
\raisetag{1\baselineskip}
\end{equation}
where the last equality follows from the linear representation in \eqref{linearrrt}. 
The result follows by collecting the terms above.

\ref{lkmiv2}:
In the case $k+m\geq n$, 
there cannot be two disjoint subtrees of sizes $k$ and $m$.
Hence the arguments above yield
{\multlinegap=0pt
\begin{multline*}
  \Cov\bigpar{F(\Lambda_n),G(\Lambda_n)} 
=n\biggl(
\sum_{j=0}^{k-m}\E\bigpar{\yl_{0,k-1} \ff(\gs(0,k-1))\yl_{j,m-1} \gbar(\gs(j,m-1))}
\\
{}-n\E\bigpar{\yl_{0,k-1}\ff(\gs(0,k-1))}\E\bigpar{\yl_{0,m-1} \gbar(\gs(0,m-1))}
\biggr)
\end{multline*}}
and the result follows from \eqref{sumfG2} and \eqref{emmf2}, \eqref{emmg2}.

\ref{lkmv2}:
In the case $k=n$ we have $F(\Lambda_n)=F(\Lambda_k)=f(\Lambda_k)$, and the result follows
from \eqref{ekm2}.

\ref{lkmvi2}:
Trivial.
\end{proof}

\begin{thm}\label{TFvar2}
  Let $f$ be a functional of ordered rooted trees, and let $F$ be the sum
  \eqref{F}. Further, let
  \begin{equation}
	\lambda_k:=\E f(\Lambda_k)
  \end{equation}
and
\begin{equation}\label{pikn2}
  \hat{\pi}_{k,n}:=
  \begin{cases}
\frac{1}{k(k+1)}, & k<n,\\
\frac1{n}, & k=n, \\
0, & k>n.
  \end{cases}
\end{equation}
Then, for the random recursive tree,
\begin{equation}\label{eF2}
\E F(\Lambda_n) = n\sumkn \hat{\pi}_{k,n}\lambda_k
\end{equation}
and
\begin{equation}\label{vF2}
  \begin{split}
\Var\bigpar{F(\gL_n)}
&=n
\lrpar{
\sum_{k=1}^n
\hat{\pi}_{k,n}\E\Bigpar{f(\gL_k)\bigpar{2F(\gL_k)-f(\gL_k)}}
-\sumkn\sum_{m=1}^n
{\hbgamx}(k,m)\lambda_k\lambda_m}
  \end{split}
\end{equation}
where, using \eqref{gamma4} and \eqref{gamma2rec}--\eqref{gamma3rec},
\begin{equation}\label{gamx2}
  {\hbgamx}(k,m):=
  \begin{cases}
\hat{\bgam}(k,m),& k+m<n, \\
\hat{\bgam}_2(k,m),& \max\set{k,m}<n\leq k+m, \\
\hat{\bgam}_3(k,m),& k=n\ge m, \\
\hat{\bgam}_3(m,k),& m=n\ge k.
  \end{cases}
\end{equation}
\end{thm}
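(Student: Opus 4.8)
The plan is to mirror the proof of \refT{TFvar} exactly, replacing every ingredient by its random-recursive-tree counterpart and invoking \refL{Lkm2} in place of \refL{Lkm}. First I would decompose $f$ by the size of its argument: set $f_k(\gL):=f(\gL)\etta\set{|\gL|=k}$ and let $F_k$ be the corresponding sum \eqref{F}, so that $f=\sum_k f_k$ and $F=\sum_k F_k$ pointwise, with $F_k(\gL_n)=0$ whenever $k>n$. Since each $f_k$ is supported on trees of a single size, \refL{Lkm2} applies directly to the pairs $(f_k,f_m)$.

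For the mean \eqref{eF2}, I would use linearity together with \refL{Lkm2}\ref{lkmi2}, which gives $\E F_k(\gL_n)=n\hat{\pi}_{k,n}\lambda_k$ for each $k\le n$ (the three cases $k<n$, $k=n$, $k>n$ of that lemma being exactly the three cases in the definition \eqref{pikn2} of $\hat{\pi}_{k,n}$). Summing over $k$ yields \eqref{eF2} at once.

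For the variance I would expand $\Var(F(\gL_n))$ bilinearly and use symmetry to write it as $\sum_{k=1}^n\sum_{m=1}^k(2-\gd_{km})\Cov(F_k(\gL_n),F_m(\gL_n))$. Here \refL{Lkm2}\ref{lkmii2}--\ref{lkmv2} supply the covariances, and since $\E f_k(\gL_k)=\lambda_k$ each one has the uniform shape $n\bigpar{\hat{\pi}_{k,n}\E(f_k(\gL_k)F_m(\gL_k))-\hbgamx(k,m)\lambda_k\lambda_m}$, where the case split in \eqref{gamx2} simply records which part of \refL{Lkm2} is being applied for the given relation between $k+m$ and $n$. Finally I would collapse the inner sum over $m$ using $F_m(\gL_k)=0$ for $m>k$ and $F_k(\gL_k)=f_k(\gL_k)=f(\gL_k)$, so that $\sum_{m=1}^k(2-\gd_{km})F_m(\gL_k)=2F(\gL_k)-f(\gL_k)$; this produces \eqref{vF2}, noting that $\hbgamx(k,m)$ is symmetric in its arguments so the double sum may be taken over all $k,m\le n$.

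The computation is almost entirely routine once \refL{Lkm2} is in hand, so I do not expect a genuine obstacle; the one point requiring care is the bookkeeping of cases. Unlike the binary situation, where the boundary $k+m+1=n$ forced a separate case \refL{Lkm}\ref{lkmiii} (and hence the term $\bgam_1$ in \eqref{gamx}), here there is no exceptional value of $n$ — as observed in the proof of \refL{Lkm2}, two disjoint subtrees of sizes $k$ and $m$ already fail to fit once $k+m\ge n$ — so \eqref{gamx2} has one fewer branch than \eqref{gamx}, and I would verify that the three surviving branches ($k+m<n$, $\max\set{k,m}<n\le k+m$, and $k=n$) together with their symmetric reflection indeed exhaust all pairs $(k,m)$ with $1\le m,k\le n$ and match the corresponding clauses of \refL{Lkm2}.
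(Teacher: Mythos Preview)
Your proposal is correct and matches the paper's approach exactly; the paper's proof is the single line ``Analogous to the proof of Theorem~\ref{TFvar}, using Lemma~\ref{Lkm2},'' and you have accurately spelled out those analogous steps, including the observation that the absence of an exceptional $k+m=n$ case in \refL{Lkm2} is why \eqref{gamx2} has one fewer branch than \eqref{gamx}.
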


\begin{proof} 
Analogous to the proof of Theorem \ref{TFvar}, using
 Lemma \ref{Lkm2}.
\end{proof}

Recall that $\gLL$ is the \rrt{}  $\gL_N$ with random size $N$
such that $\P(|\gLL|=k)=\P(N=k)=\hat\pi_k:=1/(k(k+1))$.

\begin{cor}\label{CFc2}
In the notation above, assume further that $f(\Lambda)=0$ 
when $|\Lambda|>K$,  for some $K<\infty$. If $n> 2K$, then
\begin{equation}\label{eFc2}
\E F(\gL_n) 
=n\E f(\gLL)
\end{equation}
and
\begin{equation}\label{vFc2}
  \begin{split}
\Var\bigpar{F(\gL_n)}
&=n
\lrpar{
\E\Bigpar{f(\gLL)\bigpar{2F(\gLL)-f(\gLL)}}
-\sum_{k=1}^K\sum_{m=1}^K
\hat{\bgam}(k,m)\lambda_k\lambda_m}.
  \end{split}
\end{equation}
\qed
\end{cor}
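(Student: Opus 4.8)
The plan is to obtain both \eqref{eFc2} and \eqref{vFc2} as immediate specializations of \refT{TFvar2}, exactly parallel to the binary case \refC{CFc}: the support hypothesis on $f$ lets the $n$-dependent coefficients $\hat\pi_{k,n}$ and $\hbgamx(k,m)$ collapse to their $n$-free versions $\hat\pi_k$ and $\hat{\bgam}(k,m)$, after which the remaining finite sums are recognized as expectations over the randomly sized tree $\gLL$.

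First I would record the two consequences of $f(\Lambda)=0$ for $|\Lambda|>K$. Since $|\gL_k|=k$ is deterministic, $f(\gL_k)=0$, and hence $\lambda_k=\E f(\gL_k)=0$, for every $k>K$. Consequently, in \eqref{eF2} only the terms with $k\le K$ survive; in \eqref{vF2} the first sum is supported on $k\le K$ (owing to its leading factor $f(\gL_k)$), while the double sum is supported on $\{k,m\le K\}$ (owing to the factors $\lambda_k\lambda_m$).

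Next I would invoke $n>2K$ to simplify the surviving coefficients. For $k\le K<n$, \eqref{pikn2} gives $\hat\pi_{k,n}=1/(k(k+1))=\hat\pi_k$; for $k,m\le K$ we have $k+m\le 2K<n$, so \eqref{gamx2} gives $\hbgamx(k,m)=\hat{\bgam}(k,m)$. Substituting these yields $\E F(\gL_n)=n\sum_{k=1}^K\hat\pi_k\lambda_k$ and $\Var(F(\gL_n))=n\bigpar{\sum_{k=1}^K\hat\pi_k\E(f(\gL_k)(2F(\gL_k)-f(\gL_k)))-\sum_{k=1}^K\sum_{m=1}^K\hat{\bgam}(k,m)\lambda_k\lambda_m}$.

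Finally I would interpret the single sums through $\gLL=\gL_N$, where $N$ is independent with $\P(N=k)=\hat\pi_k$: for any functional $g$ vanishing on trees of size exceeding $K$ one has $\E g(\gLL)=\sum_k\hat\pi_k\E g(\gL_k)=\sum_{k=1}^K\hat\pi_k\E g(\gL_k)$. Taking $g=f$ gives \eqref{eFc2}, and taking $g(\Lambda)=f(\Lambda)\bigpar{2F(\Lambda)-f(\Lambda)}$ — which also vanishes for $|\Lambda|>K$ since its leading factor does — converts the first variance sum into $\E\bigpar{f(\gLL)(2F(\gLL)-f(\gLL))}$, giving \eqref{vFc2}. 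The argument is entirely routine; the only point that needs a moment's care is verifying that the threshold $n>2K$ is precisely what forces \emph{both} $\hat\pi_{k,n}=\hat\pi_k$ and $\hbgamx(k,m)=\hat{\bgam}(k,m)$ over the relevant index ranges, so that none of the boundary cases $k=n$ or $k+m\ge n$ of \eqref{pikn2} and \eqref{gamx2} can intrude.
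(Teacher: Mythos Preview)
Your proposal is correct and follows exactly the approach the paper intends: the corollary is marked with \verb|\qed| and no proof, indicating it is an immediate specialization of \refT{TFvar2} once $n>2K$ forces $\hat\pi_{k,n}=\hat\pi_k$ and $\hbgamx(k,m)=\hat\bgam(k,m)$ on the support of $f$, precisely as you spell out.
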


We can now prove Theorems \ref{Tcovrec} and \ref{Tcovrec2} as two 
special cases of the results above.  The proofs are analogous to the proofs of Theorems \ref{Tcovbin} and \ref{Tcovbin2}, but we include them for completeness.

\begin{proof}[Proof of \refT{Tcovrec}]
Apply \refL{Lkm2}\ref{lkmii2} with $f(\Lambda_n(u)):=\ett{\Lambda_n(u)=\Lambda}$ and
  $g(\Lambda_n(u)):=\ett{\Lambda_n(u)=\Lambda'}$. 
Then $\hat{X}_n^\Lambda=F(\gL_n)$ and $\hat{X}_n^{\Lambda'}=G(\gL_n)$.
We have $\lambda_f=\hat{p}_{k,\Lambda}$ and
$\lambda_g=\hat{p}_{m,\Lambda'}$. 
Furthermore, if $f(\gL_k)\neq0$, then $\gL_k=\Lambda$ and $G(\gL_k)=G(\Lambda)=\hq_{\Lambda'}^{\Lambda}$.
Hence,
\begin{equation*}
  \E\bigpar{f(\gL_k)G(\gL_k)} =\hq_{\Lambda'}^{\Lambda}\E f(\gL_k) =
  \hq_{\Lambda'}^{\Lambda}\hat{p}_{k,\Lambda}.
\qedhere
\end{equation*}
\end{proof}

\begin{proof}[Proof of \refT{Tcovrec2}]
In principle, this follows from \refT{Tcovrec} by summing over all trees of
sizes $k$ and $m$, and evaluating the resulting sum;
however as noted for the binary search tree, it is easier to give a direct proof.
By symmetry we may assume $k\ge m$.
We  apply \refL{Lkm2}\ref{lkmii2} with $f(\Lambda):=\ett{|\Lambda|=k}$ and
$g(\Lambda):=\ett{|\Lambda|=m}$. 
Then $\hat{X}_{n,k}=F(\gL_n)$ and $\hat{X}_{n,m}=G(\gL_n)$. 
Furthermore, $f(\gL_k)=1$, $g(\gL_m)=1$  and $G(\gL_k)=\hat{X}_{k,m}$.
Hence  $\lambda_f=\lambda_g=1$, and, using \eqref{exnkrrt},  
\begin{equation}\label{aw2}
\E\bigpar{f(\gL_k) G(\gL_k)}=\E \hat{X}_{k,m}=
\begin{cases}
\frac{k}{m(m+1)} , & m<k,\\
1 , & m=k.  
\end{cases}
\end{equation}
Hence, \refL{Lkm2}\ref{lkmii2} yields \eqref{covksubtreesrecursive} with
\begin{align}
\gskm &=
\begin{cases}
{\frac{1}{(k+1)m(m+1)} -\hat{\bgam}(k,m)}, & m<k, \\  
\frac{1}{k(k+1)} -\hat{\bgam}(k,k), & m=k, \\  
\end{cases}
\end{align}
which yields \eqref{covb2c}--\eqref{covb2d} by elementary calculations.
\end{proof}

\begin{Lemma}\label{LnonsingularT2}
  Let $\Lambda_1,\dots,\Lambda_N$ be a finite sequence of distinct ordered 
or unordered 
rooted trees.
Then the matrix $(\hat{\sigma}_{\Lambda_i,\Lambda_j})_{i,j=1}^N$ 
in \refT{Tcovrec}
is non-singular and thus positive definite.
\end{Lemma}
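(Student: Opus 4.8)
The plan is to follow the proof of \refL{LnonsingularT} almost verbatim, replacing the binary‑tree ingredients by their recursive‑tree counterparts. First I would set $K:=\max_i|\Lambda_i|$. For any reals $a_1,\dots,a_N$ and any $n>2K$, every pair $(i,j)$ satisfies $n>|\Lambda_i|+|\Lambda_j|$, so \refT{Tcovrec} applies and gives
\begin{equation*}
\Var\Bigpar{\sum_{i=1}^N a_i\hX_n^{\Lambda_i}}
=\sum_{i,j=1}^N a_ia_j\Cov\bigpar{\hX_n^{\Lambda_i},\hX_n^{\Lambda_j}}
=n\sum_{i,j=1}^N a_ia_j\hat\sigma_{\Lambda_i,\Lambda_j}\ge0 .
\end{equation*}
Since a variance is nonnegative, the matrix $(\hat\sigma_{\Lambda_i,\Lambda_j})_{i,j=1}^N$ is positive semi‑definite. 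It then suffices to rule out singularity: if it were singular there would exist $a_1,\dots,a_N$, not all zero, with $\Var(Z_n)=0$ for all $n>2K$, where $Z_n:=\sum_i a_i\hX_n^{\Lambda_i}$; thus $Z_n$ would take the same value for every realization of $\gL_n$, and I aim to contradict this.

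As in \refL{LnonsingularT}, I may discard the trees with $a_i=0$ and reorder so that $k:=|\Lambda_1|=\min_i|\Lambda_i|$. The construction analogous to the binary ``path plus $T_1$'' tree is the following. For $n>2K$ (which forces $n-k-1\ge K$), let $\gL_{0,n}$ be the \emph{path} on $n$ nodes (each node the single child of its parent), and let $\gL_{1,n}$ be the tree whose root has two children, the first rooting a path on $n-k-1$ nodes and the second rooting a copy of $\Lambda_1$, so that $|\gL_{1,n}|=1+(n-k-1)+k=n$. Both are genuine ordered‑tree shapes and hence occur as realizations of $\gL_n$ (every rooted tree shape admits an increasing labelling); the same trees, read as unordered shapes, settle the unordered case simultaneously.

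The key bookkeeping, which I expect to be the only delicate point, is to compare the fringe subtrees of size in $[k,K]$ — the only sizes that matter, since each $|\Lambda_i|\ge k$. In $\gL_{0,n}$ these are exactly one path of each size $k,\dots,K$. In $\gL_{1,n}$ the only node whose fringe subtree mixes the two branches is the root, whose subtree has size $n>K$; the spine below the first child again contributes one path of each size $k,\dots,K$, while the second branch contributes the copy of $\Lambda_1$ (its proper subtrees all have size $<k$). Hence $\gL_{1,n}$ has, over sizes $[k,K]$, the same paths as $\gL_{0,n}$ plus exactly one extra copy of $\Lambda_1$, and therefore $\hX_n^{\Lambda_i}(\gL_{1,n})=\hX_n^{\Lambda_i}(\gL_{0,n})+\I{i=1}$ for every $i$ (this holds even when $\Lambda_1$ is itself a path, since then the extra copy simply increments the path count by one, and the $\Lambda_i$ are distinct). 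Consequently $Z_n(\gL_{1,n})=Z_n(\gL_{0,n})+a_1$ with $a_1\neq0$, contradicting that $Z_n$ is constant. Thus the matrix is non‑singular, and being positive semi‑definite it is positive definite.
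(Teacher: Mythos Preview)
The proposal is correct and takes essentially the same approach as the paper, which simply says the proof is analogous to that of \refL{LnonsingularT}. Your construction of $\gL_{0,n}$ (a path) and $\gL_{1,n}$ (root with a long path branch and a copy of $\Lambda_1$ as a second branch) is the natural recursive-tree analogue of the paper's $T_{0,n}$ and $T_{1,n}$, and your bookkeeping of the fringe subtrees of sizes in $[k,K]$ is accurate in both the ordered and unordered settings.
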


\begin{proof}
The proof is analogous to the proof of Lemma \ref{LnonsingularT}.
\end{proof}

\begin{Lemma}\label{Lnonsingulark2}
For every $N\ge1$, the matrix $(\hat{\sigma}_{k,m})_{k,m=1}^N$ 
of the values defined in \refT{Tcovrec2}
is non-singular and thus positive definite.
\end{Lemma}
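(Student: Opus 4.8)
The plan is to mirror the binary-tree argument: \refL{Lnonsingulark2} stands to \refL{LnonsingularT2} exactly as \refL{Lnonsingulark} stands to \refL{LnonsingularT}, and the cleanest route is to deduce it as a corollary of \refL{LnonsingularT2} rather than to repeat a realization argument. The key observation is the aggregation identity $\hX_{n,k}=\sum_{|\gL|=k}\hX_n^{\gL}$, where the sum runs over all ordered (resp.\ unordered) rooted trees $\gL$ of size $k$; there are only finitely many such shapes, and each size class is nonempty.

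First I would record the consequence of this identity for the covariances. Summing \refT{Tcovrec} over all shapes of the relevant sizes and comparing with \refT{Tcovrec2} gives, for $n>k+m$,
\[ n\,\hat{\gs}_{k,m}=\Cov(\hX_{n,k},\hX_{n,m})=\sum_{|\gL|=k}\sum_{|\gL'|=m}\Cov(\hX_n^{\gL},\hX_n^{\gL'})=n\sum_{|\gL|=k}\sum_{|\gL'|=m}\hat{\gs}_{\gL,\gL'}, \]
so that $\hat{\gs}_{k,m}=\sum_{|\gL|=k}\sum_{|\gL'|=m}\hat{\gs}_{\gL,\gL'}$, an identity free of $n$. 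Let $\mathcal U_N$ be the finite set of all ordered rooted trees of size at most $N$; by \refL{LnonsingularT2} the matrix $(\hat{\gs}_{\gL,\gL'})_{\gL,\gL'\in\mathcal U_N}$ is positive definite. For $a=(a_1,\dots,a_N)$ put $c_\gL:=a_{|\gL|}$; then
\[ \sum_{k,m=1}^N a_k a_m\,\hat{\gs}_{k,m}=\sum_{\gL,\gL'\in\mathcal U_N} c_\gL c_{\gL'}\,\hat{\gs}_{\gL,\gL'}\ge0, \]
with equality only when every $c_\gL=0$. Since each size $k\in\{1,\dots,N\}$ is represented by at least one tree in $\mathcal U_N$, the condition $c_\gL=0$ for all $\gL$ forces $a=0$; hence $(\hat{\gs}_{k,m})_{k,m=1}^N$ is positive definite, as claimed. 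The same computation works verbatim for unordered trees.

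As an alternative I would mirror the direct argument behind \refL{LnonsingularT2} (ultimately \refL{LnonsingularT}): for $n>2N$, \refT{Tcovrec2} gives $\Var\bigpar{\sum_k a_k\hX_{n,k}}=n\sum_{k,m}a_ka_m\hat{\gs}_{k,m}\ge0$, so the matrix is positive semidefinite, and singularity would force some nonzero combination $\hat Z_n:=\sum_k a_k\hX_{n,k}$ to be deterministic for all large $n$. The main obstacle in this route is the analogue of the $T_{0,n}$/$T_{1,n}$ construction of \refL{LnonsingularT}: one must exhibit, for the smallest index $k_0$ with $a_{k_0}\ne0$, two explicit recursive trees on $n$ nodes whose subtree-size counts agree in every size $k<k_0$ while differing by exactly one in size $k_0$, thereby changing $\hat Z_n$. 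This is why the corollary route is preferable: it offloads precisely this combinatorial difficulty onto \refL{LnonsingularT2}, which has already handled it at the level of individual tree shapes.
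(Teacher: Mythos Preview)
Your proposal is correct and matches the paper's approach: the paper's proof of \refL{Lnonsingulark2} simply says it is analogous to that of \refL{Lnonsingulark}, which in turn offers exactly the two routes you describe (a direct realization argument as in \refL{LnonsingularT}, or the corollary route via the aggregation identity $\hX_{n,k}=\sum_{|\gL|=k}\hX_n^{\gL}$). Your main argument is precisely the corollary route, carried out cleanly.
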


\begin{proof}
The proof is analogous to the proof of Lemma \ref{Lnonsingulark}.
\end{proof}

In the finitely supported case in \refC{CFc2},
both $\E F(\gL_n)$ and $\Var F(\gL_n)$ grow linearly in $n$. 
Asymptotically, this is true under much weaker assumptions.

\begin{thm}\label{Tlime2}
Under the assumptions in \refT{TFvar2},
assume further that $\E |f(\gLL)|<\infty$ and 
$\lambda_n=o(n)$ as $n\to\infty$.
Then
\begin{equation}\label{tlime2}
  \E F(\gL_n) = n \E f(\gLL) + o(n).
\end{equation}

More generally, if\/ $\E |f(\gLL)|<\infty$ and 
$\lambda_n=o(n^\ga)$ for some $\ga\le1$,
then
\begin{align}
  \E F(\gL_n) = n \E f(\gLL) + o(n^\ga), \label{tlimeo2}
\intertext{and if\/ $\E |f(\gLL)|<\infty$ and $\lambda_n=O(n^\ga)$ for some
  $\ga<1$, then} 
\E F(\gL_n) = n \E f(\gLL) + O(n^\ga). \label{tlimeO2}
\end{align}
\end{thm}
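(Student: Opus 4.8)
The plan is to mirror the proof of \refT{Tlime} verbatim, replacing the binary-tree quantities by their recursive-tree counterparts: the representation \eqref{eF2} plays the role of \eqref{eF}, the weights $\hat\pi_{k,n}$ and $\hat\pi_k=1/(k(k+1))$ replace $\pi_{k,n}$ and $\pi_k$, the means $\lambda_k=\E f(\gL_k)$ replace $\mu_k$, and $\gLL$ replaces $\cT$. First I would record, exactly as in \eqref{tlime|}--\eqref{tlime=}, that $\E|f(\gLL)|<\infty$ gives $\sumk\hat\pi_k|\lambda_k|\le\sumk\hat\pi_k\E|f(\gL_k)|=\E|f(\gLL)|<\infty$, so that $\E f(\gLL)=\sumk\hat\pi_k\lambda_k$ is an absolutely convergent series.

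The only genuinely new computation is the difference $\hat\pi_{k,n}-\hat\pi_k$ read off from \eqref{pikn2}: one has $\hat\pi_{k,n}=\hat\pi_k$ for $k<n$; the boundary term $\hat\pi_{n,n}-\hat\pi_n=\frac1n-\frac1{n(n+1)}=\frac1{n+1}$; and $\hat\pi_{k,n}=0$ for $k>n$. Substituting into \eqref{eF2} and comparing with the absolutely convergent expression for $\E f(\gLL)$ yields the key bound, the direct analogue of \eqref{tli},
\[
\Bigabs{\tfrac1n\E F(\gL_n)-\E f(\gLL)}
\le\frac{|\lambda_n|}{n+1}+\sum_{k=n+1}^\infty\hat\pi_k|\lambda_k|,
\]
so that, multiplying through by $n$, $\bigabs{\E F(\gL_n)-n\E f(\gLL)}\le|\lambda_n|+n\sum_{k=n+1}^\infty\hat\pi_k|\lambda_k|$.

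From this single inequality all conclusions follow by estimating the two terms on the right. The first term $|\lambda_n|$ is $o(n^\ga)$ (resp.\ $O(n^\ga)$) under the stated hypothesis. For the tail, at $\ga=1$ — which is exactly \eqref{tlime2}, the $\ga=1$ instance of \eqref{tlimeo2} — absolute convergence gives $\sum_{k=n+1}^\infty\hat\pi_k|\lambda_k|\to0$, hence $n$ times it is $o(n)$. For $\ga<1$ I would instead feed the pointwise bound on $\lambda_k$ into the tail: writing $|\lambda_k|\le\eps k^\ga$ for all large $k$ (with $\eps$ arbitrarily small in the $o$-case \eqref{tlimeo2}, and $\eps$ a fixed constant in the $O$-case \eqref{tlimeO2}) and using $\hat\pi_k\le k^{-2}$,
\[
n\sum_{k=n+1}^\infty\hat\pi_k|\lambda_k|\le \eps\, n\sum_{k=n+1}^\infty k^{\ga-2}\le\frac{\eps}{1-\ga}\,n^{\ga},
\]
since $\sum_{k=n+1}^\infty k^{\ga-2}\le\int_n^\infty x^{\ga-2}\dd x=\frac{n^{\ga-1}}{1-\ga}$; combined with the first term this yields \eqref{tlimeo2} and \eqref{tlimeO2}.

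I expect the main (in fact only) obstacle to be bookkeeping rather than substance. One must be careful that the convergence-of-tails argument giving $o(n)$ at $\ga=1$ is genuinely different from the integral-comparison argument for $\ga<1$: the latter relies on $\ga-2<-1$, which fails precisely at $\ga=1$ where $\sum_k k^{\ga-2}$ diverges. One should also verify that the boundary case $k=n$ in \eqref{pikn2} contributes only the harmless $|\lambda_n|/(n+1)$, of no larger order than the first error term, so that it can be absorbed exactly as in \refT{Tlime}.
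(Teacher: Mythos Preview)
Your proposal is correct and follows exactly the approach the paper intends: the paper's own proof is simply ``analogous to the proof of Theorem~\ref{Tlime}'', and you have faithfully carried out that analogy, correctly computing the single modified boundary term $\hat\pi_{n,n}-\hat\pi_n=1/(n+1)$ from \eqref{pikn2} and handling the tail for $\ga<1$ by the integral comparison. There is nothing to add.
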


\begin{proof}
The proof is analogous to the proof of Theorem \ref{Tlime}.
\end{proof}

\begin{thm}\label{Tny2}
There exists a universal constant $C$ such that,
under the assumptions and notations of \refT{TFvar2}, 
for all $n\ge1$,
\begin{equation}\label{vftny2}
\Var(F(\gL_n))
\le C n
\lrpar{
 \biggpar{\sumk\frac{(\Var f(\gL_k))\qq}{k^{3/2}}}^2
 + \sup_k \frac{\Var f(\gL_k)}k
+\sumk\frac{\lambda_k^2}{k^2} 
}.
\end{equation}
\end{thm}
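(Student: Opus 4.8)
The plan is to follow the proof of \refT{Tny} essentially line by line, substituting the random recursive tree ingredients (\refT{TFvar2}, \refL{Lkm2}, \refL{lemma1rrt} and \refT{Tcovrec2}) for their binary search tree counterparts. First I would split $f=f\upp1+f\upp2$, where $f\upp1(\gL):=\lambda_{|\gL|}$ depends only on the size and $f\upp2(\gL):=f(\gL)-\lambda_{|\gL|}$ satisfies $\E f\upp2(\gL_k)=0$. This decomposes $F(\gL_n)=F\upp1(\gL_n)+F\upp2(\gL_n)$, and since $\Var(F(\gL_n))\le 2\Var(F\upp1(\gL_n))+2\Var(F\upp2(\gL_n))$ it suffices to bound each term separately; the first contributes the $\sumk\lambda_k^2/k^2$ term of \eqref{vftny2} and the second contributes the first two terms.

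For the first term, $F\upp1(\gL_n)=\sum_{k=1}^{n-1}\lambda_k\hX_{n,k}$ (the $k=n$ term is the deterministic $\hX_{n,n}=1$), so $\Var(F\upp1(\gL_n))=\sum_{k,m<n}\Cov(\hX_{n,k},\hX_{n,m})\lambda_k\lambda_m$. The key point, which I would verify from \refL{Lkm2}\ref{lkmii2}--\ref{lkmiv2} exactly as \eqref{aw2} is used in the proof of \refT{Tcovrec2}, is that every off-diagonal covariance is non-positive: for $m<k<n$ one has $n\hgs_{k,m}=-n/\bigpar{k(k+1)(k+m+1)}<0$ from \eqref{covb2c} when $n>k+m$, and $\frac1n\Cov(\hX_{n,k},\hX_{n,m})=(k-n)/\bigpar{k(k+1)m(m+1)}<0$ when $k<n\le k+m$ (using \eqref{aw2} and \eqref{gamma2rec}). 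After reducing to $\lambda_k\ge0$ — splitting $\lambda_k$ into positive and negative parts and again using $\Var(X-Y)\le 2\Var X+2\Var Y$ — the off-diagonal terms may be discarded and only the diagonal survives; by \eqref{vxnkrrt}, $\frac1n\Var(\hX_{n,k})=O(1/k^2)$, giving $\Var(F\upp1(\gL_n))\le Cn\sumk\lambda_k^2/k^2$.

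For the second term, $\lambda_k=0$ for all $k$, so $\E f(\gL_k)=\E F(\gL_k)=0$. Writing $a_k^2:=\Var f(\gL_k)$ and $b_n^2:=\frac1n\Var F\upp2(\gL_n)$, formula \eqref{vF2} (whose double sum vanishes) reduces to $b_n^2=\sumkn\hat{\pi}_{k,n}\E\bigpar{f(\gL_k)(2F(\gL_k)-f(\gL_k))}\le 2\sumkn\hat{\pi}_{k,n}\E\bigpar{f(\gL_k)F(\gL_k)}$. By Cauchy--Schwarz, $\E\bigpar{f(\gL_k)F(\gL_k)}=\Cov\bigpar{f(\gL_k),F(\gL_k)}\le a_k\sqrt{\Var F(\gL_k)}=a_k k\qq b_k$, and since $\hat{\pi}_{k,n}k\qq\le k^{-3/2}$ for $k<n$ while $\hat{\pi}_{n,n}n\qq=n\qqw$, I obtain $b_n^2\le 2\sum_{k<n}k^{-3/2}a_k b_k+2n\qqw a_n b_n$. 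Setting $A:=\max\bigset{\sumk a_k k^{-3/2},\ \sup_k k\qqw a_k}$ gives $b_n^2\le 2A\max_{k<n}b_k+2Ab_n$, hence $(b_n-A)^2\le 2A\max_{k<n}b_k+A^2$, and an induction yields $b_n\le 4A$. Thus $\Var(F\upp2(\gL_n))\le 16A^2 n$, supplying the first two terms of \eqref{vftny2}, and combining the two cases gives the bound with a universal constant.

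The only genuinely new work compared with \refT{Tny} is in the first case, and I expect the main (minor) obstacle to be checking carefully that \emph{all} off-diagonal covariances are non-positive, since this is precisely what lets me drop them after the reduction to non-negative $\lambda_k$. This requires the explicit formulas \eqref{covb2c}--\eqref{covb2d} together with the boundary computation from \refL{Lkm2}\ref{lkmiv2}. Fortunately the absence of a positive boundary correction (the recursive-tree analogue of the $2/(n(n+1))$ term in the binary case, which does not arise here because two size-$k$ and size-$m$ subtrees can never be disjoint once $k+m\ge n$) means I will \emph{not} need the Hilbert-inequality estimate of \eqref{jp} that was used for the binary search tree, so this step is if anything easier in the recursive-tree setting. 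The induction in the second case is structurally identical to the binary one, only the numerical constants changing ($b_n\le 4A$ here rather than $6A$, since $\hat{\pi}_{k,n}\sim k^{-2}$ carries a factor $1$ instead of $2$).
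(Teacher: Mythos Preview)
Your proposal is correct and follows essentially the same approach as the paper, which itself simply says the proof is analogous to that of \refT{Tny}. Your observation that Case~1 is actually \emph{easier} for the random recursive tree is accurate: since \refL{Lkm2} has no analogue of the boundary case $k+m+1=n$ (there is no positive correction like \eqref{jk2}), all off-diagonal covariances $\Cov(\hX_{n,k},\hX_{n,m})$ with $m<k<n$ are non-positive, and after reducing to $\lambda_k\ge0$ they can be discarded outright without the extra Cauchy--Schwarz step used in the binary case.
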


\begin{proof}
The proof is analogous to the proof of Theorem \ref{Tny2}.
\end{proof}

\section{Poisson approximation by Stein's method and couplings}

To prove Theorems \ref{main1} and \ref{poissontreepattern} 
we use Stein's method with couplings as 
described by  \citet{Janson}. 
In general, let $ \cA$ be a finite index set and let 
$(I_{\alpha},~\alpha\in\cA)$  be indicator random variables. We write 
$W:=\sum_{\alpha\in\cA} I_{\alpha}$ and $ \lambda:=\E(W) $. 
To approximate
$ W $ with a Poisson distribution $ \Po(\lambda) $, this method uses
a coupling for each $\alpha\in\cA$
between $ W $ and  a random variable  
 $ W_{\alpha} $
which is defined on the same probability space as $ W $ and has the property
\begin{align}\label{coupling2}
\mathcal{L}(W_{\alpha} )=\mathcal{L}(W-I_{\alpha}\mid I_{\alpha}=1).
\end{align}  
A common way to construct such a coupling $ (W,W_{\alpha} )$ is to find
 random variables $ (J_{\beta\alpha},~\beta\in\cA)$  defined on the same
 probability space as $ (I_{\alpha},~\alpha\in\cA)$ in such a way that
 for each $ \alpha\in \cA $, and jointly for
all  $ \beta\in \cA $,
 \begin{align}\label{coupling}
\mathcal{L}(J_{\beta\alpha})=\mathcal{L}(I_{\beta}\mid I_{\alpha}=1 ).
\end{align}
Then $ W_{\alpha}=\sum_{\beta\neq\alpha}J_{\beta\alpha} $ is defined on the
same probability space as $ W $ and \eqref{coupling2} holds.

Suppose that
$J_{\gb\ga}$ are such random variables, and that,
for each $ \alpha $, the set $ \cA_{\alpha}:=\cA\backslash \{\alpha\}  $ is
partitioned into $ \cA_{\alpha}^{-} $ and $\cA_{\alpha}^{0} $ 
in such a way that  
\begin{equation}
  \label{jb}
 J_{\beta\alpha} 
   \leq I_{\beta}  \quad \text{if } \beta\in \cA_{\alpha}^{-}, 
\end{equation} 
with no condition if $\beta\in \cA_{\alpha}^{0} $. 
We will use the following result from \cite{Janson}
(with a slightly simplified constant).
(\cite{Janson} also contain similar results using a third part $\cA_\ga^+$
of $\cA_\ga$, where \eqref{jb} holds in the opposite direction; we will not
need them and note that it is always possible to 
include $\cA_\ga^+$ in $\cA_\ga^0$ and then use the following result.)

\begin{thm}[{\cite[Corollary 2.C.1]{Janson}}]\label{Tstein} 
Let\/ $ W=\sum_{\alpha\in\cA} I_{\alpha}$ and $ \lambda=\E(W) $. Let\/ $
\cA_{\alpha}=\cA\backslash \{\alpha\} $ and   
$ \cA_{\alpha}^{-},\cA_{\alpha}^{0}$  
be defined as above.
Then 
\begin{equation*}
  \begin{split}
d_{TV}(\mathcal{L}(W),\Po(\lambda)) 
&\leq (1\wedge \lambda^{-1})
\Bigpar{\lambda-\Var(W)+2\sum_{\alpha\in \cA}\sum_{\beta\in \cA_{\alpha}^{0}}
\E(I_{\alpha}I_{\beta})}.
	  \end{split}
\end{equation*}
\vskip-1.5\baselineskip
\qed
\end{thm}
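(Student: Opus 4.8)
The plan is to run the Stein--Chen method for Poisson approximation in precisely the coupling form for which the hypotheses \eqref{coupling} and \eqref{jb} have been arranged. For a bounded $g\colon\{0,1,2,\dots\}\to\bbR$ I use the Stein operator $g\mapsto\lambda g(w+1)-wg(w)$, and for each set $A$ of nonnegative integers I let $g_A$ solve the Stein equation $\lambda g_A(w+1)-wg_A(w)=\I{w\in A}-\Po(\lambda)(A)$. Substituting $w=W$ and taking expectations turns the left-hand side into $\P(W\in A)-\Po(\lambda)(A)$, so that
\begin{equation*}
d_{TV}(\mathcal{L}(W),\Po(\lambda))=\sup_A\bigabs{\E\bigpar{\lambda g_A(W+1)-Wg_A(W)}}.
\end{equation*}
The one classical fact I would quote about the solution is the Stein factor bound $\sup_w|g_A(w+1)-g_A(w)|\le 1\wedge\lambda^{-1}$, uniformly in $A$ (this is what produces the ``slightly simplified constant''); write $\theta:=1\wedge\lambda^{-1}$ for it.

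Next I would rewrite the Stein expectation via the coupling. With $p_\alpha:=\E I_\alpha$ and $\lambda=\sum_\alpha p_\alpha$, split $\E\bigpar{\lambda g(W+1)-Wg(W)}=\sum_\alpha p_\alpha\E g(W+1)-\sum_\alpha\E\bigpar{I_\alpha g(W)}$. Since $I_\alpha$ is an indicator and $\mathcal{L}(W_\alpha)=\mathcal{L}(W-I_\alpha\mid I_\alpha=1)$, we have $\E\bigpar{I_\alpha g(W)}=p_\alpha\E\bigpar{g(W)\mid I_\alpha=1}=p_\alpha\E g(W_\alpha+1)$; and because the construction realizes $W=\sum_\beta I_\beta$ and $W_\alpha=\sum_{\beta\ne\alpha}J_{\beta\alpha}$ on one probability space, this gives the basic identity
\begin{equation*}
\E\bigpar{\lambda g(W+1)-Wg(W)}=\sum_{\alpha\in\cA} p_\alpha\,\E\bigpar{g(W+1)-g(W_\alpha+1)}.
\end{equation*}

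The core step is to bound $g(W+1)-g(W_\alpha+1)$ using the sign information carried by the partition $\cA_\alpha=\cA_\alpha^-\cup\cA_\alpha^0$. I would insert the intermediate variable $\hat W_\alpha:=\sum_{\beta\in\cA_\alpha^-}J_{\beta\alpha}+\sum_{\beta\in\cA_\alpha^0}I_\beta$. By \eqref{jb} and $I_\alpha\ge0$ we have $W-\hat W_\alpha=I_\alpha+\sum_{\beta\in\cA_\alpha^-}(I_\beta-J_{\beta\alpha})\ge0$, whereas $\hat W_\alpha-W_\alpha=\sum_{\beta\in\cA_\alpha^0}(I_\beta-J_{\beta\alpha})$ has no definite sign. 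Telescoping $g$ through $\hat W_\alpha+1$ writes $g(W+1)-g(W_\alpha+1)$ as a sum of consecutive first differences of $g$ over a range of length $W-\hat W_\alpha$ and one of length $|\hat W_\alpha-W_\alpha|$; bounding each first difference by $\theta$ gives
\begin{equation*}
\bigabs{g(W+1)-g(W_\alpha+1)}\le\theta\Bigpar{(W-\hat W_\alpha)+\sum_{\beta\in\cA_\alpha^0}(I_\beta+J_{\beta\alpha})}.
\end{equation*}

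Finally I would take expectations. The coupling \eqref{coupling} gives $p_\alpha\E J_{\beta\alpha}=\E(I_\alpha I_\beta)$, while $p_\alpha\E I_\beta=\E I_\alpha\,\E I_\beta$; and a short computation using $\sum_\alpha\E(I_\alpha W)=\E W^2$ gives $\sum_\alpha p_\alpha\E(W-W_\alpha)=\lambda-\Var(W)$. Writing $W-\hat W_\alpha=(W-W_\alpha)-\sum_{\beta\in\cA_\alpha^0}(I_\beta-J_{\beta\alpha})$, the $\E I_\alpha\E I_\beta$ contributions cancel and the $\E(I_\alpha I_\beta)$ contributions add, leaving
\begin{equation*}
\sum_{\alpha\in\cA} p_\alpha\,\E\Bigpar{(W-\hat W_\alpha)+\sum_{\beta\in\cA_\alpha^0}(I_\beta+J_{\beta\alpha})}=\lambda-\Var(W)+2\sum_{\alpha\in\cA}\sum_{\beta\in\cA_\alpha^0}\E(I_\alpha I_\beta).
\end{equation*}
Multiplying by $\theta=1\wedge\lambda^{-1}$ and taking the supremum over $A$ yields the stated inequality. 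I expect the main obstacle to be the bookkeeping in this last step---keeping the cancellation of the $\E I_\alpha\E I_\beta$ terms straight---rather than any conceptual difficulty; a secondary point to handle carefully is the consistent use of the coupling, distributionally in the identity for $\E(I_\alpha g(W))$ but pathwise in the difference bound for $g(W+1)-g(W_\alpha+1)$.
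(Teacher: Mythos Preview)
The paper does not actually prove this theorem: it is quoted verbatim from \cite[Corollary~2.C.1]{Janson} and closed with a bare \qedsymbol, so there is no ``paper's own proof'' to compare against. Your proposal is a correct and complete sketch of the standard Stein--Chen coupling argument that underlies the cited result; in particular, your intermediate variable $\hat W_\alpha$ cleanly separates the monotone part $\cA_\alpha^-$ from the uncontrolled part $\cA_\alpha^0$, and your final bookkeeping is right (the expression collapses to $(W-W_\alpha)+2\sum_{\beta\in\cA_\alpha^0}J_{\beta\alpha}$, after which $p_\alpha\E J_{\beta\alpha}=\E(I_\alpha I_\beta)$ and $\sum_\alpha p_\alpha\E(W-W_\alpha)=\lambda-\Var(W)$ give exactly the stated bound). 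The only cosmetic point is that the sharp Stein factor is $(1-e^{-\lambda})/\lambda$; the $1\wedge\lambda^{-1}$ you use is precisely the ``slightly simplified constant'' the paper mentions.
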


\subsection{Couplings for proving Theorem \ref{main1} and Theorem \ref{poissontreepattern}}

Returning to the binary search tree, we use the cyclic representation 
$ X\nk= \sum_{i=1}^{n+1} \yi_{i,k}$ in \eqref{xnk}.
Recall the construction of $\yi_{i,k}$ in \eqref{yik} and the
distance
$\absni{i-j}$ on $\bbZ_{n+1}$ given by \eqref{dn+1}.

\begin{Lemma}\label{negativebinary}
Let 
$k\in\{1,\dots,n-1\}$ and let
$ \yi_{i,k}$ be as in \refS{SScyclic}.
Then for each $ i\in\{1,\dots,n+1\} $,
there exists a coupling  
$((\yi_{j,k})_j,(Z_{ji}^{k})_j )$ such that 
$\mathcal{L}(Z_{ji}^{k})=\mathcal{L}(\yi_{j,k}\mid \yi_{i,k}=1 )$  jointly
for all 
$ j \in \{1,\dots,n+1\}$. 
Furthermore, 
\begin{equation*}
  \begin{cases}
Z_{ji}^k = \yi_{j,k} & \text{if}\quad   \absni{j-i}>k+1, 
\\
Z_{ji}^k \ge \yi_{j,k} & \text{if}\quad   \absni{j-i}=k+1 ,
\\
Z_{ji}^k=0 \le \yi_{j,k} & \text{if}\quad   0<\absni{j-i}\le k. 
  \end{cases}
\end{equation*}
\end{Lemma}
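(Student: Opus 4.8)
The plan is to construct, for each fixed $i$, a single modified sequence $\tilde U=(\tilde U_m)_{m\in\bbZ_{n+1}}$ whose law is exactly that of $(U_m)$ conditioned on $\yi_{i,k}=1$, and then to set $Z_{ji}^k:=\yi_{j,k}(\tilde U)$, where $\yi_{j,k}(\tilde U)$ denotes the indicator \eqref{yik} computed from $\tilde U$. Since conditioning on $\yi_{i,k}=1$ only constrains the order relations among the $k+2$ values in the window $W:=\{i-1,i,\dots,i+k\}$, I would modify $U$ only inside $W$, leaving $\tilde U_m=U_m$ for $m\notin W$. The decisive geometric fact is that the window $\{j-1,\dots,j+k\}$ of $\yi_{j,k}$ is disjoint from $W$ when $\absni{i-j}>k+1$, meets $W$ in exactly one of the endpoints $i-1,i+k$ when $\absni{i-j}=k+1$, and overlaps $W$ in at least two indices without either window containing the other when $0<\absni{i-j}\le k$; this trichotomy is what yields the three asserted cases.

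For the construction, let $V_{(1)}<\dots<V_{(k+2)}$ be the order statistics of $(U_m)_{m\in W}$. I would keep these order statistics, place the two smallest values $V_{(1)},V_{(2)}$ at the endpoints $i-1,i+k$, put $V_{(3)},\dots,V_{(k+2)}$ at the interior positions $i,\dots,i+k-1$ in a fresh uniformly random order, and, crucially, assign $V_{(2)}$ to whichever endpoint carried the larger original value and $V_{(1)}$ to the other. Two things must then be checked. First, that $\tilde U$ has the right law: the unconditioned window configuration factors as its order statistics times an independent uniform ranking permutation, conditioning restricts that permutation to the ones sending $\{i-1,i+k\}$ to ranks $\{1,2\}$, and my rule reproduces exactly this, because the endpoint choice is governed by the event $\{U_{i-1}>U_{i+k}\}$, which has probability $\tfrac12$ and, depending only on the ranking permutation, is independent both of the order statistics and of the fresh interior randomness. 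Second, that the assignment decreases both endpoints, i.e. $\tilde U_{i-1}\le U_{i-1}$ and $\tilde U_{i+k}\le U_{i+k}$; this rests on the elementary inequality $V_{(2)}\le\max(U_{i-1},U_{i+k})$ (only one window value lies strictly below $V_{(2)}$, so the two distinct endpoints cannot both lie below it), which lets the larger of $V_{(1)},V_{(2)}$ fit under the larger endpoint value while the smaller fits trivially under the other.

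With $\tilde U$ in hand I would verify the cases. When $\absni{i-j}>k+1$ the window of $\yi_{j,k}$ is disjoint from $W$, so $\yi_{j,k}(\tilde U)=\yi_{j,k}(U)$ and $Z_{ji}^k=\yi_{j,k}$. When $0<\absni{i-j}\le k$ the two windows overlap in at least two indices with neither contained in the other, so $\yi_{i,k}\yi_{j,k}\equiv0$ for every configuration (the observation already used for \eqref{varxnk}); since $\yi_{i,k}(\tilde U)=1$ by construction, this forces $\yi_{j,k}(\tilde U)=0$, i.e. $Z_{ji}^k=0\le\yi_{j,k}$ (while $Z_{ii}^k=\yi_{i,k}(\tilde U)=1$ matches $\mathcal L(\yi_{i,k}\mid\yi_{i,k}=1)$). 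Finally, when $\absni{i-j}=k+1$ the window of $\yi_{j,k}$ shares with $W$ exactly one index $m\in\{i-1,i+k\}$ (or, in the degenerate case $k=(n-1)/2$, both), and $m$ is an endpoint of the window of $\yi_{j,k}$; at $m$ the value has only decreased, $\tilde U_m\le U_m$. Since $\yi_{j,k}=1$ asserts that the two endpoints of its window are its two smallest entries, lowering an endpoint value while leaving the other entries of that window fixed preserves this property, so $\yi_{j,k}(U)=1\Rightarrow\yi_{j,k}(\tilde U)=1$, giving $Z_{ji}^k\ge\yi_{j,k}$.

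The main obstacle is the simultaneous demand that the endpoint swap be both distribution-preserving (forcing the uniform, order-statistics-independent choice of which endpoint receives rank $1$) and monotone decreasing at \emph{both} endpoints at once; the inequality $V_{(2)}\le\max(U_{i-1},U_{i+k})$ is precisely what reconciles these two demands, and confirming that the deterministic comparison $\{U_{i-1}>U_{i+k}\}$ genuinely acts as an independent fair coin is the step that needs the most care. The boundary subtlety at $k=(n-1)/2$, where both endpoints are shared, is then absorbed for free, since monotonicity holds at both endpoints at the same time.
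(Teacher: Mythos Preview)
Your argument is correct and follows essentially the same strategy as the paper's proof: build a modified sequence $\tilde U$ that agrees with $U$ outside the window $W=\{i-1,\dots,i+k\}$, has the conditional law of $U$ given $\yi_{i,k}=1$, and satisfies $\tilde U_{i-1}\le U_{i-1}$, $\tilde U_{i+k}\le U_{i+k}$; the three cases then follow from the window geometry exactly as you describe. The only real difference is in how $\tilde U$ is defined on $W$: the paper performs two transpositions, swapping $U_{i-1}$ with the value at the position $m$ of one of the two smallest and $U_{i+k}$ with the value at the position $m'$ of the other (with a fair coin for the labelling when neither endpoint already holds one of the two smallest), whereas you keep the order statistics, place $V_{(1)},V_{(2)}$ at the endpoints using the comparison $\{U_{i-1}>U_{i+k}\}$ as your fair coin, and freshly re-randomize the interior. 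Your variant uses more auxiliary randomness but makes the verification that $\tilde U$ has the correct conditional law especially transparent; the paper's swap is more parsimonious but requires a moment's thought to see that it maps the uniform ranking to the conditioned one. The key inequality $V_{(2)}\le\max(U_{i-1},U_{i+k})$ you isolate is exactly the observation that makes both constructions deliver simultaneous monotonicity at the two endpoints.
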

\begin{proof}
We define $Z_{ji}^k$ as follows. (Indices are taken modulo $n+1$.)
Let $m$ and $m'$ be the indices in $i-1,\dots,i+k$ such that $U_m$ and
$U_{m'}$ are the two smallest of $U_{i-1},\dots,U_{i+k}$; 
if one of these is $i-1$ we choose $m=i-1$, and if one of them is $j+k$ we
choose $m'=j+k$, otherwise, we
randomize the choice of $m$ among these two indices so that
$\P(m<m')=\frac12$, independently of everything else.
Now exchange $U_{i-1}\leftrightarrow U_m$ and $U_{i+k}\leftrightarrow U_{m'}$,
i.e., let $U'_{i-1}:=U_m$, $U'_{m}:=U_{i-1}$, $U'_{i+k}:=U_{m'}$,
$U'_{m'}:=U_{i+k}$, 
and $U'_l:=U_l$ for all other indices $l$. 
Finally, let, cf.\ \eqref{yik},
\begin{equation}\label{zik}
  Z_{ji}^k=\etta\bigset{\text{$U'_{j-1}$ and $U'_{j+k}$ are the two smallest
	  among $U'_{j-1},\dots,U'_{j+k}$}}.
\end{equation}
Then,
$\cL\bigpar{U'_1,\dots,U'_n}=\cL\bigpar{(U_1,\dots,U_n)\mid \yi_{i,k}=1}$ and
thus 
$\mathcal{L}(Z_{ji}^{k})=\mathcal{L}(\yi_{j,k}\mid \yi_{i,k}=1 )$  jointly for
all $j$.

Note that $U'_l=U_l$ if $l\notin\set{j-1,\dots,j+k} $ and thus
$Z_{ji}^k=\yi_{j,k}$ if $\absni{j-i}>k+1$. 
On the other hand, if $0<j-i<k+1$, then
$Z_{ji}^k=0$ since  $i+k$ lies in $\set{j,\dots,j+k-1}$
and  $U'_{i+k}$ is smaller than $U'_{j-1}$ by construction; the case
$-k-1<j-i<0$ is similar. (This says simply that two different fringe trees
of the same size cannot overlap, which is obvious.)

Finally, if $j=i+k+1$ with $j+k+1<i+n+1$ (i.e., $k+1<(n+1)/2$),
then $j-1=i+k$ and thus $U'_{j-1}\le U_{j-1}$ while $U'_l=U_l$ for 
$l\in j,\dots,j+k$; hence $Z_{ji}^{k}\ge \yi_{i,k}$.
The cases $j=i+k+1$ with $j+k+1=i+n+1$ and
$j=i-k-1$ with $j-k-1>i-n-1$ are similar.
\end{proof}
 
Figures \ref{binartid}--\ref{binartidkopp} show an example
of this coupling, illustrated by the corresponding binary search trees;
in this example $i=4$, $k=3$, $m=i-1=3$, $m'=6$ and $U_0=0$.
\begin{figure}[ht]
\begin{minipage}[b]{0.4\linewidth}
\centering
\includegraphics[width=\textwidth]{binartidgrey.pdf}\vspace{0.7cm}
\caption{A binary search tree with no fringe subtree of size three containing the keys $ \{4,5,6\} $.}
\label{binartid}
\end{minipage}
\hspace{3cm}
\begin{minipage}[b]{0.4\linewidth}
\centering
\includegraphics[width=\textwidth]{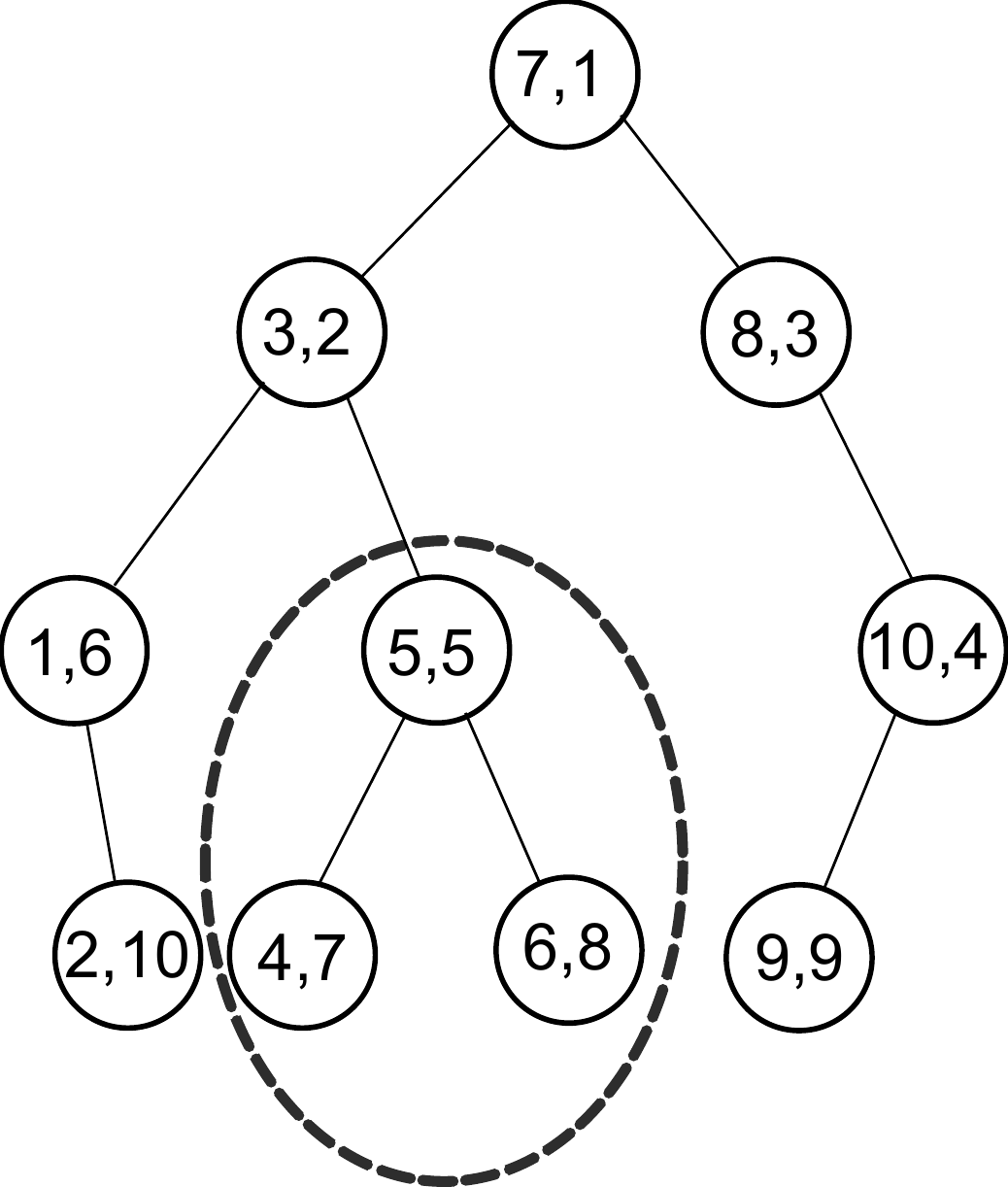}
\caption{A coupling forcing a fringe subtree of size three containing the keys $ \{4,5,6\} $ in the tree in Fig. \ref{binartid}.}
\label{binartidkopp}\end{minipage}
\end{figure}

For proving the Poisson approximation result in (\ref{Poissonrecursive}) for
the random recursive tree there is a similar coupling using 
the representation \eqref{xnkrrt} where
$\yl_{i,k-1}$
is defined by \eqref{ylik} and 
the indicators $U_i$ have period $n$:
$U_i:=U_{i\bmod n}$.

\begin{Lemma}\label{negativerecursive}
Let $k\in\{1,\dots,n-1\}$ and let
$ \yl_{i,k-1}$ be as in \refS{SScyclic}.
Then for each $ i\in\set{\nn}$,
there exists a coupling  
$((\yl_{j,k-1})_j,(Z_{ji}^{k-1})_j )$ such that 
$\mathcal{L}(Z_{ji}^{k-1})=\mathcal{L}(\yl_{j,k-1}\mid \yl_{i,k-1}=1 )$
jointly for all $ j \in \set{\nn}$. 
Furthermore, 
\begin{equation*}
  \begin{cases}
\hZ_{ji}^{k-1} = \yl_{j,k-1} & \text{if}\quad   \absn{j-i}>k, 
\\
\hZ_{ji}^{k-1}=0 \le \yl_{j,k-1} & \text{if}\quad   0<\absn{j-i}< k. 
  \end{cases}
\end{equation*}
\end{Lemma}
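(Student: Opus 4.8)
The plan is to mirror the construction in \refL{negativebinary}, adapting it to the one--sided condition \eqref{ylik}. Writing $W_i:=\set{i-1,i,\dots,i+k-1}$ for the window of $k+1$ consecutive indices (taken in $\bbZ_n$), the event $\set{\yl_{i,k-1}=1}$ says exactly that $U_{i-1}$ is the smallest and $U_{i+k-1}$ the second smallest among $(U_l)_{l\in W_i}$. First I would produce, from the unconditioned variables $(U_l)$, a modified family $(U'_l)$ that forces this order event: locate the position $m$ of the minimum of $(U_l)_{l\in W_i}$ and swap $U_{i-1}\leftrightarrow U_m$; then locate the position $m'$ of the minimum of the remaining entries $(U_l)_{l\in W_i\setminus\set{i-1}}$ and swap $U_{i+k-1}\leftrightarrow U_{m'}$ (each ``swap'' being the identity when its two indices coincide). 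This two--stage swap is confined to $W_i$, so $U'_l=U_l$ for $l\notin W_i$, and by construction $U'_{i-1}$ is the minimum and $U'_{i+k-1}$ the second smallest of $(U'_l)_{l\in W_i}$. I then define $Z_{ji}^{k-1}$ by the formula \eqref{ylik} with $U$ replaced by $U'$, for every $j$.

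The key distributional step is that $\cL\bigpar{(U'_l)_l}=\cL\bigpar{(U_l)_l\mid\yl_{i,k-1}=1}$. This holds because the $U_l$ are \iid{} (and a.s.\ distinct), so conditioning on the pure order event $\set{\yl_{i,k-1}=1}$ is the same as applying to an unconditioned sample the value--based permutation of $W_i$ that realises that order relation --- which is precisely the two swaps above. Since $Z_{ji}^{k-1}$ is the same measurable function of $(U'_l)$ that $\yl_{j,k-1}$ is of $(U_l)$, the identity $\cL(Z_{ji}^{k-1})=\cL(\yl_{j,k-1}\mid\yl_{i,k-1}=1)$ follows jointly over all $j$, which is the coupling asserted in the lemma (of the form \eqref{coupling}).

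It remains to check the two comparison clauses. If $\absn{j-i}>k$, the windows $W_i$ and $W_j$ are disjoint arcs of $\bbZ_n$ (two arcs each of length $k+1$ become disjoint once their cyclic distance exceeds $k$; for $k\ge n/2$ no such $j$ exists, so the clause is vacuous), hence $U'=U$ throughout $W_j$ and therefore $Z_{ji}^{k-1}=\yl_{j,k-1}$. If $0<\absn{j-i}<k$, then $W_i$ and $W_j$ overlap in at least two indices while neither contains the other, and two distinct subtrees of the same size cannot occupy such index sets simultaneously; concretely, as in the computation \eqref{varxnkrrt} and the proof of \refL{lemma1rrt}, \eqref{ylik} forces $\yl_{i,k-1}\yl_{j,k-1}=0$ for such $j$. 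Evaluating this identity at $U'$ and using that $\yl_{i,k-1}=1$ holds for $U'$ by construction gives $Z_{ji}^{k-1}=0\le\yl_{j,k-1}$.

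I expect the main obstacle to be the asymmetry relative to the binary case: because \eqref{ylik} prescribes not merely that the two extreme indices of the window carry the two smallest values but also their order ($U_{i-1}\le U_{i+k-1}$), there is no $\tfrac12$--randomisation as in \refL{negativebinary}; instead the minimum must be routed to $i-1$ and the second smallest to $i+k-1$, and the swap must be defined carefully enough (the two--stage version above) that it behaves correctly even when $i-1$ or $i+k-1$ already carries one of the two smallest values. Confirming that this deterministic routing still reproduces the conditional law is the one point deserving attention; the geometric case analysis for the two clauses, together with the omission of the boundary distance $\absn{j-i}=k$ (which yields no clean inequality and is absorbed into $\cA_\alpha^0$), is then routine.
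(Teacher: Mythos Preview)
Your argument is correct and follows the same overall strategy as the paper: build the coupling by a value-based rearrangement of the $U_l$ within the window $W_i$ that forces the event $\{\yl_{i,k-1}=1\}$, then read off the two comparison clauses from the locality of the rearrangement and from the deterministic incompatibility $\yl_{i,k-1}\yl_{j,k-1}=0$ for $0<\absn{j-i}<k$. The specific construction differs slightly. The paper simply reuses the (partly randomised) swap from \refL{negativebinary} and then appends one further interchange of the two boundary values whenever they end up in the wrong order; you instead use a deterministic two-stage procedure (route the minimum to $i-1$, then the next smallest to $i+k-1$). Both produce the conditional law. Your version avoids the auxiliary randomisation at the cost of having to justify the distributional identity from scratch, while the paper's version inherits it directly from \refL{negativebinary}. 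Your justification of that distributional step is a touch informal: the assertion that ``conditioning on the pure order event is the same as applying the two swaps'' deserves one line of argument (e.g.\ observe that swapping the minimum of an exchangeable family to a prescribed position leaves the remaining coordinates exchangeable, then iterate), but the claim is correct and standard.
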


In contrast to \refL{negativebinary}, there is no monotonicity (in any
direction) between 
$\hZ_{ij}^{k-1} $ and $\yl_{i,k-1}$ when $\absn{j-i}=k$, as easily is seen
by simple examples.

\begin{proof} 
We use the same construction as in the proof of \refL{negativebinary} except
that if $U'_{j-1}>U'_{j+k}$ then we make a final additional interchange
$U'_{j-1}\leftrightarrow U'_{j+k}$. Denote the result by
$U''_1,\dots,U''_{n-1}$. 
The rest of the argument is as above, now defining
\begin{equation}\label{hzik}
  \hZ_{ji}^{k-1}=
\etta\bigset{U''_{j-1}<U''_{j+k-1}<\min_{j\le l\le j+k-2} U''_l}.
\end{equation}
\end{proof}

\begin{proof}[Proof of Theorem \ref{main1}] 
The means are given in Lemmas \ref{lemma1} and \ref{lemma1rrt}.

We prove the Poisson approximation result  first for 
the binary search tree,
using the representation 
$ X_{n,k}=\sum_{i=1}^{n+1} \yi_{i,k}$ in
\eqref{xnk}.
Let 
$\cA:=\set{\nni}$. 
From Lemma
\ref{negativebinary} we see that for each $i\in \cA $ we can 
apply \refT{Tstein} with
\begin{align*}
  \cA_{i}^{-}:= \cA\setminus \set{i,i\pm(k+1)},
\qquad
\cA_{i}^{0}:=  \set{i\pm(k+1)};
\end{align*}
this yields,  
using Lemma \ref{lemma1} and \eqref{eikk}, 
provided $k\neq(n-1)/2$, 
\begin{equation*}
  \begin{split}
d_{TV}(\mathcal{L}(X_{n,k}),\Po(\mu_{n,k}))
&
\leq \bigpar{1\wedge \mu_{n,k}^{-1}}\Bigl(\mu_{n,k}-\Var(X_{n,k})
+4
\sum_{1\leq i\leq n+1}
\E(\yi_{i,k}\yi_{i+k+1,k})\Bigr)
\\&
=O\Bigpar{\frac{1}{\mu_{n,k}}\cdot\frac{n}{k^3}}
=O\Bigparfrac{1}{k},
  \end{split}
\end{equation*}
which shows \eqref{Poissonbinary}; the case $k=(n-1)/2$ follows similarly
from Lemma \ref{lemma1} and \eqref{eikk=}. 

For the random recursive tree, we argue similarly, using the representation
$\hX\nk = 
\sum_{i=1}^n\yl_{i,k-1}$
in \eqref{xnkrrt} and \refT{Tstein} together with
Lemmas \ref{negativerecursive} and \ref{lemma1rrt}, and \eqref{eikkrrt}.
\end{proof}

Lemmas \ref{negativebinary} and \ref{negativerecursive} can be extended to
include a property $P$. We state only the \bst{} case, and leave the \rrt{}
to the reader.
Recall that $\ipik$ is the indicator of the event that the binary search tree
defined by the permutation defined by $\gs(i,k)$ belongs to $P$.

\begin{Lemma}\label{negativebinaryP}
Let $k\in\{1,\dots,n-1\}$, and let 
$\tyi_{i,k}^P:=\yi_{i,k}^{\phantom P} \yi_{i,k}^{P}$.
Then for each  $ i\in \{1,\dots,n+1\}$ ,
there exists a coupling  
$((\tyi_{j,k}^{P})_{j}^{\phantom P},(W_{ji}^{k})_j^{\phantom P} )$ such that 
$\mathcal{L}(W_{ji}^{k})=\mathcal{L}(\tyi_{j,k}^{P}\mid \tyi_{i,k}^{P}=1 )$  jointly for all
$ j \in \{1,\dots,n+1\}$. Furthermore, 
\begin{equation}\label{cases}
  \begin{cases}
W_{ji}^k = \tyi_{j,k}^{P} & \text{if}\quad   |j-i|_{n+1}>k+1, 
\\
W_{ji}^k \ge \tyi_{j,k}^{P} & \text{if}\quad   |j-i|_{n+1}=k+1 ,
\\
W_{ji}^k=0 \le \tyi_{j,k}^{P} & \text{if}\quad   0<|j-i|_{n+1}\le k. 
  \end{cases}
\end{equation}
\end{Lemma}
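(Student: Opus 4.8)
The plan is to build the required coupling by \emph{augmenting} the coupling of \refL{negativebinary} with one extra re-randomization that forces the property $P$ at the site $i$, exploiting the fact that the two constraints are independent. Recall from the proof of \refL{lemma1P} that $\yi_{i,k}$ and $\ipik$ are independent: the event $\set{\yi_{i,k}=1}$ depends only on which two of $U_{i-1},\dots,U_{i+k}$ are the smallest (namely the two boundary values $U_{i-1},U_{i+k}$), whereas $\ipik$ depends only on the relative order of the $k$ interior values $U_i,\dots,U_{i+k-1}$. Consequently, conditioning on $\tyi_{i,k}^P=\yi_{i,k}^{\phantom P}\yi_{i,k}^P=1$ amounts to conditioning on the boundary event $\set{\yi_{i,k}=1}$ and, independently, on the interior order lying in $A_k^P$, the set of permutations of length $k$ that yield a \bst{} with property $P$.

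First I would carry out Step~1 exactly as in \refL{negativebinary}: locate the two smallest values in $\set{U_{i-1},\dots,U_{i+k}}$ and swap them into the boundary positions $i-1$ and $i+k$, producing a sequence $U'$ with $\cL(U')=\cL((U_l)\mid \yi_{i,k}=1)$; as there, this modifies only coordinates in $\set{i-1,\dots,i+k}$ and can only decrease $U_{i-1}$ and $U_{i+k}$. Then, in Step~2, I would re-permute the interior values $U'_i,\dots,U'_{i+k-1}$ among the positions $i,\dots,i+k-1$, keeping the same multiset, so that their induced rank order becomes uniform over $A_k^P$ (using extra independent randomization if necessary); call the result $U''$. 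Since under $\set{\yi_{i,k}=1}$ the interior order is already uniform on all $k!$ permutations and independent of the boundary values and of the interior multiset, this re-randomization realizes precisely the extra conditioning $\ipik=1$, giving $\cL(U'')=\cL((U_l)\mid \tyi_{i,k}^P=1)$. Crucially, Step~2 touches only the coordinates $\set{i,\dots,i+k-1}$ and leaves the two smallest (boundary) values untouched, so all modifications remain inside $\set{i-1,\dots,i+k}$ and the boundary values stay strictly below every interior value. I would then let $W_{ji}^k$ be the value of $\tyi_{j,k}^P=\yi_{j,k}^{\phantom P}\yi_{j,k}^P$ computed from $U''$ via \eqref{yik} and the property-$P$ indicator, which yields $\cL((W_{ji}^k)_j)=\cL((\tyi_{j,k}^P)_j\mid \tyi_{i,k}^P=1)$ jointly in $j$.

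It then remains to check the three cases of \eqref{cases}, and here each reduces to something already handled. When $\absni{j-i}>k+1$ the window $\set{j-1,\dots,j+k}$ is disjoint from $\set{i-1,\dots,i+k}$, so $U''$ agrees with $(U_l)$ on it and $W_{ji}^k=\tyi_{j,k}^P$. When $\absni{j-i}=k+1$ the interior re-randomization, being supported on $\set{i,\dots,i+k-1}$, does not meet $j$'s window at all, which shares with $\set{i-1,\dots,i+k}$ only a single boundary coordinate of $j$ whose value was decreased in Step~1; hence the factor $\yi_{j,k}^P$ is unchanged while the subtree factor obeys the monotonicity of \refL{negativebinary}, and the product inequality gives $W_{ji}^k\ge\tyi_{j,k}^P$. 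Finally, when $0<\absni{j-i}\le k$ the coordinate $i+k$ (or $i-1$) lies in the interior of $j$'s window and still carries one of the two smallest values of $i$'s window, so the subtree factor already vanishes as in \refL{negativebinary}, whence $W_{ji}^k=0$; the interior re-randomization is irrelevant here precisely because it preserves the multiset, so the smallest values are unchanged.

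The main obstacle I anticipate is the bookkeeping for Step~2: verifying that re-randomizing the interior order to be uniform over $A_k^P$, while preserving the interior multiset and fixing the boundary values, reproduces exactly the conditional law $\cL((U_l)\mid\tyi_{i,k}^P=1)$, and simultaneously that confining this operation to the coordinates $\set{i,\dots,i+k-1}$ keeps it \emph{invisible} to the inequalities—it can influence $W_{ji}^k$ only when $j$'s window meets $\set{i,\dots,i+k-1}$, i.e.\ when $\absni{j-i}\le k$, but there the subtree factor already forces $W_{ji}^k=0$. Once this localization is established, the rest follows verbatim from the proof of \refL{negativebinary}.
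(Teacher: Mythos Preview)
Your proposal is correct and is essentially the same construction as the paper's own proof: perform the swap of \refL{negativebinary} to force $\yi_{i,k}=1$, then re-permute the interior values $U'_i,\dots,U'_{i+k-1}$ uniformly over the orderings in $A_k^P$, and define $W_{ji}^k$ as $\tyi_{j,k}^P$ computed from the resulting sequence. Your case analysis matches the paper's, and your localization observation---that Step~2 affects only coordinates already forcing $W_{ji}^k=0$, so the inequalities from \refL{negativebinary} survive intact---is exactly the point.
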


\begin{proof}
We use the same notations as in the proof of \refL{negativebinary}. 
(In particular, indices
are taken modulo $ n+1 $.) 
Let $m$ and $m'$ be the indices in $i-1,\dots,i+k$ 
defined in proof of \refL{negativebinary},
and exchange $U_{i-1}\leftrightarrow U_m$ and $U_{i+k}\leftrightarrow U_{m'}$.
So far we
have used exactly the same  coupling as in  Lemma \ref{negativebinary}.
However, since we want $ \sigma(i,k) $ to have the property $ P$, 
we also exchange the values $ U'_i,\dots,U'_{i+k-1} $ with each other 
so that this property is fulfilled (choosing uniformly at random between the
orderings satisfying $P$).
We abuse notation and write $ U'_i,\dots,U'_{i+k-1} $ for the new
values after this exchange. Write  
\begin{equation*}
\sigma^{i}(j,k)=\{(j,U'_j),\dots,(j+k-1,U_{j+k-1}')\}  
\end{equation*} 
and note that $ \sigma^{i}(j,k) = \sigma(j,k)$ if $|j-i|\geq k+1  $.
Finally, let
\begin{equation}\label{wik}
  W_{ji}^k:=Z_{ji}^k\cdot\bigett{\sigma^{i}(j,k) \textrm{ has property } P},
\end{equation}
where $  Z_{ji}^k$ is defined by \eqref{zik}.
Then,
$\cL\bigpar{U'_1,\dots,U'_n}=\cL\bigpar{(U_1,\dots,U_n)\mid \tyi_{i,k}^{P}=1}$ and
thus 
$\mathcal{L}(W_{ji}^{k})=\mathcal{L}(\tyi_{j,k}^{P}\mid \tyi_{i,k}^{P}=1 )$  jointly for
all $j$. 
To see that (\ref{cases}) holds, we argue as in the proof of Lemma
\ref{negativebinary}. 
\end{proof}

\begin{proof}[Proof of  Theorem \ref{poissontreepattern}] We  prove the
  result for  
$ X_{n,k}^{P}$, the result for $ \hX_{n,k}^{P}$ follows by similar
  calculations.  

The mean $ \mu_{n,k}^{P}:=\E(X_{n,k}^{P})$ is given by \refL{lemma1P}.
From \refT{Tstein} together with 
\refL{negativebinaryP}, \refL{lemma1P}
and \eqref{eikk}--\eqref{eikk=},
we deduce that for $ k\neq (n-1)/2 $,
\begin{align*}
d_{TV}\bigpar{&\mathcal{L}(X_{n,k}^{P}),\Po(\mu_{n,k}^{P})}
\\&
\leq (1\wedge (\mu_{n,k}^{P})^{-1})\Big(\mu_{n,k}^{P}-\Var(X_{n,k}^{P})+4
\sum_{1\leq i\leq n+1}\big(\E(\tyi_{i,k}^{P}\tyi_{i+k+1,k}^{P})\Big)
\\
&=
\begin{cases}
 O\Bigpar{\frac{p_{k,P}}{k}} & \mbox{if }  \mu_{n,k}^{P}\geq 1 \\
 O\Bigpar{\frac{p_{k,P}}{k}}\cdot \mu_{n,k}^{P} & \mbox{if } \mu_{n,k}^{P} < 1
\end{cases}
\end{align*}
and for $ k=(n-1)/2 $, 
\begin{align*}
d_{TV}\bigpar{&\mathcal{L}(X_{n,k}^{P}),\Po(\mu_{n,k}^{P})}
= O\Bigpar{\frac{p_{k,P}^{2}}{k}},
\end{align*}
which shows Theorem \ref{poissontreepattern} in the binary tree case.
\end{proof}

\section{Normal approximation by Stein's method}

In this section we will prove Theorem \ref{main2} and Theorem
\ref{multivariate}.
As in \cite[Theorem 5]{Devroye2} we
use Stein's method in the following form, see \eg{}
\cite[Theorem 6.33]{Janson2} for a proof, and for the definition of
dependency graph.

\begin{Lemma}\label{jansonnormal} 
Suppose that $(S_n)_1^\infty$ is a sequence
of random variables such that
$S_n = \sum_{\alpha \in V_n} Z_{n\alpha}$, where for each $n$,
$\{ Z_{n\alpha} \}_\alpha$ is a family of random variables
with dependency graph $(V_n, E_n)$.
Let $N(\cdot)$ denote the closed neighborhood of
a node or set of nodes in this graph.
Suppose further that there exist numbers
$M_n$ and $Q_n$ such that
$$
\sum_{\alpha \in V_n} \E( | Z_{n\alpha} | ) \le M_n
$$
and for every $\alpha, \alpha' \in V_n$,
$$
\sum_{\beta \in N(\alpha,\alpha')}
   \E( | Z_{n\beta} | \mid Z_{n\alpha}, Z_{n\alpha'} ) \le Q_n~.
$$
Let $\sigma_n^2 = \Var (S_n )$. 
If 
\begin{equation}\label{mq}
\lim_{n\to\infty} \frac{M_n Q_n^2}{\sigma_n^3} = 0~,
\end{equation}
then
$$
\frac{S_n - \E( S_n )}{ \sqrt{ \Var ( S_n ) } }
\dto {\cal N}(0,1).
$$
\end{Lemma}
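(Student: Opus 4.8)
\emph{This is \cite[Theorem 6.33]{Janson2}, so in the paper one may simply cite it; what follows is the route I would take to prove it by Stein's method.}

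The plan is to reduce to the standard Stein machinery for normal approximation. First I would normalize: set $W:=(S_n-\E S_n)/\sigma_n=\sum_{\alpha\in V_n}Y_\alpha$ with $Y_\alpha:=(Z_{n\alpha}-\E Z_{n\alpha})/\sigma_n$, so that $\E W=0$ and $\Var W=1$, and note that the dependency graph $(V_n,E_n)$ is unchanged by centering and scaling. Since bounded $h$ with bounded derivatives of all orders form a convergence-determining class, it suffices to show $\E h(W)\to \E h(\N(0,1))$ for each such $h$. For the Stein solution $f=f_h$ of $f'(w)-wf(w)=h(w)-\E h(\N(0,1))$, which satisfies $\|f\|_\infty,\|f'\|_\infty,\|f''\|_\infty\le C_h<\infty$, one has $\E h(W)-\E h(\N(0,1))=\E\bigl[f'(W)-Wf(W)\bigr]$, and everything comes down to bounding this quantity.

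Next I would decompose along neighborhoods. For $\alpha\in V_n$ let $N(\alpha)$ be its closed neighborhood, $V_\alpha:=\sum_{\beta\in N(\alpha)}Y_\beta$, and $W^\alpha:=W-V_\alpha$. By the dependency-graph property $Y_\alpha$ is independent of $W^\alpha$, so $\E[Y_\alpha f(W^\alpha)]=\E Y_\alpha\,\E f(W^\alpha)=0$; hence $\E[Wf(W)]=\sum_\alpha\E[Y_\alpha(f(W)-f(W^\alpha))]$. Taylor expanding $f(W)-f(W^\alpha)=V_\alpha f'(W^\alpha)+R_\alpha$ with $|R_\alpha|\le\tfrac12\|f''\|_\infty V_\alpha^2$, and using $\E[Y_\alpha Y_\beta]=0$ for $\beta\notin N(\alpha)$ (so that $\sum_\alpha\E[Y_\alpha V_\alpha]=\Var W=1$), I would rewrite $\E[f'(W)-Wf(W)]$ as a sum of a covariance term $\sum_\alpha\Cov(Y_\alpha V_\alpha,f'(W))$ and remainder terms $\sum_\alpha\E[Y_\alpha R_\alpha]$, each carrying a factor $\|f''\|_\infty$.

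The hard part is the covariance term, because $V_\alpha$ is \emph{not} independent of $W^\alpha$: the neighbors of $\alpha$ have further neighbors, so $\E[Y_\alpha V_\alpha f'(W^\alpha)]$ does not factor. The fix is to pass to second neighborhoods: letting $W^{(\alpha)}$ be $W$ with every $Y_\gamma$ deleted for $\gamma$ adjacent to $N(\alpha)$, the variable $Y_\alpha V_\alpha$ (a function of $(Y_\beta)_{\beta\in N(\alpha)}$) is independent of $W^{(\alpha)}$, whence $\Cov(Y_\alpha V_\alpha,f'(W))=\Cov\bigl(Y_\alpha V_\alpha,f'(W)-f'(W^{(\alpha)})\bigr)$ is $O(\|f''\|_\infty)$ times sums of $\E|Y_\alpha Y_\beta Y_\gamma|$ with $\beta\in N(\alpha)$ and $\gamma$ in the second neighborhood. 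Organizing the cancellations so that every surviving error is a triple sum of exactly this shape is, I expect, the main obstacle; the remainder terms $\E[|Y_\alpha|V_\alpha^2]$ are already of this form.

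Finally I would bound these triple sums using the hypotheses, which are tailored for it. Taking expectations in the conditioning hypothesis gives $\sum_{\beta\in N(\alpha,\alpha')}\E|Z_{n\beta}|\le Q_n$, so centering replaces the conditional bound by $\sum_{\beta\in N(\alpha,\alpha')}\E(|\bar Z_{n\beta}|\mid Z_{n\alpha},Z_{n\alpha'})\le 2Q_n$, where $\bar Z:=Z-\E Z$. In each triple sum I would sum out the innermost index $\gamma$ over $N(\{\alpha,\beta\})$ using this conditional bound (factor $2Q_n$), then sum out $\beta$ over $N(\alpha)$ (a second factor $2Q_n$, taking $\alpha'=\alpha$), leaving $\sum_\alpha\E|\bar Z_{n\alpha}|\le 2M_n$. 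Together with the $\sigma_n^{-3}$ from normalization this yields $\bigl|\E[f'(W)-Wf(W)]\bigr|=O\bigl(C_h\,M_nQ_n^2/\sigma_n^3\bigr)$, which tends to $0$ by \eqref{mq}. Hence $\E h(W)\to\E h(\N(0,1))$ for all such $h$, and $W\dto\N(0,1)$, as claimed.
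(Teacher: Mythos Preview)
Your proposal is correct. The paper does not prove this lemma at all; it simply cites \cite[Theorem 6.33]{Janson2} and moves on, exactly as you note in your opening sentence. Your additional sketch of the Stein's method argument --- normalize, use the Stein equation, decompose $Wf(W)$ via first neighborhoods $N(\alpha)$, handle the covariance term via second neighborhoods, and bound the resulting triple sums by iterating the conditional hypothesis (inner sum over $\gamma$ giving one factor $Q_n$, middle sum over $\beta$ with $\alpha'=\alpha$ giving another $Q_n$, outer sum giving $M_n$) --- is the standard route and is essentially the proof that appears in the cited reference. The handling of centering ($|\bar Z|\le|Z|+\E|Z|$ doubling the constants) and the identification of the ``hard part'' as passing to second neighborhoods are both accurate. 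There is nothing to correct.
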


\begin{proof}[Proof of Theorem \ref{main2}]
We consider the binary search tree. 
The random recursive tree is similar. 

From Lemma \ref{lemma1} we have 
\begin{align}
  \E(X_{n,k})&=\frac{2(n+1)}{(k+2)(k+1)} \label{lexe}
\intertext{and}
\Var(X_{n,k})&=\E(X_{n,k})+O\Bigpar{\frac{n}{k^3}}. \label{lexv}
\end{align}

By the usual argument with subsequences, it suffices to consider the two
cases $k\to\infty$ and $k=O(1)$.

If $k\to\infty$ and $ k=o(\sqrt{n}) $,  
then Theorem \ref{main1}  shows that
$ X_{n,k} $ can be approximated by a random variable with a
$\Po(\E(X_{n,k})) $ distribution, where by 
\eqref{lexe}--\eqref{lexv},
$\Var(X_{n,k})\sim \E(X_{n,k})\to\infty $ as \ntoo.
Thus, from Theorem \ref{main1} and the central limit theorem for
Poisson distributions,
it follows that 
$\frac{X_{n,k}-\E(X_{n,k})}{\sqrt{\Var(X_{n,k})}} \dto \mathcal{N}(0,1) $ 
as $ n\rightarrow \infty $.

Thus, it remains to only show Theorem \ref{main2} for $ k=O(1) $. We repeat
the arguments used in \cite[Theorem 5]{Devroye2}, but using the
representation \eqref{xnk}.
(In fact, it suffices to consider a fixed $k$ and then the result follows by
\refT{multivariate}. However, we prefer to give a direct, and somewhat more
general, proof.)

We define the dependency graph $ (V_n,E_n) $ for the collection of random
variables $\{\yi_{i,k},~1\leq i\leq n+1\}$
by taking
$$V_{n}=\set{1,\dots,n+1}
$$
and $E_n:=\set{(i,j):0<\absni{i-j}\le k+1}$. Then $|N(\ga,\ga')|\le2(2k+3)$
for all $\ga,\ga'\in V_n$, and thus we may take $Q_n=4k+6$ in
\refL{jansonnormal}. We further take $M_n=\E X_{n,k}=O(n/k^2)$.
Thus, $M_nQ_n^2=O(n)$, and 
to show \eqref{mq} and thus Theorem \ref{main2} for the binary search tree
it is enough to show that 
\begin{align}\label{steinnormal}
\frac{n}{\Var(X_{n,k})^{\xfrac{3}{2}}}
\stackrel{n\rightarrow \infty}{\longrightarrow}0.
\end{align}

For $k=O(1)$, \refT{Tcovbin2} shows that $\Var(X_{n,k})\ge c n$, and
\eqref{steinnormal} follows, which completes the proof.

More generally, \refT{Tcovbin2} shows that $\Var(X_{n,k}) \ge c n/k^2$ for
all $k<(n-1)/2$. Thus $n/\Var(X_{n,k})^{3/2}=O(k^3/n\qq)$, and it follows
that \eqref{steinnormal} 
holds if $ k=o(n^{1/6}) $. 
\end{proof}

\begin{proof}[Proof of Theorem \ref{multivariate}]
We show the result for the binary search tree, for the random recursive tree
the  proof follows by analogous calculations. Recall that $
\bX_n=(X^{T^{1}}_n,X^{T^{2}}_n,\dots,X^{T^{d}}_n)$ 
and  let $ \mathcal{Z}_{d} =(Z_1,\dots,Z_d)$,  where $ \mathcal{Z}_{d}$ is
multivariate normal with the distribution  $\N(0, \Gamma )$, where 
$\Gamma$ is the
matrix with elements $ \gamma_{ij}=\lim_{n\rightarrow
  \infty}\frac{1}{n}\Cov(X^{T^{i}}_n,X^{T^{j}}_n)$, see \eqref{mv0}.
Note that $\gG$ is non-singular by \refL{LnonsingularT}.

By the \CramerWold{} device \cite[Theorem 7.7]{billingsley}, to show that
$ n^{-\frac{1}{2}}(\bX_n-\bmu_n)$ converges in
distribution to $ \mathcal{Z}_{d} $, it is enough to show that for every
fixed vector  $( t_1,\dots,t_{d})\in \mathbb{R}^d $ we have
\begin{align}\label{binarymultinormal}
\frac{\sum_{j=1}^{d} t_j
	X^{T^{j}}_n-\E\lrpar{\sum_{j=1}^{d} t_j X^{T^{j}}_n}}{\sqrt{n}}\dto
  \sum_{j=1}^{d} t_j Z_{j}, 
\end{align}
where 
$ \sum_{j=1}^{d} t_jZ_{j} \sim 
\N\bigpar{0,\gam^2}$ with
\begin{equation}\label{gam2}
  \gam^2 :=\sum_{j,k=1}^{d} t_jt_k \gam_{jk}.
\end{equation}

Let $S_n:=\sum_{j=1}^{d}t_{j}X^{T^{j}}_n$.
Theorem \ref{Tcovbin} implies that, as \ntoo,
\begin{equation}\label{barb}
\Var(S_n) \sim n \sum_{j,k=1}^{d} t_jt_k \gs_{T^{i},T^{j}} =
 n \sum_{j,k=1}^{d} t_jt_k \gam_{ij} =n\gam^2.    
\end{equation}
In particular, if $\gam^2=0$, then \eqref{binarymultinormal} is trivial,
with the limit 0.

To show that (\ref{binarymultinormal}) holds when $\gam^2>0$, we will use
the same 
method as was used in \cite[Theorem 5]{Devroye2} for proving this theorem
(in a more general form)
in the 1-dimensional case $d=1$.
Let 
$| T^{j} | =k_j$, $1\leq j\leq d $. 
We use the cyclic representation \eqref{cyclic}, which in this case can be
written as, see \eqref{xpnk},
\begin{equation*}
   X_n^{T^j} = \sumini I_{i}^j
\end{equation*}
for some indicator variable $I_i^j=I_{i,k_j}I_{i,k_j}^{T^j}$ depending only on
$U_{i-1},\dots,U_{i+k_j}$. 
We define
$$
V_{n}:=\{(i,{j}):1\leq i\leq n+1,  1\leq j\leq d\}
$$ 
and let for each $(i,j)\in V_n$, $\tia_{i,j}$ be the set
\set{i-1,\dots,i+k_j}, regarded as a subset of $\bbZ_{n+1}$.
Thus $I_i^j$ depends only on $\set{U_k:k\in\tia_{i,j}}$,
and thus we can define
a dependency graph $L_n$ with vertex set $V_n$ by connecting $(i,j)$ 
and $(i',j')$ when $\tia_{i,j}\cap\tia_{i',j'}\neq\emptyset$.

Let $K:=\max\{k_1,k_2,\dots, k_d\}$ and $ M:=\max\{t_1,t_2,\dots,t_d\}
$. It is easy to see that for the sum 
$$
S_n:=
\sum_{j=1}^{d}t_{j}X^{T^{j}}_n
=\sum_{i=1}^{n+1}\sum_{j=1}^{d}t_{j}I_i^j
=\sum_{(i,j)\in V_n}t_{j}I_i^j,
$$ 
we can
choose the numbers $M_n$ and  $ Q_n $ in Lemma \ref{jansonnormal}
as $M_n=(n+1)dM$ and
$$
Q_n=2M\sup_{(i,{j})\in V_n}| N((i,j))|
\le 2Md(2K+3).
$$ 
Since $\gs_n\sim n\qq$ by \eqref{barb}, \eqref{mq} holds and
\refL{jansonnormal} shows that
\eqref{binarymultinormal} holds.
%
\end{proof}

\section{Truncations}

As said in the introduction, we combine \refT{multivariate} with a
truncation argument to deal with more general additive functionals.

\begin{proof}[Proof of \refT{TF}]
We consider again the binary search tree. The random recursive tree is
similar.

Note first that \eqref{tfa1}--\eqref{tfa2} imply
  \begin{align}
\sumk\frac{\Var f(\cT_k)}{k^{2}}
\le
\lrpar{\sup_k \frac{\Var f(\cT_k)}{k}}\qq
\sumk\frac{(\Var f(\cT_k))\qq}{k^{3/2}}
<\infty,
  \end{align}
and thus, using also \eqref{tfa3},
  \begin{align}
\sumk\frac{\E|f(\cT_k)|^2}{k^{2}}
=\sumk\frac{\Var f(\cT_k)}{k^{2}}
+\sumk\frac{(\E f(\cT_k))^2}{k^{2}}
<\infty.
  \end{align}
It follows that 
$\sumk\frac{\E|f(\cT_k)|}{k^{2}}<\infty$, and thus, see \eqref{tlime|} and
\eqref{pik}, that $\E|f(\cT)|<\infty$.
Since \eqref{tfa3} also implies $\E f(\ctk)/k\to0$ as $k\to\infty$,
\eqref{tfe} follows by \refT{Tlime} and \eqref{tlime=}.

Next, define the truncations $f^N(T):=f(T)\ett{|T|\le N}$, and the
corresponding sums $F^N(T)$. Then 
\begin{equation}
  \label{FN}
F^N(\ctn)=\sum_{|T|\le N} f(T) X_n^T,
\end{equation}
and thus \refT{Tcovbin} yields, as \ntoo,
\begin{align}\label{sam}
  \Var F^N(\ctn)/n \to \gss_{F,N}
:= \sum_{|T|,|T'|\le N} f(T)f(T')\gstt.
\end{align}
Moreover,  \refT{Tny} applied to $f-f^N$ yields
\begin{equation}\label{FNx1}
\frac{1}n\Var\bigpar{F(\cT_n)-F^N(\ctn)}
\le C \gd_N
\end{equation}
where
\begin{equation}\label{FNx2}
\gd_N:=
{
 \biggpar{\sum_{k>N}\frac{(\Var f(\ctk))\qq}{k^{3/2}}}^2
 + \sup_{k>N} \frac{\Var f(\cT_k)}k
+\sum_{k>N}\frac{\mu_k^2}{k^2} 
}.
\end{equation}
Note that $\gd_N$  is independent of $n$, 
and by the assumptions \eqref{tfa1}--\eqref{tfa3}, 
$\gd_N\to0$ as $N\to\infty$. 
It follows by Minkowski's
inequality that the sequences $(\Var (F^N(\ctn))/n)_{n\ge1}$  converge uniformly
to $(\Var (F(\ctn))/n)_{n\ge1}$. This and \eqref{sam} imply \eqref{tfv}
(including the existence of the limit in \eqref{tfv}).

For the convergence in distribution \eqref{tfd}, we use again the
truncation $f^N$ and $F^N$, and note that \refT{multivariate} implies
\begin{equation}
  \frac{ F^N(\ctn)-\E F^N(\ctn)}{\sqrt{n}}\dto \cN(0,\gss_{F,N})
\end{equation}
as \ntoo{}, for each fixed $N$.
 This together with the uniform  bound \eqref{FNx1} where $\gd_N\to0$
as \Ntoo{}
implies \eqref{tfd}, see \eg{} \cite[Theorem 4.2]{billingsley}.
\end{proof}

\begin{proof}[Proof of \refC{CTF}]
The assumption  $f(T)=O(|T|^\ga)$ with $\ga<1/2$ implies
\eqref{tfa1}--\eqref{tfa3} and \eqref{tfa1r}--\eqref{tfa3r}.
The result follows by \refT{TF}.
The version \eqref{ctfd} of the asymptotic normality \eqref{tfd} follows by
\eqref{tlimeO} and \eqref{mub}, and similarly for the random recursive case
\eqref{ctfdr}.
\end{proof}

\section{Proofs of Theorems \ref{TG1}--\ref{TGr}}

Finally, we prove Theorems \ref{TG1} and \ref{TGr}, beginning with  exact
formulas for finite $n$.  

\begin{Lemma}
  \label{Lpsi}
Let $F(T)$ be given for binary trees $T$ by \eqref{F}, with a functional
$f(T)=f(|T|,|T_L|,|T_R|)$ that depends only on the sizes of $T$ and of its left
and right subtrees.
Let 
$\psi_k$ be as in \refT{TG1}.
%
Then
\begin{equation}\label{ema}
  \Var(F(\ctn)) = 
(n+1)\sum_{k=1}^{n-1}\frac{2}{(k+1)(k+2)}\psi_k +\psi_n.
\end{equation}
\end{Lemma}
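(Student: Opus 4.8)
The plan is to bypass the general variance formula \refT{TFvar} and instead exploit the recursive root-splitting of the binary search tree directly, since the hypothesis that $f(T)=f(|T|,|T_L|,|T_R|)$ depends only on the three sizes is exactly what makes this work. Write $\nu_k:=\E F(\ctk)$ and $v_k:=\Var F(\ctk)$, noting $v_0=v_1=0$ because $\cT_0=\emptyset$ and $\cT_1$ are deterministic. Recall that $\ctn$ splits at its root: conditionally on $|\ctn|=n$, the left subtree has size $I_n$, uniform on $\set{0,\dots,n-1}$, and given $I_n$ the left and right subtrees are independent random binary search trees of sizes $I_n$ and $n-1-I_n$. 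Since $F$ is additive in the sense of \eqref{rf} and $f(\ctn)=f(n,I_n,n-1-I_n)$ is a deterministic function of $I_n$ (this is where the size-only dependence is used), we have, writing $F_L$ and $F_R$ for the values of $F$ on the two principal subtrees,
\begin{equation}
F(\ctn)=f(n,I_n,n-1-I_n)+F_L+F_R,
\end{equation}
where, conditionally on $I_n$, the terms $F_L$ and $F_R$ are independent with $F_L\eqd F(\cT_{I_n})$ and $F_R\eqd F(\cT_{n-1-I_n})$.

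First I would apply the law of total variance, conditioning on $I_n$. The conditional expectation is $\E\bigpar{F(\ctn)\mid I_n}=f(n,I_n,n-1-I_n)+\nu_{I_n}+\nu_{n-1-I_n}$, and its variance over the randomness of $I_n$ is, by the definition \eqref{psik}, exactly $\psi_n$. The conditional variance is $\Var\bigpar{F(\ctn)\mid I_n}=v_{I_n}+v_{n-1-I_n}$ by the conditional independence, and its expectation over $I_n$ uniform on $\set{0,\dots,n-1}$ equals $\frac1n\sum_{i=0}^{n-1}\bigpar{v_i+v_{n-1-i}}=\frac2n\sum_{i=0}^{n-1}v_i$ after reindexing the second sum. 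Hence the variances satisfy the linear recursion
\begin{equation}
v_n=\psi_n+\frac2n\sum_{i=0}^{n-1}v_i,\qquad v_0=0,
\end{equation}
valid for all $n\ge1$ (for $n=1$ it reduces to $v_1=\psi_1=0$).

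Second I would solve this recursion. Setting $S_n:=\sum_{i=0}^{n-1}v_i$, the relation $v_n=\psi_n+\frac2n S_n$ gives $S_{n+1}=S_n+v_n=\frac{n+2}{n}S_n+\psi_n$. The homogeneous equation $S_{n+1}=\frac{n+2}{n}S_n$ has the solution $\phi_n:=n(n+1)/2$, so substituting $S_n=\phi_n T_n$ yields the telescoping relation $T_{n+1}-T_n=\psi_n/\phi_{n+1}=\frac{2\psi_n}{(n+1)(n+2)}$. Since $S_1=v_0=0$ forces $T_1=0$, this gives $T_n=\sum_{k=1}^{n-1}\frac{2\psi_k}{(k+1)(k+2)}$, hence $S_n=\frac{n(n+1)}{2}\sum_{k=1}^{n-1}\frac{2\psi_k}{(k+1)(k+2)}$, and plugging back,
\begin{equation}
v_n=\psi_n+\frac2n S_n=\psi_n+(n+1)\sum_{k=1}^{n-1}\frac{2}{(k+1)(k+2)}\psi_k,
\end{equation}
which is precisely \eqref{ema}. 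The only genuinely delicate point is the justification of the splitting identity, specifically that conditionally on the left-subtree size the two principal subtrees are independent random binary search trees of the stated sizes; once that standard fact is in place, the identification of the conditional-mean variance with $\psi_n$ is immediate from \eqref{psik}, and the remainder is a routine solution of a first-order linear recursion.
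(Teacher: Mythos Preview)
Your proof is correct and follows essentially the same route as the paper: both condition on the left-subtree size $I_n$, apply the law of total variance to obtain the recursion $v_n=\psi_n+\frac{2}{n}\sum_{i=0}^{n-1}v_i$, and then solve it. The only difference is in that last step: the paper observes that this recursion is exactly the one satisfied by $\E\Psi(\ctn)$ for the additive functional $\Psi$ with toll function $\psi_{|T|}$ and then invokes the already-established mean formula \eqref{eF}, whereas you solve the recursion by hand via variation of parameters. Both are clean; the paper's shortcut is slightly more conceptual, yours is self-contained.
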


\begin{proof}
We use the notation in \refT{TG1}, and let $\gss_n:=\Var(F(\ctn))$.

Condition  the random tree $\cT_n$  on having a left subtree of size
  $|\cT_{n,L}|=k$; then the two subtrees $\cT_{n,L}$ and $\cT_{n,R}$
are independent random trees with the distributions
$\cT_{n,L}\eqd\cT_k$ and $\cT_{n,R}\eqd\cT_{n-1-k}$.
Hence, \eqref{rf} implies that the conditional distribution of $F(\ctn)$ is
given by
\begin{equation}\label{em}
  \bigpar{F(\ctn)\mid|T_{n,L}|=k}
\eqd
f(n,k,n-1-k)+F(\ctk)+F(\cT'_{n-k-1}),
\end{equation}
where $\cT'_{n-k-1}\eqd\cT_{n-k-1}$ is independent of $\cT_k$.

Taking the expectation in \eqref{em} we obtain the conditional expectation
of $F(\ctn)$ as
\begin{equation}
  \E\bigpar{F(\ctn)\mid|T_{n,L}|=k}
=g(k)
:=f(n,k,n-1-k)+\nu_k+\nu_{n-1-k}.
\end{equation}
Since $|T_{n,L}|\eqd I_n$, it follows that
\begin{equation}\label{emw}
  \E\bigpar{F(\ctn)\mid|T_{n,L}|}
\eqd g(I_n).
\end{equation}
Consequently,
\begin{equation}\label{emvare}
 \Var\bigpar{\E\bigpar{F(\ctn)\mid|T_{n,L}|}}
=\Var( g(I_n))=\psi_n
\end{equation}
by \eqref{psik}; the last equality in \eqref{psik} follows because
taking the expectation in \eqref{emw} yields
\begin{equation}
\E g(I_n)=\E F(\ctn)=\nu_n.
\end{equation}

Furthermore,
taking the variance in \eqref{em} we obtain the conditional variance
\begin{equation}\label{emvar}
  \Var\bigpar{F(\ctn)\mid|T_{n,L}|=k}
=\Var(F(\ctk))+\Var(F(\cT'_{n-1-k}))
=\gss_k+\gss_{n-1-k}.
\end{equation}

Consequently, by a standard variance decomposition formula
(``the law of total variance''),
see, e.g., \cite[Exercise 10.17-2]{Gut}, together with \eqref{emvare} and
\eqref{emvar},
\begin{equation}\label{em4}
  \begin{split}
  \gss_n=\Var(F(\ctn))
&=
  \E\bigpar{\Var\bigpar{F(\ctn)\mid|T_{n,L}|}}
+  \Var\bigpar{\E\bigpar{F(\ctn)\mid|T_{n,L}|}}
\\&
=\E\bigpar{\gss_{I_n}+\gss_{n-1-I_n}}+\psi_n.	
  \end{split}
\raisetag\baselineskip
\end{equation}
If we define $\Psi(T)$ by \eqref{rf} using the toll function
$\psi(T):=\psi_{|T|}$, it follows from \eqref{em4} and induction that
$\E(\Psi(\ctn)) = \gss_n$, and thus 
\eqref{ema} follows from \eqref{eF}.
\end{proof}

\begin{Lemma}  \label{Lpsir}
Let $F(\gL)$ be given for rooted trees $T$ by \eqref{F}, with a functional
$f(\gL)=f(|\gL|,d(\gL),\gL_{v_1},\dots,\gL_{v_{d(\gL)}})$ that
depends only on the
size $|\gL|$ 
and the number and sizes of the principal subtrees.
Let $\psi_k$ be as in \refT{TGr}.
Then
\begin{equation}\label{emar}
  \Var(F(\gln)) = 
n\sum_{k=1}^{n-1}\frac{1}{k(k+1)}\psi_k +\psi_n.
\end{equation}
\end{Lemma}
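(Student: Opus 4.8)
The plan is to follow the proof of \refL{Lpsi} almost verbatim, replacing the conditioning on the size of the left subtree by conditioning on the whole \emph{profile} of the root, namely the degree $d(\gln)$ together with the ordered sizes $\bigpar{|\gL_{n,v_1}|,\dots,|\gL_{n,v_{d(\gln)}}|}$ of the principal subtrees. The one ingredient I need beyond \refT{TFvar2} is the standard decomposition property of the random recursive tree: conditioned on this profile, the principal subtrees $\gL_{n,v_1},\dots,\gL_{n,v_d}$ are \emph{independent} random recursive trees with $\gL_{n,v_i}\eqd\gL_{|\gL_{n,i}|}$. (This is the analogue of the fact, used in \refL{Lpsi}, that $\cT_{n,L}$ and $\cT_{n,R}$ are independent given $|\cT_{n,L}|$; the profile itself is distributed as the cycle structure of a random permutation, as noted after \refT{TGr}.) This conditional independence is the key point on which the whole argument rests, so I would state and justify it first.

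Given the profile, the recursive formula \eqref{rf} reads $F(\gln)=f(n,d(\gln),|\gL_{n,1}|,\dots)+\sum_{i=1}^{d(\gln)}F(\gL_{n,v_i})$, where the toll term is now a constant. Taking the conditional expectation and writing $\nu_k:=\E F(\glk)$ turns the right-hand side into $f(n,d(\gln),|\gL_{n,1}|,\dots)+\sum_{i=1}^{d(\gln)}\nu_{|\gL_{n,i}|}$; its variance over the profile is exactly $\psi_n$ by \eqref{psikr}, the last equality there being justified because the overall mean is $\E F(\gln)=\nu_n$. Taking instead the conditional variance and using the conditional independence gives $\Var\bigpar{F(\gln)\mid\text{profile}}=\sum_{i=1}^{d(\gln)}\gss_{|\gL_{n,i}|}$, writing $\gss_k:=\Var(F(\glk))$. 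The law of total variance then yields the recursion $\gss_n=\psi_n+\E\bigpar{\sum_{i=1}^{d(\gln)}\gss_{|\gL_{n,i}|}}$.

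To turn this recursion into the closed form \eqref{emar}, I would define $\Psi(\gL)$ by \eqref{rf} with the toll function $\psi(\gL):=\psi_{|\gL|}$, so that $\Psi(\gln)=\psi_n+\sum_{i}\Psi(\gL_{n,v_i})$. A short induction on $n$, again using the conditional independence to replace each $\E\bigpar{\Psi(\gL_{n,v_i})\mid\text{profile}}$ by $\gss_{|\gL_{n,i}|}$, shows that $\E\Psi(\gln)=\gss_n$ for every $n$ (the base case $n=1$ being $\gss_1=\psi_1=0$). Finally I apply the mean formula \eqref{eF2} of \refT{TFvar2} to $\Psi$; since $\psi(\gL)=\psi_{|\gL|}$ depends only on the size, the relevant means are $\lambda_k=\psi_k$, and \eqref{eF2} together with \eqref{pikn2} gives $\E\Psi(\gln)=n\sum_{k=1}^{n-1}\frac{1}{k(k+1)}\psi_k+\psi_n$. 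Combining this with $\E\Psi(\gln)=\gss_n=\Var(F(\gln))$ proves \eqref{emar}.

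The only genuinely new step compared with \refL{Lpsi} is the conditional independence of the principal subtrees; everything else is a mechanical transcription in which the binary pair $(\cT_{n,L},\cT_{n,R})$ is replaced by the family of principal subtrees and the factor $\frac{2}{(k+1)(k+2)}$ by $\frac{1}{k(k+1)}$. I expect that conditional-independence statement, together with the bookkeeping of the ordered sizes inside $f$, to be the main things to get right; the convergence and summability issues that complicate the asymptotic statements do not arise here, since \eqref{emar} is an exact identity for fixed $n$.
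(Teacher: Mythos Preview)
Your proposal is correct and follows precisely the approach the paper intends: it is the natural transcription of the proof of \refL{Lpsi}, conditioning on the root profile $(d(\gln),|\gL_{n,v_1}|,\dots,|\gL_{n,v_d}|)$ and using the conditional independence of the principal subtrees together with \eqref{eF2}. The paper's own proof is just the one-line remark that the argument is similar to \refL{Lpsi} with these modifications, so you have in fact spelled out exactly what was left implicit.
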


\begin{proof}
  Similar to the proof of \refL{Lpsi} with mainly notational changes,
now conditioning on the degree
$d=d(\gL_n)$ and the sizes of the principal subtrees
$\gL_{n,v_1},\dots,\gL_{n,v_{d}}$, and using \eqref{eF2}.
\end{proof}

\begin{proof}[Proof of \refT{TG1}]
  By \refT{TF}, $\Var(F(\ctn))/(n+1)\to\gss_F<\infty$. Since $\psi_n\ge0$,
  this and \eqref{ema} imply that
  \begin{equation}\label{emu}
\sum_{k=1}^{\infty}\frac{2}{(k+1)(k+2)}\psi_k <\infty	
  \end{equation}
and
\begin{equation}\label{emv}
  \gss_F
=\sum_{k=1}^{\infty}\frac{2}{(k+1)(k+2)}\psi_k+\lim_{\ntoo}\frac{\psi_n}{n+1},
\end{equation}
where the limit has to exist. However, if
$\lim_{\ntoo}\xqfrac{\psi_n}{n+1}\neq0$, then
\eqref{emu} cannot hold.
Hence 
$\lim_{\ntoo}\xqfrac{\psi_n}{n+1}=0$ and \eqref{gssfpsi} follows from
\eqref{emv}. 

It follows from \eqref{gssfpsi} and \eqref{ema} that
$\gss_F=0\iff \psi_k=0 \,\forall k \iff \Var(F(\ctn))=0\,\forall n$.
 The final conclusion follows by \eqref{psik}. (If
$f(n,k,n-1-k)=a_n-a_k-a_{n-1-k}$, then $F(T)=a_{|T|}-(|T|+1)a_0$ is
deterministic.) 
\end{proof}

\begin{proof}[Proof of \refT{TGr}]
Similar.  
\end{proof}

\section{Applications}\label{Sapp}

In this section we give some simple examples of applications of the
results above.

\subsection{Outdegrees}\label{SSdegrees}

First we consider the number of nodes 
in $\ctn$ or $\gL_n$
of a certain outdegree (number of children) $d\ge0$; we denote these numbers
by $\dn{d}$  and $\hdn{d}$, respectively.
These equal $X_n^P$ and $\hX_n^P$, where $P$ is the property that the root
has degree $d$.
Consequently, \refC{CP} immediately yields convergence of the expectation
and variance divided by $n$, and asymptotic normality provided the
asymptotic variance does not vanish.
By \refR{RTFjoint}, this extends to joint convergence for several outdegrees
$d$.

The case $d=0$ is simple; the vertices with outdegree $0$ are the leaves, and
thus $\dn0=X_{n,1}$ and $\hdn0=\hX_{n,1}$ with 
means given by \eqref{munk}--\eqref{murrt} and
variances given in
Theorems \ref{Tcovbin2} and \ref{Tcovrec2}. 
(In this case, the asymptotic normality also follows by \refT{main2}
or \ref{multivariate}.)
To find the asymptotic variances
for $d>0$ (and covariances) directly from \refC{CP} seems much more difficult.
However, as noted already by \citet{Devroye1}, for the \bst, when the only
outdegrees are $0,1,2$, it is possible to reduce to the case $d=0$,
because
\begin{align}\label{nodeschildren}
\dn0+\dn1+\dn2=n
\qquad\text{and}\qquad 
\dn1+2\dn2=n-1,
\end{align}
and hence
\begin{align}\label{outdegreetwo}
\dn2=\dn0-1
\qquad\text{and}\qquad 
\dn1=n+1-2\dn0.
\end{align}   
Hence we recover the result by  \citet[Theorem 2]{Devroye1}: 
\begin{example}[{\citet{Devroye1}}]\label{outdegreesbin} 
$ \dn{d} $, the number of vertices 
with outdegree $ d $ in the \bst,
$ d={0,1,2} $,
has expectation (for $n>1$)
\begin{align}\label{expbindegrees}
  \E(\dn{0})&=\E(\dn{1})= \frac{n+1}{3},&\E(\dn{2})&= \frac{n-2}{3}
\end{align}
and variance (for $n>3$)
\begin{align}\label{varbindegrees}
\Var{\dn{0}}&=\Var{\dn{2}}=\frac{2}{45}(n+1),
&\Var{\dn{1}}&=\frac{8}{45}(n+1)
\end{align} and  for each $ d\in\{0,1,2\} $, as \ntoo,
\begin{align}
\label{normalbindegrees}\dfrac{\dn{d}-\E(\dn{d})}{\sqrt{\Var(\dn{d})}}
\dto
\mathcal{N}\bigpar{0,1}.  
\end{align}
\end{example}

\begin{rem} 
The asymptotic means $\mud{d}:=\lim_{\ntoo}\E \dn{d}/n$ can also be
calculated by \eqref{cpe} or \eqref{tfe}. For \eqref{cpe}, we note that the
growing binary tree $\cT_t$ has root degree distributed as
$\Bin(2,1-e^{-t})$, and thus, by the definition of $\cT:=\cT_\tau$,
\begin{equation}
  \mud{d}:=\int_0^\infty \binom 2d (1-e^{-t})^d(e^{-t})^{2-d} e^{-t}\dd t
=\binom2d\int_0^1(1-x)^dx^{2-d}\dd x =\frac13,
\end{equation}
for each $d=0,1,2$, see \citet{Aldous-fringe}.
If we instead use \eqref{tfe}, we obtain 
$$
\mud{d}=\sumk\frac{2}{(k+1)(k+2)}p_{k,d},
$$ 
where $p_{k,d}$ is the
probability that the root of $\cT_k$ has degree $d$.
For $d=0$ we have $p_{1,0}=1$ and $p_{k,0}=0$ for $k>1$; hence
$\mud0=\frac{2}{2\cdot 3}=\frac{1}3$. For $d=1$ we have $p_{1,1}=0$ and
$p_{k,1}=2/k$ for $k\ge2$, since 
the binary search tree generated by a sequence of keys has root degree 1 if
and only if the first key is either the  largest or the  smallest.
Hence \eqref{tfe} yields 
\begin{align}\label{expbindegreesCTF}
\mud1=\sum_{k=2}^{\infty}\frac{2}{(k+2)(k+1)}\cdot \frac{2}{k}=\frac{1}{3}.
\end{align}
We can similarly show $\mud2=\frac13$  too by \eqref{tfe}.
\end{rem}

For the random recursive tree, \refC{CP} yields the following,
which was proved (using an urn model) by \citet{Janson2005},
extending earlier results by
Mahmoud and Smythe \cite{MS:trees}.
In fact, \cite{Janson2005} gave also a generating function for the variances
$\gsshd{d}$ (and the covariances), enabling us to calculate them; as said
above, it seems difficult to obtain $\gsshd{d}$ by the methods of this
paper except for $d=0$, when $\gsshd0=\hgs_{1,1}=\frac{1}{12}$ by
\eqref{covb2d}.
(The asymptotic formula \eqref{convergencep} for the expectation was shown
earlier by \citet{NaR}. The convergence in probability
$\hdn{d}/n\pto2^{-d-1}$, which follows from \eqref{normalrecursivedegrees},
was shown by \citet{MM88}.)
\begin{thm}\label{outdegreesrecursive} 
For $\hdn{d} $, the number of vertices with outdegree $ d\ge0 $ in the \rrt,
it holds that, as \ntoo,
\begin{align}\label{convergencep}
 \frac{\E\hdn{d}}{n}\to 2^{-d-1}
\end{align}
and furthermore 
\begin{align}
\label{normalrecursivedegrees}
\dfrac{\hdn{d}-2^{-d-1}n}{\sqrt{n}}
\dto
\mathcal{N}\lrpar{0,\gsshd{d}} 
\end{align}
for some constant $ \gsshd{d}\ge0 $.
\end{thm}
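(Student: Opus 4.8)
The plan is to recognize $\hdn{d}$ as $\hX_n^P$ for a suitable property $P$ and then invoke \refC{CP} directly. I take $P$ to be the property of (ordered or unordered) rooted trees that the root has degree $d$. Since the fringe subtree $\gL_n(v)$ has $v$ as its root, the degree of the root of $\gL_n(v)$ equals the outdegree of $v$ in $\gL_n$; hence the number of fringe subtrees of $\gL_n$ with property $P$ is exactly the number of vertices of outdegree $d$, that is, $\hdn{d}=\hX_n^P$. This is a legitimate property even though $P_k$ is empty for $k\le d$, which \refC{CP} allows.

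With this identification, the next step is to compute the limiting mean $\hmu_P=\P(\gLL\in P)$ using Aldous's branching-process construction of $\gLL$ recalled above. In that construction the root of $\gLL_t$ accumulates children according to a rate-$1$ Poisson process, so at time $t$ its degree is $\Po(t)$; stopping at an independent $\tau\sim\Exp(1)$ then gives a mixed Poisson variable, and
\begin{equation*}
  \P(d(\gLL)=d)=\int_0^\infty \frac{t^d}{d!}e^{-t}\cdot e^{-t}\dd t
  =\frac1{d!}\int_0^\infty t^d e^{-2t}\dd t=\frac1{d!}\cdot\frac{d!}{2^{d+1}}=2^{-d-1}.
\end{equation*}
Thus $\hmu_P=2^{-d-1}$, and the convergence \eqref{convergencep} follows from \eqref{cper}. (As an alternative one could apply \eqref{tfer} with $f(\gL)=\ett{d(\gL)=d}$ and the root-degree probabilities $\hat p_{k,d}=\P(d(\gL_k)=d)$, but the branching-process evaluation is the cleanest.)

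Having the mean, I would read off the rest of the claim from \refC{CP}: \eqref{cpvr} gives $\Var\hdn{d}/n\to\hgss_P=:\gsshd{d}$ for a constant $\gsshd{d}\ge0$, and \eqref{cpr} gives $(\hdn{d}-\E\hdn{d})/\sqrt n\dto\N(0,\gsshd{d})$. Finally, using the last sentence of \refC{CP}, which permits replacing $\E\hX_n^P$ by $n\hmu_P$, together with $\hmu_P=2^{-d-1}$, I obtain \eqref{normalrecursivedegrees}.

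The point to emphasize is that there is essentially no obstacle here: all of the analytic work — the truncation argument of \refT{TF} and the multivariate central limit theorem \refT{multivariate} behind it — is already packaged in \refC{CP}, so the argument reduces to identifying the right property and evaluating one mixed-Poisson integral. In particular I do not need to evaluate $\gsshd{d}$ explicitly or to verify that it is strictly positive; the statement only asserts $\gsshd{d}\ge0$, with $\N(0,0)$ understood as the degenerate point mass at $0$ (equivalently, convergence in probability to $0$). The only genuine computation is the mean above.
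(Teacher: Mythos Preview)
Your proof is correct and follows essentially the same approach as the paper: identify $\hdn{d}=\hX_n^P$ for the property that the root has degree $d$, apply \refC{CP}, and compute $\hmu_P=2^{-d-1}$ via the branching-process construction of $\gLL$ (the mixed-Poisson integral). The paper's proof is terser but identical in substance.
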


\begin{proof}
By \refC{CP}, it remains only to calculate $\hmud{d}:=\lim_\ntoo\E\hdn{d}/n$.
We use \eqref{cper} and note that the growing random tree $\gLL_t$ 
has root degree with the Poisson distribution $\Po(t)$.
Since we stop the process at a random time $ \tau\sim \Exp(1) $ it follows that
$$ 
\hmud{d}=\int_{0}^{\infty}\frac{t^d e^{-t}}{d!}\cdot e^{-t}\dd t=2^{-d-1},
$$
as calculated by \citet{Aldous-fringe}.
\end{proof}

\begin{rem}
An alternative approach  for finding $\hmud{d}$ 
is to use the the natural correspondence between the recursive tree and the
binary search tree.  
A node of outdegree $ d $ in the recursive tree 
(except the root of the whole tree) 
corresponds to a left-rooted subtree in the binary search tree with a rightmost
path of length $ d-1 $, and  thus to a left-rooted right path of
length $d-1$, considering here only paths that cannot be continued further
to the right.
By symmetry, the expected number of such paths equals the expected number of
rightrooted right paths of length $d-1$, but these paths are the right paths
of length $d$.
By symmetry again, on the average half of these paths (except paths from
the root) are left-rooted, and thus
$\E \hdn{d+1}=\frac12\E\hdn{d}+O(1)$.
Hence, $\hmud{d} =2^{-d-1} $ follows by induction
since $\hmud0=\frac{1}{2} $ (see  \eqref{murrt}). 
\end{rem}

\subsection{Protected nodes}\label{SSprotected}

We proceed to use fringe trees to study the so-called protected nodes that
recently have been studied in several types of random trees,  see e.g.\
\cite{CheonShapiro, MahmoudWard2012, Bona, DevroyeJanson,  MahmoudWard2014}
and the references there.  
 A node is $ \ell$-\emph{protected} 
 if the shortest distance to a descendant that is a leaf is at least $\ell $.
The most studied case is $\ell=2$: a node is \emph{two-protected} if it is
neither a leaf nor the parent of a leaf. 
\begin{rem}
The case $\ell=1$ is a bit trivial, at least for the random trees studied
here: a node is 1-protected if and only if it 
is a non-leaf. Hence, for \bst{s} and \rrt{s}, where the number of nodes is
given, it is equivalent to study the number of leaves, which was done in
\refS{SSdegrees}. (However, for random trees with a random number of nodes,
for example the ternary search tree studied in \cite{HolmgrenJanson},
this case too is interesting.)  
\end{rem}

\refC{CP} implies immediately that for any $ \ell $, the number of 
$\ell$-protected nodes is asymptotically normal
in both the binary search tree and the random recursive tree,
at least provided the asymptotic variances below are non-zero, which is an
obvious conjecture although we have no rigorous proof for $\ell\ge3$,
\cf{} \refPQ{PQ0}.

\begin{thm} \label{protected3} 
Let $\ell\ge1$ and 
let $\yln$ denote the number of $ \ell $-protected nodes in a binary
search tree $ \cT_{n} $. 
Then,
for some constants 
$\muyl=\P(\text{the root of $\cT$ is $\ell$-protected})>0$ 
and
$ \sigmayl^2 \ge0$,
with at least $\sigmay2^2>0$,
\begin{align}\label{lprotbinexp}
 \frac{\E(\yln)}{n}
&\rightarrow \muyl
,
\\
\label{lprotbinvar} 
\frac{\Var(\yln)}{n}
&\rightarrow \sigmayl^{2} ,
\intertext{and}
\label{lprotbinnormal}
\frac{\yln-n\muyl}{\sqrt{n}},\;
\dfrac{\yln-\E(\yln)}{\sqrt{n}}
&\dto
\mathcal{N}\lrpar{0,\sigmayl^{2}}. 
\end{align} 

Similarly,
let $\zln$ denote the number of $ \ell $-protected nodes in a random
recursive tree $ \gL_{n} $. 
Then,
for some constants 
$\muzl=\P(\text{the root of $\gLL$ is $\ell$-protected})>0$ 
and
$ \sigmazl^2 \ge0$,
with at least $\sigmaz2^2>0$,
 \begin{align}\label{lprotrecexp}
 \frac{\E(\zln)}{n}&\rightarrow \muzl, 
\\
\label{lprotrecvar} 
\frac{\Var(\zln)}{n}&\rightarrow \sigmazl^{2}
\intertext{and}
\label{lprotrecnormal}
\dfrac{\zln-\E(\zln)}{\sqrt{n}}
,\;
\dfrac{\zln-n\muzl}{\sqrt{n}}
&\dto
\mathcal{N}\lrpar{0,\sigmazl^{2}}.
\end{align}
\end{thm}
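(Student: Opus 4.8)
The plan is to recognize that $\ell$-protectedness is a property of the fringe tree at a node and then invoke \refC{CP} directly. Indeed, a node $v$ of $\ctn$ is $\ell$-protected if and only if the root of the fringe tree $\ctn(v)$ is $\ell$-protected: the descendants of $v$ in $\ctn$ are exactly the nodes of $\ctn(v)$, a descendant is a leaf of $\ctn$ precisely when it is a leaf of $\ctn(v)$, and distances from $v$ are the same in both. Hence, letting $P$ be the property that the root of a binary tree is $\ell$-protected, we have $\yln=X_n^P$, and likewise $\zln=\hX_n^P$ for the corresponding property of ordered rooted trees. \refC{CP} then immediately yields \eqref{lprotbinexp}--\eqref{lprotbinnormal} and \eqref{lprotrecexp}--\eqref{lprotrecnormal}, with $\muyl=\P(\cT\in P)$ and $\muzl=\P(\gLL\in P)$; the two centerings in \eqref{lprotbinnormal} and \eqref{lprotrecnormal} are equivalent by the last sentence of \refC{CP}.

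It remains to establish the positivity claims. For $\muyl>0$ I would exhibit a single tree in $P$ of positive probability. The left path on $\ell+1$ nodes (the root has a left child, which has a left child, and so on) has a unique leaf, at distance $\ell$ from the root, so its root is $\ell$-protected; since $\P(|\cT|=\ell+1)=\frac{2}{(\ell+2)(\ell+3)}>0$ and every binary tree of a given size has positive probability under $\cT$ (see \refR{Rpkp}), we obtain $\muyl>0$. Viewing the same path as an ordered tree and using the recursive-tree part of \refR{Rpkp} gives $\muzl>0$.

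The main point, and the only real obstacle, is the positivity of the asymptotic variance for $\ell=2$. Here I would apply \refT{TG1}. Two-protectedness of the root is equivalent to $|T|\ge2$ together with $|T_L|\neq1$ and $|T_R|\neq1$ (a child is a leaf exactly when its subtree has size one), so $f(T)=\ett{|T|\ge2,\ |T_L|\neq1,\ |T_R|\neq1}$ depends only on $|T|,|T_L|,|T_R|$, and the hypotheses of \refT{TF}(i) hold trivially since $f$ is bounded. By \refT{TG1}, $\sigmay2^2=\gss_F>0$ unless $\Var F(\ctn)=0$ for every $n$. But $Y_{2,3}$ is not deterministic: for $n=3$ the path on three nodes has one two-protected node (its root), whereas the tree whose root has two leaf children has none, and both shapes occur with positive probability. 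Hence $\Var(Y_{2,3})>0$ and $\sigmay2^2>0$.

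For the random recursive tree the argument is identical, now using \refT{TGr}: two-protectedness of the root means $d(\gL)\ge1$ and no principal subtree has size one, so $f(\gL)$ depends only on $|\gL|$, $d(\gL)$ and the principal-subtree sizes as required, and the two recursive trees on three nodes (the path, with one two-protected node, versus the star, with none) again give both values $0$ and $1$, so $\Var(Z_{2,3})>0$ and $\sigmaz2^2>0$. Thus the analytic content is entirely supplied by \refC{CP}, while the strict positivity of $\sigmay2^2$ and $\sigmaz2^2$ is exactly where the explicit variance formulas of \refT{TG1}--\refT{TGr}, together with their reduction of nonvanishing asymptotic variance to a single finite check, are needed.
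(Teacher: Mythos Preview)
Your proof is correct and follows exactly the paper's approach: apply \refC{CP} to the property $P$ that the root is $\ell$-protected, and for the case $\ell=2$ verify that the indicator functional has the form required by \refT{TG1} (respectively \refT{TGr}) so that positivity of the asymptotic variance reduces to exhibiting a single $n$ with $\Var(\yxn2)>0$ (respectively $\Var(\zxn2)>0$). Your write-up simply fills in details the paper leaves implicit---the explicit check that $f$ depends only on $(|T|,|T_L|,|T_R|)$ (respectively on the principal-subtree sizes), the concrete $n=3$ witnesses, and the easy positivity of $\muyl$ and $\muzl$---but the strategy is identical.
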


\begin{proof}
Let $ P $ be the class of trees such that the root is $\ell$-protected
and apply \refC{CP}, noting that $\yln = X_n^P$ and $\zln = \hX_n^P$.

That $\sigmay2^2>0$ and $\sigmaz2^2>0$ follows from Theorems \ref{TG1} and
\ref{TGr}.
\end{proof}

By \refR{RTFjoint}, we also obtain joint normality for several $\ell$.

\refT{protected3} includes several earlier results, proved by several 
different methods:
\eqref{lprotbinexp} was shown by 
\citet{MahmoudWard2012} for $\ell=2$ and by
\citet{Bona} and \citet{DevroyeJanson} in general;
\cite{MahmoudWard2012} also shows 
\eqref{lprotbinvar}--\eqref{lprotbinnormal} for $\ell=2$;
\eqref{lprotrecexp} was shown by 
\citet{MahmoudWard2014} for $\ell=2$ and by
\citet{DevroyeJanson} in general;
\cite{MahmoudWard2014} also shows for $\ell=2$ the weaker version of 
\eqref{lprotrecvar} that $\Var(\zxn2)=O(1/n)$.

To calculate the asymptotic means and variances is more complicated, however.
For the binary search tree, 
the asymptotic means $\muyl$ 
were calculated for $\ell\le4$
by \citet{Bona} (using generating functions)
and \citet{DevroyeJanson} (using the formula \eqref{cpe} as here)
to be
$\muy1=\frac{2}3$, $\muy2=\frac{11}{30}$,
$\muy3={\frac {1249}{8100}}$,   
$\muy4={\frac {103365591157608217}{2294809143026400000}}$;
the
methods in these papers apply to arbitrary $\ell$ (and yield rational
numbers)
but explicit calculations
quickly become cumbersome.

For the \rrt, $\muz1=\frac12$ 
as a consequence of \refT{outdegreesrecursive} (with $d=0$)
and $\muz2=\frac12-e\qw$ by \citet{MahmoudWard2014} 
and \citet{DevroyeJanson};
the method in 
\cite{DevroyeJanson} is based on \eqref{cper} as here and yields
(recursively) a complicated integral expression for every $\ell$, but we do
not know any closed form for $\ell\ge3$.

For the asymptotic variances, the formulas \eqref{cpv} and \eqref{cpvr} do
not seem to easily yield explicit formulas (although they might be useful
for numerical approximations). The only value that we know, 
except for $\ell=1$ when $\gssy1=\gs_{1,1}=\frac{2}{45}$ 
(\cf{} \eqref{varbindegrees}) and $\gssz1=\hgs_{1,1}=\frac{1}{12}$,
is $\gssy2=\frac{29}{225}$. 
In fact, for the \bst{} and $\ell=2$ we can compute the mean and variance 
of $\yxn2$ exactly by a special trick; the result is stated
in the following theorem
earlier shown by \citet[Theorems 2.1, 2.2 and 3.1]{MahmoudWard2012} 
(using generating functions and recurrences),
which is a more precise version of the special case $\ell=2$
of \refT{protected3} for the \bst.
(See also \cite[Theorem 1.2]{HolmgrenJanson} for a different proof of the
asymptotic normality using P\'olya urns.)
\begin{thm}[\citet{MahmoudWard2012}]  \label{protected1} 
Let $ \yxn2 $ denote the number of two-protected nodes in a binary search tree $
\cT_{n} $. Then \begin{align}
\label{expbinprot} \E(\yxn2)=\frac{11}{30}n-\frac{19}{30},
\qquad\text{for} \quad n\geq 4, 
 \end{align}
and
\begin{align}
\label{varbinprot} \Var(\yxn2)=\frac{29}{225}(n+1),
\qquad\text{for} \quad n\geq 8. 
\end{align}
Furthermore, as \ntoo,
\begin{align}
\label{normalbinprot}\dfrac{\yxn2-\frac{11}{30}n}{\sqrt{n}}
\dto
\mathcal{N}\Bigl(0,\frac{29}{225}\Bigr).  
\end{align}
\end{thm}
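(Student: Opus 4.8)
The plan is to recognize $\yxn2$ as an additive functional $F(\ctn)$ of the special form treated in \refT{TG1}, and then exploit the resulting exact finite-$n$ formulas. Write $f(T)=\ett{\text{the root of }T\text{ is two-protected}}$; a node with left and right subtrees $T_L,T_R$ is two-protected exactly when it is not a leaf and neither child is a leaf, i.e.\ when $|T_L|\neq1$, $|T_R|\neq1$ and $|T|\ge2$. Thus $f(T)=f(|T|,|T_L|,|T_R|)$ depends only on the three sizes, so both \refT{TFvar} (for the mean) and \refL{Lpsi}/\refT{TG1} (for the variance) apply, with $\yxn2=F(\ctn)$. This functional-form reduction is the ``special trick.''

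For the mean I would first compute $\mu_k:=\E f(\ctk)=\P(\text{root of }\ctk\text{ is two-protected})$. Since $|\cT_{k,L}|$ is uniform on $\set{0,\dots,k-1}$ and two-protectedness excludes precisely the values $1$ and $k-2$, one gets $\mu_k=(k-2)/k$ for $k\ge4$ (with $\mu_1=\mu_2=0$, $\mu_3=\tfrac23$). Plugging these into the exact mean formula \eqref{eF}, which here reads $\E F(\ctn)=(n+1)\sum_{k=1}^{n-1}\pi_k\mu_k+\mu_n$, the clean form of $\mu_k$ makes the tail $\sum_{k\ge n}\pi_k\mu_k$ a telescoping sum (partial fractions of $2(k-2)/[k(k+1)(k+2)]=-\tfrac2k+\tfrac6{k+1}-\tfrac4{k+2}$), which collapses to $2(n-1)/[n(n+1)]$ for $n\ge4$. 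Combining the pieces yields $\E\yxn2=\tfrac{11}{30}n-\tfrac{19}{30}$ for $n\ge4$, and re-identifies $\muy2=\tfrac{11}{30}$.

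For the variance I would invoke \refL{Lpsi}, which gives the exact identity $\Var F(\ctn)=(n+1)\sum_{k=1}^{n-1}\pi_k\psi_k+\psi_n$ with $\psi_k=\Var\bigpar{\nu_{I_k}+\nu_{k-1-I_k}+f(k,I_k,k-1-I_k)}$, $\nu_k=\E F(\ctk)$ and $I_k$ uniform on $\set{0,\dots,k-1}$. The key simplification is to write $\nu_m=\tfrac{11m-19}{30}+\eps_m$, where $\eps_m=0$ for $m\ge4$ and $\eps_0,\dots,\eps_3$ are the small corrections forced by $\nu_0=\nu_1=\nu_2=0$, $\nu_3=\tfrac23$. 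In $\nu_{I_k}+\nu_{k-1-I_k}$ the linear part is constant in $I_k$ and drops out of the variance, so $\psi_k=\Var\bigpar{\eps_{I_k}+\eps_{k-1-I_k}+f(k,I_k,k-1-I_k)}$, the variance of a function of $I_k$ supported on finitely many boundary positions. A direct evaluation then shows $\psi_k=\tfrac{89}{45k}$ for all $k\ge8$. Feeding this into the identity, the tail $\sum_{k\ge n}\pi_k\psi_k=\tfrac{89}{45}\sum_{k\ge n}\tfrac{2}{k(k+1)(k+2)}=\tfrac{89}{45n(n+1)}$ telescopes, and multiplying by $n+1$ cancels the $\psi_n$ term exactly, leaving $\Var\yxn2=(n+1)\gss_F$ for $n\ge8$. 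The constant $\gssy2=\tfrac{29}{225}$ is then obtained either from the series $\gss_F=\sum_k\pi_k\psi_k$ of \refT{TG1} or, more safely, by evaluating the identity at a single $n\ge8$ using explicitly computed $\psi_1,\dots,\psi_7$.

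Finally, asymptotic normality \eqref{normalbinprot} need not be proved anew: it is the case $\ell=2$ of \refT{protected3} (itself a consequence of \refC{CP}), giving $\bigpar{\yxn2-\E\yxn2}/\sqrt n\dto\N(0,\gssy2)$ with $\gssy2>0$, and the exact mean and variance just computed supply the centering $\tfrac{11}{30}n$ and the limiting variance $\tfrac{29}{225}$. I expect the variance step to be the main obstacle: the cancellation of the linear part of $\nu$ is clean, but verifying $\psi_k=\tfrac{89}{45k}$ for $k\ge8$ (and pinning down $\gssy2=\tfrac{29}{225}$) requires careful bookkeeping of the boundary cases $I_k\in\set{0,1,2,3}$ and their reflections $I_k\in\set{k-4,\dots,k-1}$, where the corrections $\eps$ and the toll $f$ interact; these boundary effects are exactly what forces the threshold $n\ge8$ rather than a smaller value.
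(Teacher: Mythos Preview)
Your proposal is correct but takes a genuinely different route from the paper. The paper's proof rests on the combinatorial identity
\[
  \yxn2 = n - 2X^L_n + X^C_n,
\]
where $L$ is the single-node tree and $C$ is the cherry; this reduces everything to means, variances and covariances of two concrete subtree counts, which are read off directly from \eqref{ptp} and \refT{Tcovbin} (giving $\Var X^L_n=\tfrac{2}{45}(n+1)$, $\Var X^C_n=\tfrac{43}{1575}(n+1)$, $\Cov(X^L_n,X^C_n)=\tfrac{2}{105}(n+1)$), and the CLT from \refT{multivariate}. You instead work with the indicator toll $f(T)=\ett{\text{root of }T\text{ is two-protected}}$, observe that this depends only on $(|T|,|T_L|,|T_R|)$, and exploit \refT{TFvar} and \refL{Lpsi}/\refT{TG1} directly. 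I checked your arithmetic: $\mu_k=(k-2)/k$ for $k\ge4$, the tail $\sum_{k\ge n}\pi_k\mu_k=2(n-1)/[n(n+1)]$, and for $k\ge8$ the boundary tabulation indeed gives $\psi_k=\tfrac{89}{45k}$, so that $(n+1)\sum_{k\ge n}\pi_k\psi_k=\psi_n$ cancels exactly and $\Var\yxn2=(n+1)\gss_F$ for $n\ge8$.

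What each approach buys: the paper's leaf--cherry identity is shorter and essentially arithmetic-free once \refT{Tcovbin} is available, but it is specific to $\ell=2$ (there is no comparably clean finite decomposition for higher $\ell$). Your approach is the ``general machinery'' route: more bookkeeping at the boundary, but it makes transparent \emph{why} the variance is exactly linear in $n+1$ from $n=8$ onward (the telescoping of $\sum_{k\ge n}2/[k(k+1)(k+2)]$), and it is the template one would follow for any toll of the form $f(|T|,|T_L|,|T_R|)$. One small efficiency note: rather than computing $\psi_1,\dots,\psi_7$ to pin down $\gss_F=\tfrac{29}{225}$, you can simply evaluate $\Var\yxn2$ at $n=8$ directly from \refL{Lpsi}, or---since you now know $\Var\yxn2=c(n+1)$ for $n\ge8$ and the paper's leaf--cherry identity is available---borrow that identity just to read off $c$.
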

We provide a simple proof of this theorem using our results on fringe trees.
Moreover, our approach using fringe trees also allows us to provide a simple
proof of the following result which was conjectured in 
\cite[Conjecture 2.1]{MahmoudWard2012}.

\begin{thm}\label{protected2} 
For each fixed integer $ k\geq 1 $, there exists a polynomial $
p_{k}(n) $ of degree $ k $, the leading term of which is $ (\frac{11}{30})^k
$, such that $ \E(\yxn2^{k})=p_{k}(n) $ for all $ n\geq 4k $. 
\end{thm}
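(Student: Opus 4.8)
The plan is to reduce the two-protected count to a bounded linear combination of fringe-subtree counts of \emph{small} size, and then to compute all of its moments as moments of a sum of $m$-dependent variables on the cycle $\bbZ_{n+1}$. The enabling observation is an exact \emph{deterministic} identity: for every binary tree with $n\ge2$ nodes,
\begin{equation*}
\yxn2 = n - 2X_{n,1} + X_n^{T_\wedge},
\end{equation*}
where $X_{n,1}$ is the number of leaves and $X_n^{T_\wedge}$ is the number of fringe subtrees equal to the cherry $T_\wedge$ (the complete binary tree of size $3$). Indeed, the number of internal nodes is $n-X_{n,1}$; since for $n\ge2$ every leaf has a unique parent, the number of nodes that are parent of at least one leaf equals $X_{n,1}-X_n^{T_\wedge}$ (cherries being the doubly counted ones); and a node is two-protected iff it is internal and not such a parent. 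As a check, taking expectations via \eqref{munk} together with $\P(\cT_3=T_\wedge)=\tfrac13$ reproduces $\E(\yxn2)=\tfrac{11}{30}n-\tfrac{19}{30}$, cf.\ \eqref{expbinprot}. The whole point of the identity is that leaves and cherries are fringe trees of \emph{bounded} sizes $1$ and $3$.

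Next I would pass to the cyclic representation of \refL{Lcyclic}, writing $X_{n,1}=\sum_{i=1}^{n+1}\yi_{i,1}$ and $X_n^{T_\wedge}=\sum_{i=1}^{n+1}\tyi_{i,3}^{T_\wedge}$, where $\tyi_{i,3}^{T_\wedge}$ is the indicator that $\gs(i,3)$ is a fringe subtree equal to $T_\wedge$. Thus $\yxn2\eqd n+S$, where $S:=\sum_{i=1}^{n+1}W_i$ and $W_i:=-2\yi_{i,1}+\tyi_{i,3}^{T_\wedge}$. Each $W_i$ is a bounded functional of $(U_{i-1},\dots,U_{i+3})$, so the sequence $(W_i)_{i\in\bbZ_{n+1}}$ is stationary under cyclic shift (the law of $(U_0,\dots,U_n)$ is exchangeable) and $m$-dependent with $m=4$: the variables $W_i$ and $W_j$ are independent as soon as $\absni{i-j}\ge5$.

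I would then compute $\E(S^a)$ through cumulants. Because a joint cumulant vanishes once its family of arguments splits into two independent subfamilies, $m$-dependence forces $\kappa(W_{i_1},\dots,W_{i_a})=0$ unless $\{i_1,\dots,i_a\}$ is connected in the distance-$\le4$ graph on $\bbZ_{n+1}$; and once $n$ is large enough that every connected cluster of at most $a$ indices lies in a proper arc, cyclic stationarity makes each such cumulant translation invariant. Summing over the $N:=n+1$ cyclic positions of each cluster shape gives $\kappa_a(S)=c_aN$, so every cumulant of $S$ of order $\le k$ is \emph{exactly} linear in $N$ (hence in $n$), with $c_1=\E(W_0)=-\tfrac{19}{30}\neq0$. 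The moment–cumulant formula $\E(S^a)=\sum_{\pi}\prod_{B\in\pi}\kappa_{|B|}(S)$ then shows $\E(S^a)$ is a polynomial in $N$ whose degree is the largest number of blocks of a partition of $\{1,\dots,a\}$, namely $a$ (the all-singletons term $\kappa_1(S)^a=(c_1N)^a$), with leading coefficient $c_1^a=\bigpar{-\tfrac{19}{30}}^a$. Expanding
\begin{equation*}
\E\bigpar{\yxn2^{k}}=\sum_{a=0}^{k}\binom{k}{a}n^{\,k-a}\,\E(S^a)
\end{equation*}
now exhibits a polynomial in $n$ of degree $k$ whose coefficient of $n^k$ is $\sum_{a=0}^{k}\binom{k}{a}\bigpar{-\tfrac{19}{30}}^a=\bigpar{1-\tfrac{19}{30}}^k=\bigpar{\tfrac{11}{30}}^k$, as claimed.

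The hard part will be the bookkeeping in the cumulant step: proving each cumulant is \emph{exactly}, not merely asymptotically, linear in $N$, and pinning the explicit threshold to $n\ge4k$. This comes from bounding the maximal span of a connected $k$-cluster on $\bbZ_{n+1}$ (a chain of at most $k$ windows spaced $\le4$ apart occupies a range of about $4k$ indices) and requiring that such a cluster fit in an arc without wraparound interaction, so that the cumulants of order $\le k$ are already in their linear regime; this is consistent with the base case $k=1$, where the threshold is $n\ge4$ as in \eqref{expbinprot}. By contrast, the identity itself is easy once guessed, and the degree and leading-coefficient statements then follow cleanly from the partition structure.
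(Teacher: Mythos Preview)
Your proposal is correct and follows essentially the same route as the paper: both start from the deterministic identity $\yxn2 = n - 2X_{n,1} + X_n^{T_\wedge}$ (this is the paper's \eqref{binprotected}), pass to the cyclic representation to write $\yxn2 \eqd n + \sum_{i=1}^{n+1} W_i$ with $(W_i)$ stationary and $4$-dependent, and then expand the $k$-th moment. The paper's proof is very terse (``the result follows easily by collecting terms that are equal''), whereas you organize the same computation through joint cumulants, which is a clean way to make the linearity in $N=n+1$ and the threshold $n\ge 4k$ explicit; but the underlying mechanism is identical.
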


\begin{proof}[Proof of Theorem \ref{protected1}] 

 In a binary tree (with at least 2 nodes), 
the number of nodes that are not two-protected
 equals two times the number of leaves (counting all the leaves and all the
 parents of the leaves) minus the number of cherry subtrees, i.e., subtrees
 consisting of a root with one left and one right child that both are leaves
(since these are the only cases when a parent is counted twice). 
Thus, writing $ L $ for a tree that is a single leaf and 
$ C $ for a tree that is a cherry, 
\begin{align}\label{binprotected}
\yxn2=n-2X^L_n+X^C_n.
\end{align}
Hence,
 \begin{align}\label{expbinprotected}\E(\yxn2)=n-2\E(X^L_n)+\E(X^C_n)
\end{align}
and
\begin{align}\label{varbinprotected}
\Var(\yxn2)=4\Var(X^L_n)+\Var(X^C_n)-4\Cov(X^L_n,X^C_n).
\end{align}

By \eqref{ptp},
the expected number of subtrees of $\ctn$ isomorphic to a tree $T$ of size
$|T|=k$ is 
\begin{equation}
  \label{pastoral}
\E(X^{T}_n)=\frac{2(n+1)}{(k+2)(k+1)}p_{k,T},
\end{equation}
where  
$ p_{k,T}=\P(\cT_k= T)=\xfrac{| A_{k}^T | }{k!}$ where 
$ A_k ^{T}$ is the set of permutations of length $ k $ that give rise to the
binary search tree $ T $.  Evidently $p_{1,L}=1$, and for
the cherry $C$ we have $|C|=3$ and $p_{3,C}=\frac{1}{3}$. 
Thus $\E(X^L_n)=(n+1)/3$ (for $n\ge2$), as already seen in
\eqref{expbindegrees}, 
and $\E(X^C_n)=\xpfrac{n+1}{30}$ (for $n\ge4$).
Hence \eqref{expbinprotected} yields \eqref{expbinprot}.

To calculate $ \Var(\yxn2) $ we use Theorem \ref{Tcovbin}. 
Using the notations there $q_L^L=q_{C}^{C}=1$ and $q_L^C=2$, and simple
calculations yield (for $n\ge8$)
$
\Var(X^L_n)=\frac{2}{45}(n+1)
$
(as shown in \eqref{varbindegrees}),
$
\Cov(X^L_n,X^C_n)= \frac{2}{105}(n+1)
$
and
$
\Var(X^C_n)=\frac{43}{1575}(n+1),
$
which together with \eqref{varbinprotected} yield \eqref{varbinprot}.

Since any linear combination of the components in a random vector with a
multivariate normal distribution is normal, the asymptotic normality
\eqref{normalbinprot} follows from \eqref{binprotected}
and Theorem \ref{multivariate}. 
\end{proof}

\begin{rem}\label{Rprot}
Alternatively, \eqref{binprotected} shows that $\yxn2=F(\ctn)$ for the
functional 
\begin{equation}
  f(T):=1-2\cdot\ett{T=L}+\ett{T=C}
\end{equation}
and the results follow by \refT{TF} (with the same calculations as above).
\end{rem}

\begin{proof}[Proof of Theorem \ref{protected2}]
We use again \eqref{binprotected} and the cyclic
representation \eqref{cyclic}, which show that
\begin{equation}\label{myr}
 \yxn2=n+\sumini g(\gs(i-1,4))
\end{equation}
for some functional $g$ defined by
$g(\gs(i-1,5)):=-2I_{i,1}+I_{i,3}f_C(\gs(i,3))$
where $f_C$ is the indicator that the permutation defines a cherry.
Thus $\E\yxn2^k$ can be calculated by substituting \eqref{myr} and expanding,
and the result follows easily by collecting terms that are equal
since
the random variables $g(\gs(i-1,5))$ are \iid{} and 4-dependent.
\end{proof}

\begin{rem}\label{remexpprotected} 
The asymptotic mean $\muy2$ in \eqref{lprotbinexp} can also be directed
directly from \eqref{tfe} in \refT{TF}. We give this alternative calculation
to illustrate our results, although in this case \eqref{mub} (see
\cite{DevroyeJanson}) or \eqref{expbinprotected}  yield simpler calculations.
Let $p_k$ be the probability that the root of $\ctk$ is two-protected.
Since the complement of the two-protected nodes consists of the leaves and
the parents of the leaves we obtain (for $k\ge2$), that the root is not
two-protected if and only if it has a child that is a leaf, which going
back to the construction of the \bst{} by a sequence of keys means that the
first key is either the second smallest or the second largest key.
Hence, $p_k=1-2/k$ for $k\ge4$. Furthermore,
$p_1=p_2=0$ and $p_3=2/3$. Consequently, \eqref{tfe} yields
\begin{align}\label{expbinprotCTF}
\muy2=\frac{2}{4\cdot5}\cdot\frac{2}3
+\sum_{k=4}^{\infty}\frac{2}{(k+2)(k+1)}\cdot\Bigpar{1- \frac{2}{k}}
=\frac{11}{30}.
\end{align}  
\end{rem}

\citet{MahmoudWard2012} also discuss the two-protected nodes in the extended
binary search tree. Recall that an extended binary search tree
is a binary search tree where the $ n+1 $ external children are added. 
The leaves in the extended binary tree are the external vertices; 
hence the two-protected nodes are those that have at least distance two to
an external vertex, \ie, the internal vertices that have no external
children. In other words, the two-protected nodes are precisely the nodes in
the
\bst{} that have outdegree 2.
Thus Example \ref{outdegreesbin} directly implies the following theorem in
\cite{MahmoudWard2012}.

\begin{thm}[\citet{MahmoudWard2012}]\label{protectedextended} 
Let $ Z_n $ denote the number of two-protected nodes in an extended binary search tree. Then \begin{align}
\label{expbinprot2} \E(Z_n)=\frac{n}{3}-\frac{2}{3},~\qquad\text{for} \quad n\geq 2, 
 \end{align}
and
\begin{align}
\label{varbinprot2} \Var(Z_n)=\frac{2}{45}(n+1),~\qquad\text{for} \quad n\geq 4. 
\end{align}
Furthermore, as \ntoo,
\begin{align}
\label{normalbinprot2}\dfrac{Z_n-\frac{n}{3}}{\sqrt{n}}
\dto
\mathcal{N}\Bigl(0,\frac{2}{45}\Bigr).  
\end{align}
\end{thm}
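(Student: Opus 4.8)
The plan is to reduce the entire statement to the already-established \refE{outdegreesbin} by a single combinatorial identification, exactly as foreshadowed in the paragraph preceding the theorem. In the extended binary search tree the leaves are precisely the $n+1$ external vertices, so a node is two-protected if and only if it is an internal vertex none of whose two children is external, i.e., an internal vertex both of whose children are internal. In terms of the original tree $\cT_n$ this says exactly that the node has outdegree $2$. Hence, deterministically on every realization, $Z_n=\dn2$, the number of outdegree-$2$ nodes of $\cT_n$, and the whole theorem becomes the $d=2$ case of \refE{outdegreesbin}.

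Granting this identification, the exact mean and variance are immediate. The $d=2$ entries of \eqref{expbindegrees} and \eqref{varbindegrees} give $\E(Z_n)=\E(\dn2)=(n-2)/3$ for $n>1$ and $\Var(Z_n)=\Var(\dn2)=\tfrac{2}{45}(n+1)$ for $n>3$, which are precisely \eqref{expbinprot2} and \eqref{varbinprot2} (rewriting $(n-2)/3=n/3-2/3$, and noting that the stated ranges $n\ge2$, $n\ge4$ are exactly $n>1$, $n>3$).

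For the limit law \eqref{normalbinprot2} I would start from the self-normalized statement \eqref{normalbindegrees} with $d=2$, namely $(\dn2-\E(\dn2))/\sqrt{\Var(\dn2)}\dto\N(0,1)$, and pass to the stated centering and scaling by Slutsky's theorem. Writing $\dn2=Z_n$, one has $(Z_n-n/3)/\sqrt n = \bigl[(Z_n-\E(Z_n))/\sqrt{\Var(Z_n)}\bigr]\cdot\bigl[\sqrt{\Var(Z_n)}/\sqrt n\bigr]-(2/3)/\sqrt n$, where $\sqrt{\Var(Z_n)}/\sqrt n=\sqrt{\tfrac{2}{45}(n+1)/n}\to\sqrt{2/45}$ and $(2/3)/\sqrt n\to0$. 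By Slutsky the right-hand side converges in distribution to $\sqrt{2/45}\,\N(0,1)=\N(0,2/45)$, which is \eqref{normalbinprot2}.

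There is essentially no analytic obstacle: all the probabilistic content has already been established in \refE{outdegreesbin} (itself a consequence of \refC{CP} together with the linear relations \eqref{outdegreetwo}). The only points that require any attention are the combinatorial identification $Z_n=\dn2$ in the first step, which is the crux of the argument, and the routine Slutsky bookkeeping needed to convert the self-normalized central limit theorem into the normalization $(Z_n-n/3)/\sqrt n$ appearing in \eqref{normalbinprot2}.
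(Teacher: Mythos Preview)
Your proposal is correct and follows exactly the paper's approach: the paper simply notes that the two-protected nodes in the extended tree are precisely the outdegree-$2$ nodes of $\cT_n$, so $Z_n=\dn2$, and then invokes \refE{outdegreesbin} directly. Your added Slutsky step to pass from the self-normalized form \eqref{normalbindegrees} to the normalization in \eqref{normalbinprot2} is a routine detail the paper leaves implicit.
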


We can also show
the following result which was conjectured in 
\cite[Conjecture 4.1]{MahmoudWard2012}. 

\begin{thm}\label{protectedextended2} Let $ Z_n $ denoted the number of
  two-protected nodes in the extended binary search tree. For each fixed
  integer $ k\geq 1 $, there exists a polynomial $ p_k(n) $ of degree $ k $,
  the leading term of which is $ \frac{1}{3^{k}} $, such that $
  \E(Z_n^{k})=p_k(n) $ for all $ n\geq 2k $. 
\qed
\end{thm}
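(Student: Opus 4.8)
The plan is to reduce $Z_n$ to the leaf‑count $X_{n,1}$ and then imitate the moment computation used in the proof of \refT{protected2}. As explained in the text preceding \refT{protectedextended}, the two‑protected nodes of the extended tree are exactly the nodes of outdegree $2$ in $\ctn$, so $Z_n=\dn2$; hence by \eqref{outdegreetwo} together with $\dn0=X_{n,1}$ we obtain the exact identity
\begin{equation*}
  Z_n=X_{n,1}-1 .
\end{equation*}
It therefore suffices to show that, for each fixed $r$, the moment $\E\bigpar{X_{n,1}^r}$ is a polynomial in $n$ of degree $r$ with leading coefficient $3^{-r}$ once $n$ exceeds a threshold of order $r$, because
$\E(Z_n^k)=\sum_{r=0}^{k}\binom{k}{r}(-1)^{k-r}\E\bigpar{X_{n,1}^r}$ is then a polynomial of degree $k$ whose $n^k$‑term comes solely from $r=k$ and equals $3^{-k}n^k$, the shift by $-1$ affecting only the lower‑order coefficients.

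First I would invoke the cyclic representation \eqref{xnk} at subtree size $1$,
\begin{equation*}
  X_{n,1}=\sum_{i=1}^{n+1}\yi_{i,1},
\end{equation*}
and record that $\yi_{i,1}$ is a function of the relative order of $U_{i-1},U_i,U_{i+1}$ only. Thus the family $(\yi_{i,1})_{i\in\bbZ_{n+1}}$ is stationary (cyclically shift‑invariant) and $2$‑dependent: $\yi_{i,1}$ and $\yi_{j,1}$ are independent whenever $\absni{i-j}\ge3$. Expanding the $r$‑th power gives
\begin{equation*}
  \E\bigpar{X_{n,1}^r}=\sum_{(i_1,\dots,i_r)\in\bbZ_{n+1}^r}
  \E\Bigpar{\yi_{i_1,1}\cdots\yi_{i_r,1}} .
\end{equation*}

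The key step is to organize the index tuples by their \emph{proximity pattern}. For a given tuple I would group the indices into maximal clusters, putting two indices in the same cluster when they are joined by a chain of indices at mutual distance $\le2$ on $\bbZ_{n+1}$; indices in different clusters are then separated by gaps $>2$, so by $2$‑dependence $\E\bigpar{\prod_a\yi_{i_a,1}}$ factors into the product of the cluster expectations, and by stationarity each cluster expectation depends only on the within‑cluster offsets. There are only finitely many cluster‑shape types (cluster sizes are $\le r$ and spans $\le 2r$), and for each such type with $p$ clusters the number of admissible placements around the cycle $\bbZ_{n+1}$ is, once the cycle is long enough to carry the configuration without overlap or wrap‑around, a polynomial in $n$ of degree $p$. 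Summing the finitely many polynomial contributions shows $\E\bigpar{X_{n,1}^r}$ is a polynomial; the maximal degree $p=r$ arises from the all‑singletons type, whose placements number $(n+1)^r+O(n^{r-1})$ and whose value is $\bigpar{\E\yi_{0,1}}^r=3^{-r}$ by \eqref{eink}, yielding the leading term $3^{-r}n^r$.

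The main obstacle will be the bookkeeping that pins the threshold to $n\ge2k$: one must check that for $n\ge2k$ every proximity pattern occurring among $r\le k$ indices fits on $\bbZ_{n+1}$ without wrap‑around, so that the factorization and the degree‑$p$ placement count hold with no cyclic boundary correction, and that these placement counts are genuinely polynomial (not merely eventually polynomial) from this threshold onward. This is precisely the ``collecting equal terms'' step invoked for \refT{protected2}, now with windows of width $3$ (hence $2$‑dependence) in place of the width‑$5$ windows there; the case $k=1$ reproduces $\E(Z_n)=n/3-2/3$ in \eqref{expbinprot2}, confirming that the threshold $n\ge2k$ is the correct one.
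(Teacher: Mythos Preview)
Your approach is essentially the same as the paper's: reduce to $Z_n=X_{n,1}-1=\sum_{i=1}^{n+1}\yi_{i,1}-1$ via \eqref{outdegreetwo} and \eqref{xnk}, then expand the $k$-th moment and exploit stationarity together with $2$-dependence of the indicators $\yi_{i,1}$ to collect terms, exactly parallel to the proof of \refT{protected2}. The paper's own proof is a two-line sketch that invokes this same mechanism; your clustering description simply makes the ``collecting equal terms'' step explicit.
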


\begin{proof}[Proof of Theorem \ref{protectedextended2}] 
By the comments above, \eqref{outdegreetwo} and \eqref{xnk},
\begin{equation}
Z_n
=\dn{2}
=\dn0-1
=X_{n,1}-1
=\sumini I_{i,1}-1.
\end{equation}
The result follows
from the fact that the indicator functions $I_{i,1}$ are 2-dependent,
\cf{}  the proof of \refT{protected2}.
\end{proof}

\subsection{Shape functionals}

\subsubsection{Binary search trees}
Consider first a binary tree $T$ with $|T|=n$, and define
$P(T):=p_{n,T}=  \P(\ctn=T)$.
It is easy to see that
\begin{equation}\label{shapebin}
P(T):=  \P(\ctn=T)
=\prod_{v\in T} |T(v)|\qw,
\end{equation}
see \eg{}  (more generally for $m$-ary search trees) \citet{DobrowFill}.
The functional $P(T)$
is known as the
\emph{shape functional} for binary trees.

By \eqref{shapebin}, the functional
$F(T):=-\log{P(T)}$ is given by \eqref{F} with $f(T)=\log|T|$.

\begin{example}[\citet{Fill1996}]
  \label{Eshapebin}
Theorems \ref{TF} and \ref{TG1} apply to $F(T)=-\log P(T)$,
and it follows immediately that, as shown by
\citet{Fill1996} (with some further details), see also
\citet{FillKapur-mary} for $m$-ary search trees, 
as \ntoo,
\begin{equation}\label{eshapebin}
  -\E \log P(\ctn) \sim n \sum_{k=2}^\infty \frac{2\log k}{(k+1)(k+2)}
\end{equation}
and
\begin{equation}
  \frac{\log P(\ctn)-\E \log P(\ctn)}{\sqrt n}\dto \cN(0,\gss)
\end{equation}
for some $\gss>0$ which  can be computed from \eqref{gssfpsi}.
(We have $\gss>0$ by \refT{TG1}, since \eg{} $P(\cT_3)$ is not deterministic.)
\end{example}

\subsubsection{Unordered random recursive trees}
For an unordered rooted tree $\gL$ with $|\gL|=n$, we similarly define
$\hP(\gL):=\hp_{n,\gL}=\P(\gL_n=\gL)$ (regarding $\gL_n$ as an unordered tree)..
Then, see \citet{FengMahmoud},
\begin{equation}\label{shaperec}
  \hP(\gL):=\P(\gL_n=\gL)=n\prod_{v\in\gL} s(\gL,v)\qw|\gL(v)|\qw,
\end{equation}
where $s(\gL,v)$ is the number of permutations of the children of $v$ that 
can be extended to
automorphisms of the tree $\gL$, \ie, if $v$ has $\nu_1$ children $v_{1i}$
such that $\gL(v_{1i})\cong \gL_1$ for some rooted tree $\gL_1$,
$\nu_2$ children $v_{2i}$
such that $\gL(v_{2i})\cong \gL_2$ for some different rooted tree $\gL_2$,
\dots, then $s(\gL,v)=\prod_j \nu_j!$.
This functional $\hP(T)$ is 
the {shape functional} for unordered rooted trees.

By \eqref{shaperec}, 
the functional
$-\log{\hP(\gL)}=F(\gL)-\log|\gL|$, where 
$F(\gL)$ is given by \eqref{F} with 
$f(\gL)=\log|\gL|+\log s(\gL,o)$, where $o$ is the root.
Note that this functional is more complicated that the corresponding one for
binary trees, and that $f(\gL)$ no longer depends only on the size $|\gL|$
(nor only on the size of $\gL$ and of the principal subtrees as in
\refT{TGr}). Nevertheless, \refT{TF} applies and yields the
following. (It seems obvious that $\hgss>0$, but we have no rigorous
proof. We have not attempted any numerical estimate.)

\begin{thm}
  As \ntoo,
\begin{equation}\label{eshaperec}
  -\E \log \hP(\gln) \sim n \hmu
\end{equation}
and
\begin{equation}
  \frac{\log \hP(\gln)-\E \log \hP(\gln)}{\sqrt n}\dto \cN(0,\hgss)
\end{equation}
for some $\hmu>0$ and $\hgss\ge0$ which in principle can be computed from
\eqref{tfer}--\eqref{tfvr}. 
\end{thm}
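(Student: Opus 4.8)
The plan is to apply \refT{TF}(ii) to the additive functional $F(\gL)=\sum_{v\in\gL}f(\gL(v))$ with toll function $f(\gL)=\log|\gL|+\log s(\gL,o)$, where $o$ is the root of $\gL$, and then to transfer the conclusions \eqref{tfer}--\eqref{tfdr} to $\hP$ by means of the identity $-\log\hP(\gln)=F(\gln)-\log|\gln|=F(\gln)-\log n$ furnished by \eqref{shaperec}. The deterministic correction $-\log n$ is harmless in both conclusions: it cancels under centering, so the normalized fluctuations of $\log\hP(\gln)$ agree with those of $-F(\gln)$, and since $\cN(0,\hgss_F)$ is symmetric, \eqref{tfdr} gives the stated central limit theorem with $\hgss:=\hgss_F$; moreover $\log n=o(n)$, so \eqref{tfer} gives $-\E\log\hP(\gln)=\E F(\gln)-\log n\sim n\hmu_F$, and I set $\hmu:=\hmu_F$.

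The substance of the argument is the verification of the hypotheses \eqref{tfa1r}--\eqref{tfa3r}, which amounts to bounding the moments of $f(\gL_k)$. As $\log k$ is deterministic, $\Var f(\gL_k)=\Var\bigpar{\log s(\gL_k,o)}$ and $\E f(\gL_k)=\log k+\E\log s(\gL_k,o)$, so the whole matter reduces to controlling the symmetry factor $s(\gL_k,o)=\prod_j\nu_j!$, where the $\nu_j$ are the multiplicities of the distinct isomorphism types among the principal subtrees of $\gL_k$, so that $\sum_j\nu_j=d(\gL_k)=:d$ is the root degree. The crucial estimate is $\log s(\gL_k,o)=\sum_j\log(\nu_j!)\le\log(d!)\le d\log d\le d\log k$, which reduces everything to the first two moments of $d$. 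Here I would use the fact recorded just after \refT{TGr} that $d(\gL_k)$ is distributed as the number of cycles of a uniform random permutation of length $k-1$; consequently $\E d=H_{k-1}=O(\log k)$ and $\E d^2=O((\log k)^2)$. Hence $\E\log s(\gL_k,o)\le(\log k)\,\E d=O((\log k)^2)$ and $\E\bigpar{\log s(\gL_k,o)}^2\le(\log k)^2\,\E d^2=O((\log k)^4)$, so that $\E f(\gL_k)=O((\log k)^2)$ and $\Var f(\gL_k)=O((\log k)^4)$.

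With these polylogarithmic bounds the three conditions are immediate: $\sumk k^{-3/2}(\Var f(\gL_k))\qq\le\sumk k^{-3/2}O((\log k)^2)<\infty$ gives \eqref{tfa1r}, $\Var f(\gL_k)/k=O((\log k)^4/k)\to0$ gives the middle hypothesis, and $\sumk k^{-2}(\E f(\gL_k))^2\le\sumk k^{-2}O((\log k)^4)<\infty$ gives \eqref{tfa3r}. Thus \refT{TF}(ii) applies and yields \eqref{tfer}, \eqref{tfvr} and \eqref{tfdr} with $\hmu_F=\sumk\frac{1}{k(k+1)}\E f(\glk)=\E f(\gLL)$ and $\hgss_F\ge0$, completing the proof once we note that $\hmu=\hmu_F>0$: indeed $f(\gL_k)\ge\log k>0$ for every $k\ge2$, while $\P(|\gLL|=k)=\frac{1}{k(k+1)}>0$, so $\E f(\gLL)\ge\sum_{k\ge2}\frac{\log k}{k(k+1)}>0$. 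The only genuinely nonroutine step is the estimate on the symmetry factor, \ie{} showing that $\E\bigpar{\log s(\gL_k,o)}^2$ grows merely polylogarithmically in $k$; once this is in hand, the remainder is a direct check of the summability conditions of \refT{TF} together with the bookkeeping of the harmless additive term $-\log n$.
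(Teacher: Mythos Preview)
Your proof is correct and follows essentially the same route as the paper: bound $\log s(\gL_k,o)\le d(\gL_k)\log k$, use $\E d(\gL_k)^2=O(\log^2 k)$ to get $\E f(\gL_k)^2=O(\log^4 k)$, and then verify the summability conditions of \refT{TF}(ii). You are in fact a bit more thorough than the paper's own proof, which does not spell out the $-\log n$ bookkeeping or the positivity of $\hmu$; your use of the cycle-count description of $d(\gL_k)$ (from the remark after \refT{TGr}) is equivalent to the paper's description of $d(\gL_k)$ as a sum of independent Bernoulli$(1/i)$ variables.
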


\begin{proof}
  Let $d(\gL)$ be the degree of the root $o$ of $\gL$. Then, crudely,
$s(o,\gL)\le d(\gL)!$ and thus 
  \begin{equation}\label{so}
\log s(o,\gL)\le d(\gL) \log d(\gL)
\le d(\gL) \log |\gL|.	
  \end{equation}
From the definition of the \rrt, $d(\gL_k)\eqd \sum_{i=1}^{k-1} I_i$,
where $I_i\sim \Be(1/i)$ are i.i.d., and a simple calculation shows that
\begin{equation}\label{ed}
  \E d(\gL_k)^2 = O\bigpar{\log^2 k}.
\end{equation}
By \eqref{so} and \eqref{ed} we have, rather crudely,
\begin{equation}
  \begin{split}
  \E \bigpar{f(\gL_k)^2}
&=  \E\bigpar{\xpar{\log s(o,\gL_k)+\log k}^2}
\le 2\E\bigpar{{d(\gL_k)^2\log^2 k}}+2\log^2 k
\\&
= O(\log^4 k).	
  \end{split}\label{ek}
\end{equation}
Hence, \eqref{tfa1r}--\eqref{tfa3r} hold, and \refT{TF}(ii) applies.
\end{proof}

\subsubsection{Ordered random recursive trees}
Now consider the \rrt{} $\gL_n$ as an ordered tree.
For an ordered rooted tree $\gL$ with $|\gL|=n$, we define
$\hP(\gL):=\hp_{n,\gL}=\P(\gL_n=\gL)$.
It is easily seen that if we denote the children of a node $v$ by
$v_1,\dots,v_{d(v)}$, then
\begin{equation}\label{shapereco}
  \hP(\gL):=\P(\gL_n=\gL)
=\prod_{v\in\gL} \prod_{i=1}^{d(v)} \lrpar{\sum_{j=i}^{d(v)} |\gL(v_j)|}\qw
\end{equation}
This functional $\hP(T)$ is
the {shape functional} for ordered rooted trees.

By \eqref{shapereco}, 
the functional
$-\log{\hP(\gL)}=F(\gL)$, where 
$F(\gL)$ is given by \eqref{F} with 
\begin{equation}
  f(\gL)=
\sum_{i=1}^{d}\log {\sum_{j=i}^{d} |\gL_j|}
\end{equation}
where $d$ is the degree of the root and $\gL_1,\dots,\gL_d$ are the
principal subtrees.
This functional is of the type
in \refT{TGr}, and we obtain the following.

\begin{thm}
  As \ntoo,
\begin{equation}\label{eshapereco}
  -\E \log \hP(\gln) \sim n \hmu
\end{equation}
and
\begin{equation}
  \frac{\log \hP(\gln)-\E \log \hP(\gln)}{\sqrt n}\dto \cN(0,\hgss)
\end{equation}
for some $\hmu>0$ and $\hgss>0$ which in principle can be computed from
\eqref{tfer} and \eqref{gssfpsir}. 
\end{thm}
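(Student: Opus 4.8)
The plan is to observe that the toll function displayed just before the statement, $f(\gL)=\sum_{i=1}^{d}\log\sum_{j=i}^{d}|\gL_j|$ (with $d=d(\gL)$ the root degree and $\gL_1,\dots,\gL_d$ the principal subtrees), depends only on $d(\gL)$ and the sizes of the principal subtrees, so that it is exactly of the form treated in \refT{TGr}. Consequently both \refT{TF}(ii) and \refT{TGr} apply to $F(\gL)=-\log\hP(\gL)$, and they deliver the three conclusions once their growth hypotheses are checked and the strict positivity of $\hmu$ and $\hgss$ is established.

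First I would verify the growth conditions \eqref{tfa1r}--\eqref{tfa3r} by the same crude bound used in the unordered case. Since each summand satisfies $\log\sum_{j=i}^{d}|\gL_j|\le\log|\gL|$ and there are $d(\gL)$ of them,
\begin{equation*}
0\le f(\gL_k)\le d(\gL_k)\log k .
\end{equation*}
Combining this with the moment estimate $\E d(\gL_k)^2=O(\log^2 k)$ from \eqref{ed} gives $\E f(\gL_k)^2=O(\log^4 k)$, hence $\Var f(\gL_k)=O(\log^4 k)$ and $\E f(\gL_k)=O(\log^2 k)$. These bounds make $\sum_k k^{-3/2}(\Var f(\gL_k))\qq$ and $\sum_k k^{-2}(\E f(\gL_k))^2$ finite and $k\qw\Var f(\gL_k)\to0$, so \eqref{tfa1r}--\eqref{tfa3r} hold and \refT{TF}(ii) yields \eqref{eshapereco} with $\hmu=\hmu_F$, the convergence of $\Var F(\gL_n)/n$ to $\hgss=\hgss_F$, and the central limit theorem \eqref{tfdr}.

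It then remains to prove $\hmu>0$ and $\hgss>0$. Positivity of the mean is immediate: $f\ge 0$ always, and $f(\gLL)>0$ with positive probability (e.g.\ when $\gLL$ is the size-$3$ tree whose root has two leaf children, $f=\log 2$), so $\hmu=\E f(\gLL)>0$. For the variance I would appeal to the dichotomy in \refT{TGr}: $\hgss_F>0$ unless $f(n,d,n_1,\dots,n_d)=a_n-\sum_{i=1}^d a_{n_i}$ for some reals $a_n$. The right-hand side is symmetric in $(n_1,\dots,n_d)$, whereas $f$ is \emph{not}, because of the nested partial sums. Concretely, the two root configurations with $n=4$, $d=2$ and principal-subtree sizes $(n_1,n_2)=(2,1)$ and $(1,2)$ --- both of which occur with positive probability in $\gL_4$ --- give $f=\log 3$ and $f=\log 3+\log 2$ respectively, while the additive form would force the same value for both. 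Hence no such $(a_n)$ can exist, so $\hgss>0$, and the computability assertion follows from \eqref{gssfpsir}.

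The main obstacle is this last step. Unlike the unordered shape functional, where only $\hgss\ge0$ is claimed, here one must actually certify non-degeneracy, and the cleanest route is to exhibit the explicit asymmetry of $f$ at $n=4$ and feed it into the \refT{TGr} criterion; by contrast, checking the growth conditions is routine and parallels the preceding unordered computation.
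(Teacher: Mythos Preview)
Your proposal is correct and follows essentially the same route as the paper: bound $f(\gL)\le d(\gL)\log|\gL|$, invoke the moment estimate \eqref{ed} to get $\E f(\gL_k)^2=O(\log^4 k)$, and apply \refT{TF}(ii) together with \refT{TGr}. The paper's proof is terser and leaves the verification of $\hmu>0$ and $\hgss>0$ implicit; your explicit asymmetry argument at $n=4$ (showing $f$ cannot have the form $a_n-\sum_i a_{n_i}$ because the latter is symmetric in the $n_i$) is exactly the right way to discharge the \refT{TGr} criterion, and is a useful addition.
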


\begin{proof}
  Let again $d(\gL)$ be the degree of the root of $\gL$. Then, 
$f(\gL)\le d(\gL)\log|\gL|$, and \eqref{ed} shows that \eqref{ek} holds in
  the ordered 
  case too; thus the result follows by Theorems \ref{TF} and \ref{TGr}.
\end{proof}

\appendix
\section{Appendix: proof of \eqref{xb}--\eqref{xr}}

Let $|T|=k$ and $|T'|=m<k$. By \eqref{gamma},  
\begin{equation}
  \bgam(k,m)=\frac{4}{(k+2)(m+1)(m+2)} + O\Bigpar{\frac{1}{k^2m}}
\end{equation}
and thus \eqref{covbin2} yields
\begin{equation}\label{50a}
    \gstt=\frac{2}{(k+1)(k+2)}\pkt
\lrpar{\qtt-\frac{2(k+1)}{(m+1)(m+2)}\pmtx}
+ O\lrpar{\frac{\pkt\pmtx}{k^2m}}.
\end{equation}
Note first that we may ignore the $O$ term in \eqref{50a}, since
\begin{equation}\label{50b}
\sum_{m=1}^\infty\sum_{k>m}\sum_{|T|=k}\sum_{|T'|=m}
 \frac{\pkt\pmtx}{k^2m}
=
\sum_{m=1}^\infty\sum_{k>m} \frac{1}{k^2m}
<\infty;
\end{equation}
hence it suffices to show that
\begin{equation}
S_1:=
\sum_{m=1}^\infty\sum_{k>m}\sum_{|T|=k}\sum_{|T'|=m}
\frac{2}{(k+1)(k+2)}\pkt
\lrabs{\qtt-\frac{2(k+1)}{(m+1)(m+2)}\pmtx}
=\infty.
\end{equation}
Note that $\qtt$ is an integer. Thus, if
$\frac{2(k+1)}{(m+1)(m+2)}\pmtx\le\frac12$, then 
\begin{equation}
\lrabs{\qtt-\frac{2(k+1)}{(m+1)(m+2)}\pmtx}\ge
\frac{2(k+1)}{(m+1)(m+2)}\pmtx . 
\end{equation}
Hence,
\begin{equation}\label{emx}
  \begin{split}
S_1
&\ge
\sum_{m=1}^\infty\sum_{|T'|=m}\sum_{k=m+1}^{m^2/(4\pmtx)}\sum_{|T|=k}
\frac{2}{(k+1)(k+2)}\pkt
\frac{2(k+1)}{(m+1)(m+2)}\pmtx
\\&=	
\sum_{m=1}^\infty\sum_{|T'|=m}
\frac4{(m+1)(m+2)}\pmtx
\sum_{k=m+1}^{m^2/(4\pmtx)}
\frac{1}{k+2}
\\&\ge
\sum_{m=1}^\infty\sum_{|T'|=m}
\frac4{(m+1)(m+2)}\pmtx
\log\frac{m^2}{4\pmtx(m+3)}
\\&\ge
\sum_{m=6}^\infty\frac1{m^2}\sum_{|T'|=m}
\pmtx
\log\frac{1}{\pmtx}
=
\sum_{m=6}^\infty\frac1{m^2}\E \log\frac{1}{p_{m,\cT_m}}.
  \end{split}
\end{equation}
However, we saw in \eqref{eshapebin} that
$\E \log{p_{m,\cT_m}\qw}\sim \mu m$ as \mtoo{}
for some constant $\mu>0$, and thus 
the sum in \eqref{emx} diverges, which as said above implies \eqref{xb} by
\eqref{50a}--\eqref{50b}. 

The proof of the random recursive case \eqref{xr} is similar, using
\eqref{covbin3}, \eqref{gamma4} and \eqref{eshaperec}.
(It suffices to consider the versions with unlabelled binary trees and
unordered rooted trees, since the versions with increasing trees or ordered
trees have larger 
$\E \log{p_{m,\cT_m}\qw}$ and
$\E \log{\hp_{m,\gL_m}\qw}$.)
\qed

\begin{rem}
  The proof above needs only lower bounds for 
$\E \log{p_{m,\cT_m}\qw}$
and $\E \log{\hp_{m,\gL_m}\qw}$. We can use the simple bounds
$ \log{p_{m,\cT_m}\qw}\ge X_{m,2}\log 2$
and
$ \log{\hp_{m,\gL_m}\qw}\ge \hX_{m,2}\log 2-\log m$,
see \eqref{shapebin} and \eqref{shaperec},
together with \eqref{munk}--\eqref{murrt} instead of \eqref{eshapebin} and
\eqref{eshaperec}.
\end{rem}

\begin{rem}
  Recalling the notation in \refS{S:intro}, $\qtt$ is the value of
  $X_k^{T'}$ when $\ctk=T$, and it follows, using \eqref{ptp}, that
\begin{equation}
\sum_{|T|=k}
\pkt
\lrabs{\qtt-\frac{2(k+1)}{(m+1)(m+2)}\pmtx}
=\E\lrabs{X_k^{T'}-\E X_k^{T'}}
\end{equation}
and thus, dropping the prime,
\begin{equation}
S_1=
\sum_{T}\sum_{k>|T|}
\frac{2}{(k+1)(k+2)}
\E\lrabs{X_k^{T}-\E X_k^{T}}
.
\end{equation}
A similar sum appears in the proof for the \rrt.
\end{rem}

\end{document}